\newtheorem{theorem}{Theorem}
\newtheorem*{theorem*}{Theorem}
\newtheorem*{definition*}{Definition}
\newtheorem{definition}{Definition}[section]
\newtheorem{corollary}[definition]{Corollary}
\newtheorem*{corollary*}{Corollary}
\newtheorem{lemma}[definition]{Lemma}
\newtheorem{example}[definition]{Example}
\newtheorem*{lemma*}{Lemma}
\newtheorem{remark}[definition]{Remark}
\newtheorem{proposition}[definition]{Proposition}
\newtheorem*{proposition*}{Proposition}
\newtheorem{conjecture}[definition]{Conjecture}
\newtheorem{theoremaaa}[definition]{Theorem}
\newtheorem*{conjecture*}{Conjecture}
\newtheorem{question*}[definition]{Question}
\numberwithin{equation}{section}
\newcommand{\R}{\mathbb{R}}
\def\bM{\mathbb{M}}
\def\bQ{\mathbb{Q}}
\def\bR{\mathbb{R}}
\def\bS{\mathbb{S}}
\def\bZ{\mathbb{Z}}
\newcommand{\cC}{\mathcal{C}}
\newcommand{\cD}{\mathcal{D}}
\newcommand{\cF}{\mathcal{F}}
\newcommand{\cL}{\mathcal{L}}
\newcommand{\cP}{\mathcal{P}}
\newcommand{\eps}{\varepsilon}
\renewcommand{\H}{\mathcal{H}}
\renewcommand{\P}{\mathcal P}
\newcommand{\LL}{\mathcal{L}}
\let\oldtocsection=\tocsection
\let\oldtocsubsection=\tocsubsection
\let\oldtocsubsubsection=\tocsubsubsection
\renewcommand{\tocsection}[2]{\hspace{0em}\oldtocsection{#1}{#2}}
\renewcommand{\tocsubsection}[2]{\hspace{1em}\oldtocsubsection{#1}{#2}}
\renewcommand{\tocsubsubsection}[2]{\hspace{2em}\oldtocsubsubsection{#1}{#2}}
\title[Homotopy invariance of coproduct \& fixed-point theory]{Obstructions to homotopy invariance of loop coproduct via parameterized fixed-point theory} 
\author{Lea Kenigsberg and Noah Porcelli}
\newcommand{\Addresses}{{
  \bigskip
  \footnotesize

  \textsc{UC Davis department of Mathematics, 1 Shields Ave, Davis, CA 95616, United States}\par\nopagebreak
  \textit{kenigsberg.lea@gmail.com}

  \textsc{Imperial College Department of Mathematics, Huxley Building, 180 Queen's Gate, London SW7 2RH, U.K.}\par\nopagebreak
  \textit{n.porcelli@imperial.ac.uk}

}}
\begin{document}

\begin{abstract}
 Given $f:M \to N$ a homotopy equivalence of compact manifolds  with boundary, we use a construction of Geoghegan and Nicas to define its Reidemeister trace  $[T] \in  \pi_1^{st}(\LL N, N)$. We realize the Goresky-Hingston coproduct as a map of spectra, and show that the failure of $f$ to entwine the spectral coproducts can be characterized by Chas-Sullivan multiplication with $[T]$. In particular, when $f$ is a simple homotopy equivalence, the spectral coproducts of $M$ and $N$ agree. 
    
\end{abstract}

\maketitle

\tableofcontents

\section{Introduction}
    Let $M$ be a closed smooth oriented manifold, and $\LL M$ its free loop space. There are various structures one can define on the homology of $\LL M$. The first to be introduced was the Chas-Sullivan product \cite{chas-sullivan}:
    $$\mu^{CS}: H_{*}(\LL M)\otimes H_{*}(\LL M) \to H_{*-n}(\LL M),$$ which, roughly speaking, takes two generic families of loops in $M$ and concatenates them when their starting points agree.
    
    There is also the Goresky-Hingston coproduct \cite{Goresky_2009}: 
    $$\Delta^{GH}: H_*\left(\LL M\right) \to \tilde H_{*+1-n}\left(\frac{\LL M}M \wedge \frac{\LL M}M\right)$$ which takes a generic family of loops, and for each loop $\gamma$ in the family and $s \in [0,1]$ such that $\gamma(0) = \gamma(s)$, contributes the pair of loops $(\gamma|_{[0,s]}, \gamma|_{[s,1]})$.  See \cref{fig:coprod-heur}.

    \begin{figure}[h] 
        \label{fig:coprod-heur}
        \centering
        $\left\{
        \begin{minipage}{.23 \textwidth}
            \begin{tikzpicture}[scale=.8,
            v/.style={draw,shape=circle, fill=black, minimum size=1.3mm, inner sep=0pt, outer sep=0pt},
            vred/.style={draw,shape=circle, fill=red, minimum size=1mm, inner sep=0pt, outer sep=0pt},]
                
                \draw[dashed] (-1.5, 0) -- (1.5,0);

                \draw[color=black, ->] (-1.5,0) to[out=90, in=90] (-0.5, 1.5);
                \draw[color=black] (-0.5,1.5) to[out=270, in=90] (-1.2, 0);
                
                \draw[color=black, -<] (-1.5,0) to[out=-90, in=-90] (-0.5, -1.5);
                \draw[color=black] (-0.5,-1.5) to[out=-270, in=-90] (-1.2, 0);

                \draw[color=black, ->] (0,0) to[out=90, in=90] (1, 1.5);
                \draw[color=black] (1,1.5) to[out=270, in=90] (0, 0);
                
                \draw[color=black, -<] (0,0) to[out=-90, in=-90] (1, -1.5);
                \draw[color=black] (1,-1.5) to[out=-270, in=-90] (-0, 0);

                \draw[color=black, ->] (1.5,0) to[out=90, in=90] (2.5, 1.5);
                \draw[color=black] (2.5,1.5) to[out=270, in=90] (1.2, 0);
                
                \draw[color=black, -<] (1.5,0) to[out=-90, in=-90] (2.5, -1.5);
                \draw[color=black] (2.5,-1.5) to[out=-270, in=-90] (1.2, 0);

                \node[v] at (-1.5, 0) {};
                
                \node[v] at (-0, 0) {}; 
                \node[vred] at (-0, 0) {};

                \node[v] at (1.5, 0) {};
            \end{tikzpicture}
        \end{minipage}
        \right\} \mapsto$
        $\left(
        \begin{minipage}{.15 \textwidth}
            \begin{tikzpicture}[scale=.8,
            v/.style={draw,shape=circle, fill=black, minimum size=1.3mm, inner sep=0pt, outer sep=0pt},
            vred/.style={draw,shape=circle, fill=red, minimum size=1mm, inner sep=0pt, outer sep=0pt},]

                \draw[color=black, ->] (-1,0) to[out=90, in=90] (0, 1.5);
                \draw[color=black] (0,1.5) to[out=270, in=90] (-1, 0);

                \draw[color=black, -<] (0.5,0) to[out=-90, in=-90] (1.5, -1.5);
                \draw[color=black] (1.5,-1.5) to[out=-270, in=-90] (0.5, 0);

                \node[v] at (-1, 0) {}; 
                \node[vred] at (-1, 0) {};

                \node at (0,0) {,};

                \node[v] at (0.5, 0) {}; 
                \node[vred] at (0.5, 0) {};
            \end{tikzpicture}
        \end{minipage}
        \right)$
        \caption{Heuristic picture of the coproduct in the case $*=1$, $n=2$: left shows a $1$-parameter family of loops, right shows the output of the coproduct, a $0$-parameter family of pairs of loops.}
    \end{figure}
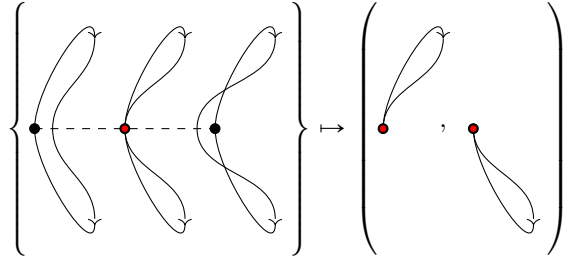
    
    There are many other structures and constructions of this flavor, all fall under the general umbrella term of string topology. For instance, there is a Lie bracket on equivariant homology $H_*^{S^1}(\LL M)$ \cite{chas-sullivan}.  Another example is Cohen-Jones' construction of a unital ring spectrum \cite{cohen2002homotopy}:
    \begin{equation}\label{eq: CS spec}
        \LL M^{-TM}\wedge \LL M^{-TM} \to \LL M^{-TM}. 
    \end{equation}  This structure  recovers the Chas-Sullivan product by taking homology, but also gives operations in other generalised homology theories.
    
The first offering of our paper is a generalization of the Goresky-Hingston coproduct to non-oriented manifolds with corners, and to a map of spectra:
\begin{equation}\label{eq: cop spec}
         \Delta: \frac{ \LL M^{-TM}}{\partial \LL M ^{-TM}}\wedge S^1\to \Sigma^\infty \frac{\LL M}{M}\wedge \frac{\LL M}{M},
 \end{equation}  where $\partial \LL M := \LL M|_{\partial M}$ is the space of loops $\gamma \in \LL M$ with $\gamma(0) \in \partial M$.

        \begin{remark}
    Note that $\Delta$ does not define a coring structure in the usual algebraic sense, since it is not of the form $A \to A \otimes A$ for any $A$. We still refer to $\Delta$ as a coproduct since when $M$ is a closed oriented manifold, $\Delta$ is a natural generalisation of the Goresky-Hingston coproduct, which does define a (non-unital) coalgebra structure on $H_{*+n-1}(\cL M, M; k)$ where $k$ is a field. See Section \ref{sec: cop hom comp} for an exact statement and proof. It would be interesting to understand the nature of the algebraic structure that $\Delta$ defines.
\end{remark}

It was shown in \cite{Cohen-Klein-Sullivan}, \cite{Crabb} and \cite{Gruher-Salvatore} that the Chas-Sullivan product is preserved by homotopy equivalences, and by Rivera-Wang \cite{Rivera-Wang} that for simply-connected manifolds the Goresky-Hingston coproduct over $\bQ$ is preserved by homotopy equivalences.\par

Motivated by a computation of Naef  \cite{Naef}, showing that the Goresky-Hingston coproduct is not a homotopy invariant in general, the main goal of this paper is to characterize the failure of the spectral Goresky-Hingston coproduct to be a homotopy invariant.\par

More precisely, let $f: N \to Z$ be a homotopy equivalence of compact manifolds with boundary. Then $f$ induces 
equivalences of spectra $f: \Sigma^\infty \cL N/N \to \Sigma^\infty \cL Z/Z$ and    
    \begin{equation*}
        f_!: \frac{\cL N^{-TN}}{\partial \cL N^{-TN}} \xrightarrow{\simeq}  \frac{\cL Z^{-TZ}}{\partial \cL Z^{-TZ}}.
    \end{equation*}
(see \cref{eq: h.e on -TN}). We study the failure of the diagram \begin{equation}\label{diag: fail of hom inv }
        \begin{tikzcd}
                    \frac{\cL N^{-TN}}{\partial \cL N^{-TN}} \wedge S^1\arrow[r, "\Delta^N"] \arrow[d, "f_! \wedge Id_{S^1}"] &
                    \Sigma^\infty \frac{\cL N}N \wedge \frac{\cL N}N \arrow[d, "f \wedge f"] \\
                    \frac{\cL Z^{-TZ}}{\partial \cL Z^{-TZ}} \wedge S^1 \arrow[r, "\Delta^Z"] &
                    \Sigma^\infty \frac{\cL Z}Z \wedge \frac{\cL Z}Z
                \end{tikzcd}
                \end{equation} 
                to commute. \par 
                As a first step to addressing the general case, we assume that $f$ is a codimension 0 embedding, and that the complement $W := Z \setminus N$ is an $h$-cobordism. We then define operations

$$\Xi_{{{r}}}, \Xi_{{{l}}}: \frac{\cL Z^{-TZ}}{\partial \cL Z^{-TZ}} \wedge S^1 \to \Sigma^\infty \frac{\cL Z}Z \wedge \frac{\cL Z}Z,$$
in the spirit of parameterized Reidemeister traces, following ideas of Geoghegan-Nicas \cite{GN} and Malkiewich \cite{malkiewichparametrized}. See \cref{sec: torsion} for further explanation. 

The first theorem of this paper is then:

\begin{theorem}[Theorem \ref{thm: cop Xi}]\label{thm: 1 thm}
Assume $f: N \to Z$ is a codimension 0 embedding such that the complement $W$ is an $h$-cobordism. Then the failure of diagram (\ref{diag: fail of hom inv }) to commute is given by $\Xi_{{{l}}}$ and $ \Xi_{{{r}}}$. That is: 
    \begin{equation}
                \Delta^Z \circ (f_! \wedge Id_{S^1}) - (f \wedge f) \circ \Delta^N \simeq \Xi_{{{l}}} - \Xi_{{{r}}}.
            \end{equation}
\end{theorem}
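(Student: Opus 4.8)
The plan is to realise every map appearing in diagram (\ref{diag: fail of hom inv }), together with $\Xi_l$ and $\Xi_r$, as Pontryagin--Thom collapse maps built from one common package of geometric data, and then to exhibit the failure of commutativity as a single collapse map which, after an explicit one-parameter deformation, splits as $\Xi_r-\Xi_l$. Concretely, I would first fix point-set models: choosing a collar of the boundary and a Riemannian metric, I would model $\Delta^M$ as the composite of (i) the collapse of $\cL M\times S^1$ onto a tubular neighbourhood of the figure-eight locus $\Sigma^M=\{(\gamma,s):\gamma(0)=\gamma(s)\}$, whose fibrewise normal bundle is $\mathrm{ev}_0^*(TM)$ and therefore absorbs the $-TM$-twist of the source, followed by (ii) the cut map $(\gamma,s)\mapsto(\gamma|_{[0,s]},\gamma|_{[s,1]})$ into $\cL M/M\wedge\cL M/M$. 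Using a Morse function $g\colon Z\to\R$ with $g^{-1}((-\infty,0])=N$ (equivalently a handle decomposition of the $h$-cobordism $W$, together with the deformation retraction $W\searrow\partial_0 W$ of the cobordism onto the boundary glued to $N$), I would model $f_!$ via the Pontryagin--Thom collapse associated to the codimension-$0$ inclusion, which, being an equivalence, can be taken in either direction.

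Next I would localise the discrepancy near $W$. Filtering $Z$ by sublevel sets $Z_u=g^{-1}((-\infty,u])$ and running the $\Delta$-construction on each $\cL Z_u$ produces a one-parameter family of maps interpolating $(f\wedge f)\circ\Delta^N$ at $u=0$ and $\Delta^Z\circ(f_!\wedge\mathrm{id}_{S^1})$ at $u=1$; the $S^1$-coordinate in the source is what allows the difference $\Xi_r-\Xi_l$ to be formed as a single map and supplies room for the interpolating homotopy. Away from the critical levels of $g$ the two collapses literally agree, so the difference of the composites is homotopic to a sum of local contributions, one for each passage of a figure-eight strand across a critical level of $g$ --- that is, for each way a self-intersecting family of loops coming from $\cL N$ is forced to see the $h$-cobordism $W$.

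I would then identify each local term. Fibrewise over the critical locus the contribution is the Pontryagin--Thom datum of the fixed-point set of the self-map of $Z$ that collapses $W$ onto $\partial_0 W$, i.e.\ of the geometric homotopy $\mathrm{id}_Z\simeq f\circ\rho$ witnessing $Z\searrow N$, packaged exactly as the parameterised Reidemeister trace of Geoghegan--Nicas \cite{GN} and Malkiewich \cite{malkiewichparametrized}. There are two flavours of contribution, distinguished by whether the strand pushed across $W$ is the first half $\gamma|_{[0,s]}$ or the second half $\gamma|_{[s,1]}$ of the figure-eight (equivalently, which of the two output loops acquires the correction); matching conventions with \cref{sec: torsion}, these are the maps $\Xi_l$ and $\Xi_r$, and the two strands carry opposite co-orientations, producing the relative minus sign. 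Assembling the three steps then gives $\Delta^Z\circ(f_!\wedge\mathrm{id}_{S^1})-(f\wedge f)\circ\Delta^N\simeq\Xi_r-\Xi_l$.

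The hard part will be the compatibility needed to make Steps 1 and 2 simultaneously valid: the collapse defining $\Delta$ requires transversality to the diagonal $M\hookrightarrow M\times M$ at the two evaluation points, while the collapse defining $f_!$ requires transversality to $\cL N\subset\cL Z$, and these must be arranged in compatible tubular neighbourhoods so that the combined collapse genuinely has as its domain a Thom spectrum over the fixed-point locus and so that the $u$-interpolation is defined across critical levels and over the corners where $\partial M$ meets the collar. Pushing this through --- and tracking the orientation and sign bookkeeping so that one lands precisely on $\Xi_r-\Xi_l$ rather than on a variant differing by signs or by lower-order correction terms --- is the technical heart of the argument; I expect a handle-by-handle reduction to elementary $h$-cobordisms to be the cleanest way to keep it under control.
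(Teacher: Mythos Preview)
Your overall architecture is right: a two-parameter family whose restriction to the four sides of a square gives $\Delta^Z$, $(f\wedge f)\circ\Delta^N\circ f_!$, $\Xi_r$ and $\Xi_l$. But the Morse-theoretic implementation you sketch is both more complicated than necessary and contains real gaps.

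The filtration by sublevel sets $Z_u$ does not produce a one-parameter family of maps with fixed source and target: the coproduct on $\cL Z_u$ lands in $\Sigma^\infty(\cL Z_u/Z_u)^{\wedge 2}$, and pushing everything forward into $\cL Z$ does not make the family interpolate between the two composites in any evident way. More seriously, the assertion that ``away from the critical levels the two collapses literally agree'' is not correct as stated: the two composites differ everywhere on the source, not only over the critical set. The operations $\Xi_l,\Xi_r$ are defined globally in terms of the full deformation retraction $F_s$ applied to the basepoint $\gamma(0)$ (see (\ref{eq: tr XiL}), (\ref{eq: tr XiR})); they are not a priori sums of local contributions supported near critical points, and your handle-by-handle plan would need a gluing statement for $\Xi$ under composition of $h$-cobordisms that you have not supplied.

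The paper avoids all of this by using the deformation retraction $F$ itself, rather than a Morse function, as the interpolating parameter. One writes down a single explicit map
\[
\Lambda:\ \frac{\cL P^{D\nu}}{\partial\cL P^{D\nu}}\times[0,1]^2_{s,t}\ \longrightarrow\ \Sigma^L\frac{\cL P}{P}\wedge\frac{\cL P}{P}
\]
which, in words, sends $(v,\gamma,s,t)$ to ``apply $F_s$ to $\gamma$, then perform the coproduct cut at time $t$'' (with the usual short $\phi$- and $\theta$-paths to make endpoints match, and conjugating by $F|_{[0,s]}$ at the basepoint so that the output loops are based at $\gamma(0)$); see (\ref{eq: Lambda}). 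Then one simply reads off the four sides: at $s=0$ one has $F_0=\mathrm{id}$, so $\Lambda|_{s=0}=\Delta^P_{unst}$ on the nose; at $s=1$ the loop has been retracted into $M$, so $\Lambda|_{s=1}=(j\wedge j)\circ\Delta^M_{unst}\circ\overline F_1$; and at $t=0$ (resp.\ $t=1$) the cut happens at the basepoint, so one of the two output loops consists only of the trace data $F|_{[0,s]}(\gamma(0))$ while the other carries $\gamma$ --- a short explicit homotopy identifies these with $\Xi_{r,unst}$ and $\Xi_{l,unst}$ respectively. No Morse function, no handle decomposition, no localisation to critical points, and no compatibility of tubular neighbourhoods beyond a single fixed choice of embedding data is needed. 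The ``hard part'' you anticipate simply does not arise: all four maps are built from the same $(e,\rho,\zeta,V,\eps,\lambda,F)$, so the required transversality and collapse are packaged once and for all in the incidence condition $\lVert v-\phi_1\circ F_s\circ\gamma(t)\rVert\le\eps$.
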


We next characterize the discrepancy $\Xi_{{{l}}} - \Xi_{{{r}}}$ in terms of existing operations and invariants. The content of the following theorem says that $\Xi_{{{l}}}$ and $\Xi_{{{r}}}$ can be interpreted as the Chas-Sullivan product with certain fixed point invariants: 

\begin{theorem}[Theorem \ref{thm: T Xi}]\label{thm Xi=mu}
   Assume $f: N \to Z$ is a codimension 0 embedding such that the complement $W$ is an $h$-cobordism. There are homotopies of maps of spectra:
    \begin{equation*}
        \Xi_{{{l}}}  \simeq \mu_{{{l}}}(\cdot \times [Z],[T_{diag}]) \textrm{    and      }
        \Xi_{{{r}}}  \simeq \mu_{{{r}}}([\overline T_{diag}],[Z] \times \cdot),
    \end{equation*} where: \begin{itemize}
    \item The classes:  $$[T_{diag}], [\overline T_{diag}]: \Sigma^\infty S^1\to \Sigma^{\infty}\frac{\cL N \times  \cL N}{N\times N}.$$ are parameterized fixed-point invariants associated to $f$. 
They are constructed as in Geoghegan-Nicas \cite{GN}, and are given by the fixed points of a strong deformation retraction $F: W\times I \to W$. See \cref{sec: torsion} and \cref{sec: tr is product} for more details.
\item The operations \begin{equation}
        \mu_{{{l}}}: \frac{\cL M^{-TM}}{\partial \cL M^{-TM}} \wedge \Sigma^\infty_+ \cL M \to\Sigma^\infty_+ \cL M,
    \end{equation}
    and
    \begin{equation}
        \mu_{{{r}}}: \Sigma^\infty_+ \cL M \wedge \frac{\cL M^{-TM}}{\partial \cL M^{-TM}} \to \Sigma^\infty_+ \cL M,
    \end{equation}
     are spectral Chas-Sullivan products in $Z\times Z$ and are defined in \cref{sec: prod}. After passing to homology, they realize the usual homology-level Chas-Sullivan products.
     \item The class $$[Z]: \bS \to Z^{-TZ}/\partial Z^{-TZ} \to \cL Z^{-TZ}/\partial \cL Z^{-TZ}$$ denote the fundamental class of $Z$, included in $\cL Z^{-TZ}/\partial \cL Z^{-TZ}$ by constant loops.

\end{itemize}

\end{theorem}

In the statement of Theorems \ref{thm: 1 thm} and \ref{thm Xi=mu} we considered homotopy equivalences which are codimension 0 embeddings. In order to deduce the general case from this setting, we first prove the following stability property:

\begin{theorem}[Theorem \ref{thm: stability} and Proposition \ref{prop: thom cop}]\label{thm: intro stab}
    Let $e: M \hookrightarrow \bR^L$ be an embedding with normal bundle $\nu$; let $D\nu$ be the total space of the unit disc bundle of $\nu$, also a compact manifold. Then the coproducts for $M$ and $D\nu$ agree.  

    Moreover,  assume $M$ is oriented and let $E \to M$ be an oriented vector bundle with disc bundle $DE$. Then the homology-level coproducts $H_*(\Delta)$ for $DE$ and for $M$ are compatible under the Thom isomorphism.
\end{theorem}

The following corollary is immediate from Theorem \ref{thm: intro stab}:
\begin{corollary}\label{cor: si}
    Let $f: N \to Z$ be a simple homotopy equivalence between compact manifolds, then diagram (\ref{diag: fail of hom inv }) commutes, i.e. the spectral coproducts of $N$ and $Z$ agree.  

    If, in addition,  $N$ and $Z$ are closed and oriented and $f$ is orientation-preserving, then their homology-level coproducts agree: $\Delta^{GR}_Z \circ f_* = f_* \circ \Delta^{GH}_N$.

\end{corollary}
Note that \cref{cor: si} has also been proved in recent work of Naef-Safronov \cite{naef2024simple}; see also Remark \ref{rem: cite}. 

\begin{remark}
    Let $B$ be a finite CW complex (not necessarily a manifold). It follows from Corollary \ref{cor: si} that one can define a version of the string coproduct for $B$, by choosing a compact manifold $M$ which is simple homotopy equivalent to $M$, and defining the coproduct for $B$ to be that of $M$. Corollary \ref{cor: si} implies the resulting invariant is well-defined.
\end{remark}

Combining Theorems \ref{thm: 1 thm}, \ref{thm Xi=mu} and \ref{thm: intro stab}, and extending the classes $[T_{diag}], [\overline T_{diag}] $ to any homotopy equivalence $f: N \to Z$, we deduce the main result of our paper:

\begin{theorem}\label{main theorem}
    Let $f: N \to Z$ be a homotopy equivalence of compact manifolds with boundary (of any dimensions). Then the failure of $f$ to respect the spectral Goresky-Hingston coproduct is given by: 
    \begin{equation}
        \Delta^Z \circ (f_! \wedge Id_{S^1}) - (f \wedge f) \circ \Delta^N \simeq   \mu_{{{l}}}(\cdot \times [Z],[T_{diag}]) - \mu_{{{r}}}([\overline T_{diag}], [Z] \times \cdot).
    \end{equation}
\end{theorem}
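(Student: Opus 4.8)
The plan is to reduce the general statement to the codimension 0 case already handled by Theorems~\ref{thm: 1 thm} and~\ref{thm Xi=mu}, using the stability result of Theorem~\ref{thm: intro stab} as the bridge. Given an arbitrary homotopy equivalence $f: N \to Z$ of compact manifolds with boundary, I would first pick embeddings $e_N: N \hookrightarrow \bR^L$ and $e_Z: Z \hookrightarrow \bR^L$ (into a common large Euclidean space, after stabilising $L$ if necessary) with normal bundles $\nu_N$, $\nu_Z$, and form the disc bundles $D\nu_N$, $D\nu_Z$. The point is that $f$ can be replaced, up to the homotopy-equivalence data we care about, by a codimension 0 embedding: by general position the composite $N \xrightarrow{f} Z \hookrightarrow D\nu_Z$ is homotopic to an embedding, and thickening it produces a codimension 0 embedding $D\nu_N \hookrightarrow D\nu_Z$ whose complement is an $h$-cobordism precisely because $f$ is a homotopy equivalence (Whitehead torsion considerations: the complement is an $h$-cobordism, though not necessarily trivial). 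This is exactly the ``reduce the general case to the codimension 0 setting'' step flagged before Theorem~\ref{thm: intro stab}.

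Next I would invoke Theorem~\ref{thm: intro stab} twice to identify all the relevant spectra and structure maps: the coproduct $\Delta^N$ agrees with $\Delta^{D\nu_N}$ under the canonical equivalences $\cL N \simeq \cL D\nu_N$ (a deformation retract, so the Thom spectra $\cL N^{-TN}/\partial\cL N^{-TN}$ and $\cL D\nu_N^{-T D\nu_N}/\partial(\cdots)$ are identified compatibly with the transfer/umkehr maps), and similarly for $Z$. Under these identifications the wrong-way map $f_!$ and the map $f \wedge f$ correspond to the analogous maps for the codimension 0 embedding $D\nu_N \hookrightarrow D\nu_Z$; one must check this compatibility carefully, since $f_!$ is defined via the embedding data and there is a genuine verification that the two constructions of the umkehr map coincide. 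Likewise the fixed-point invariant $[T]$, the diagonal variants $[T_{diag}], [\overline T_{diag}]$, the classes $[Z]$ and the spectral Chas--Sullivan products $\mu_r, \mu_l$ all transport across the stabilisation equivalence, using that $[T]$ for a general $f$ is \emph{defined} to be the Geoghegan--Nicas invariant of (a strong deformation retraction of) the $h$-cobordism complement arising from the thickened embedding.

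Having performed these identifications, the left-hand side $\Delta^Z \circ (f_! \wedge \mathrm{Id}_{S^1}) - (f \wedge f)\circ\Delta^N$ becomes literally the discrepancy for the codimension 0 embedding $D\nu_N \hookrightarrow D\nu_Z$ with $h$-cobordism complement, so Theorem~\ref{thm: 1 thm} rewrites it as $\Xi_r - \Xi_l$, and then Theorem~\ref{thm Xi=mu} rewrites that as $\mu_r(\cdot \times [Z],[T_{diag}]) - \mu_l([\overline T_{diag}],[Z]\times\cdot)$ — where every symbol now refers to $D\nu_Z$ — and finally the stability identifications from the previous paragraph translate this back into the claimed formula with $N$, $Z$. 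Assembling the chain of homotopies in the correct order (and keeping track of the $S^1$-smash factor, which is untouched throughout) gives the theorem.

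The main obstacle I expect is the compatibility bookkeeping in the second paragraph: verifying that the stabilisation equivalence $\cL M \simeq \cL D\nu_M$ intertwines \emph{all} the structures simultaneously — the spectral coproduct $\Delta$, the umkehr maps $f_!$, the Chas--Sullivan products $\mu_r,\mu_l$, the fundamental classes, and crucially the fixed-point invariant $[T]$ — and that the $h$-cobordism produced by thickening $N \xrightarrow{f} D\nu_Z$ really does have $[T]$ equal to the invariant attached to $f$ itself. None of these are conceptually deep given the earlier sections, but each requires care because the constructions are defined through auxiliary embeddings and Pontryagin--Thom data; the risk is a subtle mismatch (a sign, an orientation, or a normal-bundle twist) rather than a genuine gap. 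The actual ``new mathematics'' — relating the discrepancy to Reidemeister traces and Chas--Sullivan products — has already been done in Theorems~\ref{thm: 1 thm} and~\ref{thm Xi=mu}; this final theorem is the packaging step.
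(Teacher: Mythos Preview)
Your proposal is correct and follows essentially the same approach as the paper: thicken both $N$ and $Z$ to disc bundles $M = D\nu_N \hookrightarrow P = D\nu_Z$ so that $j: M \hookrightarrow P$ is a codimension~0 embedding with $h$-cobordism complement, apply Theorems~\ref{thm: 1 thm} and~\ref{thm Xi=mu} to the front face of a cube diagram, and use the stability results (Theorems~\ref{thm: stability} and~\ref{thm: prod stab}, plus the compatibility of fundamental classes) to transport along the other faces. The one place to be slightly more careful than your sketch is the claim that the complement is an $h$-cobordism: the paper proves this via a $\pi_1$ argument (Seifert--van~Kampen on the sphere bundles, plus a handle-dimension/general-position argument to push loops and homotopies off $M$) rather than by invoking Whitehead torsion directly.
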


We now give the corresponding statement on homology. Let $h_*: \Omega^{fr}_*(\cdot) \to H_*(\cdot)$ be the Hurewicz homomorphism.
Using the results of Sections \ref{sec: cop hom comp} and \ref{sec: hom com pro},  which show that after taking homology our spectral constructions agree with their homological counterparts,  we obtain the corresponding homological statement:

\begin{corollary}\label{main cor}

Let $f: N \to Z$ be an orientation-preserving homotopy equivalence of closed oriented manifolds.  Then for all $x \in H_p(\cL N)$: 
\begin{equation}
     \begin{split}
               & \Delta^{GH}\circ f_*(x) - (f\times f)_*\circ \Delta^{GH}(x)\\ &= 
                (-1)^{np+n}\mu^{CS}(f_*(x) \times [Z], h_*[T_{diag}]) - (-1)^{p+n} \mu^{CS}(h_*[\overline T_{diag}], [Z] \times f_*(x)).
            \end{split}
        \end{equation}
        where we take the Chas-Sullivan product in $Z \times Z$.
\end{corollary}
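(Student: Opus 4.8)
The corollary is a translation of Theorem~\ref{main theorem} from the level of spectra to the level of (ordinary) homology, in the case where $N$ and $Z$ are closed oriented manifolds and $f$ is orientation-preserving. The strategy is: (i) apply the Hurewicz/homology functor to the spectral identity of Theorem~\ref{main theorem}; (ii) identify each of the four resulting homology-level maps with the classical string-topology operations, using the comparison results of Sections~\ref{sec: cop hom comp} and~\ref{sec: hom com pro}; and (iii) carefully track the signs and Thom-isomorphism shifts introduced by the Atiyah duality identifications $\cL N^{-TN} \simeq$ Spanier-Whitehead dual of $\Sigma^\infty_+\cL N$ and by the desuspension by $S^1$.

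I would proceed as follows. First, recall that for $M$ closed oriented of dimension $n$, Atiyah duality gives $H_*(\cL M^{-TM}) \cong H_{*+n}(\cL M)$ (up to the usual sign conventions), and $\partial M = \emptyset$ so $\partial \cL M^{-TM}$ is trivial; thus $H_*\bigl(\tfrac{\cL M^{-TM}}{\partial \cL M^{-TM}} \wedge S^1\bigr) \cong H_{*-1+n}(\cL M)$. Under this identification, Section~\ref{sec: cop hom comp} shows that the spectral coproduct $\Delta$ induces, on homology, exactly the Goresky--Hingston coproduct $\Delta^{GH}\colon H_p(\cL M) \to \tilde H_{p+1-n}(\tfrac{\cL M}{M} \wedge \tfrac{\cL M}{M})$ — this is the content of that section's main comparison statement, which I invoke verbatim. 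Similarly, Section~\ref{sec: hom com pro} identifies the homology of $\mu_r$ and $\mu_l$ with $\mu^{CS}$ up to explicit signs. Next, apply $H_*(-)$ to the equation in Theorem~\ref{main theorem}. The left-hand side becomes $\Delta^{GH}\circ f_*(x) - (f\times f)_* \circ \Delta^{GH}(x)$ once we use that $f_!$ on $\cL N^{-TN}$ corresponds, under Atiyah duality for an orientation-preserving homotopy equivalence, to $f_*$ on $H_*(\cL N)$ (with trivial sign, since $f$ preserves orientation and the degree shift is by the common dimension $n$). The right-hand side becomes the two Chas--Sullivan terms; the classes $[Z]$, $[T_{\mathrm{diag}}]$, $[\overline T_{\mathrm{diag}}]$ pass to $[M]$ (the fundamental class), $h_*[T_{\mathrm{diag}}]$, $h_*[\overline T_{\mathrm{diag}}]$ via the Hurewicz map $h_*\colon \Omega^{fr}_* \to H_*$, since $[T]$ is defined as a framed bordism class (a stable map out of $\Sigma^\infty S^1$).

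The last and most delicate step is to pin down the signs $(-1)^{np+n}$ and $(-1)^{p+n}$. These arise from three sources that must be combined: the Koszul-type sign in the homology-level formula for $\mu_r$ versus $\mu_l$ established in Section~\ref{sec: hom com pro}; the sign in the Atiyah-duality identification $H_*(\cL M^{-TM}) \cong H_{*+n}(\cL M)$ used to move $f_*(x)\times[M]$ (resp.\ $[M]\times f_*(x)$) into the $\mu^{CS}$ slot of degree-type $\cL(Z\times Z)^{-T(Z\times Z)}$; and the commutation sign incurred when the $S^1$-factor (degree $1$) is moved past $x$ (degree $p$) and past the ambient $\cL Z^{-TZ}$ factor. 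Concretely, for the right-hand term $\mu_r(\cdot \times [Z], [T_{\mathrm{diag}}])$ one reads off the degree of the object being multiplied — $f_*(x) \times [M]$ sits in $H_{p+n}(\cL Z \times \cL Z) = H_{p+n}(\cL(Z\times Z))$, and the desuspension contributes a further shift — and tracks this through the defining diagram of $\mu_r$; the net sign works out to $(-1)^{np+n}$. The analogous bookkeeping for $\mu_l$, where the mobile class sits on the \emph{left} of the fixed-point class $[\overline T_{\mathrm{diag}}]$, gives $(-1)^{p+n}$. I would organize this by first fixing all sign conventions (Thom iso, cap product, smash-vs-product on homology) in a preliminary paragraph, then doing one careful diagram chase for $\mu_r$ and citing symmetry (plus one extra transposition sign) for $\mu_l$.

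\textbf{Main obstacle.} The genuine difficulty is not conceptual but bookkeeping: getting the two explicit signs exactly right. Every comparison isomorphism — Atiyah duality, the Thom isomorphism for $-TM$, the suspension by $S^1$, the Künneth identification $\cL Z \times \cL Z \cong \cL(Z\times Z)$, and the passage from framed bordism to homology — carries a sign convention, and the final answer is the product of all of them evaluated at the relevant degrees. I expect to spend most of the effort making these conventions mutually consistent with those fixed in Sections~\ref{sec: cop hom comp} and~\ref{sec: hom com pro}, so that the spectral identity of Theorem~\ref{main theorem} specializes cleanly; once the conventions are aligned, the corollary follows formally by applying $H_*$ and substituting the three comparison results.
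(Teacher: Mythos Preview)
Your proposal is correct and follows essentially the same approach as the paper: apply homology to Theorem~\ref{main theorem}, then invoke Proposition~\ref{prop: ati} (the Atiyah-duality identification of $f_!$ with $f_*$), Corollary~\ref{cor: GH spec cop} (spectral coproduct $=$ $\Delta^{GH}$ up to $(-1)^n$), and the product comparison (packaged in the paper as Proposition~\ref{prop: hom comp inc}, giving signs $(-1)^{np}$ and $(-1)^p$), and finally multiply through by $(-1)^n$. The only refinement worth noting is that the $(-1)^n$ from the coproduct comparison appears on \emph{both} $\Delta^{GH}$ terms, so it factors out cleanly; your sketch slightly understates this by saying the coproduct comparison gives ``exactly'' $\Delta^{GH}$, but your later acknowledgment that all signs must be tracked covers it.
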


\begin{remark}\label{rem: cite}
    A variant of formula (\ref{cor: mn cor pi 2 =0}), first conjectured by Naef in \cite{Naef}, has been recently proved by Naef-Safronov \cite{naef2024simple} using different methods. Their formula is similar but instead of $h_*[T]$ uses a different homology class; \cref{eq: pi20 case} below implies that when $\pi_2=0$, these homology classes agree. In particular, we expect that in the case $\pi_2=0$, Corollary \ref{cor: mn cor pi 2 =0} recovers \cite[Theorem A]{naef2024simple}.\par 
    
    Another variant of this formula has also been proved recently by Wahl \cite{hingston2019invariance}, using a differently defined obstruction class.  It is natural to conjecture that all of these obstruction classes agree.
\end{remark}
 
        Lastly, when we assume $\pi_2(N)=0$, we can invoke a theorem of Geoghegan and Nicas \cite{GN} which further identifies $[T]$ with the Dennis trace of the Whitehead torsion of $f$. More precisely, let 
        $$tr: K_1(\bZ[\pi_1(M)]) \to HH_1(\bZ[\pi_1(N)])$$ 
        be the classical Dennis trace.  Then after identifying $HH_1(\bZ[\pi_1(N)]\cong H_1(\LL N)$ (which requires the $\pi_2=0$ assumption), and projecting away from constant loops, the content of \cite[Theorem 7.2]{GN} implies that 
        \begin{equation} \label{eq: pi20 case}
            tr(\tau) = h_*[T],
        \end{equation}
        where $\tau$ is the Whitehead torsion of $f$. See Proposition \ref{lem: comp} for a more precise statement. 
        \begin{remark}
            We expect that the condition $\pi_2 =0$ can be removed by lifting the invariants of \cite{GN} to live in topological, rather than ordinary, Hochschild homology. See \cref{conj: para tr}.
        \end{remark}
        Let $tr(\tau)_{diag}$ and $\overline{tr(\tau)}_{diag}$ be the images of $tr(\tau)$ under the antidiagonal maps. Then combining (\ref{eq: pi20 case}) and \cref{main cor} we obtain:

\begin{corollary}\label{cor: mn cor pi 2 =0}

Let $f: N \to Z$ be an orientation-preserving homotopy equivalence of closed oriented manifolds. Suppose that $\pi_2(N)=0$. Then for all $x \in H_p(\cL N)$: 
\begin{equation}
     \begin{split}
        & \Delta^{GH}\circ f_*(x) - (f\times f)_*\circ \Delta^{GH}(x)\\ &= 
        (-1)^{np+n}\mu^{CS}(f_*(x) \times [M], tr(\tau)_{diag}) - (-1)^{p+n} \mu^{CS}( \overline{tr(\tau)}_{diag}, [M] \times f_*(x)).
    \end{split}
\end{equation}
\end{corollary}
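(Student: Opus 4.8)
The plan is to obtain Corollary~\ref{cor: mn cor pi 2 =0} as a formal consequence of Corollary~\ref{main cor} together with the Geoghegan--Nicas identification~(\ref{eq: pi20 case}). Corollary~\ref{main cor} already supplies, for an orientation-preserving homotopy equivalence $f\colon N\to Z$ of closed oriented manifolds, the formula
\[
\Delta^{GH}\circ f_*(x)-(f\times f)_*\circ\Delta^{GH}(x)=(-1)^{np+n}\mu^{CS}\bigl(f_*(x)\times[M],\,h_*[T_{diag}]\bigr)-(-1)^{p+n}\mu^{CS}\bigl(h_*[\overline T_{diag}],\,[M]\times f_*(x)\bigr),
\]
so the only thing left is to replace $h_*[T_{diag}]$ and $h_*[\overline T_{diag}]$ by $tr(\tau)_{diag}$ and $\overline{tr(\tau)}_{diag}$ under the extra hypothesis $\pi_2(N)=0$.

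First I would unwind~(\ref{eq: pi20 case}) carefully. The class $[T]\colon\Sigma^\infty S^1\to\Sigma^\infty\cL N/N$ is, by construction (see \cref{sec: torsion}), the Pontryagin--Thom class of the framed $1$-manifold of fixed points of the strong deformation retraction $F\colon W\times I\to W$; applying the Hurewicz map $h_*\colon\Omega^{fr}_*\to H_*$ gives a class $h_*[T]\in H_1(\cL N/N)$, supported on the non-constant components. On the algebraic side, $tr(\tau)\in HH_1(\bZ[\pi_1 N])$ is the Dennis trace of the Whitehead torsion. The equality of these two classes is the content of \cite[Theorem~7.2]{GN}, once one knows the target groups match: using the evaluation fibration $\Omega N\to\cL N\to N$ and $\pi_2(N)=0$ (which forces $\pi_1$ of the based loop component to vanish), the component of $\cL N$ at a conjugacy class $[c]\neq 1$ has fundamental group the centralizer $Z(c)$, hence first homology $Z(c)^{\mathrm{ab}}=H_1(Z(c))$, and summing over $[c]$ recovers the standard computation $HH_1(\bZ[\pi_1 N])\cong\bigoplus_{[c]}H_1(Z(c))$; projecting away from constant loops disposes of the $[c]=1$ term. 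This is precisely where the $\pi_2(N)=0$ hypothesis is genuinely used.

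Next I would observe that the anti-diagonal maps used to produce $[T_{diag}]$ and $[\overline T_{diag}]$ from $[T]$ are honest continuous maps $\cL N/N\to(\cL N\times\cL N)/(N\times N)$, and that $h_*$ is a natural transformation of homology theories; hence $h_*$ commutes with pushforward along these maps, so that $h_*[T_{diag}]=(h_*[T])_{diag}=tr(\tau)_{diag}$ and likewise $h_*[\overline T_{diag}]=\overline{tr(\tau)}_{diag}$, invoking~(\ref{eq: pi20 case}) in the middle. (One also uses here that $\Delta^{GH}$ and $\mu^{CS}$ only see the non-constant loops in the relevant degrees, so passing to the non-constant part of $h_*[T]$ is harmless.) Substituting these two equalities into the displayed formula from Corollary~\ref{main cor} gives the claim verbatim, signs included.

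The main obstacle is not in this deduction — which is essentially a substitution — but in making sure all normalizations line up in~(\ref{eq: pi20 case}) itself: the orientation and sign conventions on the Pontryagin--Thom framing of the fixed-point set of $F$, the choice of generator of $H_1(S^1)$ implicit in writing classes $\Sigma^\infty S^1\to\Sigma^\infty\cL N/N$, and the normalization of the Dennis trace together with the isomorphism $HH_1(\bZ[\pi_1 N])\cong H_1(\cL N/N)$ away from constant loops. I would pin all of these down once and for all in \cref{sec: torsion}, stating \cite[Theorem~7.2]{GN} in exactly the form~(\ref{eq: pi20 case}), after which the corollary follows formally.
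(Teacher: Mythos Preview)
Your proposal is correct and matches the paper's approach exactly: the paper derives this corollary in one line by ``combining (\ref{eq: pi20 case}) and \cref{main cor}'', which is precisely the substitution you spell out. Your extra remarks on why the Hurewicz map commutes with the anti-diagonal maps and on where the $\pi_2=0$ hypothesis enters are more detail than the paper supplies, but the logical route is identical.
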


     \subsection{Generalisations}

 Let $E \to B$ a be smooth fiber bundle with fiber a smooth closed manifold $M$. Suppose we are given a fiberwise homotopy equivalence $f: E \to M \times B$ over $B$. In future work we hope to show that one can build spectral operations in families and define $\Delta_{\operatorname{fib}}^E$, $\Delta_{\operatorname{fib}}^{B\times M}$,  $\Xi_{{{r}}}^B$, $ \Xi_{{{l}}}^B$, $\mu_{{{r}}}^{M \times B}$ and $\mu_{{{l}}}^{M \times B}$ as morphisms of parametrized spectra. In particular, we conjecture that an analogue of \cref{thm: 1 thm} holds:

 \begin{conjecture}
     \begin{equation}\label{conj: Xi}
        \Delta_{\operatorname{fib}}^{B\times M} \circ  f_!  - f\wedge f \circ \Delta_{\operatorname{fib}}^E = \Xi_{{{r}}}^B - \Xi_{{{l}}}^B.\end{equation}
 \end{conjecture}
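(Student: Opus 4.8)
The plan is to upgrade every ingredient of Theorem~\ref{thm: 1 thm} to a statement about parametrized spectra over $B$, in the sense of May--Sigurdsson, and then run the proof of Theorem~\ref{thm: cop Xi} fiberwise. First I would fix point-set models for the fiberwise loop spaces: since $E \to B$ and $M \times B \to B$ are smooth fiber bundles, $\cL_{\operatorname{fib}} E \to B$ and $\cL_{\operatorname{fib}}(M\times B) = (\cL M) \times B \to B$ are again fiber bundles, so the fiberwise Thom constructions $\cL_{\operatorname{fib}}E^{-T_{\operatorname{fib}}E}$, $\partial(\cdot)$, and the relevant suspension spectra of quotients are parametrized spectra over $B$. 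The Goresky--Hingston construction from the body of the paper --- a Pontryagin--Thom collapse onto the figure-eight locus $\{(\gamma,s) : \gamma(0)=\gamma(s)\}$ followed by a cut map --- is natural enough in the manifold to be performed continuously in the base parameter, which yields $\Delta_{\operatorname{fib}}^E$ and $\Delta_{\operatorname{fib}}^{B\times M}$ as maps of parametrized spectra, and the equivalence $f_!$ is obtained the same way from the fiberwise homotopy equivalence $f$.

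Second, I would construct $\Xi_l^B, \Xi_r^B$. In the non-parametrized proof these come from the $h$-cobordism $W = Z\setminus N$ together with a strong deformation retraction $W \times I \to W$, whose fixed-point data is the parametrized Reidemeister trace of Geoghegan--Nicas and Malkiewich; the family analogue replaces this with a \emph{fiberwise} $h$-cobordism and a fiberwise deformation retraction. So the first real step is to reduce the conjecture, via a fiberwise tubular-neighborhood argument, to the case where $f \colon E \hookrightarrow M\times B$ is a fiberwise codimension-$0$ embedding with fiberwise $h$-cobordism complement; this requires a parametrized version of the stability Theorem~\ref{thm: intro stab}, namely that replacing $E$ by the fiberwise unit disc bundle of a fiberwise embedding $E \hookrightarrow \bR^L \times B$ does not change $\Delta_{\operatorname{fib}}$. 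Granting that reduction, one defines $\Xi_l^B$ and $\Xi_r^B$ by the same collapse-plus-retraction recipe as in Section~\ref{sec: torsion}, now carried out in families, and checks that the chain of homotopies proving Theorem~\ref{thm: cop Xi} --- essentially a decomposition of the figure-eight locus for $M\times B$ into the part lying over $E$ and the part meeting the cobordism --- goes through over $B$. Theorems~\ref{thm Xi=mu} and~\ref{main theorem} should then have fiberwise upgrades by the same translation, once the parametrized Chas--Sullivan products $\mu_l^{M\times B}, \mu_r^{M\times B}$ are built analogously.

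The main obstacle is making all of this genuinely continuous --- indeed homotopy-coherent --- in the base. The non-parametrized arguments lean on transversality and generic position of families of loops, and generic position is not preserved under base change; so rather than perturbing to transversality one must use point-set or $\infty$-categorical models for fiberwise Thom collapse and for parametrized fixed-point theory (Malkiewich's framework for the parametrized Reidemeister trace, together with May--Sigurdsson fiberwise duality) and verify that the Goresky--Hingston cut map and the Chas--Sullivan concatenation maps can be modeled compatibly with them. A secondary difficulty is the $h$-cobordism input: the fiberwise $h$-cobordism $W \to B$ only exists after the stabilization of the parametrized Theorem~\ref{thm: intro stab}, and identifying its fiberwise-retraction fixed points with the parametrized Whitehead torsion of $f$ --- the family analogue of \eqref{eq: pi20 case} --- requires a parametrized version of Geoghegan--Nicas, whose invariants naturally land in a bundle of topological Hochschild homology spectra, exactly the lift alluded to in Conjecture~\ref{conj: para tr}. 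Once these foundational pieces are assembled, the formula $\Delta_{\operatorname{fib}}^{B\times M} \circ f_! - f\wedge f \circ \Delta_{\operatorname{fib}}^E \simeq \Xi_l^B - \Xi_r^B$ should follow by running the arguments of Sections~\ref{sec: torsion} onward verbatim in the parametrized category.
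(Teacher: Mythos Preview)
The statement you are addressing is a \emph{conjecture}, not a theorem: it appears in the paper's ``Future work and directions'' subsection, and the paper does not supply a proof. There is therefore no ``paper's own proof'' to compare against. Your text is not a proof either, and you correctly present it as a strategy rather than a completed argument.

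That said, your outline tracks the paper's informal suggestion quite closely: the authors write that ``in future work we hope to show that one can build spectral operations in families and define $\Delta_{\operatorname{fib}}^E$, $\Delta_{\operatorname{fib}}^{B\times M}$, $\Xi_l^B$, $\Xi_r^B$, $\mu_l^{M \times B}$ and $\mu_r^{M \times B}$ as morphisms of parametrized spectra,'' and then state the conjecture. Your plan --- run the proof of Theorem~\ref{thm: cop Xi} fiberwise after a parametrized analogue of Theorem~\ref{thm: intro stab} reduces to the codimension-0 case --- is exactly the natural extrapolation of that sentence. You also correctly identify the two genuine obstacles the paper leaves open: making the Pontryagin--Thom/cut constructions homotopy-coherent over $B$ without appealing to pointwise transversality, and supplying a parametrized version of the Geoghegan--Nicas fixed-point theory (which the authors flag separately as Conjecture~\ref{conj: para tr}).

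One caution: your proposal conflates the present conjecture with the subsequent ones. Equation~\eqref{conj: Xi} only asserts the $\Xi$-description of the defect; the identification of $\Xi_l^B, \Xi_r^B$ with Chas--Sullivan multiplication by a higher Reidemeister trace, and the compatibility with the Dennis trace, are separate conjectures in the paper and are not needed for~\eqref{conj: Xi} itself. For~\eqref{conj: Xi} alone you would need only the parametrized coproduct, the parametrized $\Xi$-operations, and a fiberwise version of the homotopy $\Lambda$ from Section~\ref{sec: cop def xi}; the parametrized Geoghegan--Nicas input and the THH comparison belong to the later conjectures.
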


We further conjecture that  $\Xi_{{{r}}}^B -\Xi_{{{l}}}^B$ can be characterized in terms of multiplication by higher Reidemeister  traces. Namely, let $\H(M)$ be the stable $h$-cobordism space of $M$.  
Then we expect that one can extend the constructions of \cref{sec: Reid Tr} to define a map: $$RT: \H(M) \to \Omega^{\infty+1}\Sigma^{\infty} \frac{\LL M}{M},  $$ and show:
\begin{conjecture}
There are homotopies of maps of parametrised spectra:
    \begin{equation}
        \Xi_{{{l}}}^B \simeq \mu_{{{l}}}^{M \times B}(\cdot \times [M], [RT_{diag}]) \textrm{ and } \Xi_{{{r}}}^B \simeq \mu_{{{r}}}^{M \times B}( [\overline{RT}_{diag}], [M] \times \cdot).
    \end{equation}
\end{conjecture}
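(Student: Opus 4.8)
The plan is to parametrize over $B$ the proofs of \cref{thm: 1 thm} and \cref{thm Xi=mu}. Throughout one works in a symmetric monoidal model of spectra parametrized over $B$ — orthogonal spectra over $B$ in the sense of May-Sigurdsson, or the $\infty$-category of parametrized spectra — in which the Thom collapse maps, the construction of $\Delta$, and the fiberwise Chas-Sullivan products of \cref{sec: torsion} and \cref{sec: prod} all make sense relative to a base. The argument then has three ingredients: constructing $RT$ out of $\H(M)$; proving the parametrized analogue of \cref{thm: 1 thm}; and re-running the proof of \cref{thm Xi=mu} in families.

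For the first ingredient: carried out fiberwise, the procedure behind \cref{main theorem} associates to $f: E \to M \times B$ a family of stable $h$-cobordisms over $B$, which is classified by a map $\beta: B \to \H(M)$. Over $\H(M)$ itself there is a universal such family, and — since the space of strong deformation retractions of an $h$-cobordism onto one of its ends is contractible — a fiberwise strong deformation retraction $\Phi$ onto the structural copy of $M$, canonical up to coherent homotopy. Running the Geoghegan-Nicas construction of \cref{sec: Reid Tr} fiberwise on $\Phi$ — after perturbing $\Phi$ generically within its contractible space of choices so that the fiberwise fixed-point locus is cut out transversally in families, then Pontryagin-Thom collapsing and projecting away from constant loops — produces $RT: \H(M) \to \Omega^{\infty+1}\Sigma^{\infty}\frac{\cL M}{M}$. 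Pulling back along $\beta$ and composing with the antidiagonals gives classes $[RT_{diag}], [\overline{RT}_{diag}]$ over $B$ which restrict fiberwise to the classes $[T_{diag}], [\overline T_{diag}]$ appearing in \cref{thm Xi=mu}.

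For the second ingredient, the proof of \cref{thm: 1 thm} is entirely fiberwise: one compares $\Delta^Z \circ f_!$ with $(f \wedge f) \circ \Delta^N$ by tracking, for a loop $\gamma$ in a fiber with $\gamma(0) = \gamma(s)$, whether the arcs $\gamma|_{[0,s]}$ and $\gamma|_{[s,1]}$ remain in the sub-fiber or enter the $h$-cobordism region, with $\Xi_r^B$ and $\Xi_l^B$ collecting the resulting error terms; since nothing in it uses that the base is a point, this upgrades to a proof of the conjectured identity~\eqref{conj: Xi}, the work being the bookkeeping of carrying each map and homotopy of \cref{sec: torsion} along over $B$ and checking that the identifications remain coherent. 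Granting~\eqref{conj: Xi}, one then re-runs the proof of \cref{thm Xi=mu} parametrically: by construction of $RT$, the collapsed fiberwise fixed-point locus of $\Phi$ pulled back along $\beta$ is precisely $[RT_{diag}]$ (respectively $[\overline{RT}_{diag}]$), the geometric manipulation of \cref{thm Xi=mu} rewriting $\Xi_r$ as $\mu_r(\cdot \times [Z], [T_{diag}])$ is again fiberwise, and with the target bundle trivialized as $M \times B$ the class $[Z]$ is replaced by the fiberwise fundamental class $[M]$ of $M \times B \to B$. This yields the claimed homotopies $\Xi_r^B \simeq \mu_r^{M \times B}(\cdot \times [M], [RT_{diag}])$ and $\Xi_l^B \simeq \mu_l^{M \times B}([\overline{RT}_{diag}], [M] \times \cdot)$.

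The hard part will be the first ingredient: showing that the Geoghegan-Nicas fixed-point construction, which in \cite{GN} is phrased point-set-theoretically through fixed-point indices, is natural and choice-independent enough to descend to a single well-defined map out of $\H(M)$, rather than merely producing a class over each individual base $B$. This calls for a clean ``universal stable $h$-cobordism with contractibly-unique fiberwise deformation retraction'' statement together with a homotopy-coherent refinement of the Reidemeister trace, in the spirit of Malkiewich's parametrized fixed-point theory \cite{malkiewichparametrized}; I expect this to be the same technical input needed for the conjectured lift of the \cite{GN} invariants to topological Hochschild homology (\cref{conj: para tr}). Once $RT$ is available with that naturality, the remaining steps are a routine parametrized reprise of the absolute arguments of \cref{thm: 1 thm} and \cref{thm Xi=mu}.
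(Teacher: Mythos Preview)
The statement you are attempting to prove is a \emph{conjecture} in the paper's ``Future work and directions'' section (Section 1.1). The paper does not provide a proof; it explicitly presents this as an open problem, alongside the companion conjectures (\ref{conj: Xi}) and \ref{conj: para tr}. There is therefore no proof in the paper to compare your proposal against.

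Your proposal is not a proof but a plan, and you acknowledge as much: you identify that the crux is constructing $RT$ as a well-defined map out of $\H(M)$ with enough naturality, and you correctly flag this as requiring a homotopy-coherent refinement of the Geoghegan--Nicas construction that is not available in the paper. The paper's authors appear to share this assessment, since they list the parametrized statements as conjectures and tie them to the unresolved \cref{conj: para tr}. Your outline of how the remaining steps would go --- parametrizing the arguments of Theorems \ref{thm: cop Xi} and \ref{thm: T Xi} over $B$ --- is a reasonable sketch of the expected strategy, but as written it is a research program rather than a proof, and the paper offers nothing further to check it against.
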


Lastly, to further relate these traces to higher Whitehead torsion, we conjecture a natural generalization of (\ref{eq: pi20 case}) of \cite{GN}:
  \begin{conjecture}\label{conj: para tr}

The following diagram commutes up to natural homotopy:
\[ \begin{tikzcd}
\Omega\Omega^\infty  K[\Sigma^\infty_+\Omega M] \arrow[r] \arrow[d, "\Omega tr"] & \H(M) \arrow[d,"RT"]\\
\Omega\Omega^\infty THH(\Sigma^\infty_+\Omega M) \arrow[r] & \Omega\Omega^\infty \Sigma^\infty(\LL M/M )
\end{tikzcd} \]
 where $tr$ is the Dennis trace on $THH$ due to B\"okstedt \cite{BokstedtTHH}, the top horizontal arrow is given by Waldhausen's splitting theorem, and the bottom arrow is the equivalence: $THH(\Sigma^\infty_+\Omega M) \simeq \Sigma_+^{\infty}\LL M.$
\end{conjecture}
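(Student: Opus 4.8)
The plan is to realise both composites in the square as instances of the bicategorical (Ponto--Shulman) trace, and then to produce the homotopy from functoriality and additivity of traces together with the Waldhausen--Jahren--Rognes identification of $\H(M)$ with an infinite loop space built from the cofibre $\mathrm{Wh}^{\mathrm{Diff}}(M)$ of the assembly map $\Sigma^\infty_+ M \to A(M)=K[\Sigma^\infty_+\Omega M]$. First I would extend the constructions of \cref{sec: Reid Tr} to families: a smooth $B$-family of $h$-cobordisms carries a fibrewise strong deformation retraction, and applying Pontryagin--Thom to its suitably framed fibrewise fixed-point locus produces a class in $[\Sigma^\infty_+ B,\ \Sigma^{\infty+1}(\cL M/M)]$, i.e.\ a point of $\mathrm{Map}\bigl(B,\ \Omega^{\infty+1}\Sigma^\infty(\cL M/M)\bigr)$; letting $B$ run over a skeletal filtration of the stable $h$-cobordism space assembles $RT$. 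The decisive reinterpretation, already present in \cite{GN} and in Ponto's fixed-point theory, is that this fixed-point invariant is the trace of the $2$-cell determined by the deformation retraction on a dualisable $(\Sigma^\infty_+\Omega M,\ \Sigma^\infty_+\Omega M)$-bimodule, computed in the bicategory of parametrised spectra; its target is the shadow $THH(\Sigma^\infty_+\Omega M)\simeq\Sigma^\infty_+\cL M$, and the passage from $\cL M$ to $\cL M/M$ is exactly the ``$\Sigma^\infty_+\Omega M$-linear'' part of the construction, collapsing the contribution of constant loops. Thus, by design, $RT$ factors through $\Omega\Omega^\infty THH(\Sigma^\infty_+\Omega M)$ via such a trace.

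For the other composite, recall that by Waldhausen's splitting and Waldhausen--Jahren--Rognes the top horizontal map sends a class of $\Omega\Omega^\infty A(M)$ to the family of stable $h$-cobordisms whose fibrewise Whitehead torsion it represents, and under the splitting such a class is presented by a $B$-family of dualisable $\Sigma^\infty_+\Omega M$-modules carrying the basing/contraction data that computes that torsion. The B\"okstedt--Dennis trace $K[\Sigma^\infty_+\Omega M]\to THH(\Sigma^\infty_+\Omega M)$, by the theorems of Blumberg--Mandell and Campbell--Ponto comparing algebraic $K$-theoretic traces with bicategorical traces, is assembled from the traces of the identity endomorphisms of exactly these dualisable modules, landing in the same shadow $THH(\Sigma^\infty_+\Omega M)\simeq\Sigma^\infty_+\cL M$. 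Hence going down-then-across computes the bicategorical trace of the identity on this family of modules, while going across-then-down --- by the previous paragraph --- computes the bicategorical trace of the $2$-cell determined by the corresponding fibrewise deformation retraction; the two agree because the deformation-retraction data and the modules' duality data are identified (on $\pi_0$ this is precisely the computation $tr(\tau)=h_*[T]$ of (\ref{eq: pi20 case})), and because trace is functorial and additive along the maps relating finite retractive spaces over $M$, $\Sigma^\infty_+\Omega M$-modules, and parametrised spectra over $M$. It remains to check the basepoint bookkeeping: the assembly summand $\Sigma^\infty_+ M\subset A(M)$ maps under the Dennis trace to the constant loops $M\subset\cL M$, so collapsing it on the source matches dividing out $M\subset\cL M$ on the target and yields the square with corner $\Sigma^\infty(\cL M/M)$; naturality of the resulting homotopy comes from naturality of all three traces.

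The main obstacle is foundational rather than combinatorial: constructing a homotopy-coherent \emph{parametrised} Reidemeister trace and certifying it as a bicategorical trace requires either promoting Ponto's bicategory of parametrised spectra (and Malkiewich's framework \cite{malkiewichparametrized}) to families over a varying base with a coherent shadow, or passing to a fully $\infty$-categorical trace formalism in the spirit of Hoyois--Scherotzke--Sibilla; and one must then prove the families-level enhancement of the comparison underlying \cite{GN}, namely that Waldhausen's geometric model of $A(M)$ through $h$-cobordism spaces carries the same dualisability and framing data as the linear model $K[\Sigma^\infty_+\Omega M]$. The further circle-equivariant refinement, matching the $S^1$-action on $THH$ with the reparametrisation action on $\cL M$, is needed for a cyclotomic strengthening but not for the bare statement of \cref{conj: para tr}.
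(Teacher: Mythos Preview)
The statement you are attempting to prove is \cref{conj: para tr}, which the paper explicitly labels as a \emph{conjecture} in the ``Future work and directions'' subsection. The paper does not provide a proof; it states this as an open problem and a natural generalisation of the Geoghegan--Nicas result (\ref{eq: pi20 case}). There is therefore no proof in the paper to compare your proposal against.

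As for the proposal itself: you have outlined a plausible strategy via the Ponto--Shulman bicategorical trace formalism and the Blumberg--Mandell/Campbell--Ponto comparison theorems, and you correctly identify that the crux is a parametrised, homotopy-coherent upgrade of the fixed-point trace together with a families-level compatibility between Waldhausen's geometric and linear models of $A(M)$. But you also explicitly acknowledge that this is where the argument stops being a proof and becomes a programme: the construction of $RT$ as a map of infinite loop spaces, the identification of the fibrewise Reidemeister trace with a bicategorical trace in a suitable $\infty$-categorical or parametrised framework, and the comparison with the Dennis trace at the level of spaces (not just $\pi_0$) are all substantial pieces of work that are not carried out here. Your final paragraph is honest about this. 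What you have written is a reasonable research outline, not a proof; the paper's authors appear to share roughly this picture, which is presumably why they state it as a conjecture rather than a theorem.
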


Combined, these conjectures imply that the failure of the Goresky-Hingston coproduct to commute in families can be measured by (suitably interpreted) multiplication with traces of higher Whitehead torsions.

    \subsection{Structure of the paper}
        In Section \ref{sec: prelim} we set up conventions and notations. In Section \ref{sec: coproduct section} we define the spectral Goresky-Hingston coproduct. In Section \ref{sec: prod} we define a version of the spectral Chas-Sullivan product. In Sections \ref{sec: cop hom comp} and \ref{sec: hom com pro} we show that these recover the usual definitions after passing to homology; as an intermediate step, we use models for the string topology operations built using transversality.\par 
        In Section \ref{sec: stab} we show that the spectral string topology operations are invariant under replacing $M$ with the total space of certain disc bundles over $M$. From this, we deduce simple homotopy invariance of the coproduct.\par 
        In Section \ref{sec: torsion} we recall and define fixed-point invariants and operations. In Section \ref{sec: cop defect} we prove Theorem \ref{main theorem} in the special case that $N \to Z$ is a codimension 0 embedding such that the complement $Z \setminus N^\circ$ is an $h$-cobordism. In Section \ref{proof of main thm} we prove Theorem \ref{main theorem} in general, by using results of Section \ref{sec: stab} to reduce to the codimension 0 case.\par 
        Appendix \ref{sec: sp} recaps some notions from stable homotopy theory, with an emphasis on sign conventions.
        
    \subsection{Acknowledgements}
We are grateful to Florian Naef and Nathalie Wahl for helpful conversations, and to the anonymous referee as well as Nathalie Wahl for helpful comments and corrections. Lea would like to thank Mohammed Abouzaid, Roger Casals, Inbar Klang, and Cary Malkiewich for helpful conversations and support, and the president post doctoral fellowship program for professional development and creating excellent work conditions. Noah thanks Ilaria Di Dedda, and Oscar Randal-Williams for helpful conversations, and is supported by the Engineering and Physical Sciences Research Council [EP/W015889/1].

\section{Preliminaries}\label{sec: prelim}
    \subsection{Loops}\label{sec: loops}
        Let $M$ be a smooth Riemannian  manifold. In this section we recall from \cite{hingston2017product} a convenient model for the free loop space of $M$.  \par
        A loop $\gamma: I := [0,1] \to M$ is of \emph{Sobolev class $H^1$} if $\gamma$ and its weak derivative are of class $L^2$.  This means that $\gamma'(t)$ is defined almost everywhere, and the length: \[l(\gamma) = \int_0^1 \gamma'(t)\] is finite and well defined. 
        
        The inclusions:
        \[C^\infty\text{-loops} \subset \text{piecewise }C^\infty \text{-loops} \subset H^1 \text{-loops} \subset {C^0} \text{-loops}\] are homotopy equivalences. See \cite{hingston2017product} and references within. \par
        
        A \emph{constant speed path} is a path $\gamma$ such that $|\gamma'(t)|$ is constant where it is defined. For our model of  the free loop space, $\LL M$, we take the space of constant speed $H^1$ loops. By reparametrising, this space is homotopy equivalent to the space of all $H^1$-loops. 
        Note that this model depends on the metric on $M$, but if $g$ and $g'$ are different metrics on $M$, there is a canonical homeomorphism $\cL (M,g) \to \cL (M,g')$ given by reparametrising all loops; because of this, in the rest of the paper we do not include the metric in the notation.\par 
         
         In our formulas consisting of operations on loops, we always implicitly reparameterise so that the loops are of constant speed. This makes concatenation strictly associative. More explicitly, if $\gamma, \beta: [0,1] \to M $ are two constant speed loops, first define
        $$\sigma = \frac{l(\gamma)}{l(\gamma) +l(\beta)}.$$
        Then the concatenation $\alpha \star \beta $ is given by:
        \begin{equation}\label{eq:concat}
            \alpha\star \beta (t)=
            \begin{cases}
              \gamma(\frac{t}{\sigma}) &\textit{ if } 0\leq t \leq \sigma \\ 
              \beta(\frac{t-\sigma}{1-\sigma}) &\textit{ if } \sigma \leq t \leq 1 .\\
            \end{cases}
        \end{equation}
        The same convention is used in \cite[Section 1]{hingston2017product}.\par 
        For the purpose of readability, we use the following notation for concatenation of paths. Given a path $\gamma$ from $x$ to $y$ and a path $\delta$ from $y$ to $z$, we write
        \begin{equation}\label{eq: concat notation}
              x \overset \gamma \rightsquigarrow y \overset \delta \rightsquigarrow z
        \end{equation}
      
        for the \textit{constant speed} concatenation of the two paths.
    \subsection{Suspensions}
        We will write many explicit formulas for maps into or out of suspensions of based spaces so we choose which model for the suspension functor we work with. 
        \begin{definition}\label{def: susp}

            For $L \geq 0$, we give two models for $\Sigma^L X$:
            \begin{enumerate}
                \item \label{def: sus 1}
                    $$\frac{[-1,1]^L \times X} {\left(\partial [-1,1]^L \times X\right)\cup \left( [-1,1]^L \times \{*\}\right)}$$
                 \item \label{def: sus 2}
                    
                    $$\frac{\bR^L \times X} {\left((\bR^L \setminus (-1,1)^L) \times X\right) \cup \left( \bR^L \times \{*\}\right)}$$
                \end{enumerate} 
                in both cases based at the point which is the image of the collapsed subspace.\par 
                In both cases, if $X$ is equipped with a basepoint $x_0$, we further quotient by $[-1,1]^L \times \{x_0\}$.\par 
                We will use these two models interchangeably, noting they are canonically homeomorphic.
        \end{definition}
        
\section{Spectral Goresky-Hingston coproduct }\label{sec: coproduct section}
\subsection{Preamble}\label{sec: preamble cop}
Let $M$ be a compact smooth manifold, possibly with corners. The main goal of this section is to define and study a realization of the Goresky-Hingston coproduct as a map of spectra.

Fix an embedding $e: M \to \R^L$, and let $\nu_e$ be the normal bundle (defined to be the orthogonal complement of $de(TM)$) equipped with the pullback metric. Denote by $D\nu_e$ and $S\nu_e$ the corresponding unit disk and sphere bundles respectively. 

Let
\begin{equation}\label{eq: eval}
    \operatorname{ev}_0: \LL M \to M,
\end{equation} be the evaluation map sending  $\gamma \mapsto \gamma(0)$. We use $\operatorname{ev}_0$ to pull back $\nu_e$ to a bundle which, by abuse of notation, we write as $\nu_e \to \LL M$.  The Thom space, $\LL M^{D\nu_e}$, is defined by:
\begin{equation} \label{eq: Thom}
    \cL M^{D\nu_e} := \operatorname{Tot}(D\nu_e \to \cL M)/\operatorname{Tot}(S\nu_e \to \cL M),
\end{equation}
where  $\operatorname{Tot}$ refers to taking the total space. Similarly to the case of suspensions, this is canonically homeomorphic to:
\begin{equation}
    \cL M^{D\nu_e} \cong \operatorname{Tot}(\nu_e \to \cL M)/\left(\operatorname{Tot}(\nu_e \to \cL M) \setminus \operatorname{Tot}(D\nu_e \to \cL M)^\circ\right)
\end{equation}

Let  $\cL M^{-TM}$ be the spectrum given by desuspending this Thom space. That is, it is the sequential spectrum
whose $i^{th}$ space, for $i \gg 0$,  is given by:

 \begin{equation*}
    \cL M^{-TM}_i := \cL M^{D(\bR^{i-L} \oplus \nu_e)}.
\end{equation*}

In Section \ref{def: closed cop} we describe the Goresky-Hingston coproduct as a map of spectra:  
$$\Delta:  \LL M^{-TM} \wedge S^1  \to \Sigma^\infty \frac{\LL M}{M} \wedge\frac{\LL M}{M} $$ 
for a \textit{closed }smooth manifold. The definition in this case is more transparent and requires less choices than the general case, but already contains most of the main ideas.

In Sections \ref{sec: CD} and \ref{sec: Coproduct} we treat the more general case of smooth compact manifolds with corners, and define a map:

$$\Delta: \frac{\cL M^{-TM}}{\partial \cL M^{-TM}} \wedge S^1 \to \Sigma^\infty \frac{\cL M}M \wedge \frac{\cL M}M, $$
where $\partial \LL M := \LL M|_{\partial M}$ denotes the space of loops $\gamma$ such that $\gamma(0) \in \partial M$. 

 We keep track of all the choices involved in the definition, and prove independence of choices in \cref{lem: cop ind}. In Section \ref{sec: stability} we prove a stability property, from which we deduce simple homotopy invariance of the coproduct.

\subsection{The closed case}\label{sec: closed case}
In this section $M$ is a smooth closed manifold of dimension $n$. Let $e, \nu_e,$ and $D\nu_e$ be as in \cref{sec: preamble cop}.   We identify $D{\nu_e}$ with an $\eps$-tubular neighborhood $U \subset \R^L$ by an embedding $\rho:D{\nu_e} \to U $. Let $\pi: D{\nu_e} \to M$ be the projection and $r: U \to M$ the retraction defined by $r = \pi \circ \rho^{-1}$. Note that we can choose $\rho$ and $\eps $ so that $r(u)$ is always the closest point to $u$ in $M$. \par

Recall from \cref{eq:concat} and \cref{eq: concat notation} our conventions and notation for the concatenation of paths. Moreover, suppose $x, y \in U \subset \R^L$ are such that $U$ contains the straight line path between $x$ and $y$. Denote by \begin{equation}\label{eq: theta}
    x \overset{\theta} \rightsquigarrow y
\end{equation} its retraction to $M$ using $r$.

\begin{definition}\label{def: closed cop}

    Let $(v, \gamma, t) \in \cL M^{D\nu_e}\wedge S^1$. That is,  $\gamma \in \cL M$, $t \in S^1$ and $v \in (D\nu_e)_{\gamma(0)}$. The \emph{unstable coproduct} is the map of spaces:
    $$\Delta_{unst}: \cL M^{D\nu_e}\wedge S^1 \to \Sigma^L \frac{\LL M}{M} \wedge \frac{\LL M}{M}$$
    sending $(v, \gamma, t)$ to:
    
    \begin{equation}\label{eq:closed case}
      \begin{cases}
        \left(\frac{\sqrt L} \eps \left(v-\gamma(t)\right), 
        \gamma(0)\overset{\gamma|_{[0,t]}}\rightsquigarrow \gamma(t)\overset{\theta}\rightsquigarrow\gamma(0), 
        \gamma(0) \overset \theta \rightsquigarrow 
        \gamma(t) \overset{\gamma|_{[t,1]}}\rightsquigarrow \gamma(0)\right) & \textrm{ if } \lVert v-\gamma(t)\rVert \leq \eps\\
        * & \textrm{ otherwise}.
    \end{cases}
    \end{equation}
    where we perform the subtraction in $\bR^L$. This is illustrated in Figure \ref{fig:coprod_closed}.
    
    Here, and in other formulae, we write $\lVert \cdot \rVert$ to denote the $L^2$-norm on $\bR^L$.
    
    The \emph{(stable) coproduct}: 
    $$\Delta:  \LL M^{-TM} \wedge S^1  \to \Sigma^\infty \frac{\LL M}{M} \wedge\frac{\LL M}{M} $$
    is obtained from the unstable coproduct by desuspending $\Delta_{unst}$ $L$ times (see Lemma \ref{lem: spectra unst}).
    
\end{definition}

\begin{figure}[h]  
    \centering
    \begin{minipage}{.25 \textwidth}
        \begin{tikzpicture}[scale=.8,
        v/.style={draw,shape=circle, fill=black, minimum size=1.3mm, inner sep=0pt, outer sep=0pt},
        vred/.style={draw,shape=circle, fill=red, minimum size=1mm, inner sep=0pt, outer sep=0pt},
        vsmall/.style={draw,shape=circle, fill=black, minimum size=1mm, inner sep=0pt, outer sep=0pt}]

            \draw[color=black, ->] (0,0) to[out=90, in=90] (2,2);
            \draw[color=black] (2,2) to[out=270,in=90] (0.5, 0);

            \draw[color=black] (0,0) to[out=-90, in=-90] (2,-2);
            \draw[color=black] (2,-2) to[out=-270,in=-90] (0.5, 0);

            \draw[thick, teal, dashed] (0.8,0.4) arc (0:360: 1.2);

            \draw[thin, cyan, ->](0,0) to (-0.2,0.2);
            \draw[thin, cyan] (-0.2, 0.2) to (-0.4, 0.4);
            \node at (-0.6, 0.6) {\tiny ${\color{cyan} v}$};

            \draw[thin, purple, ->] (0.5, 0) to (0.25, 0);
            \draw[thin, purple] (0.25, 0) to (0,0);
            \node at (0.3, 0.2) {\tiny ${\color{purple} \theta}$};

            \node at (0, 2.5) {$(v, \gamma, t)$};
            \node at (-0.3, -0.3) {\tiny ${\color{red}  \gamma(0)}$};
            \node at (1, -0.1) {\tiny ${\gamma(t)}$};
            \node at (2.1, 1.5) {\tiny $\gamma$};

            \node at (-1.5, 1.6) {\tiny ${\color{teal}B_\eps(v)}$};

            \node[v] at (0,0) {};
            \node[vsmall] at (0.5,0) {};
            \node[vsmall] at (-0.4, 0.4) {};
            \node[vred] at (0,0) {};
        \end{tikzpicture}
    \end{minipage}
    $\mapsto \left(
    \begin{minipage}{.37 \textwidth}
        \begin{tikzpicture}[scale=.8,
        v/.style={draw,shape=circle, fill=black, minimum size=1.3mm, inner sep=0pt, outer sep=0pt},
        vred/.style={draw,shape=circle, fill=red, minimum size=1mm, inner sep=0pt, outer sep=0pt},
        vsmall/.style={draw,shape=circle, fill=black, minimum size=1mm, inner sep=0pt, outer sep=0pt}]
        
            \draw[color=purple, -<] (-1.15,-0.15) to (-0.9,-0.15);
            \draw[color=purple] (-0.9,-0.15) to (-0.65,-0.15);
            \draw[color=cyan] (-1.15, -0.15) to (-1.55, 0.25);
            \draw[color=cyan, ->] (-1.15, -0.15) to (-1.35, 0.05);

            \begin{scope}[shift={(-0.6,0)}]
            \node at (0.3, -0.2) {,};
        
            \draw[color=black, ->] (0.7,0) to[out=90, in=90] (2.7,2);
            \draw[color=black] (2.7,2) to[out=270,in=90] (1.2, 0);

            \draw[thin, purple, ->] (1.2, 0) to (0.95, 0);
            \draw[thin, purple] (0.95, 0) to (0.7,0);

            \node[v] at (0.7,0) {};
            \node[vsmall] at (1.2,0) {};
            \node[vred] at (0.7,0) {};

            \node at (2.2, -0.2) {,};

            \draw[color=black, -<] (2.7,0) to[out=-90, in=-90] (4.7,-2);
            \draw[color=black] (4.7,-2) to[out=-270,in=-90] (3.2, 0);

            \draw[thin, purple, -<] (3.2, 0) to (2.95, 0);
            \draw[thin, purple] (2.95, 0) to (2.7,0);

            \node[v] at (2.7,0) {};
            \node[vsmall] at (3.2,0) {};
            \node[vred] at (2.7,0) {};

            \node at (3.2,1.5) {\tiny $\gamma|_{[0,t]}$};
            \node at (2.8, -1.5) {\tiny $\gamma|_{[t,1]}$};
            \end{scope}
        \end{tikzpicture}
    \end{minipage}
    \right)$
    \caption{Coproduct in the closed case: the figure on the left shows a triple $(v,\gamma, t)$ in the domain of the coproduct. The figure on the right shows the output: the first component is the sum of the two vectors indicated, scaled appropriate by a factor of ${\sqrt L}/\eps$. The incidence condition holds because $\gamma(t)$ lies in the ball $B_\eps(v)$.
    }
  \label{fig:coprod_closed}
\end{figure}
\begin{remark}\label{rmk: radius}
    The $\sqrt L$ arises as the maximum of the $L^2$-norm on $[-1, 1]^L$.
\end{remark}
    
For sufficiently small $\eps$, the map $\Delta_{unst}$ is a well-defined continuous map. Indeed, first note that for sufficiently small $\eps$, if $\lVert v - \gamma(t)\rVert \leq \eps$ then the straight-line path connecting $v$ and $\gamma(t)$ lives in $U$, so the paths $\gamma(t)\overset{\theta}\rightsquigarrow\gamma(0)$ and  $\gamma(0) \overset \theta \rightsquigarrow 
        \gamma(t)$ are well defined. 

\begin{definition}
    For equations of the form of (\ref{eq:closed case}), we call the ``if'' condition (so $\lVert v-\gamma(t)\rVert \leq \eps$ in the case of (\ref{eq:closed case})) the \emph{incidence condition}.
\end{definition}

Secondly, we defined $\Delta_{unst}$ using coordinates on $\operatorname{Tot}(D\nu_e \to \LL M) \times I$. To show that it descends to the quotient $\cL M^{D\nu_e}\wedge S^1$, we need to check that when either $\lvert \rho^{-1}(v) \rvert = 1$, $t=0$, or $t=1$, $(v, \gamma, t)$ is sent to the basepoint (here $|\cdot|$ is the norm coming from the chosen metric on the vector bundle). Note that $v$ is a normal vector at $\gamma(0)$ and that we chose the tubular neighborhood $U$ so that $\gamma(0)$ is the closest point to $v$ in $e(M)$. This means that when $\lvert \rho^{-1}(v) \rvert = 1$, $\lVert v -\gamma(t) \rVert \geq \varepsilon$ for every $t$, hence the first entry in \cref{eq:closed case} has $\lVert \cdot \rVert \geq \sqrt L$ and so (using Remark \ref{rmk: radius}) $(v, \gamma, t)$ is sent to the basepoint.

Moreover, when $t=0$, the retraction of the straight line path from $v$ to $\gamma(0)$ is the constant path at $\gamma(0)$, since $\gamma(0)$ is the closest point to $v$ in $M$. This implies that the second argument in \cref{eq:closed case} is sent to the base point. The case of $t=1$ is similar. \par 
We treat independence of choices when we deal with the general case in Lemma \ref{lem: cop ind}.

\subsection{Choices}\label{sec: CD}
In this section we collect all the choices required for our definition of the coproduct when $M$ is a manifold with corners. 

 To define the coproduct we require an embedding $e: M \to \R^L$, and a tubular neighborhood of $e(M)$. In order to extend the definition of a tubular neighborhood to manifolds with corners, we consider a small ``extension'' of $M$, denoted $M^{ext}$, and containing $M$ as a codimension $0$ submanifold: 
\begin{definition}\label{def: ext mnfld}
 Let $M$ be a smooth compact manifold with corners. As a topological manifold, $M^{ext}$ is given by $$M^{ext} := M \cup_{\partial M} \partial M \times [0,1].$$ To equip  $M^{ext}$ with a smooth structure we choose a vector field  on $M$ which points strictly inwards at the boundary.  Let $\{\phi^s\}_{s \geq 0}$ be the associated flow. Then there is a homeomorphism $\Phi: M^{ext} \to M$ sending $x \in M$ to $\phi^1(x)$, and $(y,t) \in M \times [0,1]$ to $\phi^{1-t}(y)$. We equip $M^{ext}$ with the pullback of the smooth structure on $M$. Note that $M^{ext}$ contains a copy of $M$, which is a codimension 0 submanifold with corners. Furthermore the canonical projection map $M^{ext} \to M$ is piecewise smooth.
\end{definition}

The auxiliary data required to define the string coproduct for $M$ is as follows:
\begin{definition}\label{def: cop dat}
    Let $L \geq 0$ be an integer. A choice of \emph{embedding data} of \emph{rank $L$} is a tuple $(e, \rho^{ext}, \zeta, V, \eps, \lambda)$ consisting of:
    \begin{enumerate}[(i).]
        \item \label{it: embedding}A smooth embedding $e: M^{ext} \hookrightarrow \R^L$. 
        
        \noindent We write $\nu_e$ for the normal bundle of this embedding, defined to be the orthogonal complement of $TM^{ext}$. Note that $e$ canonically equips both $TM^{ext}$ and $\nu_e$ with metrics, by pulling back the Euclidean metric on $\R^L$. Let $\pi_e: \nu_e \to M^{ext}$ be the projection map.
        \item \label{it: tubular ngh} 
        A tubular neighbourhood $\rho^{ext}: D_2 \nu_e \hookrightarrow \bR^L$, where $D_2$ denotes the length-2 disc bundle.
        
        \noindent More precisely, a smooth embedding, restricting to $e$ on the zero-section. We let $\tilde U$ be the image of $\rho^{ext}$. We let $\rho$ be the restriction of $\rho^{ext}$ to the unit disc bundle of $\nu_e$ over $M$, and $U$ the image of $\rho$. In symbols: $\rho :=\rho^{ext}|_{D_1 \nu_e|_M}$, $U:= \operatorname{Im}(\rho)$ and $\tilde U = \operatorname{Im}(\rho^{ext})$. From the choices above we obtain a retraction $r: \tilde U \to M$ defined to be the composition of $(\rho^{ext})^{-1}$, the projection to $M^{ext}$, and the natural map $M^{ext} \to M$.
        
        \noindent We require that along the zero section, the derivative of $\rho^{ext}$ in the fibre direction agrees with the canonical inclusion of vector bundles $\nu_e \to \bR^L$.
        \item \label{item: zeta small cop} A real number $\zeta > 0$. 
        
        \noindent We require that \textrm{$\zeta$} is small enough that whenever $x, y \in M$ satisfy $\lVert x-y\rVert \leq \zeta$, the straight-line path between them $[x,y]$ lies inside $\tilde U$.
        \item \label{item: V small} An inwards-pointing vector field, $V$, on $M$. 
        
        \noindent We write $\{\phi_s\}_{s \geq 0}$ for the flow of this vector field. We require that $V$ is small enough that the following condition holds: 
        for each $x \in M$, the length of the path $\{\phi_s(x)\}_{s \in [0,1]}$ is $\leq \zeta/4$.
      
        \item \label{it: eps bounds} A real number $\eps > 0$ sufficiently small such that:

        \begin{enumerate}[(a).]
            \item \label{item: eps small 1} \textrm{$U$ contains an $\eps$-neighbourhood of $M$ (with respect to the Euclidean distance in $\bR^L$).}
            \item \label{item: eps small 2}
            The Euclidean distance: $d\left(\rho(D\nu|_{\phi_1(M)}), \rho(D\nu|_{\partial M}))\right) \geq 2\eps$
            \item \label{item: eps small 3}
            If $x, y \in U$ and $\lVert x-y \rVert \leq \eps$, then the straight-line path $[x,y]$ lies in $\tilde U$, and $r([x,y])$ has length $\leq \zeta/4$.
        \end{enumerate}

        If this final condition holds, we write $\theta_{xy}$ (or just $\theta$ if the endpoints are clear from context) for the path $r([x,y])$.
    
        \item\label{item: lambda big} $\lambda > 0$, large enough such that:
        
         \[\lambda \cdot d(\rho(S\nu_e|_M), e(M)) \geq \sqrt L\]  
     where $S\nu_e$ is the unit sphere bundle of $\nu_e$; note that this distance on the left hand side is at least $\eps$, by (\ref{def: cop dat}.\ref{item: eps small 1}).
    \end{enumerate}
    We write $ED^L(M)$ for the simplicial set whose $k$-simplices consist of the set of continuously-varying families of tuples of {embedding data}, parametrised by the standard $k$-simplex. There is a forgetful map $ED^L(M) \to \operatorname{Emb}(M^{ext}, \R^L)$ to the simplicial set of embeddings $M^{ext} \hookrightarrow \R^L$, which forgets all the data except the embedding $e$.
\end{definition}

\begin{figure}[h] \label{fig: cop data}
    \centering
    \begin{minipage}{.5 \textwidth}
        \begin{tikzpicture}[scale=.8,
        v/.style={draw,shape=circle, fill=black, minimum size=1.3mm, inner sep=0pt, outer sep=0pt},
        vred/.style={draw,shape=circle, fill=red, minimum size=1mm, inner sep=0pt, outer sep=0pt},
        vsmall/.style={draw,shape=circle, fill=black, minimum size=1mm, inner sep=0pt, outer sep=0pt}]
            \draw[color=black] (0,1) to[out=240, in=120] (0,-1);
            \draw[color=black, dashed] (0,1) to[out=300, in=60] (0,-1);
            \draw[color=black] (0,1) to (-3,1);
            \draw[color=black] (0,-1) to (-3,-1);
            \draw[color=black] (-3,1) .. controls (-5.5, 1) and (-5.5, -1) .. (-3,-1);

            \draw[color=black] (-3,0.1) to[out=330, in=210] (-2,0.1);
            \draw[color=black] (-2.75, 0) to [out=30, in=150] (-2.25, 0);

            \node at (-2,-0.8) {\tiny ${\color{black}e(M)}$};

            \draw[color=purple] (0,1) to (1,1);
            \draw[color=purple] (0,-1) to (1,-1);
            \draw[color=purple] (1,1) to[out=240,in=120] (1,-1);
            \draw[color=purple, dashed] (1,1) to[out=300, in=60] (1,-1);

            \node at (1.3,-1.3) {\tiny ${\color{purple}e\left(M^{ext} \setminus M^\circ\right)}$};

            \draw[color=blue] (0,1.5) to[out=240, in=120] (0,-1.5);
            \draw[color=blue,dashed] (0,1.5) to[out=300,in=60] (0,-1.5);
            \draw[color=blue] (0,1.5) to (-3.5,1.5);
            \draw[color=blue] (0,-1.5) to (-3.5, -1.5);
            \draw[color=blue] (-3.5,1.5) .. controls (-6, 1.5) and (-6, -1.5) .. (-3.5,-1.5);

            \node at (-2,-1.3) {\tiny ${\color{blue}U}$};

            \draw[color=olive] (1,2) to[out=240, in=120] (1,-2);
            \draw[color=olive, dashed] (1,2) to[out=300,in=60] (1,-2);
            \draw[color=olive] (1,2) to (-4,2);
            \draw[color=olive] (1,-2) to (-4, -2);
            \draw[color=olive] (-4,2) .. controls (-6.5, 2) and (-6.5, -2) .. (-4,-2);

            \node at (-2,-1.8) {\tiny ${\color{olive}\tilde U}$};

            \draw[color=red, ->] (-0.3, 0) to (-1,0);
            \draw[color=red, ->] (-0.2, 0.5) to (-0.9,0.5);
            \draw[color=red, ->] (-0.2, -0.5) to (-0.9,-0.5);
            \draw[color=red, dashed, ->] (0.25, 0.3) to (-0.45, 0.3);
            \draw[color=red, dashed, ->] (0.25, -0.3) to (-0.45, -0.3);
            \node at (-1.2, 0.3) {\tiny ${\color{red}V}$};

        \end{tikzpicture}
    \end{minipage}
    \caption{Some choices in the definition of the coproduct: $e(M)$, ${\color{purple}e(M^{ext})}$, ${\color{blue}U}$, ${\color{olive}\tilde U}$ and ${\color{red}V}$ are shown.}
\end{figure}

\begin{remark}
    These conditions are used in Lemma \ref{lem: Delta well def} to ensure that the map we use to define the coproduct is well-defined. We indicate how they are used:
    \begin{itemize}
   \item  In Condition (\ref{def: cop dat}.\ref{it: tubular ngh})  we give a precise definition of the tubular neighborhood needed for the definition of the coproduct. The somewhat cumbersome definition stems from the fact that we are dealing with manifolds with boundary or corners. 
        \item Condition (\ref{def: cop dat}.\ref{item: zeta small cop}) is used in Lemma \ref{lem: B}, which allows us to discard small loops, of length $ < \zeta$.
        \item The choice of vector field, $V$ in (\ref{def: cop dat}.\ref{item: V small}), and the bounds (\ref{def: cop dat}.\ref{it: eps bounds}) are used so that the coproduct sends loops with starting point in $\partial M$ to the base point. 
        \item The choice of $\lambda$ in (\ref{def: cop dat}.\ref{item: lambda big}) is a logistical choice, so we can avoid excessive rescaling. It used in ensuring that the coproduct descends to the Thom space. 
    \end{itemize}
\end{remark}
\begin{remark}\label{remark: emb corn}
    In the case where a manifold $M$ has corners, a \emph{(codimension $k$) embedding} $M \to P$ means a smooth embedding on the interior $M^\circ$, which near the boundary $\partial M$ looks like the standard inclusion of $\bR^i \times \bR^{n-i}_{\geq 0} \to \bR^n \times \bR^k$ (with respect to some smooth charts on $M$ and $P$). In particular, even though $\partial M$ is not everywhere smooth, the restriction of the embedding to the boundary $\partial M \to P$ is a locally flat embedding of topological manifolds.

    For a manifold with corners $M$, a \emph{collar neighbourhood} $C: A \to M$ is the inclusion of a neighbourhood of the boundary $\partial M$, whose image is a smooth submanifold of codimension 0. For any $M$, there is always such a collar neighbourhood $C: A \to M$.
\end{remark}
\begin{lemma}\label{lem: kan}
    The forgetful map $ED^L(M) \to \operatorname{Emb}(M^{ext}, \R^L)$ is a trivial Kan fibration and hence a weak equivalence.
    
\end{lemma}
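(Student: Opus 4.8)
The plan is to show that $ED^L(M) \to \operatorname{Emb}(M^{ext}, \bR^L)$ satisfies the right lifting property against every boundary inclusion $\partial \Delta^k \hookrightarrow \Delta^k$, which for a map of simplicial sets is equivalent to being a trivial Kan fibration; the statement that this implies a weak equivalence is then standard. So suppose we are given a $k$-simplex $e \colon \Delta^k \to \operatorname{Emb}(M^{ext}, \bR^L)$ together with a compatible family of embedding data $(e, \rho^{ext}, \zeta, V, \eps, \lambda)$ defined over $\partial \Delta^k$; I must extend this data over all of $\Delta^k$. The key observation is that each piece of auxiliary data in \cref{def: cop dat} is governed by a condition that is either an open condition (so solutions form an open, hence ``locally extendable'', set) or a convex/contractible condition, and moreover the conditions are arranged in a triangular dependency order: $\rho^{ext}$ depends only on $e$; $\zeta$ depends on $\rho^{ext}$; $V$ depends on $\zeta$; $\eps$ depends on $\rho^{ext}, \zeta, V$; and $\lambda$ depends on the rest. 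I would extend the data one coordinate at a time, in this order.

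The main steps are as follows. \textbf{(1) Extending $\rho^{ext}$.} Given the $k$-parameter family of embeddings $e$ and a family of tubular neighbourhoods $\rho^{ext}$ over $\partial\Delta^k$, I extend to $\Delta^k$ using the parametrized tubular neighbourhood theorem: the space of tubular neighbourhoods of a fixed embedding (of the appropriate collared/cornered type, as set up via $M^{ext}$ in \cref{def: ext mnfld}) is convex up to the usual ``squeezing/rescaling'' moves, or at least contractible, and these deformations can be performed fibrewise over $\Delta^k$ rel the boundary; concretely one uses a partition-of-unity interpolation between the given boundary data and any globally chosen family of tubular neighbourhoods, noting that convex combinations of embeddings close enough to a fixed one are again embeddings. \textbf{(2) Extending $\zeta$, and then $V$, and then $\eps$, $\lambda$.} Each of these is a choice of a positive real number (or a vector field) subject to finitely many inequalities that are \emph{open} in the parameter and whose feasible set is an interval $(0, c(x))$ for a positive lower-semicontinuous function $c$ on $\Delta^k$ (for $V$, the feasible set is convex: the inwards-pointing vector fields of length bounded by a continuous positive function form a convex set, and one may shrink the bound fibrewise). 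Given a continuous choice on the compact set $\partial\Delta^k$, one extends: e.g. for $\zeta$, pick any continuous positive $\tilde\zeta$ on $\Delta^k$ with $\tilde\zeta < c$ (possible since $c$ is bounded below on the compact $\Delta^k$ by compactness/lower-semicontinuity), then interpolate $(1-t)\zeta_{\mathrm{bdry}} + t\,\tilde\zeta$ using a continuous retraction-like function $t\colon \Delta^k \to [0,1]$ that is $0$ on $\partial\Delta^k$ and $1$ on a slightly shrunken simplex — since the feasible region is an interval (hence convex) this convex combination stays feasible. The same template handles $V$ (convex feasible set of vector fields), then $\eps$ (interval, conditions (a)–(c) open once $\rho^{ext},\zeta,V$ are fixed), then $\lambda$ (a half-line $(c'(x),\infty)$, again convex).

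I would present this as a single inductive extension, remarking that at each stage the relevant ``space of choices'' fibred over $\Delta^k$ has contractible (in fact convex or interval) nonempty fibres, with nonemptiness on all of $\Delta^k$ guaranteed by compactness of $\Delta^k$ together with the already-constructed lower/upper data, so the section defined over $\partial\Delta^k$ extends. The honest content — the only non-formal point — is \textbf{Step (1)}: one must know that the simplicial set of tubular neighbourhoods $D_2\nu_e \hookrightarrow \bR^L$ restricting to a given $e$ on the zero section, \emph{in the cornered setting mediated by $M^{ext}$}, is itself a trivial Kan fibration over $\operatorname{Emb}(M^{ext},\bR^L)$; this is the parametrized tubular neighbourhood theorem, and I would either cite it in the form needed or spell out the fibrewise isotopy/rescaling argument. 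I expect this to be the main obstacle, essentially because it is the one step that uses genuine differential topology (existence and essential uniqueness of tubular neighbourhoods) rather than soft interpolation of scalars, and because some care is needed to make it work for manifolds with corners via the extension $M^{ext}$ — but since $M^{ext}$ is an honest smooth manifold (with corners) into which $M$ sits as a codimension-$0$ piece, and we only ever use $\rho^{ext}$ on the length-$2$ disc bundle, the standard proof applies with only notational overhead. Everything downstream (Steps (2) onward) is then routine and I would compress it accordingly.
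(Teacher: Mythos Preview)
Your proposal is correct and follows essentially the same approach as the paper: both factor the forgetful map according to the triangular dependency order of the data $(e, \rho^{ext}, \zeta, V, \eps, \lambda)$ and argue that each successive piece extends over $\Delta^k$ rel $\partial\Delta^k$ --- the tubular neighbourhood via the standard (parametrized) tubular-neighbourhood argument, and the remaining scalars/vector field because their admissible sets are open and convex (intervals, half-lines, or small vector fields). The paper packages this by introducing intermediate simplicial sets $ED_i^L(M)$ recording only the first $i$ pieces of data and asserting each $ED_i^L(M) \to ED_{i-1}^L(M)$ is a trivial Kan fibration, but the content is the same as your coordinate-by-coordinate extension; if anything, you spell out the interpolation step for the scalar parameters more explicitly than the paper does.
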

It follows that $ED^L(M)$ is $(L-2n-3)$-connected.
\begin{proof}
    We let $ED_i^L(M)$ be the simplicial set consisting of tuples consisting of the first $i$ pieces of data of a choice of {embedding data}; note that the conditions that each piece of data in Definition \ref{def: cop dat} must satisfy only involve earlier pieces of data. Then $ED_6^L(M) = ED^L(M)$ and $ED^L_1(M) = \operatorname{Emb}(M^{ext}, \R^L)$. There are forgetful maps $ED_i^L(M) \to ED_{i-1}^L(M)$; we argue that each of these is a trivial Kan fibration.\par 
    It is standard that $ED_1^L(M)$ is a Kan complex. We next argue that the first forgetful map $ED_2^L(M) \to ED^L_1(M)$ is a trivial Kan fibration. Let $k \geq 0$, and consider the lifting problem:
    \begin{equation} 
        \begin{tikzcd}
            \partial \Delta^k
            \arrow[d]
            \arrow[r, "\rho^{ext}"]
            &
            ED^L_2(M)
            \arrow[d]
            \\
            \Delta^k
            \arrow[r, "e"]
            \arrow[ur, dashed, "\widetilde \rho^{ext}"]
            &
            ED^L_1(M)
        \end{tikzcd}
    \end{equation}
    Explicitly, $e=\{e_t: M^{ext} \to \bR^L\}_{t \in \partial \Delta^k}$ and $\rho^{ext} = \{\rho^{ext}_t: D_2\nu_{e_t} \to \bR^L\}_{t \in \partial \Delta^k}$ are families of embeddings satisfying appropriate conditions, including that along the zero-sections, $\rho^{ext}|_{\partial \Delta^k}$ agrees with $e$. We must argue the lift $\widetilde \rho^{ext}$ exists.

    We may choose a family of maps (not necessarily embeddings) $\{\rho'_t: D_2\nu_{e_t} \to \bR^L\}_t$ satisfying the condition on their derivative (that the vertical derivative is the natural inclusion), by defining $\rho'_t$ to be given by $e_t$ along the zero-section and requiring it to be a straight line in the fibre direction. The $\rho'_t$ do not necessarily agree with the given $\rho^{ext}_t$ for $t \in \partial \Delta^k$, but we may homotope the $\rho'_t$ so that they do agree with the $\rho^{ext}_t$ over $\partial \Delta^k$, by linear interpolation (note this does not change what happens to the zero-sections, nor does it affect the derivative condition).

    Next, we ``scale down in the fibre direction''. Choose a family of positive numbers $\{a_t > 0\}_{t \in \Delta^k}$, such that $a_t = 1$ for $t \in \partial \Delta^k$. Let $\widetilde\rho^{ext}_t(x) = \rho'_t(a_t \cdot x)$ for $t \in \Delta^k$ and $x \in D_2\nu_{e_t}$. For $a_t$ small enough for $t \notin \partial \Delta^k$, the $\widetilde\rho^{ext}_t$ are embeddings, by the implicit function theorem. Furthermore, $\widetilde\rho^{ext}_t = \rho^{ext}_t$ for $t \in \partial \Delta^k$, thus providing the desired lift.

    For the second forgetful map, note that the condition for $\zeta$ holds for sufficiently small $\zeta$; similarly (\ref{def: cop dat}.\ref{item: V small}) holds for any sufficiently small vector fields $V$. Similarly for $\eps$ (respectively $\lambda$), any sufficiently small (respectively large) choice will satisfy the required conditions. All of these arguments also work for families over a simplex, implying that each forgetful map is a trivial Kan fibration.
\end{proof}

\subsubsection{Stabilization}\label{sec: st maps}
There are stabilisation maps: \begin{equation}\label{eq: st maps}
    st=st^{L,L+1}: ED^L(M) \to ED^{L+1}(M)
\end{equation}
constructed by sending $$(e, \rho^{ext}, \zeta, V, \eps, \lambda) \mapsto (e', \rho'^{ext}, \zeta, V, \eps, \tfrac{\sqrt{L+1}}{\sqrt{L}}\cdot \lambda).$$ Here $e'$ is given by composing $e$ with the standard embedding $\R^L \hookrightarrow \R\oplus \R^L = \R^{L+1} $, and $\rho'^{ext}$ is the composition:
$$\rho'^{ext}: D_2 \nu_{e'} = D_2  \left(\R \oplus \nu_{e'}\right) \subseteq [-2, 2] \times D_2 \nu_{e'}   \to  \R \oplus \R^L = \R^{L+1},$$
where the final arrow is inclusion on the first factor and  $\rho^{ext}$ on the last factor. It is clear that these are compatible with the natural inclusion, $st_{\operatorname{Emb}}: \operatorname{Emb}(M^{ext}, \R^L) \to \operatorname{Emb}(M^{ext}, \R^{L+1})$, given by composing with the inclusion $\bR^L \cong \{0\} \times \bR^L \hookrightarrow \bR^{L+1}$. Also note that there are natural identifications $\nu_{e'} = \bR \oplus \nu_e$. It is straightforward to check that this data does indeed define {embedding data}.\par 
For $L \leq L'$, we write $st^{L,L'}: ED^L(M) \to ED^{L'}(M)$ for the composition of $L'-L$ stabilisation maps.

\subsection{Coproduct}\label{sec: Coproduct}
Let $M$ be a smooth manifold with corners. In this section we define the coproduct as a map of spectra:
$$\Delta: \frac{\cL M^{-TM}}{\partial \cL M^{-TM}} \wedge S^1 \to \Sigma^\infty \frac{\cL M}M \wedge \frac{\cL M}M,$$ by defining it first unstably as  a map of spaces: 
\begin{equation}\label{eq: uns cop}
    \Delta_{unst}=\Delta^Q_{unst}: \frac{\cL M^{D\nu_e}}{\partial \cL M^{D\nu_e}}\wedge S^1 \to \Sigma^L \frac{\cL M}M \wedge \frac{\cL M}M,
\end{equation}
for a fixed choice of {embedding data} $Q$ for $M$.\par 

Before stating the definition of $\Delta_{unst}$ and $\Delta$, we define a map $$B: \LL M \to \LL M$$  which ``crushes'' small loops to constant loops. More precisely:  
\begin{lemma}\label{lem: B}
    Let $Q \in ED^L(M)$ be {embedding data}. Note that the embedding $e: M \to \R^L$ induces a metric on $M$.  Let $\LL M ^{\leq \zeta}$ be the subset of $\LL M$ consisting of loops of length less than $\zeta$.  Then there exists a map:
    $$B=B^Q: \cL M \to \cL M,$$ 
    homotopic to the identity (relative to the space of constant loops) and continuously varying in $Q$, which sends $\LL M ^{\leq \zeta}$ to constant loops.  
\end{lemma}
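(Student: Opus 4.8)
\textbf{Proof proposal for Lemma \ref{lem: B}.}

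The plan is to construct $B$ by a reparametrisation-then-homotopy argument: we will not literally ``crush'' small loops in one stroke, but rather define a continuous family of self-maps of $\cL M$ interpolating between the identity and a map that sends every loop of length $< \zeta$ to the constant loop at its basepoint. The key point to exploit is Condition (\ref{def: cop dat}.\ref{item: zeta small cop}): if $l(\gamma) < \zeta$ then $\gamma$ is entirely contained in a region where straight-line paths in $\R^L$ between points of (the image of) $\gamma$ stay inside $\tilde U$, so the retraction $r$ of the straight-line homotopy $H_u(t) := r\big((1-u)\gamma(t) + u\,\gamma(0)\big)$ gives a path in $M$ from $\gamma$ (at $u=0$) to the constant loop at $\gamma(0)$ (at $u=1$), all of whose intermediate loops again have length $\leq \zeta/4$ (using (\ref{def: cop dat}.\ref{it: eps bounds})). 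So ``small loops are contractible through small loops, canonically.''

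First I would fix a smooth cutoff function $\psi: [0,\infty) \to [0,1]$ with $\psi \equiv 1$ on $[0, \zeta/2]$ and $\psi \equiv 0$ on $[\zeta, \infty)$, and set, for a constant-speed loop $\gamma$,
\[
    B^Q(\gamma)(t) := r\big( (1-\psi(l(\gamma)))\,\gamma(t) + \psi(l(\gamma))\,\gamma(0) \big),
\]
then reparametrise the result to constant speed (recall our standing convention that all operations on loops are followed by constant-speed reparametrisation, which is continuous). When $l(\gamma) < \zeta/2$ this is the constant loop at $\gamma(0)$; when $l(\gamma) \geq \zeta$ it is $\gamma$ itself; in between it interpolates. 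I would need to check: (a) well-definedness --- for $\psi(l(\gamma)) \neq 0$ we have $l(\gamma) < \zeta$, so $\gamma(t) \in \tilde U$ and the straight line from $\gamma(t)$ to $\gamma(0)$ lies in $\tilde U$ by (\ref{def: cop dat}.\ref{item: zeta small cop}), hence $r$ is defined on it; (b) continuity in $\gamma$ and in $Q$ --- this follows since $e$, $\rho^{ext}$, $r$, $l$ and the constant-speed reparametrisation all vary continuously, and the formula is continuous in its inputs; (c) $B^Q$ is homotopic to $\mathrm{id}_{\cL M}$ rel constant loops --- use the homotopy $B^Q_s$ obtained by replacing $\psi(l(\gamma))$ with $s\,\psi(l(\gamma))$ for $s \in [0,1]$, which is constant (equal to the given constant loop) whenever $\gamma$ is already constant; (d) $B^Q$ sends $\cL M^{\leq \zeta}$ to constant loops --- here there is a subtlety, since $\psi(l(\gamma))$ need not equal $1$ for $\zeta/2 \leq l(\gamma) < \zeta$, so the naive formula only crushes loops of length $< \zeta/2$.

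The main obstacle is therefore exactly point (d): to crush \emph{all} loops of length $< \zeta$ rather than just those of length $< \zeta/2$. I would resolve this by iterating the straight-line retraction, or more cleanly by using the flow-towards-basepoint description above directly: define $B^Q(\gamma)$ to be the \emph{endpoint} $H_{\psi(l(\gamma))}$ of the path of loops $u \mapsto H_u$ described in the first paragraph, but rescale the cutoff so that $\psi \equiv 1$ on all of $[0,\zeta]$ and $\psi \equiv 0$ on $[2\zeta,\infty)$ --- the point being that the intermediate loops $H_u$ for a loop of length $< 2\zeta$ still have controlled length (at most $\zeta/4$ once any contraction has begun, again by (\ref{def: cop dat}.\ref{it: eps bounds}.\ref{item: eps small 3}) applied pointwise along $\gamma$), so the straight-line retraction stays inside $\tilde U$ throughout. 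This makes $B^Q$ literally the identity outside loops of length $< 2\zeta$ and literally constant on loops of length $\leq \zeta$, with a continuous interpolation between, and the homotopy to the identity and continuity in $Q$ go through verbatim. I expect the bookkeeping --- verifying the length bounds survive the reparametrisation and that everything is jointly continuous in $(\gamma, Q, s)$ --- to be routine but slightly tedious, and the only genuinely load-bearing inputs are Conditions (\ref{def: cop dat}.\ref{item: zeta small cop}) and (\ref{def: cop dat}.\ref{it: eps bounds}.\ref{item: eps small 3}).
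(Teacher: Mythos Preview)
Your construction is correct and works, but it is genuinely different from the paper's. The paper does not use a cutoff on the total length or a straight-line contraction of the whole loop. Instead, for \emph{every} loop $\gamma$ it sets $s_\gamma = \max\{t : \ell(\gamma|_{[0,t]}) \leq \zeta\}$ and defines the homotopy
\[
H_\tau(\gamma) \;=\; \Big( \gamma(0) \overset{\gamma|_{[0,\tau s_\gamma]}}{\rightsquigarrow} \gamma(\tau s_\gamma) \overset{\theta}{\rightsquigarrow} \gamma(s_\gamma) \overset{\gamma|_{[s_\gamma,1]}}{\rightsquigarrow} \gamma(1)\Big),
\]
taking $B = H_0$. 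This shrinks only the initial $\zeta$-length portion of the loop down to a $\theta$-chord and leaves the remainder untouched; when $\ell(\gamma) \leq \zeta$ one has $s_\gamma = 1$ and $H_0(\gamma)$ is the constant loop at $\gamma(0)$. The paper's approach thus avoids any cutoff function and needs only condition (\ref{def: cop dat}.\ref{item: zeta small cop}), since the endpoints of each $\theta$ are at arc-length distance $\leq \zeta$ along $\gamma$. Your approach, by contrast, has the pleasant feature of being literally the identity on loops of length $\geq 2\zeta$, which the paper's $B$ is not.

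One small correction: in your final paragraph you justify well-definedness for loops of length $< 2\zeta$ by appealing to (\ref{def: cop dat}.\ref{it: eps bounds}.\ref{item: eps small 3}), but that is an $\eps$-scale bound and does not apply here. The correct justification is once again (\ref{def: cop dat}.\ref{item: zeta small cop}): for a loop of length $< 2\zeta$ and any $t$, one of the two arcs joining $\gamma(0)$ and $\gamma(t)$ has length $< \zeta$, hence $\lVert \gamma(t) - \gamma(0)\rVert < \zeta$ and the straight segment lies in $\tilde U$. Your claim that intermediate loops have length $\leq \zeta/4$ is neither true in general nor needed for the argument.
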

\begin{proof} 
    Let $M \subset \LL M$ be the inclusion of constant loops. 

    Let  $s_\gamma: \LL M \to [0,1]$ be the continuous function defined by  
    \[s_\gamma = \text{max } \{\textit{t } | \ell(\gamma_{[0,t]} )\leq \zeta\} \]
    where $\ell$ denotes Riemannian length. Define a homotopy $H: \LL M \times [0,1] \to \LL M$ to send $(\gamma, \tau)$ to

    \begin{equation} \label{eq: wsgwsoerh}
        \gamma(0) \overset{\gamma_{[0, \tau s_\gamma ]}}\rightsquigarrow \gamma(
    \tau s_\gamma) \overset{\theta} \rightsquigarrow \gamma(s_\gamma) \overset{\gamma_{[s_\gamma, 1]}} \rightsquigarrow \gamma(1),
    \end{equation}
    noting that the path $\gamma(
    \tau s_\gamma) \overset{\theta} \rightsquigarrow \gamma(s_\gamma)$ is well-defined, by (\ref{def: cop dat}.\ref{item: zeta small cop}). Then $H_1$ is the identity. Moreover, the subset $\LL M^{\leq \zeta}$ is sent by $H_0$ to the subset of constant loops, because $s_\gamma = 1$ whenever $\ell(\gamma) \leq \zeta$ by definition, so if $\tau=0$ all three paths in (\ref{eq: wsgwsoerh}) are constant.
\end{proof}
We now proceed with the definition of $\Delta_{unst}$. 
\begin{definition}\label{def: cop}
   Fix {embedding data} $Q$ for $M$. The \emph{unstable coproduct}, $\Delta_{unst}=\Delta^Q_{unst},$ is  the map of spaces:
    \begin{equation*}
        \Delta_{unst}: \frac{\cL M^{D\nu_e}}{\partial \cL M^{D\nu_e}} \wedge S^1 \to \Sigma^L \frac{\cL M}M \wedge \frac{\cL M}M
    \end{equation*}
    defined as follows. Let $(v, \gamma, t) \in  \frac{\cL M^{D\nu_e}}{\partial \cL M^{D\nu_e}} \wedge S^1$: so $t \in [0,1], \gamma \in \cL M$, and $v \in D\nu_e$ lies in the fibre over $\gamma(0)$. Then
    \begin{equation}\label{eq: Delta unst}
    \Delta_{unst}(v, \gamma, t)=\begin{cases}
    \begin{pmatrix}
        \lambda \left(v-\phi_1 \circ \gamma(t)\right),\\
        B\left(
        \gamma(0) \overset{\gamma|_{[0,t]}}\rightsquigarrow
        \gamma(t) \overset{\phi}\rightsquigarrow
        \phi_1 \circ \gamma(t) \overset{\theta}\rightsquigarrow
        \gamma(0)
         \right),\\
        B\left(
        \gamma(0) \overset{\theta}\rightsquigarrow
        \phi_1 \circ \gamma(t) \overset{\overline \phi}\rightsquigarrow 
        \gamma(t) \overset{\gamma|_{[t,1]}}\rightsquigarrow
        \gamma(0)
         \right)\\
    \end{pmatrix}
    & \textrm{ if } \lVert v - \phi_1 \circ \gamma(t)\rVert \leq \varepsilon\\
    * & \textrm{ otherwise.}
    \end{cases}\end{equation}
    Note that we have used Convention (\ref{def: susp}.\ref{def: sus 2}) for the target.  The path  $\gamma(0) \overset{\theta=\theta_{v, \gamma(0)}}\rightsquigarrow
        \phi_1 \circ \gamma(t)$  is defined as in \cref{eq: theta}, and $\gamma(t) \overset{\phi}\rightsquigarrow
        \phi_1 \circ \gamma(t)$ denotes the path given by the flow of $\phi$. 
\end{definition}
See Figure \ref{fig:coprod} for a picture.
\begin{remark}
    The second and third entries in (\ref{eq: Delta unst}) each consist of three paths concatenated, but not all are of equal importance: the paths $\phi, \overline \phi$ and $\theta$ are all ``small'' and their purpose is to ensure the start and endpoint of the path are the same, whereas the paths $\gamma|_{[0,t]}$ and $\gamma|_{[t,1]}$ are ``big'' and are the ones which are ``morally'' important.
\end{remark}
\begin{remark}
    When $M$ is closed, for an appropriate choice of {embedding data} $Q$, the coproduct in Definition \ref{def: cop} is homotopic to the coproduct in Definition \ref{def: closed cop}, by applying Lemma \ref{lem: B}.
\end{remark}

\begin{figure}[h]  
    \centering
    \begin{minipage}{.25 \textwidth}
        \begin{tikzpicture}[scale=.8,
        v/.style={draw,shape=circle, fill=black, minimum size=1.3mm, inner sep=0pt, outer sep=0pt},
        vred/.style={draw,shape=circle, fill=red, minimum size=1mm, inner sep=0pt, outer sep=0pt},
        vsmall/.style={draw,shape=circle, fill=black, minimum size=1mm, inner sep=0pt, outer sep=0pt}]

            \draw[color=black, ->] (0,0) to[out=90, in=90] (2,2);
            \draw[color=black] (2,2) to[out=270,in=90] (1, 0);

            \draw[color=black] (0,0) to[out=-90, in=-90] (2,-2);
            \draw[color=black] (2,-2) to[out=-270,in=-90] (1, 0);

            \draw[thick, teal, dashed] (0.7,0.4) arc (0:360: 1.1);

            \draw[thin, cyan, ->](0,0) to (-0.2,0.2);
            \draw[thin, cyan] (-0.2, 0.2) to (-0.4, 0.4);
            \node at (-0.6, 0.6) {\tiny ${\color{cyan} v}$};

            \draw[thin, orange, ->] (1,0) to (0.75,0.25);
            \draw[thin, orange] (0.75, 0.25) to (0.5,0.5);
            \node at (0.7,0) {\tiny ${\color{orange} \phi}$};

            \draw[thin, purple, ->] (0.5, 0.5) to (0.25, 0.25);
            \draw[thin, purple] (0.25, 0.25) to (0,0);
            \node at (0.2, 0.5) {\tiny ${\color{purple} \theta}$};

            \node at (0, 2.5) {$(v, \gamma, t)$};
            \node at (-0.3, -0.3) {\tiny ${\color{red}  \gamma(0)}$};
            \node at (1.4, -0.3) {\tiny ${\gamma(t)}$};
            \node at (1.2, 0.8) {\tiny $\phi_1 \circ \gamma(t)$};
            \node at (2.1, 1.5) {\tiny $\gamma$};

            \node at (-1.4, 1.5) {\tiny ${\color{teal}B_\eps(v)}$};

            \node[v] at (0,0) {};
            \node[vsmall] at (1,0) {};
            \node[vsmall] at (-0.4, 0.4) {};
            \node[vsmall] at (0.5,0.5) {};
            \node[vred] at (0,0) {};
        \end{tikzpicture}
    \end{minipage}
    $\mapsto \left(
    \begin{minipage}{.37 \textwidth}
        \begin{tikzpicture}[scale=.8,
        v/.style={draw,shape=circle, fill=black, minimum size=1.3mm, inner sep=0pt, outer sep=0pt},
        vred/.style={draw,shape=circle, fill=red, minimum size=1mm, inner sep=0pt, outer sep=0pt},
        vsmall/.style={draw,shape=circle, fill=black, minimum size=1mm, inner sep=0pt, outer sep=0pt}]
            \node at (-2, 0) {$\lambda \cdot ($};
            \node at (-0.5, 0) {$)$};
        
            \draw[color=purple, -<] (-1.15,-0.25) to (-0.9,0);
            \draw[color=purple] (-0.9,0) to (-0.65,0.25);
            \draw[color=cyan] (-1.15, -0.25) to (-1.55, 0.15);
            \draw[color=cyan, ->] (-1.15, -0.25) to (-1.35, -0.05);

            \begin{scope}[shift={(-0.6,0)}]
            \node at (0.3, -0.2) {,};
        
            \draw[color=black, ->] (0.7,0) to[out=90, in=90] (2.7,2);
            \draw[color=black] (2.7,2) to[out=270,in=90] (1.7, 0);

            \draw[thin, orange, ->] (1.7,0) to (1.45,0.25);
            \draw[thin, orange] (1.45, 0.25) to (1.2,0.5);

            \draw[thin, purple, ->] (1.2, 0.5) to (0.95, 0.25);
            \draw[thin, purple] (0.95, 0.25) to (0.7,0);

            \node[v] at (0.7,0) {};
            \node[vsmall] at (1.7,0) {};
            \node[vsmall] at (1.2,0.5) {};
            \node[vred] at (0.7,0) {};

            \node at (2.2, -0.2) {,};

            \draw[color=black, -<] (2.7,0) to[out=-90, in=-90] (4.7,-2);
            \draw[color=black] (4.7,-2) to[out=-270,in=-90] (3.7, 0);

            \draw[thin, orange, -<] (3.7,0) to (3.45,0.25);
            \draw[thin, orange] (3.45, 0.25) to (3.2,0.5);

            \draw[thin, purple, -<] (3.2, 0.5) to (2.95, 0.25);
            \draw[thin, purple] (2.95, 0.25) to (2.7,0);

            \node[v] at (2.7,0) {};
            \node[vsmall] at (3.7,0) {};
            \node[vsmall] at (3.2,0.5) {};
            \node[vred] at (2.7,0) {};

            \node at (3.2,1.5) {\tiny $\gamma|_{[0,t]}$};
            \node at (2.8, -1.5) {\tiny $\gamma|_{[t,1]}$};
            \end{scope}
        \end{tikzpicture}
    \end{minipage}
    \right)$
    \caption{Coproduct: the figure on the left shows a triple $(v,\gamma, t)$ in the domain of the coproduct. The figure on the right shows the output.
    }
  \label{fig:coprod}
\end{figure}
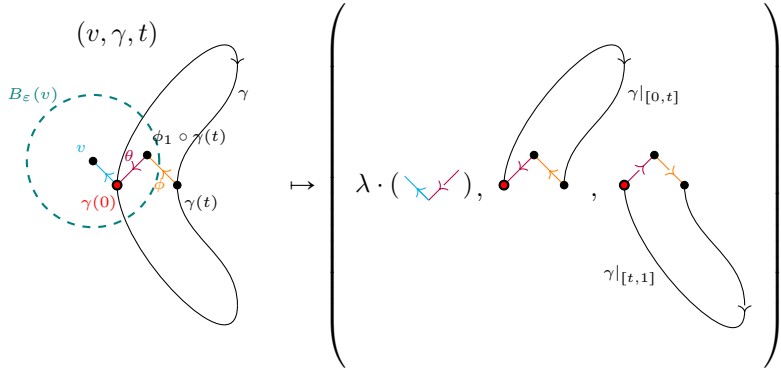

\begin{lemma}\label{lem: Delta well def}
    $\Delta_{unst}$ is a well-defined continuous map.
\end{lemma}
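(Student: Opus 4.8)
The statement to prove is \cref{lem: Delta well def}: that $\Delta_{unst}$ is a well-defined continuous map. I would organize the proof around two tasks: first, that the formula in \cref{eq: Delta unst} makes sense and is continuous on the space of triples $(v,\gamma,t)$ with $v \in \operatorname{Tot}(D\nu_e) $ over $\gamma(0)$ and $t \in [0,1]$ — i.e. before passing to the smash/quotient — and second, that it descends to the quotient $\frac{\cL M^{D\nu_e}}{\partial \cL M^{D\nu_e}} \wedge S^1$.

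\textbf{Well-definedness of the unreduced formula.} The plan is to check that each of the three coordinates of the output of \cref{eq: Delta unst} is defined and continuous on the region where the incidence condition $\lVert v - \phi_1\circ\gamma(t)\rVert \le \eps$ holds, and that the formula is (locally) constantly the basepoint where it fails, so that the two cases glue continuously along the closed locus $\lVert v - \phi_1\circ\gamma(t)\rVert = \eps$. For the first coordinate $\lambda(v - \phi_1\circ\gamma(t)) \in \bR^L$: when the incidence condition holds this lands in a ball of radius $\lambda\eps$; I need to observe that on the boundary locus $\lVert v - \phi_1\circ\gamma(t)\rVert = \eps$ this vector has norm $\lambda\eps \ge 2$ (using (\ref{def: cop dat}.\ref{item: lambda big}), since $\eps \ge d(\rho(S\nu_e|_M),e(M))$ is false in general — rather $\lambda\eps \ge \lambda \cdot d(\rho(S\nu_e|_M),e(M)) \ge 2$), hence gets collapsed in $\Sigma^L(\cdot)$ under Convention (\ref{def: susp}.\ref{def: sus 2}), matching the basepoint; this is exactly the role of $\lambda$. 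For the second and third coordinates, I must check each concatenated path is well-defined: the path $\gamma(t) \overset\phi\rightsquigarrow \phi_1\circ\gamma(t)$ is the time-$[0,1]$ flowline of $V$, always defined and of length $\le\zeta/4$ by (\ref{def: cop dat}.\ref{item: V small}); the path $\theta = \theta_{v,\gamma(0)}$ (respectively $\theta_{\gamma(0),\phi_1\circ\gamma(t)}$ in the third entry, matching the figure/text) requires that the straight-line segment between the relevant two points of $U$ has length $\le\eps$ so that (\ref{def: cop dat}.\ref{it: eps bounds}\ref{item: eps small 3}) applies — here I use that $v$ and $\phi_1\circ\gamma(t)$ are both in $U$ (the former since $v\in D\nu_e|_{\gamma(0)}$ and $U = \operatorname{Im}\rho$, the latter since $\phi_1\circ\gamma(t)\in\phi_1(M)\subset U$) and $\lVert v - \phi_1\circ\gamma(t)\rVert \le\eps$ by the incidence condition. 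Continuity of each piece then follows from continuity of $V$'s flow, of the retraction $r$, of the straight-line homotopy, of $B$ (\cref{lem: B}), and of constant-speed concatenation, assembled via the gluing lemma along the closed incidence locus.

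\textbf{Descent to the quotient.} This is where I expect the bookkeeping to be heaviest, so I would treat it carefully. I need to verify the formula sends to the basepoint every triple lying in the subspace we collapse to form $\frac{\cL M^{D\nu_e}}{\partial\cL M^{D\nu_e}}\wedge S^1$, namely: (a) $t\in\{0,1\}$; (b) $\lvert\rho^{-1}(v)\rvert = 1$ (the sphere-bundle fibre direction of the Thom space); (c) $\gamma(0)\in\partial M$ (the $\partial\cL M$ locus); and also the basepoint of $\cL M^{D\nu_e}$ itself and of $S^1$. For (a) with $t=0$: then $\gamma|_{[0,0]}$ and $\gamma|_{[0,1]}$ degenerate, and I claim $\Delta_{unst}(v,\gamma,0)$ lands at the basepoint — either the incidence condition fails, or if it holds the second coordinate becomes a small null-ish loop, but more robustly I would argue as in the closed case that $\phi_1\circ\gamma(0)$ is within $\zeta/4$ of $\gamma(0)$ and then one of the $\cL M/M$ factors is a loop of length $<\zeta$, which $B$ crushes to a constant loop, i.e. to the basepoint of $\cL M/M$; $t=1$ is symmetric. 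For (b): if $\lvert\rho^{-1}(v)\rvert = 1$ then $v\in\rho(S\nu_e|_M)$, so $\lVert v - \phi_1\circ\gamma(t)\rVert \ge d(\rho(S\nu_e|_M),\rho(D\nu_e|_{\phi_1(M)}))$; I want this $>\eps$ — this is precisely (\ref{def: cop dat}.\ref{it: eps bounds}\ref{item: eps small 2}) (distance between $\rho(D\nu|_{\phi_1(M)})$ and $\rho(D\nu|_{\partial M})$ $\ge2\eps$) combined with (\ref{item: eps small 1}); so the incidence condition fails and we get the basepoint. For (c): if $\gamma(0)\in\partial M$ then $v \in D\nu_e|_{\partial M}$, and again by (\ref{def: cop dat}.\ref{it: eps bounds}\ref{item: eps small 2}), $\lVert v - \phi_1\circ\gamma(t)\rVert \ge d(\rho(D\nu|_{\partial M}),\rho(D\nu|_{\phi_1(M)})) \ge 2\eps > \eps$, so the incidence condition fails — this is exactly why $V$ and the $\eps$-bounds were arranged as they were, to keep $\phi_1\circ\gamma(t)$ uniformly away from $\partial M$. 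Finally the basepoint of $\cL M^{D\nu_e}$ and of $S^1$ map to the basepoint trivially once (a) and (b) are handled.

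\textbf{Main obstacle.} The genuinely delicate step is the descent along locus (c) (and the sphere-bundle locus (b)): one has to chase through the chain of inequalities relating $\eps$, $\lambda$, $\zeta$, the vector field $V$, and the two tubular-neighbourhood radii, and confirm that the conditions bundled into \cref{def: cop dat} were exactly the ones needed — in particular that $\phi_1$ pushes all of $M$ (and hence all $\gamma(t)$) a definite distance into the interior, so that the incidence condition cannot be satisfied by any $v$ sitting over $\partial M$ or on the unit sphere bundle. Everything else (continuity of the individual geometric operations, the gluing along the closed incidence locus, degeneracy of the $t\in\{0,1\}$ faces via $B$) is routine point-set topology once the formula is seen to make sense.
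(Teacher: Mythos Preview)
Your overall structure matches the paper's: verify the formula is sensible on the unreduced domain, then check it kills each piece of the subspace being collapsed. Cases (a) ($t\in\{0,1\}$) and (c) ($\gamma(0)\in\partial M$) are handled correctly and exactly as in the paper.

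There is a genuine error in case (b), the sphere-bundle locus $|v|=1$. You cite condition (\ref{def: cop dat}.\ref{item: eps small 2}), which bounds the distance between $\rho(D\nu_e|_{\phi_1(M)})$ and $\rho(D\nu_e|_{\partial M})$; but for generic $\gamma(0)\in M$, the vector $v$ lies in $\rho(S\nu_e|_M)$, not in $\rho(D\nu_e|_{\partial M})$, so this condition gives no control. The mechanism here is genuinely different from case (c): rather than forcing the incidence condition to fail, the paper uses condition (\ref{def: cop dat}.\ref{item: lambda big}) directly. Since $v\in\rho(S\nu_e|_M)$ and $\phi_1\circ\gamma(t)\in e(M)$, one has
\[
\lambda\lVert v-\phi_1\circ\gamma(t)\rVert \;\ge\; \lambda\cdot d\bigl(\rho(S\nu_e|_M),\,e(M)\bigr)\;\ge\;2,
\]
so the first entry of (\ref{eq: Delta unst}) lies outside $(-1,1)^L$ and is collapsed in the target under Convention (\ref{def: susp}.\ref{def: sus 2}), regardless of whether the incidence condition holds. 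This is why $\lambda$ is introduced at all. Your earlier attempt to show $\lambda\eps\ge 2$ for the gluing at the incidence boundary is also confused: you wrote $\lambda\eps\ge\lambda\cdot d(\rho(S\nu_e|_M),e(M))$, but the note after (\ref{def: cop dat}.\ref{item: lambda big}) gives the reverse inequality $d(\rho(S\nu_e|_M),e(M))\ge\eps$, so that chain does not go through.

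In short: cases (a) and (c) are fine; for (b) replace the appeal to (\ref{def: cop dat}.\ref{item: eps small 2}) with a direct appeal to (\ref{def: cop dat}.\ref{item: lambda big}) showing the first coordinate escapes the cube.
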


\begin{proof}
    We must check that (\ref{eq: Delta unst}) sends $(v, \gamma, t)$ to the basepoint whenever $t \in \{0,1\}$, $|v|=1$ or $\gamma(0) \in \partial M$. Once this is verified, it is clear that (\ref{eq: Delta unst}) defines a continuous map.\par 
    If $t =0$ and the incidence condition for $\Delta_{unst}$ holds (i.e. $\lVert v-\phi_1 \circ \gamma(t)\rVert \leq \varepsilon$), then the first loop in (\ref{eq: Delta unst}):
    $$B\left(\gamma(0) \overset{\gamma|_{[0,t]}}\rightsquigarrow \gamma(0) \overset{\phi}\rightsquigarrow
    \phi_1 \circ \gamma(0) \overset{\theta}\rightsquigarrow
    \gamma(0)\right)$$
    is a constant loop since the path inside the brackets has length $\leq \zeta$, by (\ref{def: cop dat}.\ref{item: V small}) and (\ref{def: cop dat}.\ref{item: eps small 3}).\par 
    Similarly if $t=1$ and the incidence condition holds, the second loop in (\ref{eq: Delta unst}) is constant for the same reason.\par 
    If $|v|=1$, the first entry in (\ref{eq: Delta unst}) lies outside of $[-1,1]^L$, by (\ref{def: cop dat}.\ref{item: lambda big}) (and Remark \ref{rmk: radius}), so (\ref{eq: Delta unst}) represents the basepoint.
    
    If $\gamma(0) \in \partial M$, then by (\ref{def: cop dat}.\ref{item: eps small 2}), the incidence condition can never hold (noting that $\lVert v-\gamma(0)\rVert \leq \eps$ and using the triangle inequality).
\end{proof}

\begin{definition}
    The \emph{(stable) string coproduct} is the map of spectra
    \begin{equation}
        \Delta=\Delta^Q: \frac{\cL M^{-TM}}{\partial \cL M^{-TM}}\wedge S^1 \to \Sigma^\infty \frac{\cL M}M \wedge \frac{\cL M}M
    \end{equation}
    obtained from the unstable coproduct by applying Lemma \ref{lem: spectra unst} to $\Delta_{unst}$.
\end{definition}
\begin{lemma}\label{lem: cop ind}
    The coproduct $$\Delta: \frac{\cL M^{-TM}}{\partial \cL M^{-TM}}\wedge S^1 \to \Sigma^\infty \frac{\cL M}M \wedge \frac{\cL M}M$$ is independent of choices.
\end{lemma}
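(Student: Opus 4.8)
The plan is to show that $\Delta$ is independent of all the {embedding data} $Q = (e, \rho^{ext}, \zeta, V, \eps, \lambda) \in ED^L(M)$ as well as of the stabilization rank $L$. The natural framework is to observe that the unstable construction gives a map of simplicial sets: sending $Q$ (and its simplicial families) to $\Delta^Q_{unst}$ produces a map from $ED^L(M)$ into an appropriate mapping space (or its singular simplicial set) of unstable maps $\frac{\cL M^{D\nu_e}}{\partial \cL M^{D\nu_e}} \wedge S^1 \to \Sigma^L \frac{\cL M}{M} \wedge \frac{\cL M}{M}$. Since $ED^L(M)$ is a Kan complex and, by \cref{lem: kan}, is $(L-2n-3)$-connected — in particular it is connected once $L$ is large — any two choices $Q, Q'$ of rank $L$ are connected by a path of {embedding data}, and such a path induces a homotopy between $\Delta^Q_{unst}$ and $\Delta^{Q'}_{unst}$, hence a homotopy of the associated spectrum maps after applying \cref{lem: spectra unst}.

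First I would set up the target mapping space carefully and check that $Q \mapsto \Delta^Q_{unst}$ is continuous in $Q$; this amounts to inspecting formula (\ref{eq: Delta unst}) and noting that every ingredient — the subtraction in $\bR^L$, the rescaling by $\lambda$, the flow $\phi$, the retraction paths $\theta$, and the crushing map $B = B^Q$ from \cref{lem: B} (which is itself continuously varying in $Q$) — depends continuously on the data, and the incidence condition $\lVert v - \phi_1 \circ \gamma(t)\rVert \le \eps$ varies continuously as well. The families-over-a-simplex version of this is exactly what the simplicial set $ED^L(M)$ is built to record, so a family of {embedding data} over $\Delta^k$ yields a map $\Delta^k \times \big(\frac{\cL M^{D\nu_e}}{\partial \cL M^{D\nu_e}} \wedge S^1\big) \to \Sigma^L \frac{\cL M}{M} \wedge \frac{\cL M}{M}$, noting that the domain Thom spectrum $\cL M^{D\nu_e}$ also varies, so one should phrase this over $ED^L(M)$ as a map of parametrized spaces or, more simply, pull everything back along a chosen path $e_s$ of embeddings to identify the domains. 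Then path-connectedness of $ED^L(M)$ gives independence of the rank-$L$ data.

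Next I would handle independence of $L$: the stabilization maps $st^{L,L+1}: ED^L(M) \to ED^{L+1}(M)$ of \cref{sec: st maps} are compatible with the spectrum-level desuspension in \cref{lem: spectra unst} essentially by construction — stabilizing $e$ to $e'$ adds a trivial $\bR$ summand to $\nu_e$, and $\Delta^{st(Q)}_{unst}$ is, up to the canonical homeomorphisms of \cref{def: susp}, the suspension of $\Delta^Q_{unst}$ — so the induced maps of spectra agree. Since every {embedding data} of rank $L$ is (for $L$ large) connected to one in the image of $st^{L-1,L}$ by connectivity of $ED^{L-1}(M)$ together with \cref{lem: kan}, and the spectrum map only depends on the colimit over $L$, we conclude.

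The main obstacle I expect is bookkeeping the variation of the \emph{domain} $\frac{\cL M^{D\nu_e}}{\partial \cL M^{D\nu_e}}$ as $Q$ varies: unlike the target, which is fixed, the Thom space depends on $e$ (and on $\rho^{ext}$ through the identification with a tubular neighborhood), so ``$\Delta^Q_{unst}$'' for different $Q$ are a priori maps with different sources. The clean way around this is to fix a path $\{e_s\}$ in $\operatorname{Emb}(M^{ext}, \bR^L)$, use the resulting isotopy to produce a canonical homeomorphism between the corresponding Thom spaces (uniqueness of tubular neighborhoods), and check that these identifications are compatible with the rest of the data along the path — this is routine but is where care is needed. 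One must also confirm that the homotopy produced is through \emph{well-defined} maps, i.e. that the conditions of \cref{def: cop dat} persist along the path, which is guaranteed since a path of {embedding data} by definition satisfies all the required inequalities at every time; the well-definedness argument of \cref{lem: Delta well def} then applies uniformly.
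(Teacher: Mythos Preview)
Your proposal is correct and follows essentially the same approach as the paper: both arguments use the stabilisation maps $st^{L,L'}$ to identify $\Delta^Q$ on the $i^{th}$ space with $\Delta^{st^{L,i}(Q)}_{unst}$, and then invoke the connectedness of $ED^L(M)$ for large $L$ (via \cref{lem: kan}) to obtain a path of embedding data and hence a homotopy between any two $\Delta^Q$. The paper is terser about the point you flag as the ``main obstacle'' (the varying domain), simply noting that a path from $Q$ to $Q'$ gives a canonical-up-to-homotopy equivalence of the Thom spectra associated to $e$ and $e'$; your more explicit treatment via isotopy of embeddings and uniqueness of tubular neighbourhoods is exactly what underlies that sentence.
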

\begin{proof}
    
Let $Q = (\ldots, \lambda)$ be a fixed choice of embedding data, such that $\lambda$ is sufficiently large. Note that  $\Delta^Q$ can be alternatively described on the $i^{th}$ space: 
    \begin{equation*}
    \left(\frac{\cL M^{-TM}}{\partial \cL M^{-TM}} \wedge S^1\right)_i := \frac{\cL M^{D(\bR^{i-L} \oplus \nu_e)}}{\partial\cL M^{D(\bR^{i-L} \oplus \nu_e)}} \wedge S^1
\end{equation*} 
by using in \cref{eq: Delta unst} the stabilized {embedding data},  ${st^{L,i}(Q)} = (\ldots, \tfrac{\sqrt{L+1}}{\sqrt{L}}\cdot \lambda)$, as defined in \cref{eq: st maps}
and noting that for $e'$ (the embedding associated to $st^{L,i}(Q)$), there is a natural identification 
$$\frac{\cL M^{D(\bR^{i-L} \oplus \nu_e)}}{\partial\cL M^{D(\bR^{i-L} \oplus \nu_e)}} \wedge S^1  = \frac{\cL M^{D\nu_{e'}}}{\partial \cL M^{D\nu_{e'}}} \wedge S^1.$$

  Indeed, this follows by noting that the structure maps:
\begin{equation}\label{eq: str maps}
    \Sigma\left( \frac{\cL M^{D(\bR^{i-L} \oplus \nu_e)}}{\partial\cL M^{D(\bR^{i-L} \oplus \nu_e)}}\wedge S^1\right) \to \frac{\cL M^{D(\bR^{1+i-L} \oplus \nu_e)}}{\partial\cL M^{D(\bR^{i+1-L} \oplus \nu_e)}} \wedge S^1
\end{equation}
send the $[-1,1]$ variable, corresponding to the first suspension factor on the left hand side, to the first variable in the $\bR^{1+i-L}$ on the right hand side, and by the identity in all other factors. Hence the diagram:
\begin{center}
    \begin{tikzcd}
            \Sigma \left( \frac{\cL M^{D(\bR^{i-L} \oplus \nu_e)}}{\partial \cL M^{D(\bR^{i-L} \oplus \nu_e)}} \wedge S^1\right) \arrow[rr, "\Sigma \Delta_{unst}"] \arrow[d] &&
            \Sigma \Sigma^L \frac{\cL M}M \wedge \frac{\cL M}M \arrow[d] \\
            \frac{\cL M^{D(\bR^{1+i-L} \oplus \nu_e)}}{\partial \cL M^{D(\bR^{1+i-L} \oplus \nu_e)}} \wedge S^1 \arrow[rr, "\Delta_{unst}"] &&
            \Sigma^{L+1} \frac{\cL M}M \wedge \frac{\cL M}M
        \end{tikzcd}
        \end{center} homotopy commutes, via a homotopy that linearly interpolates between the two $\lambda$-values. Here the vertical maps are the structure maps, and the bottom horizontal map is $\Delta_{unst}$ as in  \cref{eq: Delta unst} using the stabilized {embedding data}. Hence $\Delta$ can be defined on the $i^{th}$ space using the stabilised {embedding data}. 

        Now, for sufficiently large $L$ the space of choices $ED^L(M)$ is connected. Given {embedding data} $Q, Q' \in ED^L(M)$, there is a unique up to homotopy path from $Q$ to $ Q'$, giving a (canonical up to homotopy) equivalence of spectra associated to the embeddings $e$ and $e'$, as well as a homotopy between $\Delta^Q$ and $\Delta^{Q'}$.  The conclusion follows.  
    \end{proof}
    The following lemma is clear from the definitions.
    \begin{lemma}\label{lem: diff inv}
        Let $f: M \to N$ be a diffeomorphism between compact manifolds with corners. Then there is a commutative diagram
        \begin{equation}
            \begin{tikzcd}
                \frac{\cL M^{-TM}}{\partial \cL M^{-TM}} \wedge S^1
                \arrow[r, "\Delta"] 
                \arrow[d, "f"]
                &
                \Sigma^\infty \frac{\cL M}M \wedge \frac{\cL M}M
                \arrow[d, "f \wedge f"]
                \\
                \frac{\cL N^{-TN}}{\partial \cL N^{-TN}} \wedge S^1
                \arrow[r, "\Delta"]
                &
                \Sigma^\infty \frac{\cL N}N \wedge \frac{\cL N}N
            \end{tikzcd}
        \end{equation}
    \end{lemma}
    
\subsection{Smoothing corners}\label{sec: smoth}
    In this subsection, we prove Proposition \ref{prop: smoth corn}, which says that smoothing the corners of a manifold with corners $M$ does not change its coproduct. Though this follows from (and is not used in the proof of) Theorem \ref{main theorem}, we record a more direct proof here.

    \begin{definition}\label{def: smoth}
        Let $M$ be a compact smooth manifold with corners. A compact manifold with boundary $N$ is a \emph{smoothing} of $M$ if $N \subseteq M$ is a codimension 0 submanifold, such that there is an isotopy $\{\omega_t\}_{t \in [0,1]}: \partial M \hookrightarrow M$ of locally flat topological embeddings, such that $\omega_0$ is the inclusion $\partial M \hookrightarrow M$ and $\omega_1(\partial M) = \partial N)$.
    \end{definition}
    Note that if this holds, then $M$ and $N$ are homeomorphic, by the topological isotopy extension theorem.
    \begin{proposition}\label{prop: smoth corn}
        
        Let $M$ be a compact manifold with corners and $N$ a smoothing of $M$. Then there is a commutative diagram: 
        \begin{equation}
            \begin{tikzcd}
                \frac{\cL M^{-TM}}{\partial \cL M^{-TM}} \wedge S^1
                \arrow[r, "\Delta"] 
                \arrow[d, "\simeq"]
                &
                \Sigma^\infty \frac{\cL M}M \wedge \frac{\cL M}M
                \arrow[d, "f \wedge f"]
                \\
                \frac{\cL N^{-TN}}{\partial \cL N^{-TN}} \wedge S^1
                \arrow[r, "\Delta"]
                &
                \Sigma^\infty \frac{\cL N}N \wedge \frac{\cL N}N
            \end{tikzcd}
        \end{equation}
        In particular, if $M$ and $M'$ are compact manifolds with corners which have diffeomorphic smoothings, then their coproducts agree.
    \end{proposition}
    \begin{proof}
        Let $\omega_t$ be the isotopy from Definition \ref{def: smoth}. For each $t \in [0,1]$, let $M_t$ be the component of $M \setminus \omega_t(\partial M)$ not touching $\partial M$. \par 

        Then letting $\bM = \cup_{t \in [0,1]} M_t \times \{t\}$, $\bM$ admits the structure of a (topological) fibre bundle over $[0,1]$, with fibres over $0$ and $1$ given by $M$ and $N$ respectively. \par 

        Each $M_t$ is not necessarily a smooth manifold with smooth boundary, but does admit a smooth atlas locally modelled on subsets of $\bR^n$ which are the closure of the a component of the complement of a separating (locally flat) topological hypersurface in $\bR^n$. Note that the condition for a loop in such a manifold to be of Sobolev class $H^1$ still makes sense. \par 

        The definition for the loop coproduct goes through word-for-word, though the smooth structure inherited by $M^{ext}$ is of this weaker form rather than a manifold with corners. \par 

        Now applying same argument as the end of the proof of Lemma \ref{lem: cop ind} to $\bM$ implies the result.   
    \end{proof}

\section{Spectral Chas-Sullivan modules}\label{sec: prod}
    Let $M$ be a compact $n$-manifold, possibly with corners. The purpose of this section is to construct a generalization of the Chas-Sullivan product to maps of spectra:
    \begin{equation}
        \mu_{{{l}}}: \frac{\cL M^{-TM}}{\partial \cL M^{-TM}} \wedge \Sigma^\infty_+ \cL M \to\Sigma^\infty_+ \cL M,
    \end{equation}
    and
    \begin{equation}
        \mu_{{{r}}}: \Sigma^\infty_+ \cL M \wedge \frac{\cL M^{-TM}}{\partial \cL M^{-TM}} \to \Sigma^\infty_+ \cL M.
    \end{equation}
    These maps, constructed in the spirit  of Cohen and Jones  \cite{cohen2002homotopy}, are adapted to the case that $M$ has boundary and are best suited for our purposes. 
    \begin{remark}
        In general, $\frac{\cL M^{-TM}}{\partial \cL M^{-TM}}$ is a unital ring spectrum, whose multiplication
        $$\frac{\cL M^{-TM}}{\partial \cL M^{-TM}} \wedge \frac{\cL M^{-TM}}{\partial \cL M^{-TM}} \to \frac{\cL M^{-TM}}{\partial \cL M^{-TM}}$$
        realises the Chas-Sullivan product on homology in the case $M$ is closed, see \cite{cohen2002homotopy}. Although we do not prove this here, $\mu_{{{r}}}$ and $\mu_{{{l}}}$ together equip $\Sigma^\infty_+ \cL M$ with the structure of a bimodule over this ring spectrum. In Section \ref{sec: hom com pro} we prove that our model for these module maps does recover the definition of the Chas-Sullivan product given in \cite{hingston2017product} after passing to homology, up to a sign.
    \end{remark}

    \begin{definition}
         Let $Q$ be a choice of {embedding data} for $M$. 
         The  \emph{unstable left product} is defined to be the map of spaces:

    \begin{equation}
       \mu_{{{l}},unst} = \mu^Q_{r, unst}: \frac{\cL M^{D\nu_e}}{\partial \cL M^{D\nu_e}} \wedge \cL M_+
        \to \Sigma^L_+ \cL M
    \end{equation}
   
    sending $((v, \gamma), \delta)$ to
    \begin{equation}
        \begin{cases}
            \begin{pmatrix}
                \lambda (v-\phi_1 \circ \delta(0)), \\
                \gamma(0) \overset{\gamma} \rightsquigarrow 
                \gamma(0) \overset \theta \rightsquigarrow 
                \phi_1 \circ \delta(0) \overset {\overline \phi} \rightsquigarrow 
                \delta(0) \overset \delta \rightsquigarrow 
                \delta(0) \overset \phi \rightsquigarrow 
                \phi_1 \circ \delta(0) \overset \theta \rightsquigarrow 
                \gamma(0)
            \end{pmatrix}
            & \textrm{ if } \lVert v - \phi_1 \circ \delta(0) \rVert \leq \eps \\
            * & \textrm{ otherwise.}
        \end{cases}
    \end{equation}
    The unstable \emph{right product} is defined to be the map of spaces:
    \begin{equation}
       \mu_{{{r}}, unst}=\mu_{{{r}}, unst}^Q: \cL M_+ \wedge \frac{\cL M^{D\nu_e}}{\partial \cL M^{D\nu_e}}
        \to \Sigma^L_+ \cL M
    \end{equation}
    sending $(\delta, (v, \gamma))$ to
    \begin{equation}
        \begin{cases}
            \begin{pmatrix}
                \lambda (v-\phi_1 \circ \delta(0)), \\
                \gamma(0) \overset \theta \rightsquigarrow 
                \phi_1 \circ \delta(0) \overset {\overline \phi} \rightsquigarrow 
                \delta(0) \overset \delta \rightsquigarrow 
                \delta(0) \overset \phi \rightsquigarrow
                \phi_1 \circ \delta(0) \overset \theta \rightsquigarrow
                \gamma(0) \overset \gamma \rightsquigarrow \gamma(0)
            \end{pmatrix}
            & \textrm{ if } \lVert v - \phi_1 \circ \delta(0) \rVert \leq \eps \\
            * & \textrm{ otherwise.}
        \end{cases}
    \end{equation}
The \emph{stable right module product} 
 $$\mu_{{{r}}}: \Sigma^\infty_+ \cL M \wedge \frac{\cL M^{-TM}}{\partial \cL M^{-TM}} \to \Sigma^\infty_+ \cL M$$
and the \emph{stable left product} 
   $$\mu_{{{l}}}: \frac{\cL M^{-TM}}{\partial \cL M^{-TM}} \wedge \Sigma^\infty_+ \cL M \to\Sigma^\infty_+ \cL M,$$ 
   are obtained from the unstable counterparts via Lemma  \ref{lem: spectra unst}.

    \end{definition}

Arguing exactly as in Lemmas \ref{lem: Delta well def} and \ref{lem: cop ind} we see that these are well-defined maps of spectra, independent of choices up to homotopy. 
    \begin{definition}

       Let
        \begin{equation} \label{eq: split}
            \Sigma^\infty_+ \cL M \simeq \Sigma^\infty_+ M \vee \Sigma^\infty \frac{\cL M}M
        \end{equation}
        be the cannonical splitting induced by the inclusion of constant loops. Then $\tilde \mu_{{{l}}, unst}$ is the  composition:
        \begin{equation}\label{eq: split prod}
            \tilde \mu_{{{l}}, unst}: \frac{\cL M^{-TM}}{\partial \cL M^{-TM}} \wedge \Sigma^\infty \frac{\cL M}M 
            \to 
            \frac{\cL M^{-TM}}{\partial \cL M^{-TM}} \wedge \Sigma^\infty_+ \cL M
            \xrightarrow{\mu_{{{l}}, unst}}
            \Sigma^\infty_+ \cL M 
            \to 
            \Sigma^\infty \frac{\cL M}M
        \end{equation}
        where the first and second arrows are the canonical inclusion and projection respectively, induced by (\ref{eq: split}).
        
    \end{definition}
    
\section{Stability}\label{sec: stab}

Let $M$ be a compact manifold, possibly with corners, and let $e \in \operatorname{Emb}(M, \bR^L)$. In this section we prove that the string topology operations from Sections \ref{sec: coproduct section} and \ref{sec: prod} are invariant under replacing $M$ with the total space of the disc bundle $D\nu$ of the normal bundle $\nu$ of $e$.

Let $\pi: \nu \to M$ be the projection, and $\iota: M \hookrightarrow \nu$ the inclusion of the zero section. In the following lemma we first identify the domains of the coproducts for $M$ and $D\nu$:
\begin{lemma}\label{lem: stab dom}
    There is a homotopy equivalence of spectra
    \begin{equation}
        \alpha: \frac{\cL M^{-TM}}{\partial \cL M^{-TM}} \to \frac{\cL D\nu^{-TD\nu}}{\partial \cL D\nu^{-TD\nu}}
    \end{equation}
\end{lemma}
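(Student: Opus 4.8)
The plan is to reduce the statement to a homotopy equivalence of spaces by exploiting that the tangent bundle of $D\nu$ is trivial. Write $\pi:\nu\to M$ for the projection. As the total space of a vector bundle over $M$ one has $T(D\nu)\cong\pi^*(TM\oplus\nu)$, and since $\nu$ is the normal bundle of $e$ we have $TM\oplus\nu\cong e^*T\bR^L$, which is trivial of rank $L=\dim D\nu$. Choosing an embedding $D\nu\hookrightarrow\bR^L\hookrightarrow\bR^N$ (for example a tubular neighbourhood of $e(M)$ followed by a coordinate inclusion), whose normal bundle is trivial of rank $N-L$, the construction of $\cL(-)^{-T(-)}$ from Section \ref{sec: coproduct section} simplifies to give equivalences of spectra $\cL D\nu^{-TD\nu}\simeq\Sigma^{-L}\Sigma^{\infty}_+\cL D\nu$ and $\partial\cL D\nu^{-TD\nu}\simeq\Sigma^{-L}\Sigma^{\infty}_+\partial\cL D\nu$, where $\partial D\nu=S\nu\cup_{S\nu|_{\partial M}}D\nu|_{\partial M}$ and $\partial\cL D\nu:=\cL D\nu|_{\partial D\nu}$. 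Hence the target of $\alpha$ is $\Sigma^{-L}\Sigma^{\infty}(\cL D\nu/\partial\cL D\nu)$. On the other hand, computing $\cL M^{-TM}$ using the embedding $e$ itself (the spectrum is independent of this choice up to canonical equivalence), its $i$-th space is $\cL M^{D(\bR^{i-L}\oplus\nu)}=\Sigma^{i-L}\cL M^{D\nu}$, so the domain of $\alpha$ is $\Sigma^{-L}\Sigma^{\infty}\big(\cL M^{D\nu}/(\partial\cL M)^{D\nu}\big)$, where $\cL M^{D\nu}=\operatorname{Tot}(D(\operatorname{ev}_0^*\nu))/\operatorname{Tot}(S(\operatorname{ev}_0^*\nu))$ is the Thom space of $\operatorname{ev}_0^*\nu\to\cL M$ and $(\partial\cL M)^{D\nu}$ the corresponding Thom space over $\partial\cL M$. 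It therefore suffices to produce a homotopy equivalence $\cL D\nu/\partial\cL D\nu\simeq\cL M^{D\nu}/(\partial\cL M)^{D\nu}$.

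I would get this from the map $q:\cL D\nu\to\operatorname{Tot}(D(\operatorname{ev}_0^*\nu))=\cL M\times_M D\nu$ sending a loop $\gamma$ to the pair $(\overline{\pi\circ\gamma},\,\gamma(0))$, where $\overline{(\,\cdot\,)}$ is the constant-speed reparametrisation (which fixes $s=0$) and $\gamma(0)$ is regarded as a point of the fibre $(D\nu)_{\pi(\gamma(0))}$. This is continuous, commutes with the projections to $D\nu$ — $\operatorname{ev}_0$ on the source, $(\delta,w)\mapsto w$ on the target — and carries $\cL D\nu|_{S\nu}$ into $\operatorname{Tot}(S(\operatorname{ev}_0^*\nu))$ and $\cL D\nu|_{D\nu|_{\partial M}}$ into $\operatorname{Tot}(D(\operatorname{ev}_0^*\nu))|_{\partial\cL M}$. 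Both of these projections to $D\nu$ are (equivalent over $D\nu$ to) Hurewicz fibrations — the second because it is the pullback of $\operatorname{ev}_0:\cL M\to M$ along $\pi$ — and over a point $v\in D\nu$ with $x=\pi(v)$ the map $q$ restricts on fibres to the map $\Omega_v D\nu\to\Omega_x M$ induced by $\pi$, which is a homotopy equivalence because $\pi$ is one (a disc bundle projection). Hence $q$ is a homotopy equivalence, and the same fibrewise argument over the subspaces $S\nu$ and $D\nu|_{\partial M}$ of $D\nu$ (and over their intersection $S\nu|_{\partial M}$) shows that $q$ restricts to homotopy equivalences of the corresponding sub-loop-spaces. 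Since the relevant inclusions of fibred loop spaces are cofibrations — which I would verify using a collar of $\partial D\nu$ in $D\nu$ — the gluing lemma lets $q$ descend to a homotopy equivalence $\cL D\nu/\partial\cL D\nu\xrightarrow{\ \simeq\ }\cL M^{D\nu}/(\partial\cL M)^{D\nu}$, and $\alpha$ is then defined as $\Sigma^{-L}\Sigma^{\infty}$ of a homotopy inverse of this map. (If a comparison argument later needs an explicit point-set representative of $\alpha$, one can write one down directly using a connection on $\nu$ and a bump function on $S^1$ to lift loops in $M$ to loops in $D\nu$.)

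I expect the main obstacle to be organisational rather than conceptual: matching the decomposition $\partial D\nu=S\nu\cup D\nu|_{\partial M}$ with the corresponding decomposition of $\partial\cL M^{D\nu}$, and being careful enough about the cofibration conditions in the corners case that the space-level quotients genuinely model the spectrum-level cofibres — along with the minor technical nuisance of passing between the constant-speed loop model, in which $q$ is defined, and the ordinary free loop space, in which $\operatorname{ev}_0$ is visibly a fibration. Once the setup is in place, the fibrewise homotopy-equivalence argument for $q$ is routine, since it uses nothing about $\nu$ beyond the fact that $\pi:D\nu\to M$ is the projection of a disc bundle.
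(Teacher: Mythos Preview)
Your approach is correct and reaches the same space-level reduction as the paper: both identify the domain and target with $\Sigma^{-L}\Sigma^\infty$ of $\cL M^{D\nu}/\partial\cL M^{D\nu}$ and $\cL D\nu/\partial\cL D\nu$ respectively, and your map $q:\gamma\mapsto(\pi\circ\gamma,\gamma(0))$ is precisely the homotopy inverse the paper writes down. The difference lies only in how the equivalence is verified. You argue that $q$ is a fibrewise homotopy equivalence over $D\nu$ and then invoke a gluing lemma after checking cofibration conditions on the pieces of $\partial D\nu$; the paper instead writes down the explicit inverse $\alpha(v,\gamma)=\big(v\overset{\theta}{\rightsquigarrow}\gamma(0)\overset{\gamma}{\rightsquigarrow}\gamma(0)\overset{\theta}{\rightsquigarrow}v\big)$ using the retracted straight-line paths $\theta$ already available from the embedding data, and checks directly that $q\circ\alpha$ and $\alpha\circ q$ are homotopic to the identities. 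The paper's route is shorter and sidesteps the cofibration bookkeeping you anticipate, but the more substantive point is that the explicit formula for $\alpha$ is reused verbatim in the proof of Theorem~\ref{thm: stability} (stability of the coproduct) and in the compatibility with fundamental classes (\ref{eq: stab fund class}), so the concrete point-set description is not optional --- your parenthetical remark about producing one via a connection would have to be carried out in full anyway.
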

\begin{proof}
    Choose {embedding data} $Q$ for $M$ extending $e$. We define a homotopy equivalence of spaces $\alpha: \frac{\cL M^{D\nu}}{\partial \cL M^{D\nu}} \to \frac{\cL D\nu}{\partial \cL D\nu}$ which induces a homotopy equivalence of spectra as desired, via Lemma \ref{lem: spectra unst}.\par 
    For $(v, \gamma) \in \frac{\cL M^{D\nu}}{\partial \cL M^{D\nu}}$, we define 
    \begin{equation} \label{eq: alpha}
        \alpha(v,\gamma) := (\gamma_v) \in \frac{\cL D\nu}{\partial \cL D\nu}
    \end{equation}
    where $\gamma_v$ is the loop
    \begin{equation} \label{eq: alpha 2} 
        v \overset{\theta
        } \rightsquigarrow
        \gamma(0) \overset\gamma \rightsquigarrow 
        \gamma(0) \overset {\theta
        } \rightsquigarrow 
        v
    \end{equation}
    
    A homotopy inverse to $\alpha$ is given by sending $\gamma$ to $(\gamma(0), \pi \circ \gamma)$. Also note that since the space of {embedding data} extending $e$ is connected, $\alpha$ is well-defined up to homotopy.
\end{proof}
\begin{remark}\label{rwgepinbspribndrip}
    By construction the map $\alpha$ in Lemma \ref{lem: stab dom} is compatible with fundamental classes (see Definition \ref{def: fund class}), in the sense that the following diagram commutes up to homotopy:
    \begin{equation}\label{eq: stab fund class}
        \begin{tikzcd}
            \bS \arrow[r, "{[}M{]}"] 
            \arrow[d, "="] &
            \frac{\cL M^{-TM}}{\partial \cL M^{-TM}} \arrow[r, "i^M"] 
            \arrow[d, "\simeq"] &
            \frac{\cL M^{-TM}}{\partial \cL M^{-TM}} \arrow[d, "\alpha"] \\
            \bS \arrow[r, "{[}D\nu{]}"] &
            \frac{D\nu^{-TD\nu}}{\partial D\nu^{-TD\nu}} \arrow[r, "i^{D\nu}"] &
            \frac{\cL D\nu^{-TD\nu}}{\partial \cL D\nu^{-TD\nu}} 
        \end{tikzcd}
    \end{equation}
    where the $i^M$ and $i^{D\nu}$ are induced by the inclusions of constant loops for $M$ and $D\nu$ respectively.
\end{remark}

\begin{remark}
    In the definition of the coproduct, we do not have to quotient by $\partial \cL M^{-TM}$; one would still arrive at a reasonable operation. However if we do not do this, then Lemma \ref{lem: stab dom} can't hold: the domains of the two coproducts wouldn't be homotopy equivalent.\par 
    For example, if $\nu$ is a trivial vector bundle of rank $r$ and $M$ has no boundary, the spectra $\cL D\nu^{-TD\nu}$ and $\frac{\cL D\nu^{-TD\nu}}{\partial \cL D\nu^{-TD\nu}}$ differ by a shift of degree $r$.
\end{remark}

\subsection{Coproduct}\label{sec: stability}

\begin{theoremaaa}\label{thm: stability}
    There is a homotopy commutative diagram of spectra:
    $$\begin{tikzcd}
        \frac{\cL M^{-TM}}{\partial \cL M^{-TM}} \wedge S^1 \arrow[r, "\Delta"]\arrow[d, "\alpha \wedge Id_{S^1}"] & 
        \Sigma^\infty \frac{\cL M}M \wedge \frac{\cL M}{M}\\
        \frac{\cL D\nu^{-TD\nu}}{\partial \cL D\nu^{-TD\nu}} \wedge S^1 \arrow[r, "\Delta"] & 
        \Sigma^\infty \frac{\cL D\nu}{D\nu} \wedge \frac{\cL D\nu}{D\nu}\arrow[u, "\pi \wedge \pi"]
    \end{tikzcd}$$
    where $\alpha$ and $\pi \wedge \pi$ are homotopy equivalences.
\end{theoremaaa}

\begin{proof}
    Choose $Q = (e, \rho^{ext}, \zeta, V, \eps, \lambda) \in ED^L(M)$ {embedding data} extending $e$. We define $$Q' = (e', \rho'^{ext}, \zeta', V', \eps', \lambda') \in ED^L(D\nu)$$ as follows. Let $e'=\rho$. Note that since this is a codimension 0 embedding, its normal bundle is trivial. We fix a diffeomorphism $(D\nu)^{ext} \cong D_2 \nu_e$, such that the natural map $r': (D\nu)^{ext} \to D\nu$ is given by projection to the sphere bundle on $D_2\nu \setminus D\nu$, and on $D\nu_e|_{M^{ext} \setminus M}$ is a horizontal lift of the map $M^{ext} \to M$. In particular, this implies $r \circ r' = r$. Let $\rho'^{ext} = \rho^{ext}$. \par 
    We set $\zeta' = \zeta$ and assume we have chosen $\zeta > 0$ small enough that (\ref{def: cop dat}.\ref{item: zeta small cop}) holds for $D\nu$.\par 
    Using the induced metrics on $M$ and $\nu_e|_M$, we let $\tilde{V}$ be the horizontal lift of $V$ to $D\nu$. Let $W$ be the tautological vector field on $D\nu$ (i.e. its value at a point $v$ is $v$). Now choose $\mu > 0$ and let $V' = V-\mu W$. This is an inwards-pointing vector field on $D\nu$, and for $\mu > 0$ small enough, (\ref{def: cop dat}.\ref{item: V small}) holds.\par 
    Let $\eps' = \eps$ and $\lambda' = \lambda$, and we may choose them so that $\eps$ is small enough and $\lambda$ is large enough that (\ref{def: cop dat}.\ref{item: eps small 1}, \ref{item: eps small 2}, \ref{item: eps small 3}, \ref{item: lambda big}) all hold.\par 
    We show that the following diagram commutes up to homotopy, with vertical arrows homotopy equivalences, which will imply the desired result, by Lemmas \ref{lem: spectra hom} and \ref{lem: spectra unst}.
    \begin{equation} \label{eq: instability homotopy}\begin{tikzcd}
        \frac{\cL M^{D\nu}}{\partial \cL M^{D\nu}} \wedge S^1 \arrow[r, "\Delta_{unst}^Q"]\arrow[d, "\alpha \wedge Id_{S^1}"] & 
        \Sigma^L \frac{\cL M}M \wedge \frac{\cL M}{M}\\
        \frac{\cL D\nu}{\partial \cL D\nu} \wedge S^1 \arrow[r, "\Delta_{unst}^{Q'}"] & 
        \Sigma^L \frac{\cL D\nu}{D\nu} \wedge \frac{\cL D\nu}{D\nu}\arrow[u, "\pi \wedge \pi"]
    \end{tikzcd}\end{equation}
    Now consider the incidence conditions for $\Delta^Q_{unst}$ and $\Delta^{Q'}_{unst} \circ (\alpha \wedge Id_{S^1})$ respectively, for $(v, \gamma, t) \in \frac{\cL M^{D\nu}}{\partial \cL M^{D\nu}} \wedge S^1$. These are the conditions $\lVert v-\phi_1 \circ \gamma(t) \rVert \leq \eps$, and $\lVert v-\phi'_1 \circ \gamma_v(t)\rVert \leq \eps$ respectively.\par 
    If the incidence conditions hold, the two ways around the diagram both have the same final two components.\par 
    We find a homotopy between these two ways around the diagram by linearly interpolating between $V$ and $V'$. Explicitly, this is the homotopy 
    $$H: [0,1]_u \times \frac{ \cL M^{D\nu}}{\partial \cL M^{D\nu}} \wedge S^1 \to \Sigma^L \frac{\cL M}M \wedge \frac{\cL M}M$$
    defined so that $H_u$ sends $(v, \gamma, t)$ to 
    \begin{equation}\label{eq: stability homotopy}
    \begin{cases}
        \begin{pmatrix}
            \lambda \left(v-\phi_1^u \circ \gamma_{uv}(t)\right),\\
            B\left(
            \gamma(0) \overset{\gamma|_{[0,t]}}\rightsquigarrow
            \gamma(t) \overset{\phi}\rightsquigarrow
            \phi_1 \circ \gamma(t) \overset{\theta}\rightsquigarrow
            \gamma(0)
             \right),\\
            B\left(
            \gamma(0) \overset{\theta}\rightsquigarrow
            \phi_1 \circ \gamma(t) \overset{\phi}\rightsquigarrow 
            \gamma(t) \overset{\gamma|_{[t,1]}}\rightsquigarrow
            \gamma(0)
             \right)\\
        \end{pmatrix}
        & \textrm{ if } \lVert v - \phi_1^u \circ \gamma_{uv}(t)\rVert \leq \varepsilon\\
        * & \textrm{ otherwise.}
    \end{cases}\end{equation}
    where $\phi_1^u$ is the time-one flow of the vector field $V-\mu u W$ (so in particular $\phi^1_1 = \phi'_1$). Note the only difference from (\ref{eq: Delta unst}) is that $\phi$ is replaced by $\phi^u$ (which agrees with $\phi$ on the zero section $M$).\par 
Arguing as in \cref{lem: Delta well def}, we see that  (\ref{eq: stability homotopy}) is well-defined. \par 
We assume $\eps > 0$ is small enough that $d(S\nu, D_{\frac 12} \nu) > \eps$ and $d(\phi_1^{\frac 12}(D_1\nu), S\nu) > \eps$. Then if $|v|=1$, the incidence condition can't hold: for $u \leq \frac 12$ this is because $\phi_1^u\circ \gamma_{uv} \subseteq D_{\frac 12}\nu$ so by the first condition the incidence condition can't hold, and for $u \geq \frac 12$ by the second condition the incidence condition can't hold.\par 

Inspection of (\ref{eq: stability homotopy}) and (\ref{eq: Delta unst}) shows that $H_0$ and $\Delta^Q_{unst}$ agree, and also that $H_1$ and $(\pi \wedge \pi_ \circ \Delta^{Q'}_{unst} \circ (\alpha \wedge Id_{S^1})$ agree. 

It is clear that $\pi \wedge \pi$ is a homotopy equivalence. 
\end{proof}

\begin{corollary}
    Let $M$ and $M'$ be closed manifolds which are simple homotopy equivalent. Then their string coproducts agree.\par 
    More precisely, there is a homotopy commutative diagram of spectra, with vertical arrows homotopy equivalences: 
    $$\begin{tikzcd}
        \frac{\cL M^{-TM}}{\partial \cL M^{-TM}} \wedge S^1 \arrow[r, "\Delta"] \arrow[d, "\simeq"] &
        \Sigma^\infty \frac{\cL M}{M} \wedge \frac{\cL M}{M} \arrow[d, "\simeq"]\\
        \frac{\cL M'^{-TM'}}{\partial \cL M'^{-TM'}} \wedge S^1 \arrow[r, "\Delta"] &
        \Sigma^\infty \frac{\cL M'}{M'} \wedge \frac{\cL M'}{M'}
    \end{tikzcd}$$
\end{corollary}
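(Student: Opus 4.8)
The strategy is to deduce simple homotopy invariance from the stability result (Theorem \ref{thm: stability}) together with the classical fact that a simple homotopy equivalence between closed manifolds can be factored through a chain of "obvious" moves built from tubular neighborhoods. Concretely, since $M$ and $M'$ are simple homotopy equivalent closed manifolds, by standard results in simple homotopy theory (and the $s$-cobordism theorem) there is a third manifold — or a finite zig-zag of manifolds — relating them via codimension $0$ embeddings with $h$-cobordism (indeed trivial $h$-cobordism) complements and/or disc-bundle thickenings. The cleanest route is: embed both $M$ and $M'$ into a common Euclidean space $\bR^L$ with normal bundles $\nu, \nu'$, and observe that $D\nu$ and $D\nu'$ are diffeomorphic (both being regular neighborhoods of simple-homotopy-equivalent complexes, hence simple homotopy equivalent compact manifolds with boundary whose dimensions can be taken large — here one uses that a simple homotopy equivalence of the cores induces a diffeomorphism of high-dimensional regular neighborhoods after stabilizing, i.e. Whitehead/Mazur-type arguments).

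First I would invoke Theorem \ref{thm: stability} twice: once for $M \hookrightarrow \bR^L$, giving a homotopy-commutative square relating $\Delta^M$ and $\Delta^{D\nu}$ with vertical homotopy equivalences $\alpha \wedge \mathrm{Id}_{S^1}$ and $\pi \wedge \pi$; and once for $M' \hookrightarrow \bR^L$, relating $\Delta^{M'}$ and $\Delta^{D\nu'}$. Next I would argue that for $L$ sufficiently large, $D\nu$ and $D\nu'$ are diffeomorphic as smooth compact manifolds with boundary: this is where the simple homotopy equivalence hypothesis is used in an essential way, via the fact that the Whitehead torsion obstruction to such a diffeomorphism (after stabilization) vanishes precisely when the homotopy equivalence is simple. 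A diffeomorphism $D\nu \cong D\nu'$ then induces a strictly commutative square identifying $\Delta^{D\nu}$ with $\Delta^{D\nu'}$ (naturality of the whole construction under diffeomorphisms, which is immediate from the definitions since one can transport a choice of embedding data along the diffeomorphism). Splicing the three squares together yields the desired homotopy-commutative square relating $\Delta^M$ and $\Delta^{M'}$, with vertical maps composites of homotopy equivalences.

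I would then need to check one compatibility point: the coproduct for $M$ a priori lands in $\Sigma^\infty \tfrac{\cL M}{M} \wedge \tfrac{\cL M}{M}$ while stability gives an equivalence with the target for $D\nu$; and the projection $D\nu \to M$ (resp. the diffeomorphism $D\nu \cong D\nu'$ followed by $D\nu' \to M'$) induces the vertical equivalence on targets. One should verify these target-level maps are compatible with the various quotients $\tfrac{\cL X}{X}$, but this is routine since $\pi \colon D\nu \to M$ is a homotopy equivalence restricting to a homotopy equivalence of the relevant pairs. Finally I would note that independence of the choice of embedding $e \in \operatorname{Emb}(M, \bR^L)$ (Lemma \ref{lem: cop ind}) guarantees that the resulting homotopy class of the square does not depend on the auxiliary data, so the statement is well-posed.

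The main obstacle is the middle step: producing the diffeomorphism $D\nu \cong D\nu'$ from the simple homotopy equivalence hypothesis, and in particular being careful about what "simple homotopy equivalent" buys us at the level of regular neighborhoods in high codimension. This is essentially the content of the statement that Whitehead torsion is the complete obstruction to an $h$-cobordism being a product (the $s$-cobordism theorem), combined with the uniqueness of regular neighborhoods; the honest argument factors the simple homotopy equivalence through elementary expansions/collapses and realizes each by a diffeomorphism of thickenings. Everything else — the two applications of Theorem \ref{thm: stability}, transport of embedding data along a diffeomorphism, and pasting homotopy-commutative squares — is formal.
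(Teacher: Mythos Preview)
Your proposal is correct and follows essentially the same approach as the paper: apply Theorem~\ref{thm: stability} to both $M$ and $M'$ and bridge via a diffeomorphism $D\nu \cong D\nu'$ of the thickenings. The paper simply cites Mazur for that diffeomorphism of tubular neighbourhoods in high codimension (which packages the regular-neighborhood/$s$-cobordism argument you sketch), so your ``main obstacle'' is handled by a single reference rather than being re-derived.
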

This in particular implies homeomorphism invariance of the string coproduct, though this could have been proved in a different way (for example, by giving a more general definition that did not make use of the smooth structure on $M$). 
\begin{proof}
    By \cite[Page 7]{Mazur}, for $L \gg 0$, there are embeddings $M,M' \hookrightarrow \R^L$ with diffeomorphic tubular neighbourhoods; the result then follows from Theorem \ref{thm: stability}.
\end{proof}
Alternatively, this corollary follows from Theorem \ref{main theorem}, which includes the case when $M$ and $M'$ have boundary, and further without assuming $M$ and $M'$ even have the same dimension.
\subsection{Thom isomorphism}\label{sec: thom}
    In this section we prove that the homology-level coproduct $H_*(\Delta)$ is compatible with the Thom isomorphism, in the appropriate sense.\par 

    Let $M$ be a compact oriented manifold with corners of dimension $n$, and $\pi_E: E \to M$ an oriented vector bundle of rank $n$ with disc bundle $DE$. We orient the total space $DE$ such that the canonical isomorphism $E \oplus TM \cong TDE$ is orientation-preserving.\par 

    Let $\tau_E \in H^r(DE, SE)$ be the Thom class. There is then a Thom isomorphism
    \begin{equation}
        \operatorname{Thom}: H_*\left(\frac{\cL DE}{\partial \cL DE}\right) \to H_*\left(\frac{\cL M}{\partial \cL M}\right)
    \end{equation}
    given by the composition $\pi_E \circ ((ev_0^*\tau_E) \cap \cdot)$, where $ev_0: \cL DE \to DE$ evaluates at 0.

    \begin{proposition}\label{prop: thom cop}
        There is a commutative diagram:
        \begin{equation}\label{eq: th}
            \begin{tikzcd}
                H_*\left(\frac{\cL M}{\partial \cL M}\right) 
                &
                H_*\left(\frac{\cL M^{-TM}}{\partial \cL M^{-TM}}\right) 
                \arrow[l, "\operatorname{Thom}"]
                \arrow[r, "H_*(\Delta)"]
                &
                H_*\left(\frac{\cL M}M \wedge \frac{\cL M}M \right)
                \\
                H_*\left(\frac{\cL DE}{\partial \cL DE}\right)
                \arrow[u, "\operatorname{Thom}"]
                &
                H_*\left(\frac{\cL DE^{-TDE}}{\partial \cL DE^{-TDE}}\right)
                \arrow[r, "H_*(\Delta)"]
                \arrow[l, "\operatorname{Thom}"]
                &
                H_*\left(\frac{\cL DE}{DE} \wedge \frac{\cL DE}{DE}\right)
                \arrow[u, "\pi_E \wedge \pi_E"]
            \end{tikzcd}
        \end{equation}
    \end{proposition}
    \begin{proof}
        Choose an embedding $e_E: DE \hookrightarrow \bR^L$ for $L \gg 0$, and let $\nu_E$ be its normal bundle. Let $e_M$ be the composition $M \xhookrightarrow{0\operatorname{-section}} DE \xhookrightarrow{e_E} \bR^L$, and let $\nu_M$ be its normal bundle. \par 
        There is a canonical isomorphism 
        \begin{equation} \label{ eq: M E norm}
            \nu_E \oplus E \cong \nu_M
        \end{equation}
        Orienting the two normal bundles as in Definition \ref{def: thom iso}, we find that (\ref{ eq: M E norm}) is orientation-preserving. Let $\tau_{\nu_M}$ and $\tau_{\nu_E}$ be the corresponding Thom classes; then 
        \begin{equation} \label{eq: M E or}
            \tau_E \cup \tau_{\nu_E} = \tau_{\nu_M}
        \end{equation}
        
        We extend these embeddings to embedding data on $M$ and $DE$, and note that from (\ref{ eq: M E norm}), $D\nu_M$ and $D\nu_E$ smooth to manifolds with boundary which are diffeomorphic to each other.

        Consider the following diagram:
        \begin{equation}\label{eq: big th}
            \begin{tikzcd}
                &
                H_{*-n}\left(\frac{\cL M^{-TM}}{\partial \cL M^{-TM}} \wedge S^1\right)
                \arrow{ddl}[swap]{\operatorname{Thom}}
                \arrow[d, "="]
                \arrow[rr, "\Delta"]
                &&
                H_{*-n}\left(\frac{\cL M}M \wedge \frac{\cL M}M\right)
                \arrow[dd, "\operatorname{Susp}"]
                \\
                &
                \tilde H_{*+L-n}\left(\frac{\cL M^{D\nu_M}}{\partial \cL M^{D\nu_M}}\wedge S^1\right)
                \arrow[d, "\alpha"]
                \arrow[dl, "( ev_0^*\tau_{\nu_M} \cap \cdot)"]
                &&
                \\
                H_*\left(\frac{\cL M}{\partial \cL M} \wedge S^1\right)
                &
                \tilde H_{*+L-n}\left(\frac{\cL D\nu_M}{\partial \cL D\nu_M}\wedge S^1\right)
                \arrow[d, "\cong"]
                \arrow[rr, "(\pi_{\nu_M} \wedge \pi_{\nu_M}) \circ \Delta"]
                &&
                \tilde H_{*+L-n} \left(\Sigma^L \frac{\cL M}M \wedge \frac{\cL M}M\right) 
                \arrow[d, "="]
                \\
                H_{*+r}\left(\frac{\cL DE}{\partial \cD DE}\wedge S^1\right)
                \arrow[u, "\operatorname{Thom}"]
                &
                \tilde H_{*+L-n}\left(\frac{\cL D\nu_E}{\partial \cL D\nu_E} \wedge S^1\right)
                \arrow[rr, "(\pi_{\nu_E} \wedge \pi_{\nu_E}) \circ \Delta"]
                &&
                \tilde H_{*+L-n}\left(\Sigma^L \frac{\cL M}M \wedge \frac{\cL M}M\right)
                \\
                &
                \tilde H_{*+L-n}\left(\frac{\cL DE^{D\nu_E}}{\partial \cL DE^{D\nu_E}}\wedge S^1 \right)
                \arrow[u, "\alpha"]
                \arrow{ul}[swap]{(ev_0^* \tau_{\nu_E} \cap \cdot)}
                &&
                \\
                &
                H_{*-n}\left(\frac{\cL DE^{-TDE}}{\partial \cL DE^{-TDE}}\wedge S^1\right)
                \arrow{uul}{\operatorname{Thom}}
                \arrow[u, "="]
                \arrow[rr, "\pi_E \circ \Delta"]
                &&
                H_{*-n}\left(\frac{\cL M}M \wedge \frac{\cL M}M\right)
                \arrow[uu, "\operatorname{Susp}"]
            \end{tikzcd}  
        \end{equation}
        Here $\alpha$ is as in Lemma \ref{lem: stab dom} and the middle vertical isomorphism is from Proposition \ref{prop: smoth corn}. Note that all vertical arrows are isomorphisms, and that the composition of the vertical isomorphisms in the right column gives the identity.\par 
        Note that the outside of this diagram is exactly (\ref{eq: th}), so it suffices to show that (\ref{eq: big th}) commutes.\par 

        The top right and bottom right pentagons commute by Theorem \ref{thm: stability}. The middle right square commutes by Proposition \ref{prop: thom cop}. The top left and bottom left triangles commute by Definition \ref{def: thom iso}. The middle left hexagon commutes by (\ref{eq: M E or}) along with the fact that $\pi_E \circ \pi_{\nu_E} = \pi_M$.
    \end{proof}
\subsection{Product}
    
    The following lemma is stated for $\mu_{{{l}}}$, but a similar one holds for $\mu_{{{r}}}$. 
        \begin{theoremaaa}\label{thm: prod stab}
            There is a homotopy commutative diagram of spectra:
            \begin{equation}
                \begin{tikzcd}
                    \frac{\cL M^{-TM}}{\partial \cL M^{-TM}} \wedge \Sigma^\infty_+ \cL M \arrow[r, "\mu_{{{l}}}"] \arrow[d, "\alpha \wedge \iota"] &
                    \Sigma^\infty_+ \cL M \\
                    \frac{\cL D\nu^{-TD\nu}}{\partial \cL D\nu^{-TD\nu}} \wedge \Sigma^\infty_+ \cL D\nu \arrow[r, "\mu_{{{l}}}"] &
                    \Sigma^\infty_+ \cL D\nu \arrow[u, "\pi"]
                \end{tikzcd}
            \end{equation}
            where $\alpha$ is as in Lemma \ref{lem: stab dom}.  
        \end{theoremaaa}
        \begin{proof}
            We choose {embedding data} $Q$ for $M$ extending the embedding $e$, and use this to define {embedding data} $Q'$ for $D\nu$ as in the proof of Theorem \ref{thm: stability}. We take $\alpha$ to be as in Theorem \ref{thm: stability}. Then the following diagram of spaces commutes up to homotopy:
            \begin{equation}\label{eq: prod stab unst}
                \begin{tikzcd}
                    \frac{\cL M^{D\nu}}{\partial \cL M^{D\nu}} \wedge \cL M_+ \arrow[r, "\mu_{{{l}}, unst}^Q"] \arrow[d, "\alpha \wedge \iota"] 
                    &
                    \Sigma^L_+ \cL M
                    \\
                    \frac{\cL D\nu}{\partial \cL D\nu} \wedge \cL D\nu_+ 
                    \arrow[r, "\mu_{{{l}}, unst}^{Q'}"] 
                    &
                    \Sigma^L_+ \cL D\nu \arrow[u, "\pi"].
                \end{tikzcd}
            \end{equation}
            via a homotopy constructed similarly to the one in Theorem \ref{thm: stability}, interpolating between the different incidence conditions (and first coordinates) obtained from going the two different ways around (\ref{eq: prod stab unst}).
        \end{proof}

\section{Homological comparisons: coproduct}\label{sec: cop hom comp}

    Let $M$ be a closed oriented manifold of dimension $n$. In this section we prove that by taking homology and applying the Thom isomorphism,  the spectral coproduct defined in Section \ref{sec: coproduct section} recovers the Goresky-Hingston coproduct as defined in \cite{hingston2017product} and \cite{Naef-Rivera-Wahl}.  Note that the homology coproduct currently existing in the literature only deals with the case that $M$ has no boundary, so that's the one we treat in this section. 
    
    In Section \ref{sec: GH cop} we recap the definition of the Goresky-Hingston coproduct from the literature. In \cref{sec: geo coproduct} we give a geometric model for the homology coproduct using transversality, which we use as an intermediate step for our comparisons. It follows the constructions in \cite{Chataur} which gives a similar description for the Chas-Sullivan product, and \cite{hingston2017product} which gives a similar description for the coproduct for some (but not all) homology classes. The precise statements of the comparisons are given in Propositions \ref{prop: geo GH comp} and \ref{prob: comp cop geo sp} and Corollary \ref{cor: GH spec cop}.

    \subsection{Goresky-Hingston coproduct}\label{sec: GH cop}
        In this section we recap the definition of the Goresky-Hingston coproduct, following \cite[Section 2.2]{Naef-Rivera-Wahl}.
        \begin{remark} 
            The definition we give here differs from \cite[Section 2.2]{Naef-Rivera-Wahl} only in that, corresponding to the conventions in Section \ref{sec: loops}, we restrict to working with constant speed loops in the domain and codomain. This is unproblematic since the inclusion of constant speed loops into all loops induces an isomorphism in homology. That said, it will still be convenient at one stage to consider the space of free loops of not necessarily constant speed, which we denote by $\widetilde{\cL M}$.
        \end{remark}
        
        Assume $M$ is equipped with a Riemannian metric. Let $\tau_M \in H^n(DTM, STM)$ be the Thom class determined by the given orientation on $M$. Let $\Delta: M \hookrightarrow M \times M$ be the diagonal embedding. We choose a tubular neighbourhood of the diagonal $\Delta(M)$ as follows: let $\sigma_\Delta: DTM \to M \times M$ send 
        \begin{equation} \label{eq: tub diag}
            v \in (DTM)_p \mapsto (p, \operatorname{exp}_p(v))
        \end{equation}
        This also identifies the normal bundle of the diagonal $\nu_\Delta$ with $TM$. Let $U_M = \operatorname{Im}(\sigma_\Delta)$. \par 
        We may push forward the Thom class $\tau_M$ along the diffeomorphism of pairs $\sigma_\Delta: (DTM, STM) \to (U_M, \partial U_M)$ to obtain a cohomology class that we also denote by $\tau_M \in H^n(U_M, \partial U_M)$. 
        Let $e_I: \widetilde{\cL M} \times [0,1] \to M \times M$ send $(\gamma, s)$ to $(\gamma(0), \gamma(s))$. Then let $\cF = e_I^{-1}(\Delta(M))$, which we note contains $\widetilde{\cL M} \times \{0,1\}$, and $U_{GH} = e_I^{-1} U_M$, a neighbourhood of $\cF$. Let $\partial U_{GH} = e_I^{-1} \partial U_M$. Let $\operatorname{cut}: \cF \to \cL M \times \cL M$ be the map which sends $(\gamma, s)$ to $(\gamma|_{[0,s]}, \gamma|_{[s, 1]})$ (reparametrised appropriately).\par 
        We pull back $\tau_M$ along the map of pairs $e_I: (U_{GH}, \partial U_{GH}) \to (U_M, \partial U_M)$ to obtain a class that we call $\tau_{GH} = e_I^*\tau_M \in H^n(U_{GH}, \partial U_{GH})$.\par 
        Let $R_{GH}: U_{GH} \to \cF$ be the retraction which sends $(\gamma, s)$ to the concatenation
        \begin{equation}
            \left(\gamma(0) \overset{\gamma|_{[0,s]}} \rightsquigarrow 
            \gamma(s) \overset{\theta}{\rightsquigarrow}
            \gamma(0) \overset \theta \rightsquigarrow 
            \gamma(s) \overset{\gamma|_{[s,1]}}\rightsquigarrow 
            \gamma(0), 
            s\right)
        \end{equation}
        We parametrise this loop so that it reaches the middle $\gamma(0)$ at time $s$ (this is unproblematic since if $s=0$, the first two paths are constant, and similar for $s=1$, and so that the loop has constant speed on both $[0,s]$ and $[s, 1]$ separately.
        \begin{remark}
            The paths $\theta$ are there to force a self-intersection at time $s$. Also note that here we parametrise loops differently to \cite{Naef-Rivera-Wahl}, though this is unproblematic since the space of orientation-preserving homeomorphisms of $S^1$ of Sobolev class $H^1$ preserving 0 is contractible. Similarly they also concatenate with geodesic paths rather than the $\theta$; again the resulting maps are homotopic. 
        \end{remark}
        \begin{definition}(\cite[Definition 2.2]{Naef-Rivera-Wahl})
            The \emph{Goresky-Hingston coproduct} $\Delta^{GH}$ (written $\vee_{TH}$ in \cite{hingston2017product}) is defined to be the following composition:
            \begin{multline}
                H_*(\cL M) \xrightarrow{\cdot \times [0,1]} 
                H_{*+1}\left(\cL M \times [0,1], \cL M \times \{0,1\}\right) \xrightarrow{\tau_{GH} \cap \cdot }
                H_{*+1-n}(U_{GH}, \cL M \times \{0,1\}) \\
                \xrightarrow{R_{GH}} 
                H_{*+1-n}(\cF, \cL M \times \{0,1\}) \xrightarrow{\operatorname{cut}}
                H_{*+1-n}\left(\cL M \times \cL M, (M \times \cL M) \cup (\cL M \times M)\right)
            \end{multline}
        \end{definition}
        See \cite[Figure 1(b)]{Naef-Rivera-Wahl} for a picture.
        \begin{remark}
            As in \cite{Naef-Rivera-Wahl}, we work with the definitions of the cup and cap products for (co)homology from \cite{Bredon}. 
        \end{remark}
    \subsection{Coproduct via geometric intersections}\label{sec: geo coproduct}
        In this section we give an alternative definition of the Goresky-Hingston coproduct using transverse intersections. \par 
        Let $X$ be a closed oriented manifold and $f: X \to \cL M$ a map. We define $Y=Y(f, X)$:
        \begin{equation}\label{eq: Y}
            Y = \overline{
                \left\{(x, t) \in X \times [0,1] \,|\, f(x)(t) = f(x)(0)\,\&\, t \neq 0\right\}
            }
        \end{equation}
        Here $\overline{\cdot}$ denotes the closure in $X \times [0,1]$.
        \begin{lemma}\label{lem: transverse}
            $f$ is homotopic to a map $f': X \to \cL M$ such that $Y(f',X)$ is a transversally cut out submanifold of $X \times [0,1]$, with boundary on $X \times \{0,1\}$ and intersecting it transversally.
        \end{lemma}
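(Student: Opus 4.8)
The plan is to make the map $f$ transverse to the diagonal in an appropriate auxiliary space and then read off the submanifold $Y(f,X)$ as a preimage. Concretely, consider the evaluation map $E_f \colon X \times [0,1] \to M \times M$ sending $(x,t) \mapsto (f(x)(t), f(x)(0))$; this is a continuous map, and the set we are interested in is contained in $E_f^{-1}(\Delta(M))$, with the locus $t=0$ (and, once $f$ is genuine, $t=1$) corresponding to the ``trivial'' part of this preimage. The key point is that $\Delta \colon M \hookrightarrow M \times M$ is a closed submanifold, so if $E_f$ were smooth and transverse to $\Delta(M)$, the preimage would automatically be a smooth submanifold of $X \times [0,1]$ of the expected codimension $n$, with boundary contained in $X \times \{0,1\}$ and meeting it transversally (after arranging transversality to hold near the boundary first and extending). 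So the real content is a smoothing-and-perturbation statement for $f$.

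First I would replace $f$ up to homotopy by a map whose adjoint $X \times S^1 \to M$ is smooth; since $\cL M$ here is modeled on $H^1$-loops and the inclusions between the various regularity models are homotopy equivalences (as recalled in Section~\ref{sec: loops}), this costs nothing and one may even use that $X$ is a closed manifold to invoke standard smooth approximation relative to where things are already smooth. Next, working with the smooth adjoint $\hat f \colon X \times S^1 \to M$, I would perturb it (rel a neighbourhood of $X \times \{0\}$, which we keep fixed so as not to disturb the basepoint evaluation) so that the associated map $E_f$ is transverse to $\Delta(M) \subset M \times M$; this is the Thom transversality theorem applied to a map into $M \times M$, using that $\Delta(M)$ is a closed submanifold and that $X \times [0,1]$ is a compact manifold with boundary. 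One has to be slightly careful to first achieve transversality along the boundary $X \times \{0,1\}$ — but there the map is essentially constant in the relevant direction (at $t=0$ the two evaluations agree identically, at $t=1$ similarly after reparametrising), so the preimage near the boundary is exactly $X \times \{0,1\}$ itself, and transversality there, together with a relative version of Thom transversality, lets us extend to a global perturbation. Call the resulting loop family $f'$.

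With $f'$ in hand, $E_{f'}^{-1}(\Delta(M))$ is a smooth submanifold of $X \times [0,1]$ of codimension $n$, with boundary $X \times \{0,1\}$, meeting the boundary transversally. This preimage is the disjoint union of $X \times \{0,1\}$ and the closure of the locus where $f'(x)(t) = f'(x)(0)$ with $t \notin \{0,1\}$; since the latter closure is a union of connected components of the preimage (the $t=0,1$ slices are closed and open in the preimage once transversality along the boundary is arranged), it is itself a transversally cut out submanifold with the stated boundary behaviour. That submanifold is exactly $Y(f', X)$, which gives the claim.

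The main obstacle is the boundary bookkeeping: ensuring that the perturbation achieving transversality of $E_f$ to $\Delta(M)$ can be taken to fix (a neighbourhood of) $X \times \{0\}$, so that the basepoint evaluation $f'(x)(0)$ is literally unchanged and the two ``ends'' $t = 0$ and $t = 1$ contribute cleanly as boundary strata rather than as extra interior sheets. Everything else is a routine application of smooth approximation and the Thom transversality theorem; the homotopy from $f$ to $f'$ is produced along the way by the standard ``small perturbations are homotopic to the identity'' argument. I expect the write-up to first handle transversality in a collar of $X \times \{0,1\}$, then extend over the interior, and finally identify the resulting preimage with $Y(f',X)$.
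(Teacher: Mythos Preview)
There is a genuine gap at the boundary step. You claim that $E_{f'}$ can be made transverse to $\Delta(M)$ on all of $X\times[0,1]$, and that the preimage near $X\times\{0,1\}$ is ``exactly $X\times\{0,1\}$ itself''. But $E_f(x,0)=(f(x)(0),f(x)(0))\in\Delta(M)$ for every $x$, so $X\times\{0\}$ lies entirely in $E_f^{-1}(\Delta(M))$; the same holds at $t=1$. For transversality at such a point you would need $dE_f(T_{(x,0)}(X\times[0,1]))$ to surject onto $\nu_\Delta\cong TM$. The $TX$-directions map into $T\Delta(M)$ (since $E_f(\cdot,0)$ lands in the diagonal), and $\partial_t$ contributes at most a one-dimensional subspace of $\nu_\Delta$. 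So for $n\geq 2$ transversality along $X\times\{0,1\}$ is \emph{impossible}, not automatic, and no perturbation of $f$ that keeps it a family of loops can fix this. Your dimension count confirms the problem: a transverse preimage would have dimension $\dim X+1-n$, but you assert its boundary is $X\times\{0,1\}$, which has dimension $\dim X$.

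The paper's proof does not attempt to make $E_f$ transverse. Instead it modifies $f$ in a collar $X\times[0,\eta)$ so that there $f(x)(t)=\exp_{f(x)(0)}(t\beta'(x))$ for a generic section $\beta'$ of $(ev_0\circ f)^*\nu_\Delta$. Then $Y\cap(X\times[0,\eta))=S\times[0,\eta)$ where $S=\{\beta'=0\}$ is a transversally cut out codimension-$n$ submanifold of $X$; this gives $Y$ the correct local model near $t=0$ (a product with an interval, meeting the boundary transversally). The same trick handles $t=1$, and only then is a generic perturbation applied in the interior. The point is that one controls $Y$ directly rather than the full preimage of the diagonal, precisely because the latter always contains the top-dimensional piece $X\times\{0,1\}$.
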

        \begin{remark}
            Similarly to (\ref{eq: Y}), let $Y' = \{(x, t) \in X \in [0,1]\,|\, f(x)(t) = f(x)(0)\}$ be the space of self-intersections of loops in the family $f$; note that $Y$ is obtained from $Y'$ by removing elements of the form $(x, 0)$ and then taking the closure.\par 
            The reason we do not work with $Y'$ is that the analogue of Lemma \ref{lem: transverse} can never hold for $Y'$, since $Y'$ always contains $X \times \{0\}$. We remove $X \times \{0\}$; we then take the closure since $Y' \setminus (X \times \{0\})$ is not necessarily compact otherwise.
        \end{remark}
        \begin{proof}[Proof of Lemma \ref{lem: transverse}]
            We first show that the intersection of $Y$ with $X \times [0, \eta)$ can be made smooth, for some small $\eta > 0$.
            
            \par 
            Choose a Riemannian metric on $M$; this induces one on $M \times M$ along with a decomposition $T(M \times M)|_{\Delta(M)} \cong T\Delta(M) \oplus \nu_\Delta$, where $\nu_\Delta$ is the normal bundle of the diagonal. Then for $\eta>0$ small, there are time-dependent sections
            \begin{equation}
                \{\alpha_t\}_{t \in [0, \eta)} \subseteq \Gamma\left((X\times \{0\}, (ev_0) \circ f)^*T\Delta(M)\right) 
                \textrm{ and }
                \{\beta_t\}_{t \in [0, \eta)} \subseteq \Gamma\left(X \times \{0\}, (ev_0) \circ f)^* \nu \right)
            \end{equation}
            such that both are identically 0 for $t=0$, and such that for $(x, t) \in X \times [0, \eta)$, 
            \begin{equation}
                f(x)(t) = \operatorname{exp}_{f(x)(0)}(\alpha_t(x) + \beta_t(x))
            \end{equation}
            The intersection of $Y$ with $X \times [0, \eta)$ is then $\{(x, t)\,|\, \beta_t(x) = 0\}$; this may not be smooth.\par 
            Now let $\beta' \in \Gamma\left(X \times \{0\}, (ev_0) \circ f)^* \nu \right)$ be a generic section, so its zero set $S$ is transversally cut out. \par 
            Then we may homotope $f$ in $X \times [0, \eta)$, without changing $ev_0 \circ f$, so that for $(x,t) \in X \times [0, \eta)$, we have that
            $$f(x)(t) = \operatorname{exp}_{f(x)(0)}\left(t\beta'(x)\right)$$
            Then the intersection of $Y$ with $X \times [0, \eta)$ is $S \times [0, \eta)$, which is smooth.
            We may do the same thing on $(1-\eta, 1]$, so that $Y \cap (X \times ([0, \eta) \cup (1-\eta, 1]))$ is smooth. We may make a $C^0$ perturbation to $f$ away from $X \times ([0, \eta) \cup (1-\eta, 0])$ so that the image of $f$ lies in the subspace of $C^\infty$ loops, and then generically perturb $f$ away from $X \times ([0, \eta) \cup (1-\eta, 0])$ so that $Y$ is smooth everywhere.
        \end{proof}
        We may assume the conclusion of Lemma \ref{lem: transverse} holds. Then the normal bundle $\nu_{Y \subseteq X\times [0,1]}$ of $Y$ in $X \times [0,1]$ is canonically identified with the pullback $(ev_I \circ f)^*\nu_\Delta \cong (ev_0 \circ f)^*TM$; 
        this is oriented and so we obtain a Thom class 
        \begin{equation} \label{eg: Thom GH YX}
            \tau_{Y \subseteq X \times [0,1]} := (f \times Id_{[0,1]})^*\tau_{GH} = (ev_0 \circ f)^*\tau_M
        \end{equation}
        for $\nu_{Y \subseteq X \times [0,1]}$.\par 
        We orient $Y$ so that the natural isomorphism
        \begin{equation}\label{eq: or Y}
            T(X \times [0,1])|_Y \cong \nu_{Y \subseteq X \times [0,1]} \oplus TY
        \end{equation}
        is orientation-preserving (similarly to \cite[Proposition 3.7]{hingston2017product}).
        We use the following result of Jakob \cite{Jakob}:
        \begin{proposition}
            Let $B$ be a space and $A \subseteq B$ a subspace, such that the pair $(B, A)$ is homotopy equivalent to a CW pair. Let $x \in H_*(B, A)$.\par 
            Then $x = f_* (\alpha \cap [X])$ for some $X, f, \alpha$, where
            \begin{itemize}
                \item $X$ is a compact oriented $i$-manifold, for some $i$.
                \item $f: X \to B$ is some map sending $\partial X$ to $A$.
                \item $\alpha \in H^{i-p}(X)$.
            \end{itemize}
            We call such a triple $(X^i,f, \alpha)$ a \emph{geometric representative} for $x$.\par 
        \end{proposition}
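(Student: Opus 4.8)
The plan is to deduce this from Poincar\'e--Lefschetz duality after replacing the pair $(B,A)$ by a pair of compact smooth manifolds. First, since $(B,A)$ is homotopy equivalent to a CW pair and any class $x\in H_p(B,A)$ is carried by a relative cycle with compact image, $x$ is pulled back from $H_p(B_0,A_0)$ for a \emph{finite} subcomplex pair $(B_0,A_0)$; pushing forward a geometric representative along $(B_0,A_0)\to(B,A)$ yields one for $x$, so I may assume $B$ is a finite complex and $A$ a subcomplex. Then I would fix a smooth embedding $B\hookrightarrow\R^d$ for $d\gg 0$ and choose compatible smooth regular neighbourhoods $N(A)\subseteq N(B)$ with $N(A)$ lying in the interior of the compact $d$-manifold-with-boundary $N(B)$, and with collapses $N(A)\searrow A$, $N(B)\searrow B$ realizing a homotopy equivalence of pairs $(N(B),N(A))\simeq(B,A)$.

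Next, I would set $V:=\overline{N(B)\setminus N(A)}$, a compact oriented $d$-manifold with $\partial V=\partial N(B)\sqcup\partial N(A)$ (the two boundary pieces being disjoint because $N(A)$ is interior), and form the double $X:=V\cup_{\partial N(B)}V$ along the component $\partial N(B)$. This $X$ is a compact oriented $d$-manifold whose boundary $\partial X$ is two copies of $\partial N(A)$, and it carries the inclusion $j\colon V\hookrightarrow X$ of one half together with the fold retraction $r\colon X\to V$, satisfying $r\circ j=\mathrm{id}_V$, $j(\partial N(A))\subseteq\partial X$, and $r(\partial X)=\partial N(A)$. Excision (after a standard collar adjustment) identifies $H_p(B,A)\cong H_p(N(B),N(A))\cong H_p(V,\partial N(A))$; write $x'$ for the image of $x$.

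Finally, Poincar\'e--Lefschetz duality for $X$ gives an isomorphism $\cap[X,\partial X]\colon H^{d-p}(X)\xrightarrow{\ \sim\ }H_p(X,\partial X)$; I would take $\alpha\in H^{d-p}(X)$ to be the class with $\alpha\cap[X,\partial X]=j_*x'$, so that $r_*(\alpha\cap[X,\partial X])=r_*j_*x'=x'$. Setting $f$ to be the composite $X\xrightarrow{r}V\hookrightarrow N(B)\xrightarrow{\simeq}B$, which sends $\partial X$ into $N(A)$ and hence into $A$, and $i:=d$, gives $f_*(\alpha\cap[X])=x$, a geometric representative (when $A=\varnothing$ this degenerates to $X=$ the double of a regular neighbourhood of $B$, a closed manifold, with $\alpha$ the Poincar\'e dual of $x$). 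The formal ingredients---the duality isomorphism and the identity $r\circ j=\mathrm{id}$---are routine; the step needing genuine care is the point-set/PL input: arranging $N(A)\subseteq N(B)$ with $N(A)$ interior, the compatible collapses, and the smoothing so that $V$ and $X$ are honest smooth compact oriented manifolds with the stated boundaries, and verifying that excision applies. This is exactly where the hypothesis that $(B,A)$ is equivalent to a finite CW pair enters (so that it embeds in Euclidean space with well-behaved regular neighbourhoods), and it is the only genuinely delicate point.
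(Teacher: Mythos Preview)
The paper does not prove this proposition; it simply attributes the statement to Jakob \cite{Jakob} and uses it as a black box. Your proposal therefore goes beyond what the paper does by supplying an actual argument.

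Your approach is essentially the standard one underlying Jakob's geometric homology, and it is correct. The reduction to a finite CW pair, the passage to compatible regular neighbourhoods $N(A)\subseteq N(B)$ in $\R^d$, the excision $H_p(N(B),N(A))\cong H_p(V,\partial N(A))$, and the use of Poincar\'e--Lefschetz duality on the double $X=V\cup_{\partial N(B)}V$ all work as you describe. The key algebraic point is exactly as you say: duality gives $\alpha\in H^{d-p}(X)$ with $\alpha\cap[X,\partial X]=j_*x'$, and the fold retraction $r$ with $r\circ j=\mathrm{id}$ pushes this back to $x'$. You are right to flag the geometric input (compatible regular neighbourhoods retracting as a pair, smoothing, disjointness of the two boundary pieces of $V$) as the only place requiring care; all of this is standard PL or smooth topology and holds under the finite-CW hypothesis you reduce to.

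One small clarification worth making explicit: when you double along $\partial N(B)$ you must reverse orientation on the second copy of $V$ so that $X$ is oriented, but this does not affect the fold map $r$ as a continuous map of pairs, so $r\circ j=\mathrm{id}$ still holds and the argument goes through unchanged.
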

        \begin{definition}
            We define the \emph{geometric coproduct} to be the map 
            \begin{equation}
                \Delta^{geo}: H_*(\cL M) \to H_{*+1-n}(\cL M \times \cL M, (M \times \cL M) \cup (\cL M\times M)) 
            \end{equation}
            defined as follows.\par 
            Let $x \in H_p(\cL M)$, and let $(X^i, f, \alpha)$ be a geometric representative for $x$.\par 
            Assume that $Y=Y(f, X)$ satisfies the conclusion of Lemma \ref{lem: transverse}. Let $g = \operatorname{cut} \circ (f \times Id_{[0,1]}): Y \to \cL M \times \cL M$; this sends $\partial Y$ to $(\cL M \times M) \cup (M \times \cL M)$.\par 
            We define 
            \begin{equation}
                \Delta^{geo}(x) = (-1)^{n(i-p)}g_*\left(\alpha|_Y \cap [Y]\right).
            \end{equation}
        \end{definition}
        \begin{remark}
            It is not immediate that the definition for $\Delta$ is independent of choices, since the representation $x=f_*(\alpha \cap [M])$ is not unique. However its failure to be unique is completely classified by Jakob \cite{Jakob}. Using this, one could show independence of choices directly.\par 
            We do not carry this out. Instead, it follows from Proposition \ref{prop: geo GH comp} or Proposition \ref{prob: comp cop geo sp} that $\Delta^{geo}$ is well-defined.
        \end{remark}
    \subsection{From the Goresky-Hingston to the geometric coproduct}
        In this section, we prove:
        \begin{proposition}\label{prop: geo GH comp}
            $\Delta^{geo}(x) = \Delta^{GH}(x)$ for all $x \in H_*(\cL M)$.
        \end{proposition}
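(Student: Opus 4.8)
The plan is to track a geometric representative through the four maps composing $\Delta^{GH}$ and check that the outcome is precisely the representative used to define $\Delta^{geo}$. First I would fix $x \in H_p(\cL M)$ and a geometric representative $(X^i, f, \alpha)$, so that $x = f_*(\alpha \cap [X])$; after applying Lemma \ref{lem: transverse} I may assume $Y = Y(f,X)$ is a transversally cut out submanifold of $X \times [0,1]$, with $\partial Y \subseteq X \times \{0,1\}$ met transversally. Then the cross product $x \times [0,1] \in H_{*+1}(\cL M \times [0,1], \cL M \times \{0,1\})$ has the evident geometric representative $(X \times [0,1],\ f \times \mathrm{Id}_{[0,1]},\ \pi_X^*\alpha)$, with boundary $X \times \{0,1\}$ mapping into $\cL M \times \{0,1\}$.

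The heart of the argument is the cap product with $\tau_{GH}$. By \cref{eg: Thom GH YX}, $(f \times \mathrm{Id})^*\tau_{GH} = \tau_{Y \subseteq X \times [0,1]}$ is the Thom class of the normal bundle of $Y$, so I would invoke the standard identity expressing the cap product with the Thom class of (a tubular neighbourhood of) a submanifold as restriction to the transverse preimage --- applied with target pair $(U_{GH}, \cL M \times \{0,1\})$, using that $\cL M \times \{0,1\} \subseteq \cF \subseteq U_{GH}$ --- to deduce that $\tau_{GH} \cap (x \times [0,1])$ is represented by $(Y,\ (f \times \mathrm{Id})|_Y,\ \alpha|_Y)$, up to a sign to be determined, with $Y$ oriented as in (\ref{eq: or Y}). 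Note that the image of $(f\times\mathrm{Id})|_Y$ in fact lies in $\cF$, since $f(x)(t) = f(x)(0)$ on $Y$; and Lemma \ref{lem: transverse} is exactly what is needed so that the transverse preimage of the diagonal locus is the smooth manifold $Y$, behaving correctly near the ends $t = 0,1$, where $\cF$ itself is not smooth.

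To finish, I would observe that $R_{GH}$ is a retraction of $U_{GH}$ onto $\cF$ whose restriction to $\cF$ is homotopic to the identity: for $(\gamma,s) \in \cF$ one has $\gamma(s) = \gamma(0)$, so the two $\theta$-paths inserted by $R_{GH}$ are constant and may be deleted by a reparametrisation homotopy. Hence $(R_{GH})_*$ leaves the representative unchanged, and since $g = \operatorname{cut} \circ (f \times \mathrm{Id}_{[0,1]})|_Y$ by definition, applying $\operatorname{cut}_*$ yields $g_*(\alpha|_Y \cap [Y])$; comparing with the definition of $\Delta^{geo}$ completes the identification, provided the accumulated sign equals $(-1)^{n(i-p)}$.

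The hard part will be precisely this last sign. The homotopy-theoretic steps are routine, but pinning down the constant requires carefully combining the Koszul sign from the cross product with $[0,1]$ together with its placement relative to the $X$-factor, the sign appearing in the Bredon-convention formula (\cite{Bredon}, as used in \cite{Naef-Rivera-Wahl}) for capping with a Thom class, and the orientation convention (\ref{eq: or Y}) relating $T(X \times [0,1])|_Y$, $\nu_{Y \subseteq X \times [0,1]}$ and $TY$. As a consistency check on degrees: $\alpha \in H^{i-p}(X)$ and $\dim Y = i + 1 - n$, so $\alpha|_Y \cap [Y] \in H_{p+1-n}(Y)$, which is the target degree $*+1-n$.
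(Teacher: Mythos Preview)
Your approach is essentially the same as the paper's: the paper packages your cross-product step as Lemma~\ref{lem: geo prod int} and your Thom-class cap step as Lemma~\ref{lem: cop comp tau int}, then finishes exactly as you describe. Two small sharpenings: first, with the paper's constant-speed concatenation conventions the $\theta$-paths in $R_{GH}|_{\cF}$ are literally constant, so $R_{GH}$ acts as the identity on $\cF$ (no reparametrisation homotopy needed); second, the sign you flag as the hard part is computed in the paper by a short chain of standard identities---the only nontrivial contribution is the Koszul sign $(-1)^{n(i-p)}$ from commuting $(f\times Id)^*\tau_{GH}$ (degree $n$) past $\alpha$ (degree $i-p$) via \cite[(A.3)]{hingston2017product}, with no sign arising from the $\times[0,1]$ step.
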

        This extends \cite[Proposition 3.7]{hingston2017product} in the case $x= f_*[X]$ for $f: X \to \cL M$ a map from a closed oriented manifold, and is proved similarly.
        \begin{lemma}\label{lem: geo prod int}
            Let $x \in H_p(\cL M)$, and assume $x$ has geometric representative $(X^i, f, \alpha)$. Then 
            \begin{equation}
                x \times [0,1] = \left(f \times Id_{[0,1]}\right)_* \left(\alpha \cap [X \times [0,1]]\right) \in H_{p+1}\left(\cL M\times [0,1], \cL M \times \{0,1\}\right)
            \end{equation}
        \end{lemma}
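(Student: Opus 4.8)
The plan is to reduce the identity to the standard compatibility between the cross product, the cap product, and the fundamental class of a product manifold, together with naturality of the cross product.

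First I would recall that the fundamental class of the manifold-with-boundary $X \times [0,1]$ factors through the relative cross product: viewing $(X \times [0,1], X \times \{0,1\})$ as $(X, \emptyset) \times ([0,1], \{0,1\})$, one has $[X \times [0,1]] = [X] \times [[0,1]]$ in $H_{i+1}(X \times [0,1], X \times \{0,1\})$, where $[X] \in H_i(X)$ is the fundamental class and $[[0,1]] \in H_1([0,1], \{0,1\})$ is the generator fixed by the standard orientation (cf. \cite{Bredon}). Next, by the convention implicit in the statement the class $\alpha \in H^{i-p}(X)$ is pulled back to $H^{i-p}(X \times [0,1])$ along the projection, i.e. $\alpha = \alpha \times 1$ with $1 \in H^0([0,1])$. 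Applying the cap--cross compatibility
\[ (\alpha \times 1) \cap \big([X] \times [[0,1]]\big) = (\alpha \cap [X]) \times (1 \cap [[0,1]]) = (\alpha \cap [X]) \times [[0,1]], \]
in which no sign occurs because the cohomology factor on the interval has degree $0$, gives $\alpha \cap [X \times [0,1]] = (\alpha \cap [X]) \times [[0,1]]$ in $H_{p+1}(X \times [0,1], X \times \{0,1\})$. Finally, $f \times \mathrm{Id}_{[0,1]}$ is a map of pairs, so naturality of the cross product yields
\[ \big(f \times \mathrm{Id}_{[0,1]}\big)_* \big((\alpha \cap [X]) \times [[0,1]]\big) = f_*(\alpha \cap [X]) \times [[0,1]] = x \times [[0,1]], \]
which is exactly $x \times [0,1]$ as that operation is used in the definition of $\Delta^{GH}$.

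The main obstacle is not conceptual but a careful bookkeeping of conventions: one must check that Bredon's sign conventions for $\cap$ and $\times$, the orientation chosen for $[[0,1]]$, and the position of the interval factor in the cross product are all consistent with the conventions entering the definition of $\Delta^{GH}$ and the orientation of $Y$ fixed in \cref{eq: or Y}. Once these are pinned down the argument is sign-free for the reason noted above, but reconciling them is where the care lies.
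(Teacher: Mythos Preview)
Your proof is correct and follows essentially the same approach as the paper: decompose $[X \times [0,1]] = [X] \times [0,1]$, apply the cap--cross compatibility (with no sign since the interval cohomology factor has degree $0$), and then use naturality of the cross product under $(f \times \mathrm{Id}_{[0,1]})_*$. The paper's proof is slightly more terse and omits your closing remarks about sign bookkeeping, but the argument is the same.
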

        \begin{proof}
            \begin{align*}
                \left(f \times Id_{[0,1]}\right)_*\left(\alpha \cap [X \times [0,1]]\right) 
                &= \left(f \times Id_{[0,1]}\right)_* \left(\alpha \cap ([X] \times [0,1]) \right) \\
                &= \left(f \times Id_{[0,1]}\right)_* \left( (\alpha \cap [X]) \times (1 \cap [0,1])\right) \\
                &= f_*(\alpha \cap [X]) \times (Id_{[0,1]})_*[0,1]\\
                &= x \times [0,1]
            \end{align*}
        \end{proof}
        \begin{lemma}\label{lem: cop comp tau int}
            Let $x \in H_p(\cL M)$, and assume $x$ has geometric representative $(X^i, f, \alpha)$, such that $Y = Y(f, X)$ satisfies the conclusion of Lemma \ref{lem: transverse}. Then 
            \begin{equation}
                \tau_{GH} \cap (x \times [0,1]) = (-1)^{n(i-p)} \left(f \times Id_{[0,1]}\right)_* (\alpha|_Y \cap [Y])
            \end{equation}
            noting that $f \times Id_{[0,1]}$ sends $Y$ to $\cF$ and sends $\partial Y$ to $\cL M \times \{0,1\}$.
        \end{lemma}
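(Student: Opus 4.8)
The plan is to combine Lemma \ref{lem: geo prod int} with the projection formula for cap products and the standard identification of the Thom class of a normal bundle with the Poincar\'e dual of a transversally cut out submanifold. By Lemma \ref{lem: geo prod int} we have $x \times [0,1] = (f \times Id_{[0,1]})_*\left(\alpha \cap [X \times [0,1]]\right)$ in $H_{p+1}(\cL M \times [0,1], \cL M \times \{0,1\})$, where $[X\times[0,1]]$ is the relative fundamental class in $H_{i+1}(X\times[0,1], X\times\{0,1\})$ and we write $\alpha$ also for its pullback to $X\times[0,1]$. Capping with $\tau_{GH} \in H^n(U_{GH}, \partial U_{GH})$, using naturality of the cap product under $f\times Id_{[0,1]}$ (the projection formula) together with the identity $(f\times Id_{[0,1]})^*\tau_{GH} = \tau_{Y\subseteq X\times[0,1]}$ recorded in \cref{eg: Thom GH YX}, we obtain
\[
  \tau_{GH}\cap(x\times[0,1]) = (f\times Id_{[0,1]})_*\bigl(\tau_{Y\subseteq X\times[0,1]}\cap(\alpha\cap[X\times[0,1]])\bigr),
\]
where $\tau_{Y\subseteq X\times[0,1]}\in H^n$ is supported in a tubular neighbourhood of $Y$.

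Next I would rearrange the (co)homology operations over $X\times[0,1]$. Writing $\tau := \tau_{Y\subseteq X\times[0,1]}$ and using associativity of cap with cup, graded-commutativity of the cup product (with $\deg\alpha = i-p$, $\deg\tau = n$), we get
\[
  \tau\cap(\alpha\cap[X\times[0,1]]) = (\tau\cup\alpha)\cap[X\times[0,1]] = (-1)^{n(i-p)}\,\alpha\cap\bigl(\tau\cap[X\times[0,1]]\bigr).
\]
The geometric input is that, since $Y$ is transversally cut out in $X\times[0,1]$ with boundary meeting $X\times\{0,1\}$ transversally (Lemma \ref{lem: transverse}), capping the relative fundamental class with the Thom class of $\nu_{Y\subseteq X\times[0,1]}$ recovers the relative fundamental class of $Y$: $\tau\cap[X\times[0,1]] = \iota_*[Y]$ in $H_{i+1-n}(X\times[0,1], X\times\{0,1\})$, where $\iota: Y\hookrightarrow X\times[0,1]$ and $[Y]$ carries precisely the orientation fixed in \cref{eq: or Y}. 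Applying the projection formula once more, $\alpha\cap\iota_*[Y] = \iota_*(\iota^*\alpha\cap[Y]) = \iota_*(\alpha|_Y\cap[Y])$. Assembling these identities and using that $(f\times Id_{[0,1]})\circ\iota$ is exactly the map $Y\to\cF\subseteq\cL M\times[0,1]$ yields the claimed formula.

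The main obstacle I anticipate is the bookkeeping with supports and relative groups: $\tau_{GH}$ lives in $H^n(U_{GH},\partial U_{GH})$, and one must arrange the excision and restriction isomorphisms so that each cap product above is defined in the asserted relative homology group — in particular that boundary contributions along $X\times\{0,1\}$ are handled correctly, using $\cF\supseteq\cL M\times\{0,1\}$ and that $Y$ meets $X\times\{0,1\}$ transversally. The other delicate point, essentially a sign check, is confirming that the orientation convention \cref{eq: or Y} — chosen so that $\nu_{Y\subseteq X\times[0,1]}\oplus TY \to T(X\times[0,1])|_Y$ is orientation-preserving — is exactly the one for which $\tau\cap[X\times[0,1]] = +\iota_*[Y]$ without an extra sign, so that the only sign appearing is $(-1)^{n(i-p)}$ from commuting $\alpha$ past $\tau$; this should match the convention in \cite[Proposition 3.7]{hingston2017product}, which this lemma extends.
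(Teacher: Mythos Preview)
Your proposal is correct and follows essentially the same argument as the paper: apply Lemma~\ref{lem: geo prod int}, then the projection formula and the identification $(f\times Id_{[0,1]})^*\tau_{GH}=\tau_{Y\subseteq X\times[0,1]}$, then associativity and graded-commutativity of cup/cap to produce the sign $(-1)^{n(i-p)}$, and finally use that capping with the Thom class recovers $[Y]$ with the orientation of \cref{eq: or Y}. The paper's proof is exactly this chain of equalities, citing \cite[(A.1), (A.3)]{hingston2017product}, \cite[Proposition VI.5.1.iv]{Bredon}, and \cite[Proof of Proposition 3.7]{hingston2017product} for the individual steps you outline.
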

        \begin{proof}
            \begin{align*}
                \tau_{GH} \cap (x \times [0,1]) 
                &= \tau_{GH} \cap \left(f \times Id_{[0,1]}\right)_* \left(\alpha \cap [X \times [0,1]]\right) \\
                &= \left(f \times Id_{[0,1]}\right)_*
                \left(\left(f \times Id_{[0,1]}\right)^* \tau_{GH} \cap \left(\alpha \cap[X \times [0,1]]\right)\right)\\
                &= \left(f \times Id_{[0,1]}\right)_*
                \left(\left(\left(f \times Id_{[0,1]}\right)^* \tau_{GH} \cup \alpha \right) \cap[X \times [0,1]]\right)\\ 
                &= (f \times Id_{[0,1]})_* \left((\tau_{Y \subseteq X \times [0,1]} \cup \alpha) \cap [X \times [0,1]]\right)\\
                &= (-1)^{n(i-p)} (f\times Id_{[0,1]})_* \left(\alpha \cap (\tau_{Y \subseteq X \times [0,1]} \cap [X \times [0,1]]\right) \\
                &= (-1)^{n(i-p)} (f \times Id_{[0,1]})_* (\alpha|_Y \cap [Y])
            \end{align*}
            The first equality is by Lemma \ref{lem: geo prod int}, the second is by \cite[(A.1)]{hingston2017product}, the third by \cite[Proposition VI.5.1.iv]{Bredon}, the fourth by (\ref{eg: Thom GH YX}), the fifth by \cite[(A.3)]{hingston2017product} and the sixth by Poincar\'e duality (see e.g. \cite[Proof of Proposition 3.7]{hingston2017product}).
        \end{proof}
        \begin{proof}[Proof of Proposition \ref{prop: geo GH comp}]
            Let $x \in H_*(\cL M)$, and $(X^i,f, \alpha)$ a geometric representative for $x$. Note that $f \times Id_{[0,1]}$ sends $Y$ to $\cF \subseteq U_{GH}$, so $R_{GH}$ acts on it by the identity. 
            \begin{align*}
                \Delta^{GH}(x) &= (\operatorname{cut} \circ R_{GH})_* (\tau_{GH} \cap [x \times [0,1]])\\
                &= (-1)^{n(i-p)} (\operatorname{cut}\circ (f \times Id_{[0,1]}))_*(\alpha|_Y \cap [Y])\\
                &= \Delta^{geo}(x)
            \end{align*}
            where the second equality is by Lemma \ref{lem: cop comp tau int}, and the others are by definition.
        \end{proof}
    \subsection{From the geometric to the spectral coproduct}
        In this section, we prove that taking homology and applying the Thom isomorphism, the spectral coproduct from Section \ref{sec: coproduct section} agrees with the geometric coproduct, up to sign. More precisely:
        \begin{proposition}\label{prob: comp cop geo sp}
            The following diagram commutes up to a sign of $(-1)^n$:
            \begin{equation}
                \begin{tikzcd}
                    H_*(\cL M^{-TM} \wedge S^1) \arrow[r, "\Delta_*"] \arrow[d, "\operatorname{Thom} \wedge Id_{S^1}"] &
                    H_*\left(\Sigma^\infty \frac{\cL M}M \wedge \frac{\cL M}M\right) \arrow[d, "="]\\
                    H_{*+n}(\cL M_+ \wedge S^1) & 
                    \tilde H_*\left(\frac{\cL M}M \wedge \frac{\cL M}M\right)\\ 
                    H_{*+n-1}(\cL M) 
                    \arrow[u, "\cdot \times {[}0{,}1{]}"] 
                    \arrow[ur, "\Delta^{geo}"] &
                \end{tikzcd}
            \end{equation}
        \end{proposition}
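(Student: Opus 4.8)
The plan is to evaluate the clockwise composite in the diagram on a geometric representative, recognise the result as a Pontryagin--Thom construction supported on the self-intersection locus, match it with the formula defining $\Delta^{geo}$, and then (the only subtle point) account for the sign.

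Since $M$ is closed we may replace $\Delta$ by the version of Definition~\ref{def: closed cop} (homotopic to the one of Definition~\ref{def: cop} by the remark after that definition), whose unstable model \eqref{eq:closed case} avoids the flow $\phi$ and the crushing map $B$. Fix $x\in H_{*+n-1}(\cL M)$ and, by Jakob's theorem, a geometric representative $(X^i,f,\alpha)$, so $x=f_*(\alpha\cap[X])$ with $\alpha\in H^{i-*-n+1}(X)$; write $p:=*+n-1=i-|\alpha|$. By Lemma~\ref{lem: geo prod int}, $x\times[0,1]\in H_{*+n}(\cL M_+\wedge S^1)$ is represented by $(X\times[0,1],\,f\times\operatorname{Id}_{[0,1]},\,\pi_X^*\alpha)$ with $\pi_X$ the projection to $X$. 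After homotoping $f$ we may assume, by Lemma~\ref{lem: transverse}, that $Y:=Y(f,X)$ is a transversally cut-out submanifold of $X\times[0,1]$ which is the transverse preimage of the diagonal under $e_I\circ(f\times\operatorname{Id}_{[0,1]})$, with $\partial Y\subseteq X\times\{0,1\}$.

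Next I would make the inverse Thom isomorphism explicit at a level $L\gg 0$. Put $q:=\operatorname{ev}_0\circ f\colon X\to M$ and $\widehat X:=D(q^*\nu_e)\times[0,1]$ (compact with boundary, of dimension $i+(L-n)+1$); let $\widehat f\colon\widehat X\to\cL M^{D\nu_e}\wedge S^1$ send $((x,w),t)$ to $(w,f(x),t)$, with $w$ in the fibre over $f(x)(0)$ and viewed in $\bR^L$ via $\rho$, and let $\widehat\alpha$ be the pullback of $\alpha$. Capping the Thom class of $\nu_e$ with $\widehat f_*(\widehat\alpha\cap[\widehat X,\partial\widehat X])$ recovers $x\times[0,1]$ (modulo signs, exactly as in Lemma~\ref{lem: geo prod int}), so up to the conventions of Appendix~\ref{sec: sp} this class represents $\operatorname{Thom}^{-1}(x\times[0,1])\in H_*(\cL M^{-TM}\wedge S^1)$ at level $L$; I would record the precise comparison of Thom-class normalisations here, since it feeds into the final sign.

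The heart of the proof is to identify $(\Delta_{unst})_*\widehat f_*(\widehat\alpha\cap[\widehat X,\partial\widehat X])=(\Delta_{unst}\circ\widehat f)_*(\widehat\alpha\cap[\widehat X,\partial\widehat X])$ with a Pontryagin--Thom collapse onto $Y$. The map $\Delta_{unst}\circ\widehat f$ sends $((x,w),t)$ to the basepoint unless the incidence condition $\lVert w-f(x)(t)\rVert\le\eps$ holds, so it is supported on the subset $Z\subseteq\widehat X$ where it holds, and its first coordinate there is $F((x,w),t):=\tfrac2\eps(w-f(x)(t))\in\bR^L$ (the $\Sigma^L$-direction). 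I would check three things. First, $F^{-1}(0)=\{((x,0_{f(x)(0)}),t):f(x)(t)=f(x)(0)\}$, which is precisely a copy of $Y$: a point $w$ of the $\eps$-tube lying over $f(x)(0)$ can equal the point $f(x)(t)\in M$ only if $f(x)(t)=f(x)(0)$ and $w$ is the zero vector. Second, $F$ is a submersion along $F^{-1}(0)$ --- the $w$-directions surject onto the normal fibre $(\nu_e)_{f(x)(0)}$, while transversality of $Y$ makes the $(x,t)$-directions surject onto $T_{f(x)(0)}M$, hence together onto $\bR^L$ --- so $Z$ is a tubular neighbourhood of $Y$ in $\widehat X$ whose normal bundle $F$ identifies with $\underline{\bR^L}\cong q^*(TM\oplus\nu_e)$, using the canonical isomorphism $\nu_{Y\subseteq X\times[0,1]}\cong q^*TM$ of \eqref{eg: Thom GH YX}. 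Third, on $F^{-1}(0)=Y$, where $f(x)(t)=f(x)(0)$, the $\theta$-pieces in the two path-coordinates of \eqref{eq:closed case} are constant, so those coordinates restrict on $Y$ to $(\gamma|_{[0,t]},\gamma|_{[t,1]})$, i.e.\ to $\operatorname{cut}\circ(f\times\operatorname{Id}_{[0,1]})|_Y=g$ in the notation of the geometric section. Thus $\Delta_{unst}\circ\widehat f$ is exactly the Pontryagin--Thom collapse of $Y\subseteq\widehat X$ followed by $g$ and the $L$-fold suspension; applying it to $\widehat\alpha\cap[\widehat X,\partial\widehat X]$ and using $\widehat\alpha|_Y=\alpha|_Y$ yields $\pm\,\Sigma^L\big(g_*(\alpha|_Y\cap[Y])\big)$, so after desuspension $(\Delta)_*\operatorname{Thom}^{-1}(x\times[0,1])=\pm\,g_*(\alpha|_Y\cap[Y])=\pm\,\Delta^{geo}(x)$. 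It remains to show the sign is $(-1)^n$: all the $L$-dependent Koszul signs (from commuting the degree-$(i-p)$ class $\widehat\alpha$ past the $\bR^L$-collapse, and from the Thom normalisation on $\cL M^{-TM}$) must cancel against the conventions of Appendix~\ref{sec: sp}, leaving only the discrepancy between the co-orientation of $Y$ forced by the order of coordinates in \eqref{eq:closed case} (the transverse-to-$Y$, i.e.\ $TM$, directions before the $\nu_e$ directions, matching $\tau_M\cup\alpha$) and the one built into $\Delta^{geo}$ via \eqref{eq: or Y} and the reordering in Lemma~\ref{lem: cop comp tau int}, which contributes a net $(-1)^n$; alternatively one can route the comparison through $\Delta^{GH}$ using Proposition~\ref{prop: geo GH comp}, matching $F$ directly with $\tau_{GH}\cap(\,\cdot\,)$. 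I expect this sign bookkeeping to be the only real obstacle; the rest is a lengthy but routine unwinding of the Pontryagin--Thom construction and the Thom isomorphism.
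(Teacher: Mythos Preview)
Your approach is essentially the same as the paper's: pull back the disc bundle to obtain $\widehat X$ (the paper writes $\tilde X$), identify the preimage of $\{0\}\times\cL M\times\cL M$ under $\Delta_{unst}\circ\widehat f$ with $Y$, observe the path coordinates restrict to $g$, and compare orientations. The paper makes the sign bookkeeping explicit by defining inverse maps $\Theta$ and $\Phi$ on geometric representatives with precise sign formulas and checking that the $L$-dependent Koszul signs cancel, leaving only the orientation discrepancy on $Y$.

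One correction to your sign explanation: the $(-1)^n$ does not come from a reordering of the $TM$ and $\nu_e$ factors (that would contribute $(-1)^{n(L-n)}$). It comes from the \emph{minus sign} in the first coordinate $v-\gamma(t)$ of \eqref{eq:closed case}: the linearisation of this map along $Y'\cong Y$ is $(u,v)\mapsto u-v$ on $\nu_e\oplus TM\to\bR^L$, whereas the orientation-preserving identification is $(u,v)\mapsto u+v$; since $TM$ has rank $n$, these differ by $(-1)^n$. The paper records this via the commutative square \eqref{eq: wrfewrf}.
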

        \begin{corollary}\label{cor: GH spec cop}
            By Proposition \ref{prop: geo GH comp}, it follows that Proposition \ref{prob: comp cop geo sp} also holds with $\Delta^{geo}$ replaced with $\Delta^{GH}$.
        \end{corollary}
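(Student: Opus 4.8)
The plan is to deduce this corollary purely formally from the two propositions already in hand, with no new geometric input. Proposition \ref{prob: comp cop geo sp} asserts that the pentagon-shaped diagram relating $H_*(\cL M^{-TM} \wedge S^1)$, $H_*(\Sigma^\infty \frac{\cL M}{M} \wedge \frac{\cL M}{M})$, $H_{*+n}(\cL M_+ \wedge S^1)$, $\tilde H_*(\frac{\cL M}{M} \wedge \frac{\cL M}{M})$ and $H_{*+n-1}(\cL M)$ commutes up to a sign of $(-1)^n$, where the lower-right diagonal arrow is $\Delta^{geo}$. Proposition \ref{prop: geo GH comp} asserts the equality of homomorphisms $\Delta^{geo} = \Delta^{GH}: H_{*}(\cL M) \to H_{*+1-n}(\cL M \times \cL M, (M \times \cL M) \cup (\cL M \times M))$.

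First I would observe that the target of the diagonal arrow in Proposition \ref{prob: comp cop geo sp}, namely $\tilde H_*(\frac{\cL M}{M} \wedge \frac{\cL M}{M})$, is canonically identified with $H_{*}(\cL M \times \cL M, (M \times \cL M) \cup (\cL M \times M))$ by the standard relative-homology/smash-product identification (the quotient $\frac{\cL M}{M} \wedge \frac{\cL M}{M}$ is the double mapping cone presenting exactly this relative group, using that the inclusion $M \hookrightarrow \cL M$ of constant loops is a cofibration). Under this identification the codomains of $\Delta^{geo}$ and $\Delta^{GH}$ appearing respectively in Propositions \ref{prob: comp cop geo sp} and \ref{prop: geo GH comp} coincide, so the equality $\Delta^{geo} = \Delta^{GH}$ of Proposition \ref{prop: geo GH comp} is literally an equality of the two diagonal arrows. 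Therefore one may substitute $\Delta^{GH}$ for $\Delta^{geo}$ in the statement of Proposition \ref{prob: comp cop geo sp} without altering the truth of the $(-1)^n$-commutativity claim; this is exactly the assertion of the corollary.

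The only point requiring a small amount of care — and the one I would expect to be the main (mild) obstacle — is checking that the two propositions are genuinely talking about the same relative homology group with the same sign/orientation conventions, since Proposition \ref{prop: geo GH comp} is phrased in terms of $H_{*+1-n}(\cL M \times \cL M, (M \times \cL M) \cup (\cL M \times M))$ while Proposition \ref{prob: comp cop geo sp} is phrased via $\tilde H_*(\frac{\cL M}{M} \wedge \frac{\cL M}{M})$ and a degree shift $H_{*+n-1}(\cL M) \to \tilde H_*(\cdots)$. One verifies that, with $x \in H_{p}(\cL M)$ so $*+n-1 = p$ hence $* = p+1-n$, both sides land in degree $p+1-n$, matching the degree in Proposition \ref{prop: geo GH comp}; and that the identification $\tilde H_*(\frac{\cL M}{M} \wedge \frac{\cL M}{M}) \cong H_*(\cL M \times \cL M, (M \times \cL M) \cup (\cL M \times M))$ used here is the same one implicitly used in the definition of $\Delta^{geo}$ (whose output $g_*(\alpha|_Y \cap [Y])$ is manifestly an element of the relative group). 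Once this bookkeeping is confirmed, the corollary follows immediately by transitivity, with the $(-1)^n$ sign inherited unchanged from Proposition \ref{prob: comp cop geo sp}. I would write this up in two or three sentences, citing Propositions \ref{prop: geo GH comp} and \ref{prob: comp cop geo sp} and noting the identification of codomains.
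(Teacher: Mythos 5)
Your proposal is correct and matches the paper's (implicit) argument exactly: the corollary is a formal substitution of $\Delta^{GH}$ for $\Delta^{geo}$ in the diagram of Proposition \ref{prob: comp cop geo sp}, justified by the equality of maps in Proposition \ref{prop: geo GH comp}. The codomain identification and degree bookkeeping you spell out are the only points of care, and they check out as you describe.
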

        Choose an embedding $e: M \hookrightarrow \bR^L$ for some $L \gg 0$ and {embedding data} for $M$ extending $e$. Using the identifications from Definitions \ref{def: hom sp}, \ref{def: thom iso}, we see that it suffices to show that the following diagram commutes:
        \begin{equation}\label{eq: big comp diag}
            \begin{tikzcd}
                \tilde H_*\left(\cL M^{D\nu_e} \wedge S^1 \right) 
                \arrow[r, "(\Delta_{unst})_*"] 
                \arrow[d, "\tau_{\nu_e} \cap \cdot"] & \tilde H_*\left(\Sigma^L \frac{\cL M}M \wedge \frac{\cL M}M \right) 
                \arrow[d, "\Phi", shift left=5]\\
                \tilde H_{*+n-L}\left(\cL M_+ \wedge S^1\right) 
                \arrow[u, "\Theta", shift left=5] &
                \tilde H_{*-L}\left(\frac{\cL M}M \wedge \frac{\cL M}M \right) 
                \arrow[u, "{[}-1{,}1{]}^L \times \cdot"] \\
                H_{*+n-L-1}(\cL M) 
                \arrow[u, "\cdot \times {[}0{,}1{]}"] 
                \arrow[ur, "(-1)^n \cdot \Delta^{geo}"] &
            \end{tikzcd}
        \end{equation}
        Here $\Theta$ and $\Phi$ are maps which we define shortly, inverse to the corresponding maps in the reverse direction. Note all vertical maps in (\ref{eq: big comp diag}) are isomorphisms.
        \begin{lemma}\label{lem: prod disc sign}
            Let $x \in \tilde H_{*-L}\left(\frac{\cL M}M \wedge \frac{\cL M}M\right)$ have geometric representative $(X^i, f, \alpha)$, with $f: X \to \cL M \times \cL M$ sending $\partial X$ to $(\cL M \times M) \cup (M \times \cL M)$. Then
            \begin{equation}
                [-1,1]^L \times x = (-1)^{L(i-p)} (Id_{[-1,1]^L} \times f)_* \left(\alpha \cap [[-1,1]^L \times X]\right)
            \end{equation}
        \end{lemma}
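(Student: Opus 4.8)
The plan is to reduce the claim, exactly as in the proof of \cref{lem: cop comp tau int}, to the standard compatibility between the homology cross product and the cap product. Recall that a geometric representative $(X^i, f, \alpha)$ for $x$ means $x = f_*(\alpha \cap [X])$, where $[X] \in H_i(X, \partial X)$ is the fundamental class, $\alpha \in H^{i-p}(X)$ with $p = *-L$, so that $\alpha \cap [X] \in H_p(X, \partial X)$, and $f$ carries $\partial X$ into $(\cL M \times M) \cup (M \times \cL M)$. By naturality of the cross product under $Id_{[-1,1]^L}\times f$, it is then enough to establish the identity
\begin{equation*}
    [-1,1]^L \times (\alpha \cap [X]) = (-1)^{L(i-p)}\,\alpha \cap [\,[-1,1]^L \times X\,]
\end{equation*}
inside $H_{p+L}\big([-1,1]^L \times X,\ \partial([-1,1]^L \times X)\big)$, where on the right $\alpha$ is pulled back along the projection $[-1,1]^L \times X \to X$ and $[\,[-1,1]^L \times X\,]$ is the relative fundamental class of the product (a compact manifold with corners), oriented with the cube coordinates first; applying $(Id_{[-1,1]^L} \times f)_*$ then gives the lemma.

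To prove that identity I would first write $[-1,1]^L \times (\alpha \cap [X]) = (1 \cap [\,[-1,1]^L\,]) \times (\alpha \cap [X])$, where $1 \in H^0([-1,1]^L)$ is the unit and $[\,[-1,1]^L\,] \in H_L([-1,1]^L, \partial[-1,1]^L)$ is the relative fundamental class, and then invoke the relative cross/cap formula (the relative form of \cite[(A.3)]{hingston2017product}; cf. \cite[Chapter VI]{Bredon})
\begin{equation*}
    (a \times b) \cap (u \times v) = (-1)^{|b|(\dim u - |a|)}\,(a \cap u) \times (b \cap v)
\end{equation*}
with $a = 1$, $b = \alpha$, $u = [\,[-1,1]^L\,]$, $v = [X]$. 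This produces the sign $(-1)^{(i-p)L} = (-1)^{L(i-p)}$; since $1 \times \alpha$ is the pullback $\mathrm{pr}_X^*\alpha$ and $[\,[-1,1]^L\,] \times [X] = [\,[-1,1]^L \times X\,]$ with the product orientation, rearranging yields the displayed identity. Pushing forward by $Id_{[-1,1]^L} \times f$ then lands us in $H_{p+L}$ of the pair $\big([-1,1]^L \times \cL M \times \cL M,\ (\partial[-1,1]^L \times \cL M \times \cL M) \cup ([-1,1]^L \times ((\cL M \times M) \cup (M \times \cL M)))\big)$, which is precisely the group $\tilde H_*\big(\Sigma^L(\tfrac{\cL M}{M} \wedge \tfrac{\cL M}{M})\big)$ appearing in \cref{eq: big comp diag}.

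The content is entirely sign bookkeeping, so the main thing requiring care is pinning down the exponent in the cross/cap identity under the conventions of \cite{Bredon} (an error there would propagate through \cref{prob: comp cop geo sp}); I would also check that the relative versions of the cross and cap identities apply, which is harmless since all the pairs involved are CW (equivalently, good) pairs, so that the relative homology of $(\cL M \times \cL M, (\cL M \times M) \cup (M \times \cL M))$ is identified with the reduced homology of the smash $\tfrac{\cL M}{M} \wedge \tfrac{\cL M}{M}$. As a sanity check, the case $L = 0$ reduces to the defining equation $x = f_*(\alpha \cap [X])$ with trivial sign, and taking $X$ a point (so $i = p = 0$) forces the sign to be $+1$; both are consistent with $(-1)^{L(i-p)}$.
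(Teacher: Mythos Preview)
Your proof is correct and follows the same approach as the paper: both start from $x = f_*(\alpha \cap [X])$, use naturality of the cross product to pull out $(Id_{[-1,1]^L}\times f)_*$, and then apply the cross/cap compatibility formula \cite[(A.3)]{hingston2017product} to produce the sign $(-1)^{L(i-p)}$. The paper's proof is just the three-line computation you outlined, without the additional commentary on conventions and sanity checks.
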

        \begin{proof}
            \begin{align*}
                [-1,1]^L \times x &=
                [-1,1]^L \times f_*(\alpha \cap [X]) \\
                &= (Id_{[-1,1]^L} \times f)_*\left([-1,1]^L \times (\alpha \cap [X])\right) \\
                &= (-1)^{L(i-p)} (Id_{[-1,1]^L} \times f)_*\left(\alpha \cap [[-1,1]^L \times X]\right)
            \end{align*}
            where the final equality is by \cite[(A.3)]{hingston2017product}.
        \end{proof}
        We now define the map $\Phi$ from (\ref{eq: big comp diag}). Let $x \in \tilde H_p\left(\Sigma^L \frac{\cL M}M \wedge \frac{\cL M}M\right)$, and let $(X^i, f, \alpha)$ be a geometric representative for $x$, where $f: X \to [-1,1]^L \times \cL M \times \cL M$ sends $\partial X$ to 
        \begin{equation}
            (\partial [-1,1]^L \times \cL M \times \cL M) \cup [-1,1]^L \times ((\cL M \times M) \cup (M \times \cL M))
        \end{equation}
        Generically perturbing $f$ if necessary, we may assume that $f$ is transverse to $\{0\} \times \cL M \times \cL M$. Let $Z = f^{-1}(\{0\} \times \cL M \times \cL M)$.\par 
        $Z$ is a smooth submanifold of $X$ with normal bundle $\nu_{Z \subseteq X}$ canonically identified with $\bR^L$. We orient $Z$ so that the canonical identification
        \begin{equation}\label{eq: 7.14}
            TX|_Z \cong \bR^L \oplus TZ
        \end{equation}
        is orientation-preserving. \par 
        Note that $f|_Z$ sends $Z$ to $\cL M \times \cL M$ and $\partial Z$ to $(\cL M \times M) \cup (M \times \cL M)$. We now define
        \begin{equation}\label{eq: def Phi}
            \Phi(x) := (-1)^{L(i-p)} (f|_Z)_*\left(\alpha|_Z \cap [Z]\right)
        \end{equation}
        It follows from the following lemma that the definition for $\Phi(x)$ is independent of the choice of geometric representative of $x$.
        \begin{lemma}
            $\Phi$ is an inverse to $[-1,1]^L \times \cdot$.
        \end{lemma}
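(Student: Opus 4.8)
The plan is to recognise $[-1,1]^L\times\cdot$ as the $L$-fold reduced suspension isomorphism $\tilde H_q\!\left(\frac{\cL M}M\wedge\frac{\cL M}M\right)\xrightarrow{\sim}\tilde H_{q+L}\!\left(\Sigma^L\frac{\cL M}M\wedge\frac{\cL M}M\right)$, obtained by crossing iteratively with the relative fundamental class of $([-1,1],\partial[-1,1])$, and then to exhibit $\Phi$ as its inverse by identifying it with ``cap with the Thom class of the origin''. Concretely, write $A=\frac{\cL M}M\wedge\frac{\cL M}M$ and let $\tau_0\in H^L([-1,1]^L,\partial[-1,1]^L)$ be the fundamental cohomology class of the cube. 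The standard Thom isomorphism identifies the inverse of $[-1,1]^L\times\cdot$ with the map $\tilde H_p(\Sigma^L A)=H_p\big([-1,1]^L\times A,(\partial[-1,1]^L\times A)\cup([-1,1]^L\times *)\big)\xrightarrow{\tau_0\cap\cdot}H_{p-L}([-1,1]^L\times A,[-1,1]^L\times *)\cong\tilde H_{p-L}(A)$, where $\tau_0$ is pulled back along the projection; the equality $\tau_0\cap([-1,1]^L\times y)=y$ is immediate from the cross-product formula for cap products together with $\langle\tau_0,[[-1,1]^L,\partial]\rangle=1$. So it would suffice to show that, for every geometric representative $(X^i,f,\alpha)$ of $x$, the recipe \eqref{eq: def Phi} computes $\tau_0\cap x$; this would simultaneously give well-definedness of $\Phi$ (it always returns the representative-independent class $\tau_0\cap x$) and the inverse property.

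For the identification $\Phi(x)=\tau_0\cap x$, I would argue exactly as in the proof of \cref{lem: cop comp tau int}. Take a representative $(X^i,f,\alpha)$ with $f$ transverse to $\{0\}\times\cL M\times\cL M$, and set $Z=f^{-1}(\{0\}\times\cL M\times\cL M)$; transversality gives $f^*\tau_0=\tau_{Z\subseteq X}$, the Thom class of the canonically trivialised normal bundle $\nu_{Z\subseteq X}\cong\bR^L$. Then naturality of the cap product, the cup/cap compatibility \cite[Proposition VI.5.1.iv]{Bredon}, the sign rule \cite[(A.3)]{hingston2017product}, and Poincar\'e duality $\tau_{Z\subseteq X}\cap[X]=[Z]$ — with $[Z]$ carrying exactly the orientation fixed by \eqref{eq: 7.14} — combine to give $\tau_0\cap x=(-1)^{L(i-p)}(f|_Z)_*(\alpha|_Z\cap[Z])=\Phi(x)$, the sign $(-1)^{L(i-p)}$ being produced by commuting $\tau_{Z\subseteq X}$ past $\alpha$ in a cup product. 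This identifies $\Phi$ with the genuine inverse of $[-1,1]^L\times\cdot$ and completes the proof.

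As a consistency check I would also verify the other composite directly: for $y$ with representative $(X^i,g,\alpha)$, \cref{lem: prod disc sign} presents $[-1,1]^L\times y$ by the product representative $([-1,1]^L\times X,\operatorname{Id}_{[-1,1]^L}\times g,(-1)^{L(i-q)}\alpha)$; since $\operatorname{Id}_{[-1,1]^L}\times g$ is already transverse to $\{0\}\times\cL M\times\cL M$, with preimage $\{0\}\times X\cong X$ oriented compatibly with \eqref{eq: 7.14}, feeding this into \eqref{eq: def Phi} makes the two sign factors $(-1)^{L((i+L)-(q+L))}$ and $(-1)^{L(i-q)}$ cancel and returns $g_*(\alpha\cap[X])=y$.

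The only genuinely delicate point is the one shared with \cref{lem: cop comp tau int}: matching all the signs — the sign of $\tau_0$ relative to the conventions of Appendix~\ref{sec: sp}, the sign in the cap/cup commutation, and the orientation of $Z$ dictated by \eqref{eq: 7.14} — so that the constant $(-1)^{L(i-p)}$ in \eqref{eq: def Phi} is reproduced on the nose. Everything else is formal once $[-1,1]^L\times\cdot$ has been recognised as the suspension isomorphism and its inverse as the cap with $\tau_0$.
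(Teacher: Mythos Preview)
Your proposal is correct. The paper's own proof is essentially just your final ``consistency check'' paragraph: it only verifies $\Phi\bigl([-1,1]^L\times y\bigr)=y$ directly, by taking a geometric representative $(X^i,g,\alpha)$ for $y$, invoking \cref{lem: prod disc sign} to get a geometric representative for $[-1,1]^L\times y$, and observing that applying the recipe \eqref{eq: def Phi} returns $y$ (the preimage $Z=\{0\}\times X\cong X$ with the same orientation, and the two sign factors cancel). Since $[-1,1]^L\times\cdot$ is already known to be an isomorphism, a one-sided inverse suffices.

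Your main argument --- identifying $\Phi$ with $\tau_0\cap\cdot$ via the same naturality/cup-cap/Poincar\'e duality chain as in \cref{lem: cop comp tau int} --- is a genuinely different and more conceptual route. It has the advantage of simultaneously proving well-definedness of $\Phi$ (independence of the geometric representative) without appeal to Jakob's classification, and of making transparent why the specific sign $(-1)^{L(i-p)}$ appears in \eqref{eq: def Phi}. The paper's approach is shorter but leaves well-definedness to be inferred \emph{a posteriori} from the inverse property.
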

        \begin{proof}
            Let $x \in \tilde H_p\left(\frac{\cL M}M \wedge \frac{\cL M}M\right)$, and let $(X^i, f, \alpha)$ be a geometric representative, where $f: X \to \cL M \times M$ sends $\partial X$ to $(\cL M \times M) \cup (M \times \cL M)$. By Lemma \ref{lem: prod disc sign}, we have that
            \begin{equation}
                [-1,1]^L \times x = (-1)^{L(i-p)} (Id_{[-1,1]^L} \times f)_*\left(\alpha \cap \left[[-1,1]^L \times X\right]\right)
            \end{equation}
            Applying $\Phi$ to the right hand side gives a geometric representative with $Z=\{0\} \times X \cong X$ equipped with the same orientation, so we find that $\Phi([-1,1]^L\times x) = x$.
        \end{proof}
        We now define the map $\Theta$ from (\ref{eq: big comp diag}). Let $x \in \tilde H_p(\cL M_+ \wedge S^1)$ and let $(X^i, f, \alpha)$ be a geometric representative, with $f: X \to \cL M \times [0,1]$ sending $\partial X$ to $\cL M \times \{0,1\}$.\par 
        Let $\tilde X = \operatorname{Tot}(f^*D\nu_e \to X)$, and let $\tilde f: \tilde X \to \operatorname{Tot}(D\nu_e \to \cL M) \times [0,1]$ be the map induced by $f$.\par 
        $\tilde X$ is naturally a smooth manifold of dimension $i+L-n$, and there is a canonical identification
        \begin{equation}
            T\tilde X \cong f^*\nu_e \oplus TX
        \end{equation}
        We orient $\tilde X$ so that this is orientation-preserving. We now define
        \begin{equation}\label{eq: def Theta}
            \Theta(x) := (-1)^{(L-n)(i-p)}\tilde f_*(\alpha \cap [\tilde X])
        \end{equation}
        It follows from the following lemma that the definition for $\Theta(x)$ is independent of the choice of geometric representative of $x$.
        \begin{lemma}
            $\Theta$ is an inverse to $\tau_{\nu_e} \cap \cdot$.
        \end{lemma}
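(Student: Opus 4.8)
The plan is to observe that, once we identify $\cL M^{D\nu_e}$ with the Thom space of the rank $(L-n)$ bundle $\operatorname{ev}_0^*\nu_e \to \cL M$ and note that smashing with $S^1$ is harmless, the map $\tau_{\nu_e}\cap\cdot$ is exactly the homology Thom isomorphism, hence already known to be an isomorphism. So it suffices to check that the single composite $(\tau_{\nu_e}\cap\cdot)\circ\Theta$ is the identity; the other composite will then also be the identity automatically.

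First I would fix $x\in\tilde H_p(\cL M_+\wedge S^1)$ together with a geometric representative $(X^i,f,\alpha)$, where $f:X\to\cL M\times[0,1]$ sends $\partial X$ to $\cL M\times\{0,1\}$. By definition (\ref{eq: def Theta}), $\Theta(x)$ is represented by the triple $(\tilde X,\tilde f,\pi_{\tilde X}^*\alpha)$ equipped with the sign $(-1)^{(L-n)(i-p)}$, where $\pi_{\tilde X}\colon \tilde X=\operatorname{Tot}(f^*D\nu_e\to X)\to X$ is the disc-bundle projection, $\tilde X$ carries the orientation making $T\tilde X\cong f^*\nu_e\oplus TX$ orientation-preserving, and $\pi_{\tilde X}^*\alpha$ has the same degree $i-p$ as $\alpha$ (using $\dim\tilde X=i+L-n$ and $\deg\Theta(x)=p+L-n$).

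Next I would apply $\tau_{\nu_e}\cap\cdot$ to this representative using the geometric description of capping with a Thom class: after making the representing map transverse to the zero section, the result is represented by the preimage of the zero section, with $\alpha$ restricted and an orientation sign dictated by the normal splitting (this is the Poincar\'e-duality step used at the end of the proof of Lemma \ref{lem: cop comp tau int}, now with $X$, $\tilde X$ in place of $Y$, $X\times[0,1]$). Here no perturbation is needed: $\tilde f\colon\tilde X\to\operatorname{Tot}(D\nu_e\to\cL M)\times[0,1]$ is a fibrewise-linear bundle map, already transverse to the zero section $\cL M\times[0,1]$, and $\tilde f^{-1}(\cL M\times[0,1])$ is precisely the zero section $X\hookrightarrow\tilde X$, on which $\tilde f$ restricts to $f$ and $\pi_{\tilde X}^*\alpha$ restricts to $\alpha$. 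The normal bundle of $X$ in $\tilde X$ is $f^*\nu_e$, whose Thom class is the one being capped, and by the chosen orientation on $\tilde X$ the induced orientation on $X$ is the original one. Tracking the Koszul sign in the cap-with-Thom formula yields a factor $(-1)^{(L-n)(i-p)}$, which cancels the sign in $\Theta$, so $(\tau_{\nu_e}\cap\cdot)(\Theta(x))$ is represented by $(X^i,f,\alpha)$ and hence equals $x$.

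The hard part will be pinning down the sign conventions: one must align the paper's cap-product and Thom-class conventions (from \cite{Bredon}), the orientation convention for total spaces of bundles, and the normalization direction of the Thom isomorphism, and then confirm that the two factors of $(-1)^{(L-n)(i-p)}$ genuinely cancel rather than reinforce. A secondary point to handle carefully is the relative/based bookkeeping --- that the geometric-representative calculus of \cite{Jakob} applies to the based pair defining $\cL M^{D\nu_e}\wedge S^1$, and that the disc/sphere-bundle quotient defining the Thom space is compatible with ``intersect with the zero section'' --- but this is routine given the set-up already used for $\Phi$ and in Lemma \ref{lem: prod disc sign}.
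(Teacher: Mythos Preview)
Your proposal is correct and follows essentially the same approach as the paper: both compute the composite $(\tau_{\nu_e}\cap\cdot)\circ\Theta$ on a geometric representative and show it returns $x$, the only difference being that the paper writes out the cap/cup identities explicitly (push--pull, associativity, and the sign from commuting $\tilde f^*\tau_{\nu_e}$ past $\alpha$) before invoking $\tilde f^*\tau_{\nu_e}\cap[\tilde X]=[X]$, whereas you phrase the same steps geometrically as ``intersect with the zero section''. Your observation that $\tau_{\nu_e}\cap\cdot$ is already known to be an isomorphism (so checking one composite suffices) is implicit in the paper as well.
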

        \begin{proof} 
            Let $x$, as well as a geometric representative $(X^i, f, \alpha)$ for $x$, be as above. Then
            \begin{align*}
                \tau_{\nu_e} \cap \Theta(x) &=
                (-1)^{(L-n)(i-p)}\tau_{\nu_e} \cap \tilde f_* (\alpha \cap [\tilde X]\\
                &= (-1)^{(L-n)(i-p)}\tilde f_* \left( (\tilde f^* \tau_{\nu_e} \cup \alpha) \cap [\tilde X]\right) \\
                &= \tilde f_*\left(\alpha \cap (\tilde f^*\tau_{\nu_e} \cap [\tilde X])\right) \\
                &= (\tilde f|_X)_*(\alpha \cap [X])\\
                &= x
            \end{align*}
            noting that the intersection of $\tilde X$ with the zero section is exactly $X$, with the same orientation.
        \end{proof}
        \begin{proof}[Proof of Proposition \ref{prob: comp cop geo sp}]
            Let $x \in H_{p+n-L-1}(\cL M)$. We show that the result of going both ways around (\ref{eq: big comp diag}) to the bottom right give the same result when applied to $x$. Let $(X^i, f, \alpha)$ be a geometric representative for $x$; we may assume the conclusion of Lemma \ref{lem: transverse} holds. Let $Y= Y(f, X)$, oriented as in (\ref{eq: or Y}). Then by definition,
            \begin{equation}\label{eq: comp cop geo}
                \Delta^{geo}(x) = (-1)^{n(i-p-n+L+1)} g_*(\alpha|_Y \cap [Y])
            \end{equation}
            where $g= \operatorname{cut} \circ (f \times Id_{[0,1]}$.\par 
            By Lemma \ref{lem: geo prod int}, 
            \begin{equation}
                x \times [0,1] = (f \times Id_{[0,1]})_*\left(\alpha \cap [X \times [0,1]]\right)
            \end{equation}
            Let $\tilde X = \operatorname{Tot}(f^* D\nu_e \to X)$, and $\tilde f: \tilde X \to \operatorname{Tot}(D\nu_e \to \cL M)$ the natural map. We orient $\tilde X$ so that the natural identification 
            \begin{equation}
                T\tilde X \cong f^*\nu_e \oplus TX
            \end{equation}
            is orientation-preserving. Then
            \begin{equation}
                \Theta(x \times [0,1]) = (-1)^{(L-n)(i+1-p-n+L)} (\tilde f \times Id_{[0,1]})_* \left(\alpha \cap [\tilde X \times [0,1]]\right)
            \end{equation}
            and so 
            \begin{equation}\label{eq: aaaaaa}
                (\Delta_{unst})_*(\Theta(x \times [0,1])) = (-1)^{(L-n)(i+1-p-n+L)} (\Delta_{unst} \circ (\tilde f \times Id_{[0,1]}))_* \left(\alpha \cap [\tilde X \times [0,1]]\right)
            \end{equation}
            We next compute $\Phi(\ref{eq: aaaaaa})$. Define
            \begin{equation}
                Y' := \left(\Delta_{unst} \circ (\tilde f \times Id_{[0,1]})\right)^{-1} \left(\{0\}\times \cL M \times \cL M\right) \subseteq \tilde X \times [0,1]
            \end{equation}
            Opening up (\ref{eq:closed case}), we see that
            \begin{equation}
                Y' = \left\{(v,x, t)\,|\, x\in X,\,v \in (D\nu_e)_{f(x)},\, t \in [0,1]\, v=0\, f(x)(t) = f(x)(0)\right\}
            \end{equation}
            which is canonically identified with $Y$ as smooth manifolds. Examining the two maps $Y, Y' \to \cL M \times \cL M$, we see that
            \begin{align}\label{eq: simplifying signs Phi}
                \Phi((\Delta_{unst})_*(\Theta(x \times [0,1]))) & = (-1)^{(L-n)(i+1-p-n+L)}(-1)^{L(L-n+i+1-p)} g_*\left(\alpha|_{Y'} \cap [Y']\right)\\
                & = (-1)^{n(i-p-n+L+1)} g_*\left(\alpha|_{Y'} \cap [Y']\right)
            \end{align}
            Note the sign here agrees with that of (\ref{eq: comp cop geo}). It remains to compare the orientations on $Y'$ and $Y$.\par 
            Consider the following diagram of isomorphisms of vector bundles over $Y' \cong Y$ (all pulled back appropriately):
            \begin{equation}\label{eq: wrfewrf}
                \begin{tikzcd} 
                    \nu_e \oplus TM \oplus TY' 
                    \arrow[r, "Y' \cong Y"] 
                    \arrow[d, "- \oplus Id_{TY'}"] &
                    \nu_e \oplus TM \oplus TY
                    \arrow[d, "(\ref{eq: or Y}){,}(\ref{eg: Thom GH YX})"] \\
                    \bR^L \oplus TY' 
                    \arrow[d, "(\ref{eq: 7.14})"] &
                    \nu_e \oplus T(X \times [0,1]) \arrow[d, "="] \\
                    T(\tilde X \times [0,1]) \arrow[r, "="] &
                    \nu_e \oplus TX \oplus \bR 
                \end{tikzcd}
            \end{equation}
            where the isomorphism $-: \nu_e \oplus TM \to \bR^L$ sends $(u,v)$ to $u-v$. Inspecting (\ref{eq:closed case}) and (\ref{eq: tub diag}) shows that the diagram commutes. All isomorphisms except possibly the top horizontal and top left vertical ones are orientation-preserving; the top left verical one preserves orientation up to $(-1)^n$ (since $+: \nu_e \oplus TM \to \bR^L$ is orientation-preserving and $TM$ has rank $n$) so the diffeomorphism $Y' \cong Y$ is orientation-preserving up to $(-1)^n$. Therefore
            \begin{equation}
                [Y] = (-1)^n [Y']
            \end{equation}
            Comparing this with (\ref{eq: comp cop geo}) and (\ref{eq: simplifying signs Phi}), the result follows.
        \end{proof}
    \section{Homological comparisons: product}\label{sec: hom com pro}
            In this section we prove the spectral product we work with in Section \ref{sec: prod} recovers the Chas-Sullivan product by taking homology and applying the Thom isomorphism, using the same strategy as in Section \ref{sec: cop hom comp}. A similar result is shown in \cite[Theorem 1(3)]{cohen2002homotopy}, however here we work with different sign conventions/twists.\par  Let $M$ be a closed oriented manifold of dimension $n$. As in Section \ref{sec: cop hom comp}, similar methods can be applied to the case where $M$ has boundary.
            \subsection{Chas-Sullivan product}
                In this section we recap the definition of the Chas-Sullivan product, following \cite[Section 2.2]{Naef-Rivera-Wahl}. Once again we work implicitly with constant-speed loops, but this does not affect the homology-level product operation.\par 
                Assume $M$ is equipped with a Riemannian metric, and let $\tau_M, \Delta, \sigma_M, U_M$ all be as in Section \ref{sec: GH cop}.\par 
                We define $U_{CS} = (ev_0 \times ev_0)^{-1}U_M \subseteq \cL M \times \cL M$, and $U_{CS} = (ev_0 \times ev_0)^{-1}\partial U_M$. We pull back $\tau_M$ along the map of pairs $ev_0 \times ev_0: (U_{CS}, \partial U_{CS}) \to (U_M, \partial U_M)$ to obtain a class $\tau_{CS} = (ev_0 \times ev_0)^*\tau_M \in H^n(U_{CS}, \partial U_{CS})$.\par 
                Let $R_{CS}: U_{CS} \to \cL M \times_M \cL M$ be the retraction which sends $(\gamma, \delta)$ to 
                \begin{equation}
                    \left(\gamma, \gamma(0) \overset \theta \rightsquigarrow 
                    \delta(0) \overset \delta \rightsquigarrow 
                    \delta(0) \overset \theta \rightsquigarrow 
                    \gamma(0)\right)
                \end{equation}
                and let $\operatorname{concat}: \cL M \times_M \cL M \to \cL M$ send $(\gamma, \delta)$ to the concatenation $(\gamma(0) \overset \gamma \rightsquigarrow \gamma(0) = \delta(0) \overset \delta \rightsquigarrow \delta(0))$.
                \begin{definition}(\cite[Definition 2.1]{Naef-Rivera-Wahl})
                    The \emph{Chas-Sullivan product} $\mu^{CS}$ (written $\wedge_{TH}$ in \cite{hingston2017product}) is defined to be the following composition:
                    \begin{equation}
                        H_*(\cL M) \otimes H_*(\cL M) \xrightarrow{\times} H_*(\cL M \times \cL M) \xrightarrow{\tau_{CS} \cap \cdot}
                        H_{*-n}(U_{CS}) \xrightarrow{\operatorname{concat}} H_{*-n}(\cL M)
                    \end{equation}
                \end{definition}
            \subsection{Product via geometric intersections}
                In this section we recap an alternative definition of the Chas-Sullivan product, using transverse intersections, following \cite{Chataur} (though with slightly different sign conventions). 
                \begin{definition}
                    We define the \emph{geometric product} to be the map 
                    \begin{equation}
                        \mu^{geo}: H_*(\cL M) \otimes H_*(\cL M) \to H_{*-n}(\cL M)
                    \end{equation}
                    defined as follows.\par 
                    Let $x \in H_p(\cL M)$ and $y \in H_q(\cL M)$. Let $(X^i, f, \alpha)$ and $(Y^j, g, \beta)$ be geometric representatives for $x$ and $y$ respectively. Generically perturbing if necessary, we may assume that the maps $ev_0 \circ f: X \to M$ and $ev_0 \circ g: Y \to M$ are transverse. We define $Z$ to be the space
                    \begin{equation}
                        \left\{(a,b) \in X \times Y\,|\, f(a)(0)=g(b)(0)\right\}
                    \end{equation}
                    which is a smooth manifold of dimension $i+j-n$ by assumption. We orient $Z$ so that the natural isomorphism
                    \begin{equation}
                        \nu_M \oplus TZ \cong TX \oplus TY
                    \end{equation}
                    is orientation-preserving. Let $h: Z \to \cL M$ send $(a, b)$ to $\operatorname{concat}(f(a), g(b))$.\par 
                    We define
                    \begin{equation}\label{eq: def mu geo}
                        \mu^{geo}(x,y) = (-1)^{i(j-q) + n(i+j-p-q)} h_*\left((\alpha \cup \beta) \cap [Z]\right)
                    \end{equation}
                    where we pull $\alpha$ and $\beta$ back to $Z$ in the natural way.
                \end{definition}
            \subsection{From the Chas-Sullivan to the geometric product}
                In this section, we prove:
                \begin{proposition}\label{prop: CS geo prod comp}
                    $\mu^{CS}(x, y) = \mu^{geo}(x,y)$ for all $x \in H_p(\cL M), y \in H_q(\cL M)$.
                \end{proposition}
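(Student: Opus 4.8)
The plan is to mimic the proof of \cref{prop: geo GH comp}: represent $x$ and $y$ by geometric cycles and push the definition of $\mu^{CS}$ through the (co)homology formulas of \cite[Appendix A]{hingston2017product} and \cite{Bredon} until it reduces to \eqref{eq: def mu geo}. First I would fix geometric representatives $(X^i,f,\alpha)$ for $x$ and $(Y^j,g,\beta)$ for $y$ and, perturbing generically, arrange that $\mathrm{ev}_0\circ f\colon X\to M$ and $\mathrm{ev}_0\circ g\colon Y\to M$ are transverse. Then $Z=(\mathrm{ev}_0\circ f\times \mathrm{ev}_0\circ g)^{-1}(\Delta(M))$ is a transversely cut out submanifold of $X\times Y$ of dimension $i+j-n$, with boundary on $\partial(X\times Y)$; using the identification $\nu_\Delta\cong TM$ from \cref{sec: GH cop}, its normal bundle in $X\times Y$ is canonically $(\mathrm{ev}_0\circ f|_Z)^{*}TM$, an oriented bundle, and I orient $Z$ so that $T(X\times Y)|_Z\cong \nu_{Z\subseteq X\times Y}\oplus TZ$ is orientation-preserving, writing $\tau_{Z\subseteq X\times Y}$ for the resulting Thom class.

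Next I would carry out three computational steps, exactly parallel to \cref{lem: geo prod int} and \cref{lem: cop comp tau int}. (1) An external-product version of \cref{lem: geo prod int} gives
\[
x\times y=(-1)^{i(j-q)}(f\times g)_{*}\big((\alpha\times\beta)\cap[X\times Y]\big),
\]
the sign being the Koszul sign from \cite[(A.3)]{hingston2017product} (moving $\beta$, of degree $j-q$, past $[X]$). (2) Since $(f\times g)^{*}\tau_{CS}=(\mathrm{ev}_0\circ f\times \mathrm{ev}_0\circ g)^{*}\tau_M=\tau_{Z\subseteq X\times Y}$, applying the projection formula \cite[(A.1)]{hingston2017product}, the cup--cap relation \cite[Proposition VI.5.1.iv]{Bredon}, commuting $\tau_{Z\subseteq X\times Y}$ past $\alpha\times\beta$, and Poincar\'e--Lefschetz duality ($\tau_{Z\subseteq X\times Y}\cap[X\times Y]=[Z]$, as in \cite[Proof of Proposition 3.7]{hingston2017product}) yields
\[
\tau_{CS}\cap(x\times y)=(-1)^{i(j-q)+n(i+j-p-q)}\big((f\times g)|_Z\big)_{*}\big((\alpha\cup\beta)\cap[Z]\big),
\]
where $\alpha\cup\beta$ denotes the cup product on $Z$ of the pullbacks of $\alpha$ and $\beta$, which matches $(\alpha\times\beta)|_Z$. (3) Since $(f\times g)(Z)\subseteq \cL M\times_M\cL M$, on which $R_{CS}$ is homotopic to the identity (the $\theta$-paths there being constant), and $\operatorname{concat}\circ(f\times g)|_Z=h$ by the definition of $h$ in \eqref{eq: def mu geo}, I obtain
\[
\mu^{CS}(x,y)=(\operatorname{concat}\circ R_{CS})_{*}\big(\tau_{CS}\cap(x\times y)\big)=(-1)^{i(j-q)+n(i+j-p-q)}h_{*}\big((\alpha\cup\beta)\cap[Z]\big),
\]
which is precisely $\mu^{geo}(x,y)$ by \eqref{eq: def mu geo} --- provided the orientation on $Z$ used above agrees with the one specified there.

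The hard part, as in the previous section, will be that last orientation bookkeeping: I need to check that the orientation on $Z$ induced by $\tau_{Z\subseteq X\times Y}\cap[X\times Y]=[Z]$ coincides with the one making $\nu_M\oplus TZ\cong TX\oplus TY$ orientation-preserving in \eqref{eq: def mu geo}. This is a short diagram-chase of canonical isomorphisms of vector bundles over $Z$, in the style of \eqref{eq: wrfewrf}, combining $T(X\times Y)|_Z\cong\nu_{Z\subseteq X\times Y}\oplus TZ$, the identification $\nu_{Z\subseteq X\times Y}\cong\nu_M$ coming from $\nu_\Delta\cong TM$, and $T(X\times Y)=TX\oplus TY$. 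Unlike the Goresky--Hingston comparison there is no subtraction map entering here (only the diagonal tubular neighbourhood), so I expect no compensating $(-1)^n$ and hence agreement with the stated formula; I would nonetheless verify that the convention relating $[X\times Y]$ with $[X]\times[Y]$ is used consistently in step (1) and in the orientation of $Z$ (as in \eqref{eq: or Y}), since that is where a stray sign could creep in.
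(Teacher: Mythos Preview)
Your proposal is correct and follows essentially the same approach as the paper's proof: represent $x$ and $y$ geometrically, expand $\tau_{CS}\cap(x\times y)$ using the cross-product sign \cite[(A.3)]{hingston2017product}, the projection formula, and $\tau_{Z\subseteq X\times Y}\cap[X\times Y]=[Z]$, then observe that $\operatorname{concat}\circ R_{CS}\circ(f\times g)|_Z=h$. The paper's proof is just a terser version of the same chain of equalities and in fact does not spell out the orientation check you flag at the end; your observation that no $(-1)^n$ appears here (because only the diagonal tubular neighbourhood, not a subtraction map, is involved) is correct and makes your write-up slightly more complete than the paper's.
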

                This extends \cite[Proposition 3.1]{hingston2017product} as well as \cite{Chataur}, with a similar proof.
                \begin{proof}
                    Let $(X^i, f, \alpha)$ and $(Y^j, g, \beta)$ be geometric representatives for $x$ and $y$ respectively. Then
                    \begin{align*}
                        \tau_{CS} \cap (x \times y) &=
                        \tau_{CS} \cap \left(f_*(\alpha \cap [X]) \times g_*(\beta \cap [Y])\right) \\
                        &= (-1)^{i(j-q)} \tau_{CS} \cap \left( (f\times g)_* \left((\alpha \cup \beta) \cap [X \times Y]\right) \right) \\
                        &= (-1)^{i(j-q) + n(i+j-p-q)} (f\times g)_*\left( (\alpha \cup \beta) \cap \left( (f\times g)^* \tau_{CS} \cap [X \times Y]\right)\right) \\
                        &= (-1)^{i(j-q)+n(i+j-p-q)} (f \times g)_* \left( (\alpha \cup \beta) \cap [Z]\right)\\
                        &= \mu^{geo}(x,y)
                    \end{align*}
                \end{proof}
            \subsection{From the geometric to the spectral product}
                In this section, we prove that taking homology and applying the Thom isomorphism, the spectral products (on the left or right) from Section \ref{sec: prod} agree with the geometric product, up to sign. More precisely:
                \begin{proposition}\label{prop: hom comp prod}
                    The following diagrams commute up to a sign of $(-1)^n$:
                    \begin{equation}
                        \begin{tikzcd}
                            H_*\left(\cL M^{-TM} \wedge \Sigma^\infty_+ \cL M\right) 
                            \arrow[d, "\operatorname{Thom}"] 
                            \arrow[r, "(\mu_{{{l}}})_*"] &
                            H_*\left(\Sigma^\infty_+\cL M\right) \arrow[d, "="] &
                            H_*\left(\Sigma^\infty_+ \cL M \wedge \cL M^{-TM}\right) 
                            \arrow[d, "\operatorname{Thom}"] 
                            \arrow[r, "(\mu_{{{r}}})_*"] &
                            H_*\left(\Sigma^\infty_+ \cL M\right)
                            \arrow[d, "="]\\
                            H_{*+n}\left(\cL M \times \cL M\right) &
                            H_*\left(\cL M\right) &
                            H_{*+n}\left(\cL M \times \cL M\right) &
                            H_*\left(\cL M\right) \\
                            H_*\left(\cL M \right) \otimes H_{*+n} \left(\cL M\right) 
                            \arrow[u, "\times"] 
                            \arrow[ur, "\mu^{geo}"] 
                            &
                            &
                            H_*\left(\cL M \right) \otimes H_{*+n} \left(\cL M\right) 
                            \arrow[u, "\times"] 
                            \arrow[ur, "\mu^{geo}"] 
                            &
                        \end{tikzcd}
                    \end{equation}
                \end{proposition}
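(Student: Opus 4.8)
The plan is to run the argument of Proposition \ref{prob: comp cop geo sp} essentially verbatim, with the coproduct $\Delta_{unst}$ replaced by $\mu_{r,unst}$; the case of $\mu_l$ is entirely analogous and I comment on it at the end. First I would fix an embedding $e : M \hookrightarrow \bR^L$ with $L \gg 0$ together with embedding data for $M$, and use Lemmas \ref{lem: spectra hom} and \ref{lem: spectra unst} together with the identifications of Definitions \ref{def: hom sp} and \ref{def: thom iso} to reduce the claim to the corresponding unstable statement, i.e.\ to commutativity up to $(-1)^n$ of the analogue of diagram (\ref{eq: big comp diag}) in which $\Delta_{unst}$ is replaced by $\mu^Q_{r,unst} : \cL M^{D\nu_e} \wedge \cL M_+ \to \Sigma^L_+ \cL M$ (here $M$ is closed, so the quotient by $\partial$ is trivial). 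As in Section \ref{sec: cop hom comp} this needs two auxiliary isomorphisms: the map $\Theta$, inverse to $\tau_{\nu_e} \cap \cdot$ on the first smash factor, given on a geometric representative $(X^i, f, \alpha)$ by thickening $X$ to $\tilde X := \operatorname{Tot}(f^* D\nu_e \to X)$ with the sign of (\ref{eq: def Theta}); and the map $\Phi$, inverse to $[-1,1]^L \times \cdot$, given on a geometric representative by intersecting transversally with $\{0\} \times \cL M$ with the sign of (\ref{eq: def Phi}). The verifications that $\Theta$ and $\Phi$ are well defined two-sided inverses are identical to those in Section \ref{sec: cop hom comp}.

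Next I would take $x \in H_p(\cL M)$, $y \in H_q(\cL M)$ with geometric representatives $(X^i, f, \alpha)$ and $(Y^j, g, \beta)$, and, perturbing, assume $\operatorname{ev}_0 \circ f$ and $\operatorname{ev}_0 \circ g$ are transverse, so that $Z = \{(a,b) \in X \times Y : f(a)(0) = g(b)(0)\}$ is the transversally cut out $(i+j-n)$-manifold used in the definition of $\mu^{geo}$ in (\ref{eq: def mu geo}). Applying $\Theta$ to $x \times y$ (which produces a Künneth sign of the shape $(-1)^{i(j-q)}$ when $\alpha \times \beta$ is rewritten as $(\alpha \cup \beta) \cap [X \times Y]$, exactly as at the start of the proof of Proposition \ref{prop: CS geo prod comp}) thickens $X \times Y$ to $\tilde X \times Y$, and then $(\mu_{r,unst})_*$ is represented by $\mu_{r,unst} \circ (\tilde f \times g) : \tilde X \times Y \to \Sigma^L_+ \cL M$. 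To compute $\Phi$ of this class I would form
\[
    Z' := \bigl(\mu_{r,unst} \circ (\tilde f \times g)\bigr)^{-1}\bigl(\{0\} \times \cL M\bigr) \subseteq \tilde X \times Y .
\]
Unwinding the formula for $\mu_{r,unst}$ exactly as (\ref{eq:closed case}) was unwound for the coproduct, its first coordinate $\lambda(v - \phi_1 \circ g(b)(0))$ vanishes precisely when $v$ lies on the zero section and $f(a)(0) = g(b)(0)$; hence $Z' = \{(v,a,b) : v \in (D\nu_e)_{f(a)(0)},\ v = 0,\ f(a)(0) = g(b)(0)\}$ is canonically identified with $Z$ as a smooth manifold, and under this identification the map $Z' \to \cL M$ induced by $\mu_{r,unst}$ is, up to the standing reparametrisation homotopies, the map $h = \operatorname{concat} \circ (f \times g)|_Z$ from (\ref{eq: def mu geo}).

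The remaining point, and the one I expect to be the main obstacle — not conceptually, but in the bookkeeping — is to assemble the signs. There are four contributions, as in the coproduct case: the sign from $\Theta$, the sign from $\Phi$, the Künneth/cup-product sign $(-1)^{i(j-q)}$ (using \cite[(A.1), (A.3)]{hingston2017product}), and an orientation comparison between $Z'$ and $Z$. For the last, one sets up a commuting square of vector bundle isomorphisms over $Z' \cong Z$ analogous to (\ref{eq: wrfewrf}); all of its maps are orientation-preserving except the trivialisation $\nu_e \oplus TM \to \bR^L$, $(u,v) \mapsto u-v$, used in the first coordinate of $\mu_{r,unst}$, which differs from the additive trivialisation $(u,v) \mapsto u+v$ compatible with the exponential-map tubular neighbourhood of the diagonal defining $\tau_M$ by exactly $(-1)^n$ (since $\operatorname{rank} TM = n$). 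Thus $[Z'] = (-1)^n [Z]$. Collecting everything and cancelling the powers of $L$ against those hidden in $i,j,p,q$ (as in (\ref{eq: simplifying signs Phi})), what survives is precisely the sign $(-1)^{i(j-q)+n(i+j-p-q)}$ of $\mu^{geo}$ in (\ref{eq: def mu geo}) together with one leftover factor of $(-1)^n$; this is the claimed discrepancy. Finally, for $\mu_l$ the very same argument applies: the only change is that the loop $\operatorname{concat}(g(b),f(a))$ appears in place of $\operatorname{concat}(f(a),g(b))$, which alters neither the incidence manifold $Z$, nor its orientation, nor any of the signs above, so the identical $(-1)^n$ discrepancy results.
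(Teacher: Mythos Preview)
Your proposal is correct and follows essentially the same approach as the paper's own proof: reduce to the unstable statement via the inverses $\Theta'$ and $\Phi$, represent $x$ and $y$ geometrically, form $Z' = (\mu_{r,unst}\circ(\tilde f\times g))^{-1}(\{0\}\times\cL M)$, identify it with $Z$, and track the four sign contributions, with the orientation discrepancy $[Z']=(-1)^n[Z]$ coming from the subtractive trivialisation as in (\ref{eq: wrfewrf}). The only minor addition in the paper is the remark that since $M$ is closed one may take the isotopy $\{\phi_s\}$ constant, which slightly simplifies unwinding the first coordinate of $\mu_{r,unst}$; otherwise the structure and sign bookkeeping are exactly as you outline.
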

                \begin{corollary}\label{cor: prod CS spec}
                    By Proposition \ref{prop: CS geo prod comp}, it follows that Proposition \ref{prop: hom comp prod} also holds with $\mu^{geo}$ replaced with $\mu^{CS}$.
                \end{corollary}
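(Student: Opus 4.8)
The plan is to deduce this corollary directly, with no new computation, by feeding the identity of Proposition \ref{prop: CS geo prod comp} into the diagrams of Proposition \ref{prop: hom comp prod}. First I would record that Proposition \ref{prop: CS geo prod comp} is not merely a statement up to sign or up to homotopy: it asserts the honest equality $\mu^{CS}(x,y)=\mu^{geo}(x,y)$ of elements of $H_{p+q-n}(\cL M)$ for every $x\in H_p(\cL M)$ and $y\in H_q(\cL M)$, hence an equality of the graded homomorphisms
\begin{equation*}
    \mu^{CS}=\mu^{geo}\colon H_*(\cL M)\otimes H_*(\cL M)\to H_{*-n}(\cL M).
\end{equation*}
In particular the two coincide in the bidegree relevant to Proposition \ref{prop: hom comp prod}, namely as maps $H_*(\cL M)\otimes H_{*+n}(\cL M)\to H_*(\cL M)$, which is exactly the role played by the diagonal arrows $\mu^{geo}$ in both commuting diagrams of that proposition.

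Having made this identification, the second step is purely formal: in each of the two diagrams of Proposition \ref{prop: hom comp prod} one replaces the arrow labelled $\mu^{geo}$ by the (literally equal) arrow $\mu^{CS}$. Since every other arrow in the diagrams is untouched, the statement ``the diagram commutes up to a sign of $(-1)^n$'' transports verbatim; the sign $(-1)^n$ is unaffected because no rearrangement or re-indexing of any map occurs. This yields precisely the assertion that Proposition \ref{prop: hom comp prod} holds with $\mu^{geo}$ replaced throughout by $\mu^{CS}$, which is the content of the corollary.

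I do not anticipate any real obstacle here; the only point worth a sentence of care is that the source and target conventions of $\mu^{CS}$ (it lowers total degree by $n$, and its inputs are the two factors $H_*(\cL M)$ of the domain, in the same order as in $\mu^{geo}$) match those in the bottom row of the diagrams in Proposition \ref{prop: hom comp prod}, so that the substitution is the identity on the nose rather than only up to a Koszul sign. This is immediate from the definitions of $\mu^{CS}$ and $\mu^{geo}$ recalled above.
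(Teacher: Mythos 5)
Your proposal is correct and matches the paper, which treats this as immediate: since Proposition \ref{prop: CS geo prod comp} gives the literal equality $\mu^{CS}=\mu^{geo}$ as maps $H_*(\cL M)\otimes H_*(\cL M)\to H_{*-n}(\cL M)$, substituting one for the other in the diagrams of Proposition \ref{prop: hom comp prod} changes nothing. No further argument is needed.
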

                We give the proof for the right-hand diagram; the left-hand case is identical. \par 
                Choose an embedding $e: M \hookrightarrow \bR^L$ and {embedding data} for $M$ extending $e$; since $M$ is closed, we may assume the isotopy $\{\phi_s\}_s$ is constant. Using the identifications from Definitions \ref{def: hom sp}, \ref{def: thom iso} and \ref{def: smash} (choosing sequences $(u_i)_i$ and $(v_i)_i$ with $u_L=L$ and $v_L=0$), we see that it suffices to show that the following diagram commutes:
                \begin{equation}
                    \begin{tikzcd}
                        \tilde H_r\left(\cL M^{D\nu_e} \wedge \cL M_+\right) 
                        \arrow[d, "\tau_{\nu_e} \cap \cdot"] 
                        \arrow[r, "(\mu_{{{l}}{,}unst})_*"] 
                        &
                        \tilde H_r \left(\Sigma^L_+ \cL M\right) 
                        \arrow[d, "\Phi", shift left=5] 
                        \\
                        H_{r+n-L}\left(\cL M \times \cL M\right) 
                        \arrow[u, "\Theta'", shift left=5] 
                        &
                        H_{r-L}\left(\cL M\right)
                        \arrow[u, "{[}-1{,}1{]}^L \times \cdot"] 
                        \\
                        H_p\left(\cL M \right) \otimes H_q\left(\cL M \right) 
                        \arrow[u, "\times"]
                        \arrow[ur, "(-1)^n\mu^{geo}"] 
                        &
                    \end{tikzcd}
                \end{equation}
                where $p+q=r+n-L$, $\Phi$ is as in (\ref{eq: def Phi}) and $\Theta'$ is defined analogously to (\ref{eq: def Theta}).
                \begin{proof}[Proof of Proposition \ref{prop: hom comp prod}]
                    Let $x \in H_p(\cL M)$ and $y \in H_q(\cL M)$; let $(X^i, f, \alpha)$ and $(Y^j, g, \beta)$ be geometric representatives for $x,y$ respectively. 
                    \begin{lemma}\label{lem: cross geo}
                        $x\times y = (-1)^{i(j-q)} (f \times g)_*\left((\alpha \cup \beta) \cap [X \times Y]\right)$
                    \end{lemma}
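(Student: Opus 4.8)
The plan is to mimic the template of Lemmas~\ref{lem: geo prod int} and \ref{lem: cop comp tau int}: expand both sides into a short chain of equalities built from naturality of the homology cross product and the standard interaction between cross and cap products. First I would use that, by definition of a geometric representative, $x = f_*(\alpha \cap [X])$ and $y = g_*(\beta \cap [Y])$, and that the homology cross product is natural under external products of maps, to obtain
\[
x \times y \;=\; f_*(\alpha \cap [X]) \times g_*(\beta \cap [Y]) \;=\; (f \times g)_*\big((\alpha \cap [X]) \times (\beta \cap [Y])\big).
\]

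Next I would apply the identity relating an external product of cap products to a cap product of external products, in the same form \cite[(A.3)]{hingston2017product} that was used to prove Lemma~\ref{lem: prod disc sign}. With the degree conventions $\beta \in H^{j-q}(Y)$ and $[X] \in H_i(X)$, this introduces exactly the Koszul sign $(-1)^{i(j-q)}$:
\[
(\alpha \cap [X]) \times (\beta \cap [Y]) \;=\; (-1)^{i(j-q)}\,(\alpha \times \beta) \cap ([X] \times [Y]).
\]
To finish I would rewrite the right-hand side using two sign-free identifications with the conventions of \cite{Bredon}: the cohomology cross product $\alpha \times \beta$ equals $p_1^*\alpha \cup p_2^*\beta$ for the two projections $p_1, p_2$ of $X \times Y$ --- which is precisely what ``$\alpha \cup \beta$'' abbreviates here --- and $[X] \times [Y]$ is the fundamental class $[X \times Y]$ of the product with its product orientation. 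Combining the three displays yields $x \times y = (-1)^{i(j-q)}(f \times g)_*\big((\alpha \cup \beta) \cap [X \times Y]\big)$, as claimed.

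There is no real obstacle here; the only point requiring care is sign bookkeeping. One should confirm that the sign produced by \cite[(A.3)]{hingston2017product} is the Koszul sign for moving $\beta$ (degree $j-q$) past $[X]$ (degree $i$), so that it reads $(-1)^{i(j-q)}$ and not, say, $(-1)^{(j-q)(i-p)}$, and that no further sign is hidden in the identification $p_1^*\alpha \cup p_2^*\beta = \alpha \times \beta$ or in the product orientation on $X \times Y$; all of these are consistent with the conventions already fixed in the paper.
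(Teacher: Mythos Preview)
Your proposal is correct and essentially identical to the paper's proof: both expand $x \times y$ via the definition of geometric representatives, use naturality of the cross product to pull out $(f \times g)_*$, and then apply \cite[(A.3)]{hingston2017product} to obtain the sign $(-1)^{i(j-q)}$ and rewrite as $(\alpha \cup \beta) \cap [X \times Y]$. The paper's version is just the three-line display without your additional commentary on the identifications $\alpha \times \beta = p_1^*\alpha \cup p_2^*\beta$ and $[X]\times[Y]=[X\times Y]$.
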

                    \begin{proof}[Proof of lemma.]
                        \begin{align*}
                            x \times y 
                            &= f_*(\alpha \cap [X]) \times g_*(\beta \cap [Y]) \\
                            &= (f \times g)_*\left((\alpha \cap [X]) \times (\beta \cap [Y])\right) \\
                            &= (-1)^{i(j-q)} (f \times g)_*\left( (\alpha \cup \beta) \cap [X \times Y]\right)
                        \end{align*}
                        where the final equality is by \cite[(A.3)]{hingston2017product}.
                    \end{proof}
                    We first compute $\Theta'(x \times y)$:
                    \begin{align*}
                        \Theta'(x \times y) 
                        &= (-1)^{i(j-q)}\Theta'\left( (f \times g)_* \left( (\alpha \cup \beta) \cap [X \times Y]\right) \right) \\
                        &= (-1)^{i(j-q)} (-1)^{(L-n)(i+j-p-q)} (\tilde f \times g)_*\left((\alpha \cup \beta) \cap [\tilde X \times Y]\right) \\
                    \end{align*}
                where we define $\tilde X = \operatorname{Tot}(f^* D\nu_e \to X)$ and $\tilde f: \tilde X \to \operatorname{Tot}(D\nu_e \to \cL M)$ is the natural map. The first equality is by Lemma \ref{lem: cross geo} and the second by definition of $\Theta'$. Therefore 
                \begin{equation}\label{eq: mu r unst theta}
                    (\mu_{{{l}}}, unst)_*(\Theta'(x \times y)) = (-1)^{i(j-q)+(L-n)(i+j-p-q)} \left(\mu_{{{l}},unst} \circ (\tilde f \times g) \right)_* \left((\alpha \cup \beta) \cap [\tilde X \times Y]\right)
                \end{equation}
                Similarly to the proof of Proposition \ref{prob: comp cop geo sp}, we see that 
                \begin{align*}
                    \Phi\left((\mu_{{{l}}, unst})_*(\Theta'(x \times y))\right) 
                    &= (-1)^{L(i+j+L-n-r) + i(j-q) + (L-n)(i+j-p-q)} h_*\left((\alpha \cup \beta) \cap [Z']\right)\\
                    &=(-1)^{i(j-q) + n(i+j-p-q) } h_*\left((\alpha \cup \beta) \cap [Z']\right)
                \end{align*}
                where $Z' = \left(\mu_{{{l}},unst}\circ (\tilde f \times g)\right)^{-1}(\{0\} \times \cL M)$. $Z'$ is transversally cut out by assumption, and we have a canonical identification $Z \cong Z'$ as smooth manifolds. Since the sign here agrees with that of (\ref{eq: def mu geo}), it suffices to compare the orientations on $Z$ and $Z'$; by the same argument as in the proof of Proposition \ref{prob: comp cop geo sp}, their orientations differ by a factor of $(-1)^n$. Therefore $[Z]= (-1)^n[Z']$; the result follows.
            \end{proof}

\section{Traces and torsion}\label{sec: torsion}
Given a homotopy equivalence $f: N \to Z$ one could ask whether $f$ is a simple homotopy equivalence. A related question arises when classifying diffeomorphism classes of higher dimensional $h$-cobordisms. Namely, one could ask whether an $h$-cobordism is smoothly trivial. \par
In order to prove the main results of this paper we convert the first question into the second. That is, to $f$ we associate a codimension $0$ embedding of manifolds with boundary, $P \subset Q$, so that the complement of $P$ in $Q$ is an $h$-cobordism. We then study the failure of $f$ to be a simple homotopy equivalence by considering instead the triviality of $W$. In particular, we will study the Whitehead torsion, $\tau (W)$, and its image under various trace maps.

So let $W$ be a smooth $h$-cobordism of dimension $n \geq 6$; we assume its boundary is partitioned into two components $M$ and $N$. 
We allow $W$ to have corners, so $M$ and $N$ may not necessarily be smooth everywhere, but this does not affect any of the constructions in this section, nor the ones in \cite{GN}, where  PL manifolds are considered.
In \cite{GN} Geoghegan and Nicas study the obstruction to deforming $W$ to $M$ in a fixed point free manner. They do so by considering the fixed point set of a strong deformation retraction $F: W\times I \to W$. 
To such a deformation retraction they associate an algebraic $1$-parameter Reidemeister trace:

 $$R(W) \in HH_1(\bZ [\pi_1 M])/ \bZ[\pi_1 M],$$
and prove the following:
\begin{theoremaaa}[\cite{GN}, Theorem 7.2] \label{thm: GN 1}
    Let $M$ be a smooth compact manifold of dimension $n \geq 5$, and $\H(M)$  the space of $h$-cobordisms on $M$.  Suppose $\pi_2(M) = 0$. Then the following diagram commutes:\\
    \[\begin{tikzcd}
    K_1 (\bZ[\pi_1(M)]) \arrow[r] \arrow[d, "tr"] & Wh(\pi_1(M)) \cong \pi_0\H(M) \arrow [d, "-R"]\\ 
    HH_1(\bZ [\pi_1 M])\arrow[r]
    & HH_1(\bZ [\pi_1 M])/ \bZ[\pi_1 M].
    \end{tikzcd}\]
    Here the equivalence $Wh(\pi_1(M)) \cong \pi_0\H(M)$ is given by the $s$-cobordism theorem;  $tr$ is the Dennis trace map,  and the horizontal maps are the natural quotient maps.  
    
    \end{theoremaaa}

In order to prove (\ref{eq: pi20 case}) we need to consider other geometric incarnations of the invariant $R(W)$. In \cite{GN} Geoghegan and Nicas further define a geometric $1 $ parameter Reidemeister trace, $\Theta(W) \in H_1(E_F)$, where  $E_F$ is the \emph{twisted free loop space} defined by:  
\begin{equation}\label{eq: tw loop space}
    E_F := \left\{\gamma: I \to W \times I \times W \,|\, \gamma(0) = (x,t,x) \text{ and } \gamma(1) = (y, s, F_s(y)) \text{ for some } x,y,s,t\right\}.
\end{equation}
They construct a map: $$\Psi: H_1(E_F) \to HH_1(\bZ [\pi_1 M]) $$ and prove: 

\begin{theoremaaa}[\cite{GN}, Theorem 1.10]\label{thm: NG 2}

    $\Psi(\Theta(W)) =-R(W)$. Moreover, when $\pi_2(M) = 0$, $\Theta(W)$ vanishes if and only if $R(W)$ vanishes. 
\end{theoremaaa}

In this section we construct two other variations of the $1$ parameter Reidemeister trace. In \S \ref{sec: framed bordism invariant} we define a framed bordism class $[T] \in \Omega^{fr}_1(\LL W, W)$, which is used in the statement of our main \cref{main theorem}. Composing with the homotopy equivalence $r := F_1: W \to M$, this construction gives a well defined map: $$T_*: \pi_0 \H (M) \to \Omega^{fr}_1(\LL M, M).$$

Combining \cref{lem: T=theta}, \cref{thm: GN 1} and \cref{thm: NG 2} we obtain:

\begin{proposition}\label{lem: comp}
    Suppose $\pi_2(M) = 0$. Then the following diagram commutes:
    \[\begin{tikzcd}
    K_1 (\bZ[\pi_1(M)]) \arrow[r] \arrow[d, "tr"] & \pi_0\H(M) \arrow [d, "h_* \circ T_*"]\\ 
    HH_1(\bZ [\pi_1 M])\arrow[r]
    & H_1(\LL M, M).
    \end{tikzcd}\]
 Here $h_*: \Omega^{fr}_1(\LL M, M ) \to H_1(\LL M, M )$ is the Hurewicz homomorphism. The bottom horizontal arrow is the composition:
    
    $$HH_1(\bZ [\pi_1 M]) \xrightarrow[]{\Psi^{-1}} H_1(E_F) \xrightarrow[]{(r \circ \mu)_*} H_1(\LL M)
    \xrightarrow{q}H_1(\LL M, M),$$ 
    $\mu$ is given in  \cref{lem: tw loop eq reg loop}, $r: W \to M$ is the retraction $F_1$, $\Psi$ is the isomorphism of \cite{GN}[\S 6A]\footnote{\textit{loc. cit.} uses a twisted version of $HH_1(\bZ[\pi_1 M])$, taking into account an automorphism $\phi$ of $\pi_1 M$. In our case, the twisting is trivial, because $F$ acts as the identity on $\pi_1$.}, and $q$ is the projection map. 
\end{proposition}

In \cref{sec: Reid Tr} we construct the $1$ parameter Reidemeister trace:
$$Tr(W): \Sigma \mathbb{S}\to \Sigma^{\infty}\frac{\cL W}W$$ on spectra. This definition adapts a homotopical construction of the  Reidemeister trace to the 1-parameter and relative settings, see for example \cite{malkiewichparametrized}.

The invariant $Tr(W)$ is shown to correspond to $[T]$ under the Pontrjagin-Thom isomorphism in \S \ref{sec: T Tr}. It is also used as a prototype for the definition of the operations: 
$$\Xi_{{{r}}}, \Xi_{{{l}}}: \Sigma \frac{\cL W^{-TW}}{\partial \cL W^{-TW}} \to \Sigma^\infty \frac{\cL W}W \wedge \frac{\cL W}W $$
constructed in Section \ref{sec:traces on loop space}. In \cref{thm: T Xi} in Section \ref{sec: cop defect}, we show that the maps $\Xi_{{{r}}}$ and $\Xi_{{{l}}}$ can be described by taking the Chas-Sullivan product with the class $[T]$.

\subsection{The framed bordism invariant}

\label{sec: framed bordism invariant}
\subsubsection{The definition of $[T]$}

For the rest of this section, we assume that $W$ is embedded as a codimension $0$ submanifold of $\R^L$.

Define subsets $\tilde{T}, T^\circ, T$ of $W \times [0,1]$ as follows.
$$\tilde{T} := \left\{ (x,t) \in W\times [0,1] \,|\, F_t(x) = x\right\}$$
$$T^\circ := \left\{(x,t) \in \tilde{T} \,|\, t \neq 0 \textrm{ and } x \notin  M\right\}, $$
and let \begin{equation}\label{eq: T}
    T = \bar{T}^\circ
\end{equation} 
be the closure of $T^\circ$ in $W \times [0,1]$, which we note is compact.  

Though $\tilde T$ is the most natural of these to define, it can never be transversally cut out: if it was, it would have dimension 1, but $\tilde T$ always contains $W \times \{0\}$ and $M \times I$. 

Note that if $T$ is transversally cut out and hence a 1-manifold, its boundary can lie on $W \times \{1\}$, $M$, $N$ and $W \times \{0\}$. We prove next that $T$ can be assumed to be transversally cut out with no boundary terms on the first three types, and later (in Lemma \ref{lem: no bound T}) that we can also choose $F$ so it has no boundary of the fourth type either.
\begin{lemma}\label{lem: gen pert F}
    There is a $C^0$-small perturbation of $F$ through strong deformation retractions, such that $T$ is a smooth $1$-dimensional submanifold of $[0,1]\times W$, possibly with boundary which must lie on $\{0\} \times W^\circ$.
\end{lemma}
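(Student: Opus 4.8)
The plan is to exhibit $\tilde T$ locally as the zero set of a displacement map and to make this zero set cut out transversally by a generic perturbation of $F$, paying special attention to the slice $\{t=0\}$, where $F_0 = \operatorname{id}_W$ forces transversality to fail. First I would work in a tubular neighbourhood of the diagonal $\Delta \subseteq W \times W$: using the exponential map of a metric on $W$, near $\Delta$ one can encode $(x, F_t(x))$ by the displacement $d_t(x) := \exp_x^{-1}(F_t(x)) \in T_x W$, so that $F_t(x) = x$ iff $d_t(x) = 0$. Since $F_0 = \operatorname{id}_W$ one has $d_0 \equiv 0$, hence $d_t(x) = t\,\Phi(x,t)$ for a smooth $\Phi$ with $\Phi(x,0) = v(x)$, where $v(x) := \partial_t|_{t=0} F(x,t) \in T_x W$ is the initial velocity field; note $v|_M = 0$ since $F_t|_M = \operatorname{id}_M$. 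For $t > 0$ the condition $F_t(x)=x$ is equivalent to $\Phi(x,t)=0$.

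Next I would make a preliminary small perturbation of $F$, supported near $\partial W$ and keeping $F$ a strong deformation retraction (so still $F_0 = \operatorname{id}$, $F_1 = r$, $F_t|_M = \operatorname{id}$): using collars of $M$ and of $N$, arrange that for $t>0$ the map $F_t$ has no fixed point within a fixed collar of $M$ other than on $M$ itself, and no fixed point within a fixed collar of $N$ at all. Together with the fact that $F_1(x)=x$ forces $x = r(x)\in M$, this confines $T^\circ$ to the open subset $U := \{(x,t)\in W\times[0,1): x\in W^\circ \text{ and } x \text{ lies outside the two collars}\}$, and identifies $T^\circ = \{\Phi = 0\}\cap U\cap\{t>0\}$. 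Then, by a standard parametric transversality argument --- perturbing $F$ with support in $\{0<t<1\}$ away from the collars, which amounts to an arbitrary small perturbation of $\Phi$ on the relevant region --- I would arrange simultaneously that $0$ is a regular value of $\Phi$ on $U$ and that $0$ is a regular value of the boundary restriction $\Phi|_{U\cap\{t=0\}} = v$; this is the usual transversality-with-boundary condition.

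Granting this, the transversality theorem for manifolds with boundary gives that $\{\Phi=0\}\cap U$ is a smooth $1$-manifold with boundary $v^{-1}(0)\cap(U\cap\{t=0\})\subseteq\{0\}\times W$; it has no component lying inside $\{t=0\}$ since $v^{-1}(0)$ is $0$-dimensional, so its interior is exactly $T^\circ$ and is dense in it. By construction the closure of this set in $W\times[0,1]$ avoids the excluded collars and $\{t=1\}$, hence stays inside $U$, so it is compact and equal to $T = \overline{T^\circ}$; thus $T$ is a smooth $1$-manifold with boundary contained in $\{0\}\times W$. (Near a boundary point $(p,0)$ one has $v(p)=0$ and $dv_p$ invertible, so the implicit function theorem applied to $\Phi$ presents $T$ there as a half-open arc $\{(x(s),s): s\in[0,\epsilon)\}$, confirming the local model.)

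I expect the main obstacle to be the analysis at $\{t=0\}$: one has to carry the genericity argument down to the initial velocity field $v=\partial_t|_0 F$ and verify that the two regular-value conditions (for $\Phi$ on $U$ and for $v$ on $U\cap\{t=0\}$) can be achieved by a single perturbation of $F$ that remains a strong deformation retraction --- that is, the perturbations near $t=0$, near $\partial W$, and in the interior must be carried out in a mutually compatible order. Everything else is routine transversality.
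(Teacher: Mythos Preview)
Your proposal is correct and follows essentially the same strategy as the paper: first normalise $F$ near the constrained regions (collars of $M$ and $N$, and $t=1$) so that $T^\circ$ is confined to a compact interior region, then use the initial velocity field $v = \partial_t|_0 F$ to control the behaviour at $t=0$, and finally apply a generic transversality argument. The one minor presentational difference is that the paper, rather than using your blow-up $\Phi = d_t/t$ together with transversality-with-boundary, instead perturbs $v$ to be transverse to the zero section and then rebuilds $F$ on $[0,\eta)$ as geodesic flow along $v$, so that $T\cap(W\times[0,\eta))$ is literally the product $S\times[0,\eta)$ with $S=v^{-1}(0)$; this sidesteps the compatibility issue you flag as the main obstacle (perturbing $\Phi$ at $t=0$ while keeping $F_0=\operatorname{id}$), since one simply chooses the new $v$ first and then takes $F_t=\exp(tv)$ for small $t$.
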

Note this lemma cannot hold for $\tilde T$ instead of $T$, since $\tilde T$ always contains $(W \times \{0\}) \cup (M \times [0,1])$.
\begin{proof}
    
    If we could perturb $F$ arbitrarily, standard transversality results would imply the lemma. Instead, $F$ is constrained along $M \times [0,1]$, $W \times \{1\}$ and $W \times \{0\}$. We first argue that the lemma holds in some neighbourhood of this region.\par

    $\tilde T$ does not intersect $W \times \{1\}$ except along $M \times \{1\}$. We may perturb $F$ such that for all $x$ sufficiently close to $M$, the path $\{F_t(x)\}_{t \in [0,1]}$ is the embedded geodesic to the closest point in $M$. Now any point in $(x, t) \in \tilde T$ such that $x$ is near to $M$ must have $x \in M$.\par 
    It follows that now $T$ can only intersect $(W \times \{0, 1\}) \cup (M \times [0,1])$ along $W \times \{0\}$.\par 
    To ensure $T$ is smooth near $W \times \{0\}$, we consider the vector field $Z$ on $W$, whose value at $p \in W$ is $\frac d{ds}|_{s=0} F_s(p)$. This is constrained so that it points inwards along $N$ and vanishes along $M$. We may generically perturb $F$ such that $Z$ intersects the zero section transversally away from $M$. We may further perturb $F$ so that for $\eta > 0$ small, for all $p \in W$, the path $\{F_t\}_{t \in [0,\eta)}$ is a geodesic. Now the intersection of $T$ with $W \times [0,\eta)$ agrees with $S \times [0,\eta)$.\par 
    Therefore $T$ is smooth near $W \times \{0\}$; perturbing generically away from the region on which $F$ is constrained allows us to obtain the lemma.
\end{proof}
Let 
\begin{equation}
    i: T \hookrightarrow W \times [0,1]
\end{equation}
be the natural inclusion, and denote the normal bundle by $\nu_i$. 

Let $\psi: \nu_i \to \R^L$ be the isomorphism of vector bundles sending $(v,t)$ in the fibre of $\nu_i$ over $(x,s)$ to 
\begin{equation}   \label{eq: psi}
    \psi(v,t) := v- dF_{(x,s)}(v,t).
\end{equation} 
That $\psi$ is an isomorphism is exactly the condition that $T$ is transversally cut out. 

We consider the natural map $f: T \to \cL W$ sending $(x,t)$ to the loop $F|_{[0,t]}$ from $x$ to itself.
Note that $\psi$ equips $T$ with a stable framing which  therefore defines a class $[T]$ in $\Omega^{fr}_1(\cL W,W)$. 
\begin{lemma}
    The space of strong deformation retractions is contractible.
\end{lemma}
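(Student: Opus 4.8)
The plan is to reinterpret the space of strong deformation retractions as the homotopy fibre of a map between mapping spaces, and then observe that this map is a homotopy equivalence, so its homotopy fibre is contractible.

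Concretely, I would first fix notation: let $\cF$ be the space of maps $F\colon W\times I\to W$ with $F_0=\mathrm{id}_W$, with $F_1(W)\subseteq M$, and with $F_t|_M=\mathrm{id}_M$ for all $t$, topologised as a subspace of $\mathrm{Map}(W\times I,W)$ with the compact--open topology; let $\cA=\mathrm{Map}_M(W,W)$ be the space of self-maps of $W$ restricting to the identity on $M$; and let $\cB=\mathrm{Map}_M(W,M)\subseteq\cA$ be the subspace of such maps whose image lies in $M$ (equivalently, the retractions of $W$ onto $M$), with $j\colon\cB\hookrightarrow\cA$ the inclusion. The exponential law $F\mapsto(t\mapsto F_t)$ identifies $\cF$ with the space of paths in $\cA$ that start at $\mathrm{id}_W$ and end somewhere in $\cB$; after reversing the path parameter, this is precisely the homotopy fibre of $j$ over the point $\mathrm{id}_W\in\cA$.

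Next I would show that $j$ is a homotopy equivalence. Since $W$ is an $h$-cobordism, $M\hookrightarrow W$ is a homotopy equivalence, and since $(W,M)$ is a cofibration (use a collar of $M$ in $W$), a standard argument shows that $M$ is a strong deformation retract of $W$; in particular $\cF$ is non-empty, and I may fix some $F^{\ast}\in\cF$. Post-composition with $F^{\ast}$ then gives a homotopy $\cA\times I\to\cA$, $(\phi,t)\mapsto F^{\ast}_t\circ\phi$, and one checks that it: takes values in $\cA$ (because $F^{\ast}_t$ fixes $M$ and $\phi$ restricts to $\mathrm{id}_M$); is the identity at $t=0$; takes values in $\cB$ at $t=1$ (because $F^{\ast}_1(W)\subseteq M$); and is stationary on $\cB$ (because $F^{\ast}_t$ fixes $M$, which contains $\phi(W)$ when $\phi\in\cB$). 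Hence $\cB$ is a strong deformation retract of $\cA$, so $j$ is a homotopy equivalence; since a homotopy fibre of a homotopy equivalence is contractible (the homotopy fibre of $j$ is the pullback of the contractible path fibration $P_{\mathrm{id}_W}\cA\to\cA$ along $j$, and pulling a Hurewicz fibration back along a homotopy equivalence yields a homotopy equivalence), we conclude that $\cF$ is contractible.

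I expect the only slightly delicate point to be the bookkeeping: checking that the exponential-law identification carries the three conditions defining $\cF$ exactly onto the homotopy-fibre conditions, and that the contracting homotopy above genuinely stays inside $\cA$ and is constant on $\cB$. Everything else is formal. This contractibility is exactly what is needed so that the framed bordism class $[T]$ does not depend on the auxiliary choice of strong deformation retraction $F$.
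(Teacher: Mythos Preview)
Your argument is correct. The paper actually states this lemma without proof, so there is nothing to compare against; your identification of the space of strong deformation retractions with the homotopy fibre of the inclusion $\mathrm{Map}_M(W,M)\hookrightarrow\mathrm{Map}_M(W,W)$ over $\mathrm{id}_W$, together with the observation that post-composition with any fixed strong deformation retraction deforms the latter onto the former, is a clean and standard way to fill this gap.
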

\begin{lemma}\label{lem: T ind}
    The class $[T] \in \Omega_1^{fr}(\cL W,W)$ is independent of the choice of $F$.
\end{lemma}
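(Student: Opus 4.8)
The plan is to deduce independence of $[T]$ from contractibility of the space of strong deformation retractions, by running the construction of $[T]$ one dimension up, over a path of choices. Fix for now the codimension-$0$ embedding $W \hookrightarrow \R^L$, and let $F^0, F^1$ be two strong deformation retractions, generically perturbed as in \cref{lem: gen pert F} so that the associated $1$-manifolds $T^0, T^1 \subseteq W \times [0,1]$ are smooth, with their framings $\psi^0,\psi^1$ from \eqref{eq: psi} and their natural maps $f^0, f^1$ to $\cL W$. By the preceding lemma the space of strong deformation retractions is path-connected, so I would join $F^0$ to $F^1$ by a path $\{F^u\}_{u \in [0,1]}$ of strong deformation retractions.

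Next I would set $\widetilde{\mathcal T} := \left\{(x,t,u) \in W \times [0,1] \times [0,1] : F^u_t(x) = x\right\}$ and let $\mathcal T$ be the closure of $\{(x,t,u) \in \widetilde{\mathcal T} : t \neq 0,\ x \notin M\}$. Arguing exactly as in \cref{lem: gen pert F}, but in a parametrized fashion — perturbing the whole path $\{F^u\}$ generically, keeping it constant at $u \in \{0,1\}$, keeping each $F^u$ a strong deformation retraction, and keeping the paths $\{F^u_t(x)\}_t$ geodesic near $M$ and for small $t$ — one arranges that $\mathcal T$ is a compact smooth $2$-manifold with corners whose boundary is $(T^0 \times \{0\}) \cup (T^1 \times \{1\}) \cup \mathcal T'$, where $\mathcal T'$ lies over $(W \times \{0\} \times [0,1]) \cup (M \times [0,1] \times [0,1])$ and meets $T^0 \sqcup T^1$ exactly along $\partial T^0 \sqcup \partial T^1$. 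The assignment $(x,t,u) \mapsto \bigl(F^u|_{[0,t]}$, a loop at $x\bigr)$ is a continuous map $g : \mathcal T \to \cL W$ extending $f^0 \sqcup f^1$ and sending $\mathcal T'$ into the constant loops $W \subseteq \cL W$, since there $t=0$ or $x \in M$. Finally, the section $(x,t,u) \mapsto F^u_t(x) - x$ of the trivial bundle $\R^L$ over $W \times [0,1] \times [0,1]$ cuts out $\mathcal T$ transversally, hence trivialises $\nu_{\mathcal T}$, giving a framing $\Psi$ which restricts to $\psi^0, \psi^1$ on $T^0, T^1$. Thus $(\mathcal T, g, \Psi)$ is a framed bordism rel $W$ between $(T^0,f^0,\psi^0)$ and $(T^1,f^1,\psi^1)$, so $[T^0]=[T^1]$ in $\Omega^{fr}_1(\cL W, W)$.

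For the remaining choices: independence of the generic perturbation of a single $F$ is the same argument applied to the constant path $F^u \equiv F$; and independence of the codimension-$0$ embedding $W \hookrightarrow \R^L$ follows since any two such become isotopic after a stabilisation $\R^L \hookrightarrow \R^{L'}$, so that dragging $T$ along the isotopy, with $\Psi$ varying continuously, yields the required bordism.

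The main obstacle I expect is the parametrized version of \cref{lem: gen pert F}: one must perform the transversality perturbations of the path $\{F^u\}$ while simultaneously respecting all the constraints — the endpoint conditions at $u \in \{0,1\}$, the strong-deformation-retraction conditions along $M$ and at $t \in \{0,1\}$, and the geodesic normalisations near $M$ and for small $t$ — so that $\partial \mathcal T$ is exactly as claimed and no spurious boundary appears. This is routine but is where essentially all of the content lies; the framing, the map to $\cL W$, and the matching of boundary data are then bookkeeping.
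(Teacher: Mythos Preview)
Your proposal is correct and follows essentially the same approach as the paper: connect two strong deformation retractions by a path using contractibility, take the closure of the parametrized fixed-point set $\{(x,t,u):F^u_t(x)=x,\ t\neq 0,\ x\notin M\}$, perturb as in \cref{lem: gen pert F} relative to $u\in\{0,1\}$, and equip it with the framing coming from the differential of $(x,t,u)\mapsto x-F^u_t(x)$ as in \eqref{eq: psi}. Your treatment is in fact more explicit than the paper's (which is a brief sketch), and your additional remarks on independence of the generic perturbation of a single $F$ and of the codimension-$0$ embedding are correct bonuses not spelled out in the paper.
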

\begin{proof}
   Let $F'$ be another choice of strong deformation retraction as above. Since the space of such deformation retractions is contractible, there is a $1$-parameter family of strong deformation retractions $\left\{F^\tau: W \times I \to W\right\}_{\tau \in [0,1]}$ such that $F^0 = F$ and $F^1 = F'$. Generically perturbing $\{F^\tau\}$ relative to $\{\tau \in \{0,1\}\}$ similarly to Lemma \ref{lem: gen pert F}, and letting $S$ be the closure of 
    \begin{equation*}
        S^\circ := \left\{(x, t, \tau) \in W \times [0,1]^2 \,|\, F^\tau_t(x) = x,\,t\neq 0,\,x \notin M\right\}
    \end{equation*}
    provides the desired bordism; this can be equipped with a stable framing similarly to (\ref{eq: psi}).
\end{proof}

\subsubsection{Definition of $\Theta(W)$ }
  
In this subsection we recall the definition of $\Theta (W) \in H_1(E_F)$ appearing in \cite[Section 6]{GN}.
\par  

Let $(x,t),(y,s) \in W \times [0,1]$ be two fixed points of $F$ (i.e. $F_t(x) =x$, $F_s(y)=y$). We say that $(x,t)$ and $(y,s)$ are \emph{in the same fixed point set} if there is some path $\gamma$ in $W \times I$ from $x$ to $y$, such that the loop $(pr_1\circ \gamma) \star (F\circ \gamma)^{-1}$ is homotopically trivial (where $pr_1$ projects to the first factor of $W \times [0,1]$). This defines an equivalence relation on the set of fixed points.\par  

The manifold $T$,  constructed in \cref{eq: T}, consists of a union of circles and arcs. Note that fixed points in the same path component of $T$ are in the same fixed point class. A geometric intersection invariant in \cite{GN} is defined using the submanifold $A \subset T$ consisting only of the union of those circles of intersections not in the same fixed point class as the fixed points of $F_0$ and $F_1$.  

\par In \cite[Page 432]{GN} an orientation of $A$ is defined as follows: to an isolated fixed point $x$ of $F_t$, one associates an index $i(F_t, x)$, which is the degree of the map: $$id -F_t: B_\epsilon(x) \setminus\{x\} \to \R^L \setminus\{0\}.$$ Here $B_\epsilon$ is a small neighborhood of $x$ in $W \times \{t\}$ not containing any other fixed point of $F_t$. The transversality hypothesis implies that generically $i(F_t,x) = \pm 1$, and both values occur on each loop. The orientation on each circle of fixed points, $S$, is given by picking any $(x,t) $ for which $i(F_t, x) =1$, and orientating $S$ near $(x,t)$ in the direction of increasing time. 

Let $E_F$ be the twisted loop space defined in \cref{eq: tw loop space}.
Then $A$ is a closed oriented $1$-manifold which includes into $E_F$ by constant loops and hence defines a class which we define $\Theta(W) \in  H_1(E_F)$ to be. 

\subsubsection{Relating $[T]$ and $\Theta(W)$}

To compare $\Theta(W)$ and $[T]$ we need to consider the following.  Firstly, we need to relate the target of the invariants; the definition of $[T]$ involves the free loop space $\LL W$ while $\Theta(W)$ concerns the twisted loop space $E_F$. Moreover, $\Theta (W)$ consists of a choice of orientation and defines a class in $H_1(E_F)$, while $[T]$ consists of a choice of framing, and defines a class in $\Omega_1^{fr}(\LL W,W)$. Secondly, $\Theta(W)$ is defined by manually discarding circles of intersections in the fixed point class of $F_0$ and $F_1$. The analogous procedure in the definition of $[T]$ corresponds to modding out $\LL W$ by constant loops. \par

We show that if $\pi_2(W) = 0$, after passing to homology, the two invariants agree. For this to make sense, we must first relate the groups in which these invariants live.

\begin{lemma} \label{lem: tw loop eq reg loop}
There exists a homotopy equivalence   $\mu: E_F \to \LL W$.
\end{lemma}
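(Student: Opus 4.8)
The plan is to recognise $E_F$ as a double mapping path space and then apply homotopy invariance of such spaces. Write a point of $E_F$ as a triple $(a,c,b)$, where $a,b\colon I\to W$ and $c\colon I\to I$, subject to $a(0)=b(0)$ and $b(1)=F_{c(1)}(a(1))$; thus $E_F$ is exactly the double mapping path space $P(W\times I\times W;\,D,\,G)$ of the inclusions of $D:=\{(x,t,x):x\in W,\ t\in I\}$ and $G:=\{(y,s,F_s(y)):y\in W,\ s\in I\}$, each homeomorphic to $W\times I$. First I would write down the candidate $\mu$ directly: send $(a,c,b)$ to the constant-speed concatenation
\[
  a(0)\ \overset{a}{\rightsquigarrow}\ a(1)\ \overset{\,v\,\mapsto\, F_{vc(1)}(a(1))\,}{\rightsquigarrow}\ b(1)\ \overset{\overline b}{\rightsquigarrow}\ b(0)=a(0),
\]
a loop at $a(0)$; this is well defined because $v\mapsto F_{vc(1)}(a(1))$ is a genuine path from $a(1)=F_0(a(1))$ to $b(1)=F_{c(1)}(a(1))$, and it visibly depends continuously on $(a,c,b)$.

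To see that $\mu$ is a homotopy equivalence I would compare $E_F$ with the standard homotopy-pullback model $P(W\times W;\Delta,\Delta)$ of $\LL W$. Post-composing the inclusions $D,G\hookrightarrow W\times I\times W$ with the homotopy equivalence $q\colon W\times I\times W\to W\times W$ that forgets the middle coordinate, one finds that $q|_D=\Delta\circ\mathrm{pr}_D$ with $\mathrm{pr}_D\colon D\cong W\times I\xrightarrow{\ \sim\ }W$, while $q|_G$ is the map $(y,s)\mapsto(y,F_s(y))$, which is homotopic to $\Delta\circ\mathrm{pr}_G$ via $(y,s,\tau)\mapsto(y,F_{\tau s}(y))$; here I crucially use $F_0=\mathrm{id}_W$. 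By the standard homotopy invariance of double mapping path spaces --- in the ambient space, in the attaching maps, and under homotopy equivalences of the two ends --- one obtains homotopy equivalences
\[
  E_F = P(W\times I\times W;D,G)\ \simeq\ P(W\times W;\Delta\circ\mathrm{pr}_D,\Delta\circ\mathrm{pr}_G)\ \simeq\ P(W\times W;\Delta,\Delta),
\]
and the last space is the usual homotopy-pullback model for $\LL W$. Unwinding the middle equivalence shows that the endpoint $(a(1),F_{c(1)}(a(1)))$ of the path in $W\times W$ attached to $(a,c,b)$ is slid back to the diagonal precisely along $v\mapsto F_{vc(1)}(a(1))$, so the composite of this chain is exactly the explicit $\mu$ above; hence $\mu$ is a homotopy equivalence.

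A more hands-on route is to exhibit the homotopy inverse $\nu\colon\LL W\to E_F$ given by $\nu(\delta):=\bigl(u\mapsto(\delta(u),0,\delta(0))\bigr)$. Then $\mu\circ\nu$ sends $\delta$ to $\delta\star(\mathrm{const})\star(\mathrm{const})$, which is homotopic to the identity by shrinking the two constant pieces. The only real content is then the homotopy $\nu\circ\mu\simeq\mathrm{id}_{E_F}$: a deformation through $E_F$ of an arbitrary $(a,c,b)$ to the standard form, during which one must simultaneously contract $c$ to the constant path at $0$ and absorb $b$ together with the $F$-correction into a single loop while keeping the constraint $b(1)=F_{c(1)}(a(1))$ satisfied at every stage. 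This is precisely what the homotopy-pullback argument packages abstractly, and it is the step I expect to be the main obstacle if one insists on writing everything by hand; for that reason I would present the double mapping path space argument as the main proof and use it to produce $\mu$.
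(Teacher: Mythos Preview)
Your proof is correct and takes a somewhat different, more conceptual route than the paper. The paper factors $\mu$ through a chain of explicit intermediate spaces
\[
  E_F \xrightarrow{\Gamma} \tilde E_F \to \bar E_F \xrightarrow{\bar\Gamma} \cL W \times I \to \cL W,
\]
writing down explicit homotopy inverses at each step; here $\tilde E_F$ and $\bar E_F$ are pullback spaces built from $\cP(W)\times\cP(I)$ and $\cP(W)$ respectively, and the contractibility of $\cP(I)$ is used directly to pass between them. You instead recognise $E_F$ in one step as a double mapping path space $P(W\times I\times W; D, G)$ and invoke homotopy invariance of homotopy pullbacks under homotopy equivalences of the ambient space and the attaching maps, reducing to the standard model $P(W\times W;\Delta,\Delta)\simeq\cL W$. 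Your explicit formula for $\mu$ (and its value on constant loops at fixed points, namely $F|_{[0,s]}$) matches what the paper later uses in comparing $[T]$ with $\Theta(W)$, so nothing downstream is lost. The paper's approach has the advantage of being entirely self-contained with all inverses written out; yours is shorter and makes the underlying reason for the equivalence (that $D$ and $G$ are both homotopic to the diagonal after forgetting the $I$-factor, using $F_0=\mathrm{id}$) more transparent.
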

\begin{proof}
We will construct $\mu$ as the composition of several homotopy equivalences. Let $\tilde E_F$ be the pullback in the diagram:

\begin{center}
    \begin{tikzcd}
        \tilde E_F \arrow[r] \arrow[d] & \P( W)\times \P(I) \arrow[d]\\
        W\times I \times I \arrow[r] & W\times I \times I \times W\\
    \end{tikzcd}
\end{center} where the bottom horizontal map is given by $(w,t,s) \mapsto (w, t,s, F_s(w))$, the right vertical map is given by $(\alpha, \beta) \mapsto (\alpha(0), \beta(0), \beta(1), \alpha(1))$, and $\cP\cdot$ denotes the path space. Then $\tilde E_F$ consists of pairs $(\alpha, \beta) \in \P(W )\times \P(I)$ 
satisfying $F_{\beta(1)}(\alpha(0)) = \alpha(1)$ 

Let $\gamma$  be a path in $E_F$, so $\gamma(0) = (x,t,x) $ and $\gamma(1) = (y, s, F_s(y))$. We can decompose $\gamma$ into components $(\gamma_1, \gamma_I, \gamma_2)$ by projecting into the first, second, and third factors in $W \times I \times W$. So that $\gamma_1$ is a path from $x$ to $y$, $\gamma_2$ is a path from from $x $ to $F_s(y)$, and $\gamma_I$ is a path in $I$ from $t$ to $s$. 

Define $\Gamma: E_F \to \tilde E_F $ by  sending $\gamma$ to 
$$ (y \overset{\overline\gamma_1} \rightsquigarrow x \overset{\gamma_2} \rightsquigarrow F_s(y), \gamma_I)$$
where we choose the concatenation of $y \overset{\overline \gamma_1} \rightsquigarrow x \overset{\gamma_2} \rightsquigarrow F_s(y)$ to happen at time equals to $1 /2$. Then $\Gamma$ is a homotopy equivalence admitting a homotopy inverse sending $(\alpha, \beta)$ to $(\overline\alpha_{[0,1/2]}, \beta, \alpha_{[1/2, 1]})$ (and appropriately rescaling). 

Note that since $\P(I)$ is contractible, $\tilde E_F$ is further homotopy equivalent to $\bar E_F$, the pullback of the diagram:

\begin{center}
    \begin{tikzcd}
        \bar E_F \arrow[r] \arrow[d] & \P(W) \arrow[d]\\
        W\times I \arrow[r] & W \times W\\
    \end{tikzcd}
\end{center}
where the right vertical map is given by $\gamma \to (
\gamma(0), \gamma(1))$, and the bottom horizontal map is given by $(w,s) 
\to (w, F_s(w))$. 

Then $\bar E_F$ consists of pairs $(\alpha, s)$ where $\alpha: [0,1] \to W $ is such that  $\alpha(1) =F_s(\alpha(0))$. The homotopy equivalence is given by the forgetful map sending $(\alpha, \beta) \to (\alpha, \beta(1))$. 

We further define 
$$\bar \Gamma: \bar E_F \to \LL W \times I $$ 
by sending $(\alpha, s)$ to: 
$$(\alpha(0) \overset{\alpha} \rightsquigarrow F_s(\alpha(0)) \overset{\overline F|_{[0,s]}}\rightsquigarrow \alpha(0),s).$$
Then $\bar \Gamma$ is a homotopy equivalence with inverse given by $$(\delta, s)  \mapsto (\delta(0) \overset{\delta} \rightsquigarrow \delta(0) \overset{F|_{[0,s]}}\rightsquigarrow F_s(\delta(0)), s).$$
Lastly, note that the forgetful map $ \LL W \times I \to \LL W $ is a homotopy equivalence. The homotopy equivalence $\mu$ is given by the composition of $\Gamma$, $\bar \Gamma$ and the forgetful map. 
\end{proof}

\begin{remark}
    We provide a more direct description of the map $\mu$. $\mu$ sends a path $(\gamma_1, \gamma_I, \gamma_2): I \to W \times I \times W$ from $(x,t,x)$ to $(y,s,F_s(y))$ to the following loop in $W$:
    \begin{equation}
        \left(y \overset{\overline \gamma_1}\rightsquigarrow x \overset{\gamma_2}\rightsquigarrow F_s(y) \overset{\overline F|_{[0,s]}}\rightsquigarrow y\right)
    \end{equation}
\end{remark}

The homotopy equivalence $\mu$ from \cref{lem: tw loop eq reg loop} induces a map: $$\mu_*: H_1(E_F) \to H_1(\LL W),$$
which we can compose with the quotient map: $$\pi: H_1(\LL W) \to H_1(\LL W,W).$$ To complete the comparison of $[T]$ and $\Theta (W)$, we will need to consider the Hurewicz map 
$$h_*: \Omega^{fr}_1(\LL W, W ) \to H_1(\LL W, W ).$$

In order to define $h_*$, we must fix conventions for how a stable framing on a manifold induces an orientation.\par 

Given a stably framed manifold, one consistent choice of orientation is given as follows. Let  $[Y] \in \Omega^{fr}_1(\LL W) $ be represented by $f:Y \to \LL W $; choose an embedding $e: Y \to \R^{L+1}$, with normal bundle $\nu_Y$, and framing $\phi: Y\times \R^L \to \nu_Y $ representing the stable framing on $Y$. Let $\{v_0, v_1, ... , v_L\}$ be the standard basis of $\R^{L+1}$
 and $\{ v_1, ... , v_L\}$ a basis for $\R^L$. For $y \in Y$, there exists a unique vector $v_y \in T_y Y \subset \R^{L+1}$ such that the matrix $(\phi(y,v_1) , ..., \phi(y, v_L), v_y)$ has determinant $1$. We orient $Y$ so that the positive orientation points in the direction of $v_y$. \par

\begin{lemma}\label{lem: T=theta}
    Suppose $\pi_2(W) = 0$. Then $\pi \circ \mu_*(\Theta(W)) =  h_*([T])$.
\end{lemma}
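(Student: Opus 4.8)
The plan is to prove $\pi \circ \mu_*(\Theta(W)) = h_*([T])$ by carefully tracking both invariants through the homotopy equivalences of \cref{lem: tw loop eq reg loop} and checking that the three discrepancies identified just before the lemma---the target spaces, orientation versus framing, and the discarding of ``trivial'' circles---all cancel under the hypothesis $\pi_2(W) = 0$.

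\begin{proof}[Proof sketch]
First I would unwind the definition of $\Theta(W)$: it is the class in $H_1(E_F)$ represented by the oriented $1$-manifold $A \subseteq T$ included into $E_F$ by constant loops, where $A$ consists of those circles of $T$ \emph{not} in the fixed point class of the fixed points of $F_0$ and $F_1$. On the other hand $[T] \in \Omega_1^{fr}(\cL W, W)$ is represented by the map $f: T \to \cL W$ sending $(x,t)$ to the loop $F|_{[0,t]}$, with stable framing induced by $\psi$ of \cref{eq: psi}; its image $h_*([T])$ in $H_1(\cL W, W)$ is the corresponding oriented (via the framing-orientation convention fixed before the lemma) singular chain.

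Next I would compute the composite homotopy equivalence $\mu: E_F \to \cL W$ on the constant loops that represent $A$. A constant loop in $E_F$ at a fixed point $(x,t)$ of $F$ (i.e.\ $F_t(x) = x$) is sent by $\Gamma$, then $\bar\Gamma$, then the forgetful map, to the loop $x \overset{F|_{[0,t]}}\rightsquigarrow F_t(x) = x$ together with its reverse pieces collapsing appropriately---i.e.\ to exactly the loop $f(x,t)$ up to canonical reparametrisation. Hence $\mu \circ (\text{inclusion of } A) \simeq f|_A$, so $\mu_*(\Theta(W)) = (f|_A)_*[A]$ in $H_1(\cL W)$, where $A$ is given the orientation from the index convention of \cite[Page 432]{GN}. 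I would then check that this index-based orientation on $A$ agrees with the framing-induced orientation on $A \subseteq T$: both are computed from the degree of $v \mapsto v - dF(v, \cdot)$, i.e.\ from $\psi$, so the sign conventions match (this is the one genuinely computational lemma and should be a local calculation at a single fixed point where $i(F_t, x) = 1$).

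It then remains to account for the difference between $A$ and all of $T$. In $H_1(\cL W, W)$, the circles of $T$ that lie in the fixed point class of the fixed points of $F_0$, $F_1$ map into the component of $\cL W$ of nullhomotopic loops; I would argue that under $\pi_2(W) = 0$ such a circle, included into $\cL W$ and then projected to $\cL W / W$ (equivalently, to the relative homology $H_1(\cL W, W)$), bounds---this is where the hypothesis enters, via the fact that $\pi_1$ of the component of constant loops in $\cL W$ at the trivial loop surjects onto $\pi_2(W) = 0$, so these circles are nullhomotopic rel the subspace of constant loops and hence killed in $H_1(\cL W, W)$. Consequently $\pi_*(f_*[T]) = \pi_*(f|_A)_*[A] = \pi \circ \mu_*(\Theta(W))$, and since $h_*([T]) = \pi_*(f_*[T])$ by definition of the Hurewicz map on framed bordism with the chosen orientation convention, the lemma follows. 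The main obstacle I anticipate is the last step: making precise, under $\pi_2(W) = 0$, that the ``discarded'' circles of $T$ die in $H_1(\cL W, W)$---this requires identifying the relevant component of $\cL W$ and using that its fundamental group (based at a constant loop, rel constant loops) is controlled by $\pi_2(W)$.
\end{proof}
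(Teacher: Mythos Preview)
Your proposal is correct and follows essentially the same approach as the paper: chase $\mu$ on constant loops to identify $\mu_*(\Theta(W))$ with $(f|_A)_*[A]$, verify the orientation comparison via a local computation with $\psi$, and use $\pi_2(W)=0$ to kill the contribution of $T \setminus A$ in $H_1(\cL W, W)$. Two minor refinements worth noting: the paper phrases the vanishing argument via the long exact sequence of the fibration $\Omega_0 W \to \cL_0 W \to W$ (giving $\pi_1(\cL_0 W, W)=0$ directly, rather than your surjection-onto-$\pi_2$ phrasing), and $T \setminus A$ may contain arcs as well as circles, but these likewise map to $\cL_0 W$ since their boundary lies at $t=0$.
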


\begin{proof}

Both invariants are defined starting with the manifold $T$. Since in the definition of $\Theta (W)$ we discard the arcs and circles in $T\setminus A$, we need to consider their contribution to $h_*[T]$.  Note that for $(x,t) \in  T\setminus A$, the loop $F|_{[0,t]}(x)$ is contractible. Let $\LL_0 W$ be the path component of $\LL W$ consisting of contractible loops. When $\pi_2(W) =0$, $\pi_1(
\LL_0 W)$ is isomorphic to $ \pi_1(W)$ (by the long exact sequence associated to the fibration $\Omega_0 W \to \LL_0 W \to W$) and is generated by constant loops. Hence  $\pi_1(\LL_0 W, W)=0$, and the contributions of $T\setminus A$ die in $H_1(\LL W, W)$.  

By chasing the homotopy equivalence $\mu$ we see that $\mu$ sends the constant loop at $(y, s, F_s(y))$, associated to a fixed point $(y,s)$, to the loop $F_{[0,s]}$ based at $y$. Hence,  up to a question of orientation, we have the equivalence $\pi \circ \mu_*(\Theta(W)) =  h_*([T])$. So the last thing to consider is the equivalence of orientations. \par 
Let $x$ be a fixed point of $F_t$, such that $i(F_t,x) =1$.  Let $(v_1, ..., v_L)$ be the standard basis for $\R^{L}$, and $(v_0, v_1, ..., v_L)$ be the standard basis for $\R^{L} \oplus \bR$. This choice of basis induces a trivialization of $T(W\times [0,1]) \cong \R^L \oplus \bR$.

Recall the map 
$$ Id -F_t: B_\epsilon(x) \setminus \{x\} \to \R^L \setminus\{0\}$$ 
defining the index $i(F_t,x)$. Note that $Id-F_t$ extends to $B_\epsilon$ and we denote its differential at $x$ by $\phi$. For generic $(x, t)$, $\phi$ is a linear isomorphism; we may assume this holds. Note if the degree of $Id - F_t$ equals to one, then $\phi$ is orientation preserving, and hence has positive determinant.

Let $\psi: \R^L \oplus \bR \to  \R^L$ be the map sending $(v,s) \in T(W \times [0,1])$ in the fibre over $(x,t)$

to 
\begin{equation}   
    \psi(v,s) := v- dF_{(x,t)}(v,s).
\end{equation}  
Note that $\ker \psi \cong TT$. Let $\tilde \psi :\R^L \oplus \bR \to \R^L \oplus \bR$ be the map sending $(v,s)$ in the fibre over $(x,t)$ to 
\begin{equation}   
    \tilde\psi(v,s) := (s,v- dF_{(x,t)}(v,s)).
\end{equation}
Then $\tilde \psi^{-1}$ defines an isomorphism $\R^L \oplus \bR \to \R^L \oplus \bR$ sending the final $\R$ factor to $TT$ (by the implicit function theorem). The matrix of $\tilde \psi$ is given by 
\begin{equation*}
    \begin{pmatrix}
        \phi & * \\
        0 & 1
    \end{pmatrix}
\end{equation*}
and hence has positive determinant, and the matrix of $\tilde \psi^{-1}$ is given by:
\begin{equation*}
    \begin{pmatrix}
        \phi^{-1} & * \\
        0 & 1
    \end{pmatrix}
\end{equation*}
where the vector \begin{equation}
  \tau := \begin{pmatrix}
        *\\
        1
    \end{pmatrix}
    = \tilde \psi^{-1}\begin{pmatrix}
        0\\
        1
    \end{pmatrix} \in TT_{x,t}
\end{equation} 
is oriented in the direction of increasing time (because its first coordinate is positive). 
The first $L$ columns of $\tilde \psi^{-1}$ don't necessarily give a framing of $\nu_T$, but by performing column operations (specifically those which don't change the sign of the determinant), i.e. projecting off of the subspace spanned by $\tau$, we arrive at a matrix $(\chi, \tau) $ which has positive determinant, and is such that: 
$$\R^L \xrightarrow[]{\chi} \R^L \oplus \bR\xrightarrow{\psi} \R^L  $$ 
is the identity
and hence induces our choice of framing of $T$. Note that after possibly rescaling by a positive number, $\tau$ defines an orientation of $TT$, consistent with the Hurewicz isomorphism defined above. Since $\tau$ is oriented in the direction of increasing time, it follows that the two conventions for orienting $T$ agree.
\end{proof}

\subsection{The Reidemeister trace of an $h$-cobordism}\label{sec: Reid Tr}
Let $W$ be a smooth $h$-cobordism of dimension $n$. $\partial W$ consists of two boundary components, which we call $M$ and $N$. In this section we define the Reidemeister trace of $W$ as a map of spectra:

$$Tr: \Sigma^\infty S^1\to \Sigma^{\infty}\frac{\cL W}W$$
and show that it is related to the framed bordism invariant $[T]$ by the Pontrjagin-Thom isomorphism in Section \ref{sec: T Tr}.\par 
We will need to make some choices, analogously to the definition of the coproduct. 

\subsubsection{Choices}
We choose an extension $$W^{ext}:= M\times [0,1]\cup_M W \cup_N \times N \times [0,1]$$ of $W$ as in Section \ref{sec: CD}.
\begin{definition}\label{def: trace data hcob}
     We define \emph{Trace data} for $W$ to be a tuple $\bar {R} = ( e, \rho^{ext}, \zeta, V, \epsilon, \lambda,  F)$ consisting of:
    \begin{enumerate}[(i).]
        \item \label{it: tr embedding}A smooth embedding $e: W^{ext} \hookrightarrow \R^L$. We write $\nu_e$ for the normal bundle of this embedding, defined to be the orthogonal complement of $TW^{ext}$. Note that $e$ canonically equips both $TW^{ext}$ and $\nu_e$ with metrics, by pulling back the Euclidean metric on $\R^L$. Let $\pi_e: \nu_e \to W^{ext}$ be the projection map.
        \item \label{it: tr tubular ngh} A tubular neighbourhood $\rho^{ext}: D_2 \nu_e \hookrightarrow \bR^L$. More precisely, $\rho^{ext}$ is a smooth embedding, restricting to $e$ on the zero-section. We let $\tilde U$ be the image of $\rho^{ext}$. We let $\rho$ be the restriction of $\rho^{ext}$ to the unit disc bundle of $\nu_e$ over $W$, and $U$ the image of $\rho$; this lies in the interior of $\tilde U$. In symbols: $\rho :=\rho^{ext}|_{D_1 \nu_e|_W}$, $U:= \operatorname{Im}(\rho)$ and $\tilde U = \operatorname{Im}(\rho^{ext})$. From the choices above we obtain a retraction $r: \tilde U \to W$ defined to be the composition of $(\rho^{ext})^{-1}$, the projection to $W^{ext}$, and the natural map $W^{ext} \to W$.

        \noindent We require that along the zero section, the derivative of $\rho^{ext}$ in the fibre direction agrees with the canonical inclusion of vector bundles $\nu_e \to \bR^L$.
        \item \label{item: zeta small} A real number $\zeta > 0$. 
        
        \noindent We require that \textrm{$\zeta$} is small enough that whenever $x, y \in M$ satisfy $\lVert x-y\rVert \leq \zeta$, the straight-line path between them $[x,y]$ lies inside $\tilde U$.
        
    \item \label{it: tr v.f.} A  vector field $V$ on $W^{ext}$ such that:

    \begin{enumerate}[(a).]
        \item $V|_W$ points strictly inwards at $N$ and strictly outwards at $M$. For simplicity, we require that for $(m,t) \in M \times (0,1]$, $V_{(m,t)}$ is a non zero rescaling of $V_{(m,0)}$, and similarly for $(n,t) \in N \times [0,1]$. 
        
        We denote the flow of $V$ by $\{\phi_s(x)\}_{s \geq 0}$. A priori this isn't defined for all time since the flow can leave along one of the components of $\partial W^{ext}$; we define the flow to be constant in $s$ as soon as it hits this component of $\partial W^{ext}$. 
        \item Let $\pi: W^{ext} \to W$ be the natural projection. For $x \in W^{ext}$, the length of the path $\pi(\{\phi_s(x)\}_{s \in [0,1]})$ is $\leq \zeta/4$.
    \end{enumerate} 
      
        \item A real number $\eps > 0$ sufficiently small such that: 
        \begin{enumerate}[(a).]
            \item $\eps < \zeta/8$.
            \item  $\tilde U$ contains an $\eps$-neighbourhood of $W$ (with respect to the Euclidean distance in $\bR^L$). 
            \item \label{it: tr eps small 1} If $x\in U$, $y \in e(W^{ext}) $ and  $\lVert x-y \rVert \leq \eps$  then the straight line path $[x,y]$ 
            lies in $\tilde U$, and $r([x,y])$ has length $\leq \zeta/4$.
            \item \label{it: tr eps small 2 }
            The Euclidean distance: $d\left(\phi_1(M), \rho(D\nu|_{ W}))\right) \geq 2\eps$ 
            \item\label{it: tr eps small 3} The Euclidean distance: $d\left(\phi_1(N), \rho(D\nu|_{ N}))\right) \geq 2\eps$
         
        \end{enumerate}
        
        \item \label{it: tr lambda big}$\lambda > 0$, large enough such that:
        
        \[\lambda \cdot d(\rho(S\nu_e|_W), e(W^{ext})) \geq \sqrt L\]  where $S\nu_e$ is the unit sphere bundle of $\nu_e$; note that this distance on the left hand side is at least $\eps$.
        \item A strong deformation retraction $F: W \times [0,1] \to W$ onto $M$.
    \end{enumerate}
  We write $TD^L(W)$ for the simplicial set whose $k$-simplices consist of the set of continuously-varying families of tuples of trace data, parametrised by the standard $k$-simplex. 
    
    \end{definition}

\begin{lemma}
    The forgetful map $TD^L(W) \to \operatorname{Emb}(M^{ext}, \R^L)$ which forgets all the data except the embedding $e$ is a trivial Kan fibration and hence a weak equivalence.
\end{lemma}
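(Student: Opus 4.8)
The statement is exactly analogous to Lemma~\ref{lem: kan}, and the proof will follow the same template. The plan is to filter the data: for $0 \le i \le 7$, let $TD^L_i(W)$ denote the simplicial set of tuples recording only the first $i$ pieces of trace data, so that $TD^L_1(W) = \operatorname{Emb}(W^{ext}, \mathbb{R}^L)$ and $TD^L_7(W) = TD^L(W)$. (Note that the statement as written says $\operatorname{Emb}(M^{ext}, \mathbb{R}^L)$, but this should read $\operatorname{Emb}(W^{ext}, \mathbb{R}^L)$, consistently with item~(\ref{def: trace data hcob}.\ref{it: tr embedding}).) I would observe that each condition imposed on the $i$-th piece of data in Definition~\ref{def: trace data hcob} refers only to the earlier pieces, so there are well-defined forgetful maps $TD^L_i(W) \to TD^L_{i-1}(W)$, and it suffices to show each of these is a trivial Kan fibration; the composite is then a trivial Kan fibration, hence a weak equivalence.

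I would then check each forgetful map in turn, exactly as in Lemma~\ref{lem: kan}. The base $TD^L_1(W) = \operatorname{Emb}(W^{ext},\mathbb{R}^L)$ is a Kan complex. The map $TD^L_2(W) \to TD^L_1(W)$ forgetting the tubular neighbourhood $\rho^{ext}$ is a trivial Kan fibration by the standard tubular neighbourhood theorem (via the implicit function theorem), together with the usual contractibility-of-choices argument, which works equally well in families over a simplex and relative to a prescribed boundary condition; here one uses that $W^{ext}$ is a compact manifold with corners so that tubular neighbourhoods of rank-$2$ disc bundles exist and form a contractible space. For the remaining maps: the constant $\zeta$ (item~\ref{item: zeta small}), the vector field $V$ (item~\ref{it: tr v.f.}), the constant $\eps$ (item~5), and the constant $\lambda$ (item~\ref{it: tr lambda big}) are each constrained only by conditions of the form ``sufficiently small'' (respectively ``sufficiently large'' for $\lambda$) relative to earlier data, and any sufficiently small (resp. large) choice works; the set of valid choices is therefore nonempty, open, and convex (or star-shaped toward $0$, resp. toward $+\infty$), hence the space of such choices is contractible and the forgetful map is a trivial Kan fibration. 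The one extra ingredient compared to Lemma~\ref{lem: kan} is the final piece of data, the strong deformation retraction $F: W \times [0,1] \to W$ onto $M$: here I would invoke the fact (used already in Section~\ref{sec: framed bordism invariant}, cf. the lemma asserting the space of strong deformation retractions is contractible) that the space of such $F$ is nonempty and contractible, and that this persists in families, so that $TD^L_7(W) \to TD^L_6(W)$ is also a trivial Kan fibration. Note that the conditions on $V$ in item~(\ref{def: trace data hcob}.\ref{it: tr v.f.}) involving the behaviour at $M$ and $N$ are satisfied by an open convex set of vector fields once one restricts to the correct sign conventions along the boundary; this does not affect contractibility.

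The only genuine subtlety — and the step I would be most careful about — is the interaction between the boundary constraints on $V$ and $F$: the vector field $V$ must point strictly outward at $M$ and strictly inward at $N$, while $F$ must be a strong deformation retraction onto $M$, and one should check these constraints are mutually compatible with the smallness conditions (in particular item~(\ref{def: trace data hcob}.\ref{it: tr v.f.}b), bounding the length of $\pi(\{\phi_s(x)\})$ by $\zeta/4$, and items~5(d),(e) on Euclidean distances). Since the relevant spaces of vector fields and of deformation retractions are each contractible and the smallness/distance conditions cut out open neighbourhoods of suitable basepoints that remain contractible (being defined by finitely many open inequalities on a convex space of choices), there is no obstruction; but I would phrase the argument so that one first picks $V$ and $F$ with the correct qualitative boundary behaviour and only then shrinks to enforce the metric inequalities, exactly as the proof of Lemma~\ref{lem: kan} handles $\eps$ and $\lambda$. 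All arguments apply verbatim to continuously varying families over a standard simplex, which is what is needed for the trivial Kan fibration property. This completes the proof.
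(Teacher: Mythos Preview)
Your proposal is correct and takes essentially the same approach as the paper: the paper's proof is a single sentence saying ``This lemma is the same as that of \cref{lem: kan}, also using the fact that the space of deformation retractions is contractible,'' and you have simply spelled out that template in full detail, including the one new ingredient (contractibility of the space of strong deformation retractions $F$). Your observation that the target should read $\operatorname{Emb}(W^{ext}, \R^L)$ rather than $\operatorname{Emb}(M^{ext}, \R^L)$ is also correct.
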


\begin{proof}
  This lemma is the same as that of \cref{lem: kan}, also using the fact that the space of deformation retractions is contractible. 
\end{proof}

\subsubsection{The definition of the trace}
\begin{definition}\label{def: Tr}
    Fix  trace data 
    $$ \bar {R} = (e, \rho^{ext}, \zeta, V, \epsilon, \lambda,  F). $$
    Let $(v, w, t) \in \frac{W^{D_{\nu_e}}}{\partial W^{D_{\nu_e}}} \wedge S^1$.  So $t \in [0,1], w \in W$, and $v \in (D\nu_e)_w$. The \emph{ unstable Trace, $Tr_{unst}$,} is the composition of the Thom collapse map: 
    $$\Sigma^L S^1 \to \frac{W^{D_{\nu_e}}}{\partial W^{D_{\nu_e}}} \wedge S^1 $$  
    and the map
    $$\frac{W^{D_{\nu_e}}}{\partial W^{D_{\nu_e}}} \wedge S^1 \to \Sigma^L \frac{\cL W}W $$ 
    defined by:
    \begin{equation}\label{eq: tr unst}
    (v, w, t) \mapsto \begin{cases}
    \begin{pmatrix}
        \lambda \left(v-\phi_1 \circ F_t(w)\right),\\
        B\left(
        w \overset{F|_{[0,t]}}\rightsquigarrow
        F_t(w) \overset{\phi}\rightsquigarrow
        \phi_1 \circ F_t(w) \overset{\theta}\rightsquigarrow
        w
         \right)
    \end{pmatrix}
    & \textrm{ if } \lVert v - \phi_1 \circ F_t(w)\rVert \leq \varepsilon\\
    * & \textrm{ otherwise.}
    \end{cases}\end{equation}
Note that we have used convention (\ref{def: sus 2}) for a model of the target. 
\end{definition}
\begin{remark}
    Unlike the case of the coproduct, the target of $\phi_1$ is $W^{ext}$, hence in order to end up with loops in $W$ we need to use the natural projection $W^{ext} \to W$.  Therefore, in \ref{eq: tr unst} the path \[F_t(w) \overset{\phi}\rightsquigarrow
        \phi_1 \circ F_t(w)\] is understood to be its projection to $W$, and the path $$\phi_1 \circ F_t(w) \overset{\theta}\rightsquigarrow
        w$$
        is the retraction of the straight line path $[v, \pi\circ F_t(w)]$ to $W$. 
\end{remark}
\begin{lemma}\label{lem: tr well def}
    $Tr_{unst}$ is a well-defined continuous map.
\end{lemma}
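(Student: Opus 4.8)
The plan is to follow the proof of Lemma \ref{lem: Delta well def} essentially verbatim, the only new feature being the strong deformation retraction $F$ in place of the family of restricted loops $\gamma|_{[0,t]}$. The Thom collapse map $\Sigma^L S^1 \to \frac{W^{D\nu_e}}{\partial W^{D\nu_e}} \wedge S^1$ is continuous for standard reasons, so it suffices to show that the formula (\ref{eq: tr unst}) descends to a continuous map out of the smash quotient $\frac{W^{D\nu_e}}{\partial W^{D\nu_e}} \wedge S^1$. Just as in Lemma \ref{lem: Delta well def}, once we verify that (\ref{eq: tr unst}) sends $(v,w,t)$ to the basepoint whenever $t \in \{0,1\}$, $|v| = 1$, or $w \in \partial W = M \cup N$, continuity of the induced map is automatic, since away from these loci the formula is visibly continuous and the incidence locus $\lVert v - \phi_1 \circ F_t(w)\rVert \le \eps$ is closed. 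All the content is therefore in these degenerate cases, which I would treat in turn.

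First I would dispose of the two cases not involving $\partial W$. If $|v| = 1$, then $\rho(v)$ lies on the sphere bundle $\rho(S\nu_e|_W)$, which by (\ref{def: trace data hcob}.\ref{it: tr lambda big}) is far enough from $e(W^{ext}) \ni \phi_1 \circ F_t(w)$ that the first coordinate $\lambda(v - \phi_1 \circ F_t(w))$ lies outside $[-1,1]^L$, hence represents the basepoint in convention (\ref{def: sus 2}). If $t = 0$, then $F_0 = \operatorname{id}$, so whenever the incidence condition holds the output loop is $B$ applied to the concatenation of the constant path at $w$, the flow path $w \rightsquigarrow \phi_1(w)$, and the retracted straight-line path $\phi_1(w) \rightsquigarrow w$; by (\ref{def: trace data hcob}.\ref{it: tr v.f.}) and (\ref{def: trace data hcob}.\ref{it: tr eps small 1}) the latter two have length $\le \zeta/4$, so the whole loop has length $< \zeta$ and Lemma \ref{lem: B} collapses it to a constant loop, i.e.\ the basepoint of $\cL W/W$.

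The boundary cases all reduce to showing the incidence condition fails. When $t = 1$ we have $F_1(w) \in M$, and when $w \in M$ we have $F_t(w) = w \in M$ for all $t$ because $F$ is a strong deformation retraction onto $M$; in both cases $\phi_1 \circ F_t(w) \in \phi_1(M)$, and the condition on $\eps$ in Definition \ref{def: trace data hcob} requiring $d(\phi_1(M), \rho(D\nu_e|_W)) \ge 2\eps$ then gives $\lVert v - \phi_1 \circ F_t(w)\rVert \ge 2\eps > \eps$, so the incidence condition cannot hold. The hard part will be the remaining case $w \in N$: here $F_t(w)$ is not constant in $t$ (so one cannot argue as for the coproduct's $\partial M$) and $F$ gives no length control on $F|_{[0,t]}$. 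For this I would use the dynamical content of (\ref{def: trace data hcob}.\ref{it: tr v.f.}), that $V|_W$ points strictly inwards along $N$: its time-one flow $\phi_1$ then sweeps all of $W$ off a fixed neighbourhood of $N$, and combined with (\ref{def: trace data hcob}.\ref{it: tr eps small 3}) --- that $d(\phi_1(N), \rho(D\nu_e|_N)) \ge 2\eps$ --- this keeps $\phi_1 \circ F_t(w)$ at distance $> \eps$ from $\rho(D\nu_e|_N) \ni \rho(v)$ for all $t$, so again the incidence condition fails. Pinning down this last estimate --- that the correction flow genuinely separates the outgoing boundary component $N$ from the whole of $W$, uniformly in $t$ --- is the one step I expect to require care; everything else is bookkeeping identical to Lemma \ref{lem: Delta well def}.
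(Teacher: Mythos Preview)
Your proposal is correct and follows essentially the same approach as the paper's proof: both dispatch the four degenerate loci $t\in\{0,1\}$, $|v|=1$, $w\in\partial W$ by the same mechanisms (short loop collapsed by $B$ at $t=0$; incidence condition failing via the $\eps$-separation conditions at $t=1$ and on $\partial W$; first coordinate escaping the cube via the $\lambda$-condition when $|v|=1$). The paper's treatment of $w\in N$ is in fact terser than yours --- it simply cites the two $\eps$-conditions without further comment --- so your flagging of that case as the one requiring care is, if anything, more scrupulous than the original.
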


\begin{proof}
Clearly the collapse map is well defined.  We must check that (\ref{eq: tr unst}) sends $(v, w, t)$ to the basepoint whenever $t \in \{0,1\}$, $|v|=1$ or $w\in \partial W$.  \par 

    Indeed, if $t =0$ and the incidence condition  holds then the second component simplifies to $$B(w \overset{\phi}\rightsquigarrow \phi_1(w) \overset{\theta} \rightsquigarrow w)$$ which is a constant loop since each of the paths has length less than $\frac{\zeta}{4}$ by (\ref{def: trace data hcob}.\ref{it: tr v.f.}) and (\ref{def: trace data hcob}.\ref{it: tr eps small 1}).\par
    
    When $t=1$, $F_1(w) $ is in $M$, and by (\ref{def: trace data hcob}.\ref{it: tr eps small 2 }) the incidence condition can not hold so (\ref{eq: tr unst}) represents the basepoint.\par 

    Similarly, if $w \in \partial W$, then by (\ref{def: trace data hcob}.\ref{it: tr eps small 2 }) and (\ref{def: trace data hcob}.\ref{it: tr eps small 3}) the incidence condition can not hold so (\ref{eq: tr unst}) represents the basepoint. \par 

    Lastly, if $|v|=1$, the first entry in (\ref{eq: tr unst}) lies outside of the cube, by (\ref{def: trace data hcob}.\ref{it: tr lambda big}), so (\ref{eq: tr unst}) represents the basepoint.\par    
    
\end{proof}

\begin{definition}
    The \emph{(stable) Trace}:
    $$Tr: \Sigma^\infty S^1\to \Sigma^{\infty}\frac{\cL W}W$$
    is defined to be the $L$-times desuspension of $$Tr_{unst}:  \Sigma^L S^1 \to \Sigma^L \frac{\LL W}{W}$$ for some trace data $\bar R$. 
\end{definition}

The proof of \cref{lem: cop ind} carries over word by word to give: 
\begin{lemma}
    The stable Trace is well defined and is independent of choices up to homotopy.
\end{lemma}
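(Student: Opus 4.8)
The plan is to mimic the proof of \cref{lem: cop ind} essentially verbatim, since the Trace is built from exactly the same ingredients (an embedding $e: W^{ext} \hookrightarrow \bR^L$, a tubular neighbourhood $\rho^{ext}$, and the auxiliary constants $\zeta, V, \eps, \lambda$) with the single extra datum of a strong deformation retraction $F$. First I would record that the unstable Trace $Tr_{unst}$ stabilises compatibly: given trace data $\bar R \in TD^L(W)$, one obtains a stabilisation map $st: TD^L(W) \to TD^{L+1}(W)$ exactly as in \cref{sec: st maps} (composing $e$ with $\bR^L \hookrightarrow \bR^{L+1}$ and modifying $\rho^{ext}$ in the obvious way, leaving $\zeta, V, \eps, \lambda, F$ unchanged). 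One then checks that $Tr_{unst}^{st(\bar R)}$, computed on the $(i{+}1)$st space, agrees with the suspension of $Tr_{unst}^{\bar R}$ on the $i$th space — that is, the analogue of the commuting square in the proof of \cref{lem: cop ind} holds, where the structure maps send the suspension coordinate to the new $\bR$-summand of $\bR^{1+i-L} \oplus \nu_e$ and act as the identity elsewhere. This shows the desuspended map assembles into an honest map of spectra $Tr^{\bar R}: \Sigma^\infty S^1 \to \Sigma^\infty \cL W/W$, so the definition makes sense.

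For independence of choices, I would invoke the lemma just proved in the excerpt: the forgetful map $TD^L(W) \to \operatorname{Emb}(W^{ext}, \bR^L)$ is a trivial Kan fibration and hence a weak equivalence, so for $L \gg 0$ the simplicial set $TD^L(W)$ is highly connected, in particular connected (this uses in turn that $\operatorname{Emb}(W^{ext}, \bR^L)$ is connected for $L$ large, and that the space of strong deformation retractions of $W$ onto $M$ is contractible). Given two choices $\bar R, \bar R' \in TD^L(W)$, connectedness provides a path between them, and a $1$-parameter family of trace data yields a homotopy between $Tr^{\bar R}_{unst}$ and $Tr^{\bar R'}_{unst}$; the well-definedness verification of \cref{lem: tr well def} applies uniformly along such a family (each condition in \cref{def: trace data hcob} is an open/closed condition that persists), so the homotopy lands in the correct based maps and descends to a homotopy of maps of spectra after desuspending. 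Since $TD^L(W)$ is moreover simply connected for $L$ large, the path, hence the homotopy, is canonical up to higher homotopy, giving a well-defined element of the homotopy category.

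The only genuine subtlety — the ``main obstacle'' — is bookkeeping around the Thom collapse map $\Sigma^L S^1 \to \frac{W^{D\nu_e}}{\partial W^{D\nu_e}} \wedge S^1$ that is prepended in the definition of $Tr_{unst}$: one must check this collapse map is also compatible with stabilisation (it is, since collapsing $\bR^L \oplus \nu_e$ against a point of $e(W) \subseteq \bR^L$ is compatible with adding a trivial summand) and varies continuously, hence up to homotopy canonically, over $TD^L(W)$. Everything else is a direct transcription of \cref{lem: cop ind}, using that the formula \cref{eq: tr unst} depends on $\bar R$ only through $e, \rho^{ext}, \zeta, V, \eps, \lambda, F$ in a manner manifestly continuous in those data. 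I would therefore present the proof as: ``The proof of \cref{lem: cop ind} carries over, with the following modifications,'' and then spell out only the stabilisation-of-the-collapse-map point and the use of contractibility of the space of deformation retractions.

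\begin{proof}
    The argument is identical to that of \cref{lem: cop ind}, with two small additions, so we only indicate the changes.\par
    First, as in \cref{sec: st maps} there are stabilisation maps $st = st^{L,L+1}: TD^L(W) \to TD^{L+1}(W)$, sending $\bar R = (e, \rho^{ext}, \zeta, V, \eps, \lambda, F)$ to $(e', \rho'^{ext}, \zeta, V, \eps, \lambda, F)$, where $e'$ and $\rho'^{ext}$ are modified exactly as in \cref{eq: st maps} and $\zeta, V, \eps, \lambda, F$ are unchanged; using the identification $\nu_{e'} = \bR \oplus \nu_e$ one checks this is again trace data. Both the Thom collapse map $\Sigma^L S^1 \to \frac{W^{D\nu_e}}{\partial W^{D\nu_e}} \wedge S^1$ and the map \cref{eq: tr unst} are compatible with the structure maps of the relevant spectra: the structure map sends the suspension coordinate to the new $\bR$-summand of $\bR^{1+i-L} \oplus \nu_e$ and is the identity in all other factors, so the square
    \begin{center}
        \begin{tikzcd}
            \Sigma\left(\frac{W^{D(\bR^{i-L} \oplus \nu_e)}}{\partial W^{D(\bR^{i-L} \oplus \nu_e)}} \wedge S^1\right) \arrow[r, "\Sigma Tr_{unst}"] \arrow[d] & \Sigma \Sigma^L \frac{\cL W}W \arrow[d] \\
            \frac{W^{D(\bR^{1+i-L} \oplus \nu_e)}}{\partial W^{D(\bR^{1+i-L} \oplus \nu_e)}} \wedge S^1 \arrow[r, "Tr_{unst}"] & \Sigma^{L+1} \frac{\cL W}W
        \end{tikzcd}
    \end{center}
    commutes, the bottom map being \cref{eq: tr unst} for $st^{L,i}(\bar R)$. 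Hence $Tr$ is defined on the $i^{th}$ space using stabilised trace data, and the desuspensions assemble into a map of spectra.\par
    Second, by the preceding lemma the forgetful map $TD^L(W) \to \operatorname{Emb}(W^{ext}, \R^L)$ is a weak equivalence; since $\operatorname{Emb}(W^{ext}, \R^L)$ is highly connected for $L \gg 0$ (and the space of strong deformation retractions of $W$ onto $M$ is contractible), $TD^L(W)$ is connected, and simply connected, for $L$ sufficiently large. Given trace data $\bar R, \bar R' \in TD^L(W)$ there is a path between them, unique up to homotopy, giving a canonical (up to homotopy) identification of the associated spectra together with a homotopy between $Tr^{\bar R}$ and $Tr^{\bar R'}$; the well-definedness checks of \cref{lem: tr well def} apply uniformly along a $1$-parameter family of trace data, so this homotopy is through the correct based maps. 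The conclusion follows.
\end{proof}
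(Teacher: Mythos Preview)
Your proposal is correct and takes essentially the same approach as the paper, which simply states that the proof of \cref{lem: cop ind} carries over word by word. You have in fact supplied more detail than the paper does, correctly noting the extra ingredient (the Thom collapse map) and the use of contractibility of the space of strong deformation retractions; one minor quibble is that the arrows in your commutative square are labelled $Tr_{unst}$ but really depict only the second constituent map \cref{eq: tr unst}, not the full composition with the collapse.
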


\subsection{The operations $\Xi_{{{r}}}$ and $\Xi_{{{l}}}$} \label{sec:traces on loop space}

In the previous section we defined the trace map:

 $$Tr: \Sigma^\infty S^1 \to \Sigma^{\infty}\frac{\cL W}W.$$
In this section we will upgrade the construction and define maps: 
$$\Xi_{{{r}}}, \Xi_{{{l}}}: \frac{\cL W^{-TW}}{\partial \cL W^{-TW}} \wedge S^1 \to \Sigma^\infty \frac{\cL W}W \wedge \frac{\cL W}W.$$
It will prove more useful for the following sections to consider the situation of a cobordism with a filling. This is, let $M \subseteq P$ be a codimension 0 submanifold with corners, with $j: M\hookrightarrow P$ an embedding which is a homotopy equivalence, and such that $\partial M$ and $\partial P$ are disjoint. Let $M^\circ = M \setminus \partial M$ be the interior of $M$ and $W = P \setminus M^\circ$, a cobordism from $\partial M$ to $\partial P$. We assume that $W$ is an $h$-cobordism.

Precomposing $j$ by the diffeomorphism $\Phi$ of \cref{def: ext mnfld}, we obtain an embedding $M^{ext} \hookrightarrow P$. 
Note that this  defines a collar neighborhood $ \partial M \times [0,1] \to P$ (in the sense of Remark \ref{remark: emb corn}) by restricting this embedding to $M^{ext} \setminus M^\circ$, and a smooth structure on: 
$$W^{ext} := W\cup_{\partial M}\partial M \times [0,1] \cup_{\partial N} \partial N \times [0,1].$$ 
\begin{definition}\label{def: tr dara loop}   
A choice of \emph{trace data} for $(M,P,j)$ is a pair $(Q,F)$, where $Q \in ED^L(P)$ is {embedding data} for $P$ and $F:P\times I \to P$ is a strong deformation retraction onto $M$. We require that: 
    $$(Q,F)|_W:= (e|_{W^{ext}}, \rho^{ext}|_{D_2\nu|_{W^{ext}}}, \zeta, V|_{W^{ext}}, \eps, \lambda, F)$$
    consists of trace data for $W$.\par 
    We write $TD^L(M \xhookrightarrow{j} P)$ for the simplicial set whose $k$-simplices consist of the set of continuously-varying families of tuples of trace data, parametrised by the standard $k$-simplex. 
 
\end{definition}

\begin{definition}
   Fix a choice of trace data $\Bar R \in TD^L(M \xhookrightarrow{j} P)$. We define  
   $$\Xi_{{{r}},unst}: \frac{\cL P^{D\nu_e}}{\partial \cL P^{D\nu_e}} \wedge S^1 \to \Sigma^L \frac{\cL P}P \wedge \frac{\cL P}P$$ 
   to send $(v, \gamma, s)$ to: 
    
    \begin{equation}\label{eq: tr  XiL}
        \begin{cases}
            \begin{pmatrix}
                \lambda\left(v-\phi_1 \circ F_s \circ \gamma(0)\right), \\
                B\left(
                    \gamma(0) \overset{F|_{[0,s]}}{\rightsquigarrow}
                    F_s \circ \gamma(0) \overset{\phi}{\rightsquigarrow}
                    \phi_1 \circ F_s \circ \gamma(0) \overset{\theta}{\rightsquigarrow}
                    \gamma(0)
                \right),\\
                B\left(
                    \gamma(0) \overset{\theta}{\rightsquigarrow}
                    \phi_1 \circ F_s \circ \gamma(0) \overset{\bar \phi}{\rightsquigarrow}
                    F_s \circ \gamma(0) \overset{\bar F|_{[0,s]}}{\rightsquigarrow}
                    \gamma(0) \overset{\gamma}{\rightsquigarrow}
                    \gamma(0)
                \right)
            \end{pmatrix} &
            \textrm{ if } \lVert v- \phi_1 \circ F_s \circ \gamma(0) \rVert \leq \eps \\
            * & 
            \textrm{otherwise.}
        \end{cases}
    \end{equation}
    and similarly, $$\Xi_{{{l}},unst}: \frac{\cL P^{D\nu_e}}{\partial \cL P^{D\nu_e}} \wedge S^1 \to \Sigma^L \frac{\cL P}P \wedge \frac{\cL P}P$$  sends $(v, \gamma, s)$ to
    \begin{equation}\label{eq: tr  XiR}
        \begin{cases}
            \begin{pmatrix}
                \lambda\left(v-\phi_1 \circ F_s \circ \gamma(0)\right), \\
                B\left(
                    \gamma(0) \overset{\gamma}{\rightsquigarrow} 
                    \gamma(0) \overset{F|_{[0,s]}}{\rightsquigarrow}
                    F_s \circ \gamma(0) \overset{\phi}{\rightsquigarrow}
                    \phi_1 \circ F_s \circ \gamma(0) \overset{\theta}{\rightsquigarrow}
                    \gamma(0)
                \right),\\
                B\left(
                    \gamma(0) \overset{\theta}{\rightsquigarrow}
                    \phi_1 \circ F_s \circ \gamma(0) \overset{\overline \phi}{\rightsquigarrow} 
                    F_s \circ \gamma(0) \overset{\overline F|_{[0,s]}}{\rightsquigarrow}
                    \gamma(0)
                \right)
            \end{pmatrix} &
            \textrm{ if } \lVert v- \phi_1 \circ F_s \circ \gamma(0) \rVert \leq \eps \\
            * & 
            \textrm{otherwise.}
        \end{cases}
    \end{equation}
\end{definition}
\begin{lemma}\label{lem: Xi cont}
    $\Xi_{{{r}},unst}$ and $\Xi_{{{l}},unst}$ are well-defined continuous maps.
\end{lemma}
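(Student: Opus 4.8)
The plan is to argue exactly as in the proofs of Lemmas \ref{lem: Delta well def} and \ref{lem: tr well def}: the formulas \eqref{eq: tr  XiL} and \eqref{eq: tr  XiR} are obtained by splicing the ``trace part'' of \eqref{eq: tr unst} into the ``coproduct part'' of \eqref{eq: Delta unst}, and well-definedness is verified in the same way. First I would record that $\frac{\cL P^{D\nu_e}}{\partial\cL P^{D\nu_e}}\wedge S^1$ is the quotient of $\operatorname{Tot}(D\nu_e\to\cL P)\times[0,1]$ obtained by collapsing the four subspaces on which (a) $s=0$, (b) $s=1$, (c) $|v|=1$, or (d) $\gamma(0)\in\partial P$. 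The content of the lemma is then that $\Xi_{l,unst}$ and $\Xi_{r,unst}$ send each of these four subspaces to the basepoint; once this is shown, continuity follows at once from continuity of concatenation, of the flow $\phi$, of the retractions $\theta$, and of the crushing map $B$ (Lemma \ref{lem: B}), together with continuity across the incidence locus $\lVert v-\phi_1\circ F_s\circ\gamma(0)\rVert=\eps$, exactly as in Lemma \ref{lem: Delta well def}.

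The four checks I would carry out run as follows. (c) When $|v|=1$ the first coordinate $\lambda(v-\phi_1\circ F_s\circ\gamma(0))$ is pushed outside the cube by the choice of $\lambda$ in (\ref{def: cop dat}.\ref{item: lambda big}) (applied to $Q\in ED^L(P)$), so both maps represent the basepoint irrespective of the remaining data. (a) At $s=0$ we have $F_0=\operatorname{id}$, so $\gamma(0)\overset{F|_{[0,s]}}{\rightsquigarrow}F_s\circ\gamma(0)$ is the constant path; for $\Xi_{l,unst}$ the middle component of \eqref{eq: tr  XiL} then reduces to $B\big(\gamma(0)\overset\phi\rightsquigarrow\phi_1\circ\gamma(0)\overset\theta\rightsquigarrow\gamma(0)\big)$, a loop of length $\le\zeta/4+\zeta/4<\zeta$ by (\ref{def: cop dat}.\ref{item: V small}) and (\ref{def: cop dat}.\ref{item: eps small 3}), which $B$ crushes to a constant loop; for $\Xi_{r,unst}$ the same applies to the third component of \eqref{eq: tr  XiR}. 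Either way one smash factor lands at the basepoint of $\cL P/P$. (b) At $s=1$ we have $F_1\circ\gamma(0)\in M$ and $F$ is stationary on $M$: if $\gamma(0)\in M$ then $\{F_t\circ\gamma(0)\}_t$ is constant and one output loop is again short, hence crushed by $B$ as in (a); if $\gamma(0)\in W$, the separation conditions on $(Q,F)|_W$, namely (\ref{def: trace data hcob}.\ref{it: tr eps small 2 }) and (\ref{def: trace data hcob}.\ref{it: tr eps small 3}), force $\phi_1\circ F_1\circ\gamma(0)$ to lie at distance $>\eps$ from every $v$ in $\rho(D\nu_e)$ over $W$, so the incidence condition fails. (d) When $\gamma(0)\in\partial P$ the vector $v$ lies in the part of the tubular neighbourhood over $\partial P$, while $V$ points strictly inwards at $\partial P$ and $F_s$ carries $\gamma(0)$ off $\partial P$ into the interior, so $\phi_1\circ F_s\circ\gamma(0)$ stays at distance $\ge2\eps$ from $v$ by (\ref{def: trace data hcob}.\ref{it: tr eps small 3}) (for $s=0$) and the analogous estimate for $s>0$, and again the incidence condition fails.

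As in the remark following the definition of $Tr_{unst}$, a little care is needed because the flow $\phi$ of $V$ a priori takes values in $P^{ext}$: the segment $F_s\circ\gamma(0)\overset\phi\rightsquigarrow\phi_1\circ F_s\circ\gamma(0)$ should be read via the projection $P^{ext}\to P$, and $\phi_1\circ F_s\circ\gamma(0)\overset\theta\rightsquigarrow\gamma(0)$ as the retraction to $P$ of the straight segment from $v$ to $\pi\circ\phi_1\circ F_s\circ\gamma(0)$; this does not affect any of the length or separation bounds above, since Definition \ref{def: tr dara loop} was arranged precisely so that $(Q,F)|_W$ is trace data for $W$ while $Q$ stays embedding data for $P$. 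The one mildly delicate point I anticipate is case (b) and its overlap with (d): one must keep track of whether $\gamma(0)$ sits in $M$, in $W^\circ$, or near $\partial P$, and invoke the right bound among (\ref{def: trace data hcob}.\ref{it: tr eps small 2 }), (\ref{it: tr eps small 3}) in each regime. Otherwise the argument is a routine transcription of those already given for $\Delta_{unst}$ and $Tr_{unst}$.
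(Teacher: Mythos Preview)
Your proposal is correct and follows essentially the same approach as the paper: you verify that the formulas send $(v,\gamma,s)$ to the basepoint on each of the four collapsed subspaces ($s\in\{0,1\}$, $|v|=1$, $\gamma(0)\in\partial P$), invoking the same length and separation estimates as in Lemmas~\ref{lem: Delta well def} and~\ref{lem: tr well def}. One small refinement: in case~(d), the cleanest argument uses the embedding-data condition (\ref{def: cop dat}.\ref{item: eps small 2}) for $P$ directly (since $F_s\circ\gamma(0)\in P$ implies $\phi_1\circ F_s\circ\gamma(0)\in\phi_1(P)$, which is uniformly $\ge 2\eps$ from $\rho(D\nu|_{\partial P})$), rather than the trace-data condition (\ref{def: trace data hcob}.\ref{it: tr eps small 3}) for $W$, which as you note only handles $s=0$ on its own.
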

\begin{proof}
    We prove that (\ref{eq: tr XiL}) sends $(v, \gamma, s)$ to the basepoint if $s \in \{0,1\}$, $\gamma(0) \in \partial P$ or $|v|=1$; the case of (\ref{eq: tr XiR}) is identical.\par 
    If $s = 0$, the second entry in (\ref{eq: tr XiL}) is constant, and so (\ref{eq: tr XiL}) represents the basepoint. \par 
    
    If $s=1$ and $\gamma(0) \in W$ then since $F_1(\gamma(0)) \in M$ by (\ref{def: trace data hcob}.\ref{it: tr eps small 2 }) the incidence condition can not hold. If $\gamma(0) \in M, $ then the second entry of (\ref{eq: tr XiL}) represents the basepoint. 
\par 
    The case of $|v|=1$ and $\gamma(0) \in \partial P$ is the same as in Lemma \ref{lem: tr well def}.
\end{proof}

\begin{definition}
The stable operations:
    $$\Xi_{{{r}}}, \Xi_{{{l}}}: \frac{\LL P^{-TP}}{\partial \LL P^{-TP}} \wedge S^1\to \Sigma^{\infty}\frac{\cL P}P$$
    are defined to be the $L$-times desuspension of $\Xi_{{{r}},unst}$ and $\Xi_{{{l}},unst}$.     
\end{definition}

 By a proof similar to that of Lemma \ref{lem: cop ind}, $\Xi_{{{r}}}$ and $\Xi_{{{l}}} $ are independent of choices.

\subsection{$T$ and $Tr$}
    \label{sec: T Tr}
    
    In this section, we prove Lemma \ref{lem: T vs Tr}, which says that the invariant $[T] $ corresponds to $Tr$ under the Pontrjagin-Thom correspondence. \par 
     Let $j: M \hookrightarrow P$ and $W$ be as in Section \ref{sec:traces on loop space}. We begin by choosing convenient trace data and collars for $W$.
    \subsubsection{Convenient choices}
        We first choose convenient collars for $M$ and $P$. Let:  
        $$\cC_P: \partial P \times [0,1] \to W$$ 
        be a collar neighbourhood of $\partial P$, sending $\partial P \times \{1\}$ to $\partial P$. We write $\cC_P$ also for its image, and $\cC_P^{in}$ for the smaller collar neighbourhood $\cC_P(\partial P \times [\frac 1 2, 1])$. \par
        Similarly, let $\cC_M: \partial M \times [0,1] \to W$ be a collar neighbourhood of $\partial M$, sending $\partial M \times \{0\}$ to $\partial M$. We write $\cC_M$ also for its image; we assume this is disjoint from $\cC_P$.

        \begin{figure}[h] 
            \centering
            \begin{minipage}{.42 \textwidth}
                \begin{tikzpicture}[scale=.8,
                v/.style={draw,shape=circle, fill=black, minimum size=1.3mm, inner sep=0pt, outer sep=0pt},
                vred/.style={draw,shape=circle, fill=red, minimum size=1mm, inner sep=0pt, outer sep=0pt},
                vsmall/.style={draw,shape=circle, fill=black, minimum size=1mm, inner sep=0pt, outer sep=0pt}]
        
                    \draw[color=black] (0,-1) to (4,-1);
                    \draw[color=black] (0,1) to (4,1);
                    \draw[color=black] (0,-1) to (-2, -1);
                    \draw[color=black] (0,1) to (-2, 1);
                    \draw[color=black] (-2, -1) .. controls (-4, -1) and (-4, 1) ..  (-2, 1);
        
                    \draw[color=black] (-2,0.1) to[out=330, in=210] (-1,0.1);
                    \draw[color=black] (-1.75, 0) to [out=30, in=150] (-1.25, 0);
        
                    \draw[color=black] (0,-1) to[in=240, out=120] (0,1);
                    \draw[color=black, dashed] (0,-1) to [in=300, out=60] (0,1);
                    \draw[color=black] (4,-1) to[in=240, out=120] (4,1);
                    \draw[color=black] (4,-1) to [in=300, out=60] (4,1);
        
                    \draw[color=red] (1, -1) to[in=240, out=120] (1,1);
                    \draw[color=red, dashed] (1,-1) to[in=300, out=60] (1,1);
        
                    \draw[color=red] (3, -1) to[in=240, out=120] (3,1);
                    \draw[color=red, dashed] (3,-1) to[in=300, out=60] (3,1);
        
                    \draw[color=blue] (3.5, -1) to[in=240, out=120] (3.5,1);
                    \draw[color=blue, dashed] (3.5,-1) to[in=300, out=60] (3.5,1);
        
                    \draw[color=red, <->] (-0.3, 0) to (0.7, 0);
                    \draw[color=red, <->] (2.75, -0.3) to (3.75, -0.3);
        
                    \draw[color=blue, <->] (3.25, 0.3) to (3.75, 0.3);
        
                    \draw[color=black, <->] (-3.5, -1.2) to (0, -1.2);
                    \draw[color=black, <->] (0,-1.2) to (4, -1.2);
                    \draw[color=black, <->] (-3.5, 1.2) to (4, 1.2);
        
                    \node at (-1.5, -1.5) {\tiny $M$};
                    \node at (2, -1.5) {\tiny $W$};
                    \node at (0, 1.5) {\tiny $P$};
        
                    \node at (0.2, -0.3) {\tiny ${\color{red}\cC_M}$};
                    \node at (3.2, -0.6) {\tiny ${\color{red}\cC_P}$};
        
                    \node at (3.6, 0.6) {\tiny ${\color{blue} \cC_P^{in}}$};

                \end{tikzpicture}
            \end{minipage}
            \caption{Collars.}
            \label{fig:collar-pic}
        \end{figure}
        We also choose trace data $(Q, F) \in TD^L(M \xhookrightarrow{j} P)$, such that $F$ satisfies the conclusion of Lemma \ref{lem: gen pert F}.

        By choosing the collars to be sufficiently small, we may assume
        \begin{equation}\label{eq: coll small}
            i(T) \textrm{ is disjoint from } (\cC_M \cup \cC_P) \times [0,1]
        \end{equation}
        
        By using an appropriate cut-off function, we may also assume
        \begin{equation}\label{eq: V cond}
            V=0 \textrm{ on } P \setminus (M \cup \cC_M \cup \cC_P^{in}).
        \end{equation}

    \subsubsection{Pontrjagin-Thom data}
        We consider Pontrjagin-Thom data (see Appendix \ref{sec: PT}) for the manifolds $P$ and $T$, (defined in \cref{eq: T}), as follows, using the trace data chosen above.\par 
        For $P$, we take the embedding $e: P \hookrightarrow \bR^L$. By rescaling if necessary, we may assume the image of $e$ lies in $(-1, 1)^L$. Since this is a codimension 0 embedding, no extra data is required.\par 
        For $T$, we take 
        \begin{itemize} 
            \item The embedding to be  
            \begin{equation} 
                T \xhookrightarrow{i} P \times [0,1) \xhookrightarrow{e \times Id} (-1,1)^L \times (-1,1)
            \end{equation}
            noting that by Lemma \ref{lem: gen pert F}, the image of $i$ does not hit $P \times \{1\}$.
            \item  $\psi_\mu: \nu_i \to \R^L$ is the isomorphism of vector bundles sending $(v,t)$ in the fibre of $\nu_i$ over $(x,s)$ to 
            \begin{equation}   \label{eq: psimu}
                \psi(v,t) :=\mu( v- dF_{(x,s)}(v,t)),
            \end{equation} 
            where $\mu>0$ is large.
            
            \item $\sigma_\chi: D\nu_i \to P \times [0,1]$ to send $(v,t)$, lying in the fibre of $D\nu_i$ over $(x,s) \in T$, to $(x,s) + \chi \cdot (v,t)$, where $\chi > 0$ is a small fixed number. 
        \end{itemize}
        
        \begin{lemma}\label{lem: chi mu}
            For $\chi > 0$ sufficiently small, $\sigma_\chi$ is an embedding, with image lying outside of $(\cC_M \cup \cC_P) \times [0,1]$.\par 
            For $\chi > 0$ fixed and $\mu > 0$ sufficiently large, $\psi_\mu$ satisfies (\ref{eq: PT fram cond}).
        \end{lemma}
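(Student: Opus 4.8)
The plan is to establish the two assertions of \cref{lem: chi mu} by elementary differential topology, treating the $\sigma_\chi$ statement and the $\psi_\mu$ statement separately; both are standard ``compactness plus scaling'' arguments, and the main subtlety is bookkeeping the codimension-0 embedding $e: P \hookrightarrow \bR^L$ together with the collar constraints from \cref{lem: cop def choices}.

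\textbf{The embedding statement.} First I would observe that the map $(v,t) \mapsto (x,s) + \chi\cdot(v,t)$ on $D\nu_i$ is, for each fixed $\chi$, a smooth map $D\nu_i \to P\times[0,1]$ which restricts to the embedding $i$ on the zero section $T$. Since $T$ is compact (by \cref{eq: T} and the remark following it) and the differential of $\sigma_\chi$ along $T$ is the canonical isomorphism $\nu_i \oplus TT \cong T(P\times[0,1])|_T$ (up to the scaling $\chi$ on the $\nu_i$ summand, which is invertible), $\sigma_\chi$ is an immersion near $T$; the standard tubular-neighbourhood argument — $\sigma_\chi$ is injective on $D\nu_i$ and an embedding for $\chi$ small enough, by a compactness argument on $T\times T$ as in the usual existence proof for tubular neighbourhoods — gives that $\sigma_\chi$ is an embedding for all sufficiently small $\chi$. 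For the image-disjointness claim: by \cref{lem: V 0} and the choice of $F$ in \cref{lem: cop def choices} (conditions (\ref{eq: cCM small}), (\ref{eq: cCP small}), (\ref{eq: V 0 away col})), the manifold $T$ — whose points $(x,s)$ satisfy $F_s(x)=x$ with $s\neq 0$, $x\notin M$ — is disjoint from the closed sets $\overline{\cC_M}\times[0,1]$ and $\overline{\cC_P^{in}}\times[0,1]$ (a fixed point of $F_s$ in a collar where $F$ moves points along a straight line in the collar direction would have to be fixed, forcing $s=0$ or forcing it out of the collar). Since $T$ is compact and these sets are closed, $T$ has positive distance from $\cC_M\times[0,1]$ and from a neighbourhood of $\cC_P^{in}\times[0,1]$; shrinking $\chi$ further keeps the image of $\sigma_\chi$ within that distance of $T$, hence outside $(\cC_M\cup \cC_P)\times[0,1]$ as required (here using (\ref{eq: def eps small P}) and the collar structure to control $\cC_P$ as well).

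\textbf{The framing-condition statement.} With $\chi>0$ now fixed, I would check condition (\ref{eq: PT fram cond}) for $\psi_\mu$. By construction $\psi_\mu = \mu\cdot\psi$, where $\psi(v,t)=v-dF_{(x,s)}(v,t)$, and $\psi$ is already the isomorphism $\nu_i \xrightarrow{\ \cong\ }\bR^L$ used to frame $T$ in \cref{sec: framed bordism invariant} (its kernel on all of $T(P\times[0,1])$ is exactly $TT$, by the implicit function theorem argument in the proof of \cref{lem: T=theta}). The Pontrjagin--Thom framing condition (\ref{eq: PT fram cond}) asks that the composite of $\psi_\mu$ with the relevant collapse/retraction is compatible up to the scaling built into the tubular neighbourhood $\sigma_\chi$; since $\psi$ is a fixed bundle isomorphism over the compact base $T$, its operator norm and the norm of its inverse are bounded, so multiplying by a sufficiently large constant $\mu$ makes $\psi_\mu$ expand by at least the factor dictated by $\chi$ and $\lambda$. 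The precise inequality is the same kind of estimate as in conditions (\ref{def: trace data hcob}.\ref{it: tr lambda big}) and (\ref{def: cop dat}.\ref{item: lambda big}): choose $\mu$ with $\mu\cdot d(\rho(S\nu_i),\text{zero section}) \geq$ the required constant, which is possible by compactness of $T$.

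\textbf{The main obstacle} I anticipate is purely organizational rather than mathematical: making the disjointness of $T$ from the collars $\cC_M, \cC_P$ fully rigorous requires carefully invoking the precise collar-straightness conditions (\ref{eq: cCP small}), (\ref{eq: cCM small}) of \cref{lem: cop def choices} to rule out fixed points of $F_s$ in the collars, and then propagating ``$T$ is at positive distance'' through the scaling of $\sigma_\chi$; one must also confirm that the order of quantifiers matches the statement (first choose $\chi$ small for the embedding and image claims, \emph{then} with $\chi$ fixed choose $\mu$ large for the framing claim), which is exactly how the lemma is phrased. No genuinely new idea beyond compactness-and-scaling is needed.
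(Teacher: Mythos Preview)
Your proposal is correct and follows the same approach as the paper, which simply says the first statement follows from the inverse function theorem together with the fact that $i(T)$ lies outside $(\cC_M \cup \cC_P)\times[0,1]$, and that the second statement is clear. Two minor points: condition (\ref{eq: cCP small}) already gives disjointness of $T$ from all of $\cC_P$ (not just $\cC_P^{in}$), so the extra step invoking (\ref{eq: def eps small P}) is unnecessary; and the framing condition (\ref{eq: PT fram cond}) is just the inequality $|\psi_\mu(v)|\geq|v|$, so no factor involving $\chi$ or $\lambda$ enters---your compactness-of-$T$ argument bounding $\lVert\psi^{-1}\rVert$ suffices directly.
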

        \begin{proof}
        
            The first statement follows from the inverse function theorem and the fact that $i(T)$ lies outside $(\cC_M \cup \cC_P) \times [0,1]$, by (\ref{eq: coll small}). The second statement is clear. 
        \end{proof}
    \subsubsection{Proof of Lemma \ref{lem: T vs Tr}}
        For the rest of the section, we fix $\chi, \mu>0$ as in Lemma \ref{lem: chi mu}. We assume the map $[T]_{unst}$ is taken with respect to this choice of data.
        \begin{lemma}\label{lem: change inc cond}
            For $\lambda > 0$ large enough, if $Tr(\gamma,s)$ is not the basepoint, then $(\gamma, s) \in \sigma_\chi(D\nu_i)$.
        \end{lemma}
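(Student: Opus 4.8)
The plan is to compare the incidence condition appearing in the definition of $Tr_{unst}$ (equation \eqref{eq: tr unst}), namely $\lVert v - \phi_1 \circ F_t(w)\rVert \leq \eps$, with the condition that $(w,t)$ lies in the image $\sigma_\chi(D\nu_i)$ of the chosen Pontrjagin--Thom tubular neighbourhood of $T$. Recall that $Tr$ is obtained from $Tr_{unst}$ by precomposing with a Thom collapse map $\Sigma^L S^1 \to \frac{W^{D\nu_e}}{\partial W^{D\nu_e}} \wedge S^1$, so a point $(v,w,t)$ in the image of the collapse map has $v$ determined by $w$: writing $w = e^{-1}(\text{point near }T)$, the vector $v$ is a rescaling (by $\lambda$, roughly) of the displacement of $w$ from its nearest point on $e(W)$. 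Thus on the image of the collapse map, $v - \phi_1\circ F_t(w)$ is controlled by two quantities: the distance from $w$ to $W$ inside $\R^L$, and the ``defect'' $w - \phi_1 \circ F_t(w) = -\psi$-type expression measuring how far $(w,t)$ is from being a fixed point of $F$ (after the inward flow correction).

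First I would make precise that, for $(v,w,t)$ in the image of the Thom collapse map with $\lVert v-\phi_1\circ F_t(w)\rVert \leq \eps$, the point $\phi_1\circ F_t(w)$ must lie within $O(\eps)$ of $w$ in $\R^L$; since $\phi_1$ moves points a bounded amount (controlled by $\zeta/4$ via \eqref{def: trace data hcob}.\ref{it: tr v.f.}), this forces $F_t(w)$ to be close to $w$, i.e. $(w,t)$ is close to $\tilde T$. Away from $M\times[0,1]$ and $W\times\{0\}$ (which are excluded by the basepoint analysis in Lemma \ref{lem: tr well def} together with the constraints on $F$ near those regions), being close to $\tilde T$ means being close to $T$. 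Then I would invoke the transversality/genericity from Lemma \ref{lem: gen pert F}: near $T$, the map $(w,t) \mapsto w - dF_{(x,s)}(\cdots)$ is a submersion transverse to $0$, so the normal directions are non-degenerate, and a neighbourhood of $T$ in $W\times[0,1]$ is exactly $\sigma_\chi(D\nu_i)$ for the chosen $\chi$ (Lemma \ref{lem: chi mu}). Choosing $\lambda$ large forces $v$ (hence the allowed displacement of $w$ from $W$, and via the incidence condition the allowed displacement of $w$ from $\tilde T$) to be small enough that ``close to $T$'' upgrades to ``inside $\sigma_\chi(D\nu_i)$''.

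The key steps, in order: (1) restrict attention to the image of the Thom collapse map and record the relation between $v$ and the position of $w$ relative to $e(W)\subseteq\R^L$; (2) use the incidence condition plus the bound on the flow $\phi$ to deduce $\lVert w - F_t(w)\rVert = O(\eps)$; (3) use the constraints on $F$ near $M\times[0,1]$ and $W\times\{0,1\}$ (as in Lemma \ref{lem: tr well def}) to rule out the part of $\tilde T$ not near $T$; (4) apply the inverse-function-theorem description of a neighbourhood of $T$ as $\sigma_\chi(D\nu_i)$ from Lemma \ref{lem: chi mu} to conclude, after taking $\lambda$ large (equivalently $\eps$ effectively small in the relevant estimate). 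The main obstacle I expect is step (2)--(3): carefully book-keeping the several small parameters ($\eps$, $\zeta$, $\chi$, $1/\lambda$) and the directions in which $v$, the normal displacement to $W$, and the fixed-point defect point, so that one genuinely gets containment in $\sigma_\chi(D\nu_i)$ rather than merely proximity to $T$; this is exactly the kind of estimate that was handled in the (asserted) proof of the analogous statement, so I would mirror that argument. Since the excerpt says the proof is ``Same as Lemma \ref{lem: change inc cond}'' for the $\Xi_r$ analogue, the cleanest writeup is to prove this lemma directly by the parameter-chase above and then reuse it verbatim.
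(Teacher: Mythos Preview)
Your step~(1) rests on a misreading of the setup. In Section~\ref{sec: T Tr} the embedding $e: P \hookrightarrow \bR^L$ has codimension~$0$, so $\nu_e$ has rank~$0$ and there is no separate coordinate $v$: the input to the relevant map is just $(x,s) \in P \times [0,1]$, and the first coordinate of~(\ref{eq: tr unst}) reads $\lambda(x - \phi_1 \circ F_s(x))$. Even in positive codimension your claim that the Thom collapse ``determines $v$ from $w$'' would be wrong: the collapse surjects onto the whole disc bundle, so $v$ is a free variable in the fibre over $w$, not a function of it. With step~(1) removed, your steps~(2)--(4) would work but are more elaborate than needed.

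The paper's argument is pure compactness: set $S = \{(x,s) \in P \times [0,1] : \lVert x - F_s(x)\rVert \leq \eps\} \setminus \sigma_\chi(D\nu_i^\circ)$, which is closed in a compact space hence compact; on $S$ the first-coordinate expression is bounded below by some $\delta > 0$, so $\lambda > 2/\delta$ pushes it out of $[-1,1]^L$. No inverse function theorem is required, and $\eps, \zeta, \chi$ are all fixed before $\lambda$ is chosen. Your instinct in step~(3) is correct and is a point the paper leaves implicit: the positive lower bound on $S$ would fail where $\phi_1 \circ F_s(x) = x$ (for instance at $s=0$ in the region where $V=0$, or at zeros of $V$ in $M$), and at such points one must instead observe that the \emph{loop} coordinate has length $\leq \zeta$ and is therefore killed by $B$, so $Tr_{unst}$ is already the basepoint there. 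Once those points are excised from $S$, the compactness argument goes through cleanly.
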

        \begin{proof}
            Let $S = \{(x, s) \in P \times [0,1]\,|\, \lVert x-F_s(x)\rVert \leq \eps \} \setminus \sigma_\chi(D\nu_i^\circ)$. Since $S$ is compact, for $\lambda > 0$ large enough, whenever $(\gamma(0), s)$ doesn't lie in $S$, the first term of (\ref{eq: tr unst}) has large norm.
        \end{proof}
        Choosing $\lambda >0$ large enough that Lemma \ref{lem: change inc cond} holds and using \cref{eq: V cond} and Lemma \ref{lem: chi mu}, we have:
        \begin{equation}\label{eq: simp tr}
            Tr_{unst}(x, s) = 
            \begin{cases}
                \begin{pmatrix}
                    \lambda(x-F_s(x)) \\
                    B\left(x \overset{F|_{[0,s]}} \rightsquigarrow
                    F_s(x) \overset \theta \rightsquigarrow
                    x\right)
                \end{pmatrix}
                & \textrm{ if } (x, s) \in \sigma_\chi(D\nu_i)\\
                * & \textrm{ otherwise.}
            \end{cases}
        \end{equation}
        Using the chosen Pontrjagin-Thom data for $T$ (and assuming that $\lambda = \mu/\chi$, which we can do by increasing $\lambda$ or $\mu$ as necessary) and opening up the definition of $\psi_\mu$, we have that
        \begin{equation}\label{eq: simp T}
            [T]_{unst}(x, s) = \begin{cases}
                \begin{pmatrix}
                    \lambda ((x-y) - dF_{(y,t)} (x-y,s-t)) \\
                    y \overset{F|_{[0,s]}} \rightsquigarrow y
                \end{pmatrix}
                & \textrm{ if } (x, s) \in \sigma_\chi(D\nu_i)\\
                * & \textrm{ otherwise.}
            \end{cases}
        \end{equation}
        here $(y,t)$ is in $  T$, and is such that $\sigma^{-1}_\chi(x,s)$ lives in the fibre of $D\nu_i \to T$ over $(y,t)$. This is well defined when the incidence condition holds.

        \begin{lemma}\label{lem: T vs Tr}
            $Tr$ and $[T]$ are homotopic.
        \end{lemma}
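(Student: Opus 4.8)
The plan is to compare the two explicit unstable representatives already isolated above: equation (\ref{eq: simp tr}) for $Tr_{unst}$ and equation (\ref{eq: simp T}) for $[T]_{unst}$. Both are obtained from the Thom collapse $\Sigma^L S^1 \to P^{D\nu_i}/\partial P^{D\nu_i}$ of Appendix \ref{sec: PT} followed by a map into $\Sigma^L \frac{\cL W}W$ which is supported, in the sense of sending everything else to the basepoint, on the same region $\sigma_\chi(D\nu_i)$, and whose incidence condition is in both cases exactly ``$(x,s) \in \sigma_\chi(D\nu_i)$''. It therefore suffices to produce a homotopy through maps with this same support property, and since the two target wedge factors — the $\R^L$-coordinate and the loop coordinate — are essentially independent, I would homotope them in turn.

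\emph{Step 1: the loop coordinates.} Given $(x,s) \in \sigma_\chi(D\nu_i)$, write $\sigma_\chi^{-1}(x,s) = (v,t') \in D\nu_i$ in the fibre over $(y,t) \in T$; by construction $\|(x,s) - (y,t)\| = O(\chi)$, so $x$ is close to $y$ and the correction path $\theta = \theta_{x,y}$ has length $\le \zeta/4$. The loop coordinate of $Tr_{unst}$ is $B\big( x \overset{F|_{[0,s]}}\rightsquigarrow F_s(x) \overset{\theta}\rightsquigarrow x\big)$ while that of $[T]_{unst}$ is $f(y,t) = \big(y \overset{F|_{[0,t]}}\rightsquigarrow y\big)$. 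Interpolating $x \rightsquigarrow y$ and $s \rightsquigarrow t$ along short paths — which, by Lemma \ref{lem: chi mu}, stay away from the collars and so do not affect admissibility — and simultaneously contracting $\theta$, gives the desired homotopy of loop coordinates; this is the same elementary manoeuvre used in the simplification steps of the proof of Theorem \ref{thm: T Xi}, and it is automatically a based homotopy because the ``bad'' loci ($|v|=1$, $x \in \partial P$, $s \in \{0,1\}$) are already collapsed.

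\emph{Step 2: the $\R^L$-coordinates.} Here the two representatives give $\lambda\big(x - F_s(x)\big)$ versus $\lambda\big((x-y) - dF_{(y,t)}(x-y,\,s-t)\big)$. Since $F_t(y) = y$, Taylor's theorem applied to the smooth map $F$ gives $F_s(x) - y = dF_{(y,t)}(x-y,\,s-t) + R$ with $\|R\| = O(\chi^2)$ uniformly over the compact manifold $T$; hence $x - F_s(x) = (x-y) - dF_{(y,t)}(x-y,s-t) - R$, and the straight-line homotopy killing the $-\lambda R$ term connects the two first coordinates. The content is checking admissibility of this homotopy: on the interior of $\sigma_\chi(D\nu_i)$, where the loop coordinate may be nontrivial, the interpolant must remain in $[-1,1]^L$, while near $S\nu_i$ it must retain norm $\ge 2$ throughout so that the point stays at the basepoint. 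Both hold after fixing $\chi$ small (which pins down the linearised model) and then taking $\mu$, hence $\lambda = \mu/\chi$, large, exactly as in Lemma \ref{lem: chi mu} and Lemma \ref{lem: change inc cond}: the error $\lambda R = O(\mu\chi)$ is dominated by the transversality estimates guaranteeing that $\psi_\mu$ satisfies (\ref{eq: PT fram cond}).

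Concatenating the homotopies of Steps 1 and 2 gives $Tr_{unst} \simeq [T]_{unst}$, and desuspending $L$ times (Lemma \ref{lem: spectra unst}) yields $Tr \simeq [T]$ as maps of spectra. The main obstacle is Step 2: nothing is deep, but one must order the quantifiers $\chi \ll 1 \ll \mu, \lambda$ correctly and lean on compactness of $T$ so that the quadratic remainder from linearising $F$ is negligible against the scale on which the Thom collapse ``sees'' the framing $\psi_\mu$; this is precisely where the care taken in choosing the Pontrjagin--Thom data (Lemma \ref{lem: chi mu}) and the trace data (Lemma \ref{lem: cop def choices}) is consumed. Step 1, by contrast, is routine.
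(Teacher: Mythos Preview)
Your proposal is correct and follows essentially the same approach as the paper's own proof: compare the simplified representatives (\ref{eq: simp tr}) and (\ref{eq: simp T}), treat the $\bR^L$-coordinate and the loop coordinate separately, use the first-order Taylor agreement for the former and a straight-line interpolation of basepoints for the latter. The paper's proof is much terser—it handles Step~2 in a single clause (``the first entries agree up to first order, so they are homotopic if we take $\lambda$ sufficiently large'') and for Step~1 writes down the explicit homotopy $\{z_\tau(x,y) \overset{F|_{[0,s]}}{\rightsquigarrow} F_s(z_\tau(x,y)) \overset\theta\rightsquigarrow z_\tau(x,y)\}_\tau$ along the straight line between $x$ and $y$, together with an appeal to Lemma~\ref{lem: B}—but your more careful unpacking of the quantifier order $\chi \ll 1 \ll \mu$ and the role of compactness of $T$ is exactly the content being suppressed there.
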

        \begin{proof}
         We construct a homotopy between (\ref{eq: simp T}) and (\ref{eq: simp tr}) as follows.

            The first entries in (\ref{eq: simp T}) and (\ref{eq: simp tr}) agree when $(x, s) \in T$, or when $(x, s) \notin \sigma_\chi(D\nu_i)$. As $(x, s)$ leaves $T$, the derivative of the first entry of (\ref{eq: simp tr}) in the normal direction is exactly the first entry of (\ref{eq: simp T}). Therefore we may define a homotopy by linearly interpolating between them. By the implicit function theorem, this is well-defined for $(x, s)$ in some small neighbourhood of $T$; by picking $\lambda$ sufficiently large, this neighbourhood will include the region where the two expressions are not sent to the base-point, so this homotopy will be defined everywhere.

            In the second entry we can take a homotopy of the form $\{z_\tau(x,y) \overset{F|_{[0,s]}}\rightsquigarrow F_s(z_\tau(x,y)) \overset \theta \rightsquigarrow z_\tau(x,y)\}_\tau$, where $\{z_\tau(x,y)\}_\tau$ follows the straight line between $x$ and $y$, and also applying Lemma \ref{lem: B}.

        \end{proof}

\section{Codimension 0 coproduct defect}\label{sec: cop defect}

Let $j: M \subseteq P$ be a codimension $0$ embedding such that the complement $W := P \setminus M^\circ$ is an $h$-cobordism (in particular, $j$ is a homotopy equivalence), possibly with corners. Let $F: W \times I \to W$ be a strong deformation retraction onto $\partial M$, which we extend by the identity on $M$ to a strong deformation retraction $F: P \times I \to P$.

Let $[T]$ be the associated framed bordism invariant, defined as in Section \ref{sec: framed bordism invariant}. 
In this section we compare the coproducts on $M$ and $P$ and relate the difference to the diagonal Chas-Sullivan product with $[T]$. We do so by first relating the difference to the operations $\Xi_{{{r}}}$ and $\Xi_{{{l}}}$ in Section \ref{sec: cop def xi} (\cref{thm: cop Xi}), and then relating $\Xi_{{{r}}}$ and $\Xi_{{{l}}}$ to the diagonal Chas-Sullivan product with $[T]$ in Section \ref{sec: tr is product} (\cref{thm: T Xi}).
\subsection {Coproduct defect is given by $\Xi_{{{l}}}-\Xi_{{{r}}}$} \label{sec: cop def xi}

For the rest of this section fix trace data $ (Q,F) \in TD^L(M \xhookrightarrow{j} P)$ as in Definition \ref{def: trace data hcob}. We assume that $j$ extends to an embedding  $j^{ext}: M^{ext}\hookrightarrow P$  such that $j^{ext}(M^{ext})$ and $\partial P$ are disjoint.  We require that $Q \in ED^L(P)$ is a choice of {embedding data}, such that
        \begin{equation}\label{eq: Q restr Q}
            \textrm{$Q|_M := (e|_{M^{ext}}, \rho^{ext}|_{D_2\nu|_{M^{ext}}}, \zeta, V|_M, \eps, \lambda)$ consists of {embedding data} for $M$}.
        \end{equation} For convenience, we write $\nu_P$ for $\nu_e|_P$, and similarly for $\nu_M$.
Let $F:P\times I \to P$ be the strong deformation retraction. Then $F_1$ induces a map of spaces: 
$$\overline F_1: \frac{\cL P^{D\nu_P}}{\partial \cL P^{D\nu_P}} \to \frac{\cL M^{D\nu_M}}{\partial \cL M^{D\nu_M}}$$
by sending 
\begin{equation}\label{eq: overline F1}
   \overline F_1(v, \gamma)= 
\begin{cases}
    (v, F_1 \circ \gamma) & \textit{ if  } \gamma(0) \in M\\
    * & \textit{ otherwise}
\end{cases}
\end{equation} 
By passing to spectra, we get a map that we also call $\overline F_1$: 
 $$\overline F_1:\frac{\cL P^{-TP}}{\partial \cL P^{-TP}}\to \frac{\cL M^{-TM}}{\partial \cL M^{-TM}}$$
\begin{lemma}
    $\overline F_1$ is an equivalence of spectra.
\end{lemma}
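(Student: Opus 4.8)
The plan is to factor $\overline F_1$ through the intermediate spectrum obtained by collapsing \emph{all} loops based in the cobordism $W = P\setminus M^\circ$, not merely those based on $\partial P$. Work unstably with Thom spaces and desuspend at the end. Since $\partial P\subseteq W$, the subspace $\cL P|_W := \operatorname{ev}_0^{-1}(W)\subseteq\cL P$ contains $\partial\cL P$, so there is a further collapse map
$$c\colon\frac{\cL P^{D\nu_P}}{\partial\cL P^{D\nu_P}}\longrightarrow\frac{\cL P^{D\nu_P}}{(\cL P|_W)^{D\nu_P}}.$$
Because $F_1$ is the identity on $M$ and carries $W$ into $\partial M$, formula (\ref{eq: overline F1}) for $\overline F_1$ is exactly the composite $g\circ c$, where $g$ is the map on relative Thom spaces induced by $\gamma\mapsto F_1\circ\gamma$: for $\gamma(0)\in M^\circ = P\setminus W$ the basepoint $\gamma(0)$ is fixed, so the normal vector stays in the same fibre and $g$ is literally $(v,\gamma)\mapsto(v,F_1\circ\gamma)$ there, while loops based in $W$ have already been sent to the basepoint by $c$. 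It therefore suffices to show that $c$ and $g$ are both equivalences of spectra.

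That $g$ is an equivalence I would deduce from homotopy invariance of Thom spectra: the loopwise flow $\hat F_s(\gamma):=F_s\circ\gamma$ is a strong deformation retraction of the pair $(\cL P,\cL P|_W)$ onto $(\cL M,\partial\cL M)$ — it preserves $\cL P|_W$ (as $F$ maps $W\times I$ into $W$), is the identity on $\cL M$ and hence on $\partial\cL M$, and at time $1$ lands in $(\cL M,\partial\cL M)$; and since $F_s\simeq\operatorname{id}_P$ the homotopy $\hat F$ pulls $\nu_P$ back to the pullback of $\nu_P$ along the projection, i.e. it is covered by bundle data. Applying this to the defining cofibre sequences $\partial\cL M^{D\nu_M}\to\cL M^{D\nu_M}\to\cL M^{D\nu_M}/\partial\cL M^{D\nu_M}$ and $(\cL P|_W)^{D\nu_P}\to\cL P^{D\nu_P}\to\cL P^{D\nu_P}/(\cL P|_W)^{D\nu_P}$ — whose first two vertical maps, induced by the deformation retracts $\partial\cL M\hookrightarrow\cL P|_W$ and $\cL M\hookrightarrow\cL P$ with matching bundles, are equivalences — and invoking the five lemma, the inclusion $\cL M^{D\nu_M}/\partial\cL M^{D\nu_M}\hookrightarrow\cL P^{D\nu_P}/(\cL P|_W)^{D\nu_P}$ is an equivalence; $g$ is its retraction, hence an equivalence too.

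That $c$ is an equivalence I would get by identifying its cofibre with the relative Thom spectrum $(\cL P|_W)^{-TP}/\partial\cL P^{-TP}$ of $-TP$ over the pair $(\cL P|_W,\cL P|_{\partial P})$, and showing that the inclusion $\cL P|_{\partial P}\hookrightarrow\cL P|_W$ is a homotopy equivalence: it is a map of Hurewicz fibrations over $\partial P\hookrightarrow W$, obtained by restricting the evaluation fibration $\operatorname{ev}_0\colon\cL P\to P$, and it is the identity on fibres $\Omega_pP$; since $W$ is an $h$-cobordism the base inclusion $\partial P\hookrightarrow W$ is a homotopy equivalence, hence so is the inclusion on total spaces, so the cofibre of $c$ is contractible. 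Combining, $\overline F_1 = g\circ c$ is an equivalence. The step I expect to require the most care is the factorization $\overline F_1 = g\circ c$ in the first paragraph: one cannot compare $\overline F_1$ with the naive inclusion into $\cL P^{-TP}/\partial\cL P^{-TP}$, which is not even well defined because loops based on $\partial M$ lie in the interior of $P$, so it is essential to collapse loops based anywhere in $W$ — and it is exactly there that the $h$-cobordism hypothesis gets used.
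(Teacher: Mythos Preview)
Your argument is correct and takes a genuinely different route from the paper's. The paper proceeds by writing down an explicit homotopy inverse $G$: it chooses a collar $C\colon\partial M\times I\to M$ together with a map $g_1\colon\partial M\times I\to W\cup C$ that pushes out to $\partial P$ (essentially a homotopy inverse to $F_1$ rel boundary), uses these to define $G$ by conjugating loops by the paths $g(x,t)$, and then verifies $G\circ\overline F_1\simeq Id$ by concatenating two hand-built homotopies $H$ and $H'$, the second of which requires a further auxiliary choice of a map $\delta\colon(W\cup C)\times[0,1]^2\to W\cup C$ satisfying five boundary constraints.

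Your factorisation $\overline F_1 = g\circ c$ through the intermediate quotient $\cL P^{D\nu_P}/(\cL P|_W)^{D\nu_P}$ replaces all of this with two cofibre-sequence arguments. The trade-off is clear: the paper's approach is self-contained and never leaves the level of explicit formulas for loops, at the cost of several auxiliary choices whose existence has to be justified separately; your approach is cleaner and makes the role of the $h$-cobordism hypothesis completely transparent (it enters exactly once, to show $\partial P\hookrightarrow W$ is an equivalence), but leans on standard background facts---Thom-space homotopy invariance, that the quotients in question are honest homotopy cofibres (which needs the inclusions $\partial M\hookrightarrow M$ and $W\hookrightarrow P$ to be cofibrations, true here since they have collars), and the five lemma. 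Both are perfectly valid; yours is arguably the more conceptual of the two.
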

\begin{proof}
    We prove this at the level of spaces. We define an explicit homotopy inverse
    \begin{equation*}
        G: \frac{\cL M^{D\nu_M}}{\partial \cL M^{D\nu_M}} \to \frac{ \cL P^{D\nu_P}}{\partial \cL P^{D\nu_P}}
    \end{equation*}
    as follows. Choose a collar neighbourhood $C: \partial M \times I \to M$ sending $\partial M \times \{1\}$ to $\partial M$; by abuse of notation we write $C$ also for the image of the map. Choose a map $g_1: \partial M \times I \to W \cup C$ which is given by $C|_{\partial M\times \{0\}}$ on $\partial M \times \{0\}$ and sends $\partial M \times \{1\}$ to $\partial P$, along with a homotopy $\{g_t\}_{t\in[0,1]}$ from $g_0=C$ to $g_1$ relative to $\partial M \times \{0\}$. This exists since $P \setminus M^\circ$ is an $h$-cobordism: $g_1$ is a homotopy inverse (of pairs) to $F_1$, and $\{g_t\}$ is a homotopy between $g_1$ and the inclusion of the collar.
    Now define
    \begin{equation*}
        G(v, \gamma) = \begin{cases}
            (v, \gamma) & \textrm{ if } \gamma(0) \in M \setminus C\\
            \left( \tilde v, g_1(x,t) \overset{\overline{g(x,t)}}\rightsquigarrow
            g_0(x,t)=\gamma(0) \overset\gamma\rightsquigarrow
            \gamma(0) \overset{g(x,t)}\rightsquigarrow
            g_1(x,t)\right)&
            \textrm{ if }\gamma(0) = C(x,t)
        \end{cases}
    \end{equation*}
    where $\tilde v$ is given by parallel transporting $v$ along the path $\{g_\tau(x,t)\}_{\tau \in [0,1]}$.\par 
    
    We show by explicit construction of a homotopy that $G \circ \overline F_1 \simeq Id_P$.
    We do this by concatenating two homotopies
    \begin{equation*}
        H,H': \frac{\cL P^{D\nu_P}}{\partial \cL P^{D\nu_P}} \times [0,1] \to \frac{\cL P^{D\nu_P}}{\partial \cL P^{D\nu_P}}
    \end{equation*}
    For $s \in [0,1]$, we define $H_s(v, \gamma)$ to be
    \begin{equation*}
        \begin{cases} 
            (v, F_s \circ \gamma) & \textrm{ if } \gamma(0) \in M \setminus C\\
            \left(\tilde v, g_1(x, t) \overset{\overline g}{\rightsquigarrow} 
            g_0(x, t) \overset{F_s \circ \gamma}{\rightsquigarrow}
            g_0(x, t) \overset{g}{\rightsquigarrow} 
            g_1(x,t)\right) &
            \textrm{ if } \gamma(0) \in W \cup C
        \end{cases}
    \end{equation*} 
    where in the second case, $(x, t) \in \partial M \times I$ is such that $F_1(\gamma(0)) = C(x, t)$, and $\tilde v$ is defined as above.
    
    We choose a map $\delta: (W \cup C) \times [0,1]_\tau \times [0,1]_t \to W \cup C$, which we think of as a family of paths $\{\delta^y_\tau\}_{\tau \in [0,1], y \in W\cup C}$, such that:
    \begin{itemize}
        \item $\delta(y, \tau, 0) = y$ for all $y, \tau$.
        \item $\delta(y, 1, t) = y$ for all $y, t$.
        \item $\delta(y, 0, \cdot)$ is the path
        $$y \overset{F}\rightsquigarrow F_1(y) = C(x, t) \overset g \rightsquigarrow g_1(x, t)$$
        for all $y$, where $(x, t) \in \partial M \times [0,1]$ is determined by $F_1(y) = C(x, t)$.
        \item $\delta(y, \tau, t) = y$ for all $y \in C(\partial M \times \{0\})$.
        \item 
        $\delta(y, \tau, 1) \in \partial P$ for all $y \in \partial P$ and all $\tau$.
    \end{itemize}
    
    Together, all the constraints except the last one specify $\delta$ totally over a subspace which $W \times [0,1]^2$ deformation retracts to: explicitly, this subspace is $\left[(W \cup C) \times \sqcup\right] \cup \left[C(\partial M \times \{0\}) \times [0,1]^2\right]$, where $\sqcup := \{t=0\} \sqcup \{\tau \in \{0,1\}\} \subseteq [0,1]^2$ consists of three sides of the boundary of the square. The final constraint says that $\delta$ sends $(W \cup C) \times \{ t=1\}$ to $\partial P$. Since the codomain $W \cup C$ deformation retracts to $\partial P$, we may choose $\delta$ satisfying the first constraints, and then homotope it (relative to this subspace) so that it satisfies the final constraint too.
    
    We define $H'_s(v, \gamma)$ to be
    \begin{equation}
        \begin{cases}
            (v, \gamma) & \textrm{ if } \gamma(0) \in M \setminus C\\
            \left(\tilde v_s, \delta^{\gamma(0)}_s(1) \overset{\overline \delta^{\gamma(0)}_s}\rightsquigarrow 
            \gamma(0) \overset \gamma \rightsquigarrow
            \gamma(0) \overset{\delta^{\gamma(0)}_s} \rightsquigarrow
            \delta^{\gamma(0)}_s(1)\right) & \textrm{ otherwise.}
        \end{cases}
    \end{equation}
    where $\tilde v_s$ denotes $v$ parallel transported along the path $\delta^{\gamma(0)}_s$.\par 
    Then $H_1 = G\circ \overline F_1$, $H_0 = H'_1$ and $H'_0$ is the identity.

    The other direction (that $\overline F_1 \circ G$ is homotopic to the identity) is similar but simpler, as it does not require the $\delta$-paths. We may choose $\{g_t\}$ such that for $t \leq 1/2$, it follows a straight line, hitting $\partial P$ when $t=1/2$: explicitly, $g_t(x, s) = (x, st + (1-t)2s)$ for $t \leq 1/2$, and $g_1(x,t) \in W$ whenever $t \geq 1/2$. Then $\overline F_1 \circ G$ sends $(v, \gamma)$ to:
    \begin{equation}
        \begin{cases}
            (v, \gamma)
            &
            \textrm{ if } \gamma(0) \in M \setminus C
            \\
            \left(\tilde v, g_1(x, t) \overset{\overline g}{\rightsquigarrow} 
            g_0(x, t) \overset{\gamma}{\rightsquigarrow}
            g_0(x, t) \overset{g}{\rightsquigarrow} 
            g_1(x,t)\right)
            &
            \textrm{ if } \gamma(0) = C(x, t) \textrm{ and } t \leq \frac12 \\
            *
            &
            \textrm{ otherwise.}
        \end{cases}
    \end{equation}
    One can construct an explicit homotopy to the identity by retracting the $g$-paths.
\end{proof}

The main result of this section is that $\Xi_{{{l}}}$ and $\Xi_{{{r}}}$ together determine the failure for the coproducts for $M$ and $P$ to agree:
\begin{theoremaaa}\label{thm: cop Xi}
    There is a homotopy
    \begin{equation}
        \Delta^P - (j\wedge j) \circ \Delta^M \circ \overline F_1 \simeq  \Xi_{{{l}}} -  \Xi_{{{r}}}
    \end{equation}
    between maps of spectra $\frac{\cL P^{-TP}}{\partial \cL P^{-TP}} \wedge S^1 \to \Sigma^\infty \frac{\cL P}P \wedge \frac{\cL P}P$. \par

\end{theoremaaa}

To prove \cref{thm: cop Xi}, we start by defining a map $\Lambda$ whose boundary will give rise to the required homotopy. 
\begin{definition}
  For the fixed choice of $(Q,F)$, we define a map of spaces:
    $$\Lambda: \frac{\cL P^{D\nu}}{\partial \cL P^{D\nu}} \times [0,1]^2_{s,t} \to \Sigma^L \frac{\cL P}P \wedge \frac{\cL P}P$$
  which sends $(v, \gamma, s, t)$ to
    \begin{equation}\label{eq: Lambda}
        \begin{cases}
            \begin{pmatrix}
                \lambda\left(v-\phi_1\circ F_s \circ \gamma(t)\right),\\
                B\left( \gamma(0) \overset{F|_{[0,s]}}\rightsquigarrow
                F_s \circ \gamma(0) \overset{F_s \circ \gamma|_{[0,t]}}\rightsquigarrow
                F_s \circ \gamma(t) \overset{\phi}\rightsquigarrow
                \phi_1 \circ F_s \circ \gamma(t) \overset{\theta}\rightsquigarrow
                \gamma(0)
                \right),\\
                B\left(\gamma(0) \overset{\theta}\rightsquigarrow
                \phi_1 \circ F_s \circ \gamma(t) \overset{\overline \phi}\rightsquigarrow
                F_s \circ \gamma(t) \overset{F_s \circ \gamma|_{[t,1]}}\rightsquigarrow
                F_s \circ \gamma(1) \overset{\overline F|_{[0,s]}}\rightsquigarrow
                \gamma(1) \right)
            \end{pmatrix}
            & \textrm{ if } \lVert v-\phi_1\circ F_s \circ \gamma(t) \rVert \leq \varepsilon\\
            * & \textrm{ otherwise.}
        \end{cases}
    \end{equation}
\end{definition}

\begin{figure}[h] 
    \centering
    \begin{minipage}{.25 \textwidth}
        \begin{tikzpicture}[scale=.8,
        v/.style={draw,shape=circle, fill=black, minimum size=1.3mm, inner sep=0pt, outer sep=0pt},
        vred/.style={draw,shape=circle, fill=red, minimum size=1mm, inner sep=0pt, outer sep=0pt},
        vsmall/.style={draw,shape=circle, fill=black, minimum size=1mm, inner sep=0pt, outer sep=0pt}]
            \draw[color=black, ->] (0,0) to[out=90, in=180] (1,2);
            \draw[color=black] (1,2) to[out=0, in=105] (1.8, 1);
            \draw[color=black] (1.8, 1) to[out=285, in=90] (2, 0);

            \draw[color=black] (2, 0) to[out=270, in=0] (1, -2);
            \draw[color=black] (1, -2) to[out=180, in=270] (0,0);

            \draw[thin, color=blue, ->] (1.8, 1) to[out=135, in=0] (1.4, 1.4);
            \draw[thin, color=blue] (1.4, 1.4) to[out=180, in=90] (1,-0.6);
            
            \draw[thin, color=orange] (1, -0.6) to (0.5,0.5);
            \draw[thin, color=orange, ->] (1, -0.6) to (0.75,-0.05);

            \draw[thin, color=purple] (0.5, 0.5) to (0,0);
            \draw[thin, color=purple, ->] (0.5, 0.5) to (0.25, 0.25);

            \draw[thick, teal, dashed] (0.9,0.4) arc (0:360: 1.3);

            \draw[thin, color=cyan, ->] (0, 0) to (-0.2, 0.2);
            \draw[thin, color=cyan] (-0.2, 0.2) to (-0.4, 0.4);

            \node[v] at (1.8,1) {};
            \node[v] at (1, -0.6) {};
            \node[v] at (0.5, 0.5) {};
            \node[v] at (0,0) {};
            \node[v] at (-0.4, 0.4) {};
            \node[vred] at (0,0) {};

            \node at (-1.7, 1.7) {\tiny ${\color{teal} B_\eps(v)}$};
            \node at (0.2, 0.5) {\tiny ${\color{purple} \theta}$};
            \node at (0.5, -0.2) {\tiny ${\color{orange} \phi}$};
            \node at (0.9, 1.6) {\tiny ${\color{blue} F|_{[0,s]}}$};
            \node at (0.3, 1.9) {\tiny $\gamma$};
            \node at (-0.2, -0.2) {\tiny $\gamma(0)$};
            \node at (-0.6, 0.6) {\tiny ${\color{cyan} v}$};
            \node at (2.2, 1) {\tiny $\gamma(t)$};

            \node at (1, 2.5) {$(v,\gamma, s,t)$};

        \end{tikzpicture}
    \end{minipage}
    $\mapsto B\left(
    \begin{minipage}{.4 \textwidth}
        \begin{tikzpicture}[scale=.8,
        v/.style={draw,shape=circle, fill=black, minimum size=1.3mm, inner sep=0pt, outer sep=0pt},
        vred/.style={draw,shape=circle, fill=red, minimum size=1mm, inner sep=0pt, outer sep=0pt},
        vsmall/.style={draw,shape=circle, fill=black, minimum size=1mm, inner sep=0pt, outer sep=0pt}] 
            \begin{scope}[shift={(0,0)}]
                \node at (0,0) {$\lambda \cdot ($};
                \node at (1.4, 0) {$)$};
                \node at (1.8, -0.2) {,};
                    
                \draw[thin, color=purple] (1.3, 0.25) to (0.8,-0.25);
                \draw[thin, color=purple, ->] (1.3, 0.25) to (1.05, 0);
                \draw[thin, color=cyan] (0.8, -0.25) to (0.4, 0.15);
                \draw[thin, color=cyan, ->] (0.8, -0.25) to (0.6, -0.05);
            \end{scope}
            \begin{scope}[shift={(2.2,0)}]
                \draw[color=black, ->] (0,0) to[out=90, in=180] (1,2);
                \draw[color=black] (1,2) to[out=0, in=105] (1.8, 1);
    
                \draw[thin, color=blue, ->] (1.8, 1) to[out=135, in=0] (1.4, 1.4);
                \draw[thin, color=blue] (1.4, 1.4) to[out=180, in=90] (1,-0.6);
    
                \draw[thin, color=orange] (1, -0.6) to (0.5,0.5);
                \draw[thin, color=orange, ->] (1, -0.6) to (0.75,-0.05);
    
                \draw[thin, color=purple] (0.5, 0.5) to (0,0);
                \draw[thin, color=purple, ->] (0.5, 0.5) to (0.25, 0.25);
    
                \node[v] at (1.8,1) {};
                \node[v] at (1, -0.6) {};
                \node[v] at (0.5, 0.5) {};
                \node[v] at (0,0) {};
                \node[vred] at (0,0) {};

                \node at (1.8, -0.2) {,};
            \end{scope}

            \begin{scope}[shift={(4.5,0)}]
                \draw[color=black] (1.8, 1) to[out=285, in=90] (2, 0);

                \draw[color=black, ->] (2, 0) to[out=270, in=0] (1, -2);
                \draw[color=black] (1, -2) to[out=180, in=270] (0,0);

                \draw[thin, color=orange] (1, -0.6) to (0.5,0.5);
                \draw[thin, color=orange, -<] (1, -0.6) to (0.75,-0.05);
    
                \draw[thin, color=purple] (0.5, 0.5) to (0,0);
                \draw[thin, color=purple, -<] (0.5, 0.5) to (0.25, 0.25);

                \draw[thin, color=blue, -<] (1.8, 1) to[out=135, in=0] (1.4, 1.4);
                \draw[thin, color=blue] (1.4, 1.4) to[out=180, in=90] (1,-0.6);

                \node[v] at (1.8,1) {};
                \node[v] at (1, -0.6) {};
                \node[v] at (0.5, 0.5) {};
                \node[v] at (0,0) {};
                \node[vred] at (0,0) {};
            \end{scope}
            
        \end{tikzpicture}
    \end{minipage}
    \right)$
    \caption{The operation $\Lambda$: the figure on the left shows a tuple $(v,\gamma, s,t)$ in the domain of $\Lambda$, the one on the right shows the output.}
    \label{fig:Lambda}
\end{figure}

\begin{lemma}\label{lem: Lambda well def}
    $\Lambda$ is well-defined. 
    Furthermore if both $s, t \in \{0,1\}$, then $\Lambda$ sends $(v, \gamma, s,t)$ to the basepoint. 
\end{lemma}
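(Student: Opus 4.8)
The plan is to verify the two assertions of \cref{lem: Lambda well def} in turn: first that $\Lambda$ descends to a well-defined continuous map on the given domain, and second the boundary behavior when $s,t \in \{0,1\}$.

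\textbf{Well-definedness.} Since (\ref{eq: Lambda}) is manifestly continuous on $\operatorname{Tot}(D\nu \to \cL P) \times [0,1]^2$, it suffices to check that the formula sends $(v,\gamma,s,t)$ to the basepoint of $\Sigma^L \frac{\cL P}P \wedge \frac{\cL P}P$ whenever $|v|=1$ or $\gamma(0) \in \partial P$ (these being the conditions which must be collapsed to pass to the quotient $\frac{\cL P^{D\nu}}{\partial \cL P^{D\nu}}$; note there is no collapsing in the $[0,1]^2_{s,t}$ directions, so $s,t$ do not enter here). If $|v|=1$, then by (\ref{def: trace data hcob}.\ref{it: tr lambda big}) applied to the {embedding data} underlying $\bar R$, the first entry $\lambda(v - \phi_1 \circ F_s \circ \gamma(t))$ lies outside the cube $[-1,1]^L$, so the whole expression represents the basepoint — note we are using that $\phi_1 \circ F_s \circ \gamma(t) \in e(W^{ext})$ and that the distance from $\rho(S\nu_e|_P)$ to $e(W^{ext})$ is what the $\lambda$-bound controls. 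If $\gamma(0) \in \partial P$, then $F_s\circ\gamma(0)$ stays in $\partial P$ for all $s$ (since $F$ restricts to a deformation retraction of the $h$-cobordism $W$ fixing the boundary $\partial P = \partial N$ setwise along the relevant component, or $\gamma(0) \in \partial M$), so $\phi_1 \circ F_s \circ \gamma(t)$ cannot be within $\eps$ of $v$: using $\|v - \gamma(0)\| \leq \eps$ together with the triangle inequality and the bound (\ref{def: trace data hcob}.\ref{it: tr eps small 2 }), (\ref{def: trace data hcob}.\ref{it: tr eps small 3}) that $\phi_1$ of a boundary component is at Euclidean distance $\geq 2\eps$ from $\rho(D\nu|_{\text{relevant piece}})$. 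Hence the incidence condition fails and the output is the basepoint. This is exactly the argument of \cref{lem: Delta well def} and \cref{lem: tr well def}; the only new feature is that the loop formulas now also interpolate in the auxiliary parameter $s$ via $F_s$, which does not affect any of these estimates. I would also remark that for the second and third entries to be valid loops (start point $=$ end point), one uses as before that the short paths $\phi, \overline\phi, \theta$ and the corrections $F|_{[0,s]}, \overline F|_{[0,s]}$ have length $\leq \zeta/4$ by (\ref{def: trace data hcob}.\ref{it: tr v.f.}) and (\ref{def: trace data hcob}.\ref{it: tr eps small 1}), so that $B$ is applied to genuine loops.

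\textbf{Boundary behavior when $s,t \in \{0,1\}$.} There are four corner cases. When $s=0$: then $F_0 = \operatorname{id}_P$, so the first entry becomes $\lambda(v - \phi_1\circ\gamma(t))$ and the loop entries reduce (the $F|_{[0,0]}$ and $\overline F|_{[0,0]}$ pieces are constant); one checks the second loop $B(\gamma(0) \overset{\gamma|_{[0,t]}}\rightsquigarrow \gamma(t) \overset{\phi}\rightsquigarrow \phi_1\circ\gamma(t)\overset\theta\rightsquigarrow\gamma(0))$ and third loop are genuinely there — this is just $\Delta_{unst}^Q$ of \cref{eq: Delta unst} — and then additionally using $t \in \{0,1\}$ we are in the situation of \cref{lem: Delta well def}: for $t=0$ the second loop has length $\leq\zeta$ hence is crushed to a constant by $B$ and the pair lands in $\frac{\cL P}P \wedge \{*\}$, i.e. the basepoint of the smash; for $t=1$ the third loop is crushed similarly. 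When $s=1$: then $F_1\circ\gamma(t) \in M$, and for $\gamma(0) \in W^\circ$ the incidence condition $\|v - \phi_1\circ F_1\circ\gamma(t)\| \leq \eps$ fails by (\ref{def: trace data hcob}.\ref{it: tr eps small 2 }) (here $v$ lies over $\gamma(0)$, far from $M$), giving the basepoint; for $\gamma(0) \in M$, one of the two loop entries is crushed by $B$ depending on whether $t=0$ or $t=1$ (the $F|_{[0,1]}$ and $\gamma$ corrections being absorbed, the remaining "small" pieces having length $\leq \zeta/4$), again landing at the basepoint of the smash. In all four corners $(s,t) \in \{0,1\}^2$ the output is the basepoint.

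\textbf{Main obstacle.} None of the steps is deep; the care required is purely bookkeeping — tracking which of the several concatenated paths is "small" (length $\leq \zeta$, hence killed by $B$) in each degenerate case, and confirming the metric estimates (\ref{def: trace data hcob}.\ref{it: tr eps small 1})–(\ref{def: trace data hcob}.\ref{it: tr eps small 3}), (\ref{def: trace data hcob}.\ref{it: tr lambda big}) transplant correctly from the {embedding data} for $P$ to the trace data $\bar R$. The one point that genuinely needs a sentence of justification is why $F_s$ preserves $\partial P$ componentwise for all $s \in [0,1]$, which follows from $F$ being a strong deformation retraction of the $h$-cobordism $W$ onto $\partial M$ extended by the identity on $M$, so that it is stationary on $\partial N = \partial P$.
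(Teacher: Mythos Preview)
Your approach matches the paper's: appeal to the same estimates as in \cref{lem: Delta well def} and \cref{lem: tr well def} for well-definedness, then case-check the four corners. The corner analysis is fine.

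There is, however, a genuine error in your treatment of the case $\gamma(0) \in \partial P$. You claim that ``$F_s\circ\gamma(0)$ stays in $\partial P$ for all $s$'' because $F$ is ``stationary on $\partial N = \partial P$''. This is false: $F$ is a strong deformation retraction of $W$ onto $\partial M$, so it fixes $\partial M$ pointwise but \emph{moves} $\partial P$ into $W$ (eventually to $\partial M$). Moreover the claim is about $\gamma(0)$, whereas the incidence condition involves $\gamma(t)$, so even if it were true it would not directly help.

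The correct argument (which the paper obtains by just invoking \cref{lem: Delta well def}) does not need any property of $F$ here at all. The point is that $v \in \rho(D\nu|_{\partial P})$ since $v$ lies in the fibre over $\gamma(0) \in \partial P$, while $\phi_1 \circ F_s \circ \gamma(t) \in \phi_1(P)$ for any $s,t$. Condition (\ref{def: cop dat}.\ref{item: eps small 2}) for the embedding data of $P$ gives $d\big(\rho(D\nu|_{\phi_1(P)}), \rho(D\nu|_{\partial P})\big) \geq 2\eps$, so the incidence condition $\lVert v - \phi_1 \circ F_s \circ \gamma(t)\rVert \leq \eps$ cannot hold. Once you replace your paragraph about $F_s$ preserving $\partial P$ with this, the proof is complete.
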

\begin{proof}
   
    For (\ref{eq: Lambda}) to be well-defined, it must send $(v,\gamma,s,t)$ to the basepoint whenever $|v|=1$ or $\gamma(0) \in \partial P$; this holds by the same argument as in Lemma \ref{lem: Delta well def}.\par 
    Suppose $s=0$ and $t=0$ (or $1$). Then if the incidence condition holds, the second (or third, respectively) entry in (\ref{eq: Lambda}) must be constant, by (\ref{def: cop dat}.\ref{item: V small}) and (\ref{def: cop dat}.\ref{item: eps small 3}).\par 
    Suppose $s=1$. Then $F_s\circ \gamma(t) \in M$. If $\gamma(0) \in W$, then by  (\ref{def: trace data hcob}.\ref{it: tr eps small 3}) the incidence condition can not hold.  If $\gamma(0) \in M$ then since $F|_M$ is the identity, the paths $F|_{[0,s]}$ and $\overline F|_{[0,s]}$ appearing in (\ref{eq: Lambda}) are constant. Then if $t=0$, the second entry of (\ref{eq: Lambda}) is constant by the same argument as in Lemma \ref{lem: Delta well def}; similarly if $t=1$ the third entry of (\ref{eq: Lambda}) is constant.
\end{proof}

We next analyse the restriction of $\Lambda$ to each of the four sides of the square $[0,1]^2_{s,t}$. The restriction of $\Lambda$ to the subspace $s=0$ is denoted by:   
$$\Lambda|_{\{s=0\}} := \Lambda|_{(v, \gamma, 0,t)} : \frac{\cL P^{D\nu}}{\partial \cL P^{D\nu}} \times [0,1]_{t} \to \Sigma^L \frac{\cL P}P \wedge \frac{\cL P}P $$
The other sides of the square are denoted in a similar manner.  By Lemma \ref{lem: Lambda well def}, $\Lambda|_{\{s=0\}}$, as well as the restriction of $\Lambda$ to the other sides of the square, descend to maps from $\frac{\cL P^{D\nu}}{\partial \cL P^{D\nu}} \wedge S^1$.

\begin{lemma}\label{lem: Lambda s=0}
    $\Lambda|_{\{s=0\}} = \Delta^P$.
\end{lemma}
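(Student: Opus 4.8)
The plan is to prove the equality by direct inspection: evaluate the formula (\ref{eq: Lambda}) defining $\Lambda$ at $s=0$ and match it term by term against the formula (\ref{eq: Delta unst}) for $\Delta_{unst}^P$, where $\Delta^P$ is understood with respect to the fixed choice of embedding data $Q$ underlying $(Q,F)$ (so $\Delta^P = \Delta^Q$). Since both $\Lambda|_{\{s=0\}}$ and $\Delta^P$ are obtained from their unstable counterparts by $L$-fold desuspension via Lemma \ref{lem: spectra unst}, it suffices to check $\Lambda_{unst}|_{\{s=0\}} = \Delta_{unst}^P$ as maps of spaces $\frac{\cL P^{D\nu}}{\partial \cL P^{D\nu}} \wedge S^1 \to \Sigma^L \frac{\cL P}P \wedge \frac{\cL P}P$.

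First I would record that $F_0 = \operatorname{Id}_P$, since $F$ is a strong deformation retraction. Substituting $s=0$ into (\ref{eq: Lambda}): the incidence condition $\lVert v - \phi_1 \circ F_0 \circ \gamma(t)\rVert \leq \eps$ becomes $\lVert v - \phi_1 \circ \gamma(t)\rVert \leq \eps$, which is exactly the incidence condition of $\Delta_{unst}^P$, and the first entry $\lambda(v - \phi_1 \circ F_0 \circ \gamma(t))$ becomes $\lambda(v - \phi_1 \circ \gamma(t))$. In the second and third entries one has $F_0 \circ \gamma|_{[0,t]} = \gamma|_{[0,t]}$ and $F_0 \circ \gamma|_{[t,1]} = \gamma|_{[t,1]}$, while the paths $\gamma(0) \overset{F|_{[0,0]}}{\rightsquigarrow} F_0\circ\gamma(0)$ and $F_0\circ\gamma(1) \overset{\overline F|_{[0,0]}}{\rightsquigarrow} \gamma(1)$ are the constant paths at $\gamma(0)$ and at $\gamma(1)=\gamma(0)$ respectively. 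Concatenating with a constant path does not change a loop: by the reparametrisation convention (\ref{eq:concat}), a path of length $0$ forces $\sigma \in \{0,1\}$, so $\star$ with it is the identity. Hence, after dropping these degenerate segments, the second entry is $B(\gamma(0) \overset{\gamma|_{[0,t]}}{\rightsquigarrow} \gamma(t) \overset{\phi}{\rightsquigarrow} \phi_1\circ\gamma(t) \overset{\theta}{\rightsquigarrow} \gamma(0))$ and the third is $B(\gamma(0) \overset{\theta}{\rightsquigarrow} \phi_1\circ\gamma(t) \overset{\overline\phi}{\rightsquigarrow} \gamma(t) \overset{\gamma|_{[t,1]}}{\rightsquigarrow} \gamma(0))$. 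These entries, together with the incidence condition and the ``$*$ otherwise'' clause, agree verbatim with (\ref{eq: Delta unst}), so $\Lambda_{unst}|_{\{s=0\}} = \Delta_{unst}^P$, and desuspending $L$ times yields $\Lambda|_{\{s=0\}} = \Delta^P$.

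There is essentially no obstacle here: the $s=0$ slice of $\Lambda$ was constructed precisely so as to reproduce the coproduct of $P$, and the argument is a literal unwinding of definitions. The only point meriting explicit comment is the treatment of the degenerate constant paths $F|_{[0,0]}$ and $\overline F|_{[0,0]}$ under the constant-speed concatenation conventions of Section \ref{sec: loops}; once that is noted, the equality is on the nose rather than merely up to homotopy.
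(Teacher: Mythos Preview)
Your proposal is correct and takes essentially the same approach as the paper: the paper's proof is the single sentence ``Since $F_0$ is the identity on $P$, this follows by comparing (\ref{eq: Delta unst}) and (\ref{eq: Lambda}),'' and you have carried out that comparison in full detail. The only minor remark is that in the paper $\Lambda$ is already defined at the space level, so the lemma is an equality of maps of spaces and no desuspension step is needed; the passage to spectra happens later in the proof of Theorem~\ref{thm: cop Xi}.
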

\begin{proof}
    Since $F_0$ is the identity on $P$, this follows by comparing (\ref{eq: Delta unst}) and (\ref{eq: Lambda}).
\end{proof}

\begin{lemma}\label{lem: Lambda t=0,1}
    There is a homotopy $\Lambda|_{\{t=0\}} \simeq \Xi_{{{l}},unst}$, relative to the subspace $\{s \in \{0,1\}, t=0\}$.\par 
    Similarly there is a homotopy $\Lambda|_{\{t=1\}} \simeq \Xi_{{{r}},unst}$, relative to the subspace $\{s \in \{0,1\}, t=1\}$.
\end{lemma}
\begin{proof}
    We first construct the homotopy $\Lambda|_{\{t=0\}} \simeq \Xi_{{{r}}}$. We define a homotopy 
    $$H: [0,1]_\tau\times  \frac{\cL P^{D\nu}}{\partial \cL P^{D\nu}} \wedge S^1 \to \Sigma^L \frac{\cL P} P \wedge \frac{\cL P} P$$
    by 
    \begin{equation} \label{eq: Lambda s0t0 hom}
    H_\tau(\gamma,s)=
    \begin{cases}
        (\lambda\left (v-\phi_1 \circ F_s \circ \gamma(0)\right), \alpha_{s,\gamma,\tau}, \beta_{s,\gamma,\tau})
        & \textrm{ if } \lVert v-\phi_1 \circ F_s \circ \gamma(0)\rVert \leq \varepsilon\\
        * & \textrm{ otherwise.}
    \end{cases}
    \end{equation}
    where \begin{align*}
        & \alpha_{s,\gamma,\tau} = B\left(
            \gamma(0) \overset{F|_{[0, \tau s]}}\rightsquigarrow
            F_{\tau s} \circ \gamma(0) \overset{F_{\tau s} \circ \gamma}\rightsquigarrow
            F_{\tau s} \circ \gamma(0) \overset{F|_{[\tau s, s]}}\rightsquigarrow
            F_s \circ \gamma(0) \overset{\phi}\rightsquigarrow
            \phi_1 \circ F_s \circ \gamma(0) \overset{\theta}\rightsquigarrow
            \gamma(0)
            \right) \\
            &\beta_{s,\gamma,\tau}=B\left(
            \gamma(0) \overset{\theta}\rightsquigarrow
            \phi_1 \circ F_s \circ \gamma(0) \overset{\overline \phi}\rightsquigarrow
            F_s \circ \gamma(0) \overset{\overline F|_{[0,s]}}\rightsquigarrow
            \gamma(0)
            \right)\\
            \end{align*}
    This is well-defined by the same argument as in Lemma \ref{lem: Lambda well def}. Inspection of (\ref{eq: Lambda}), (\ref{eq: tr  XiL}) and (\ref{eq: Lambda s0t0 hom}) shows that $H_0 = \Xi_{{{l}},unst}$ and $H_1 = \Lambda|_{\{t=0\}}$, so $H$ is the required homotopy.\par 
    
    The other case is similar; explicitly, a homotopy   
    $$H': [0,1]_{\tau} \times  \frac{\cL P^{D\nu}}{\partial \cL P^{D\nu}} \wedge S^1 \to \Sigma^L \frac{\cL P} P \wedge \frac{\cL P} P$$
    between $\Lambda|_{\{t=1\}}$ and $\Xi_{{{r}},unst}$ is given by
    \begin{equation} 
    H'_\tau(\gamma,s)=
    \begin{cases}
        (\lambda\left (v-\phi_1 \circ F_s \circ \gamma(0)\right), \tilde\alpha_{s,\gamma,\tau}, \tilde\beta_{s,\gamma,\tau})
        & \textrm{ if } \lVert v-\phi_1 \circ F_s \circ \gamma(0)\rVert \leq \varepsilon\\
        * & \textrm{ otherwise.}
    \end{cases}
    \end{equation}
    where \begin{align*}
        &\tilde \alpha_{s,\gamma,\tau} =  B\left(
            \gamma(0) \overset{F|_{[0, s]}}\rightsquigarrow
            F_s \circ \gamma(0) \overset{\phi}\rightsquigarrow
            \phi_1 \circ F_s \circ \gamma(0) \overset{\theta}\rightsquigarrow
            \gamma(0)
            \right)\\
            &\tilde \beta_{s,\gamma,\tau} = B\left(
            \gamma(0) \overset{\theta}\rightsquigarrow
            \phi_1 \circ F_s \circ \gamma(0) \overset{\overline \phi}\rightsquigarrow
            F_s \circ \gamma(0) \overset{\overline F|_{[\tau s,s]}}\rightsquigarrow
            F_{\tau s}\circ \gamma(0) \overset{F_{\tau s} \circ \gamma }\rightsquigarrow
            F_{\tau s} \circ \gamma(0) \overset{ \overline F|_{[0,\tau s]}} \rightsquigarrow
            \gamma(0)
            \right)
    \end{align*}
  \end{proof}

Lastly, we prove the following:

\begin{lemma}\label{lem: Lambda s=1}
 $\Lambda|_{\{s=1\}} = (j \wedge j) \circ \Delta^M \circ \overline F_1$.
\end{lemma}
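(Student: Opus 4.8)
### Proof proposal for Lemma~\ref{lem: Lambda s=1}

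The plan is to compute $\Lambda|_{\{s=1\}}$ directly from the defining formula (\ref{eq: Lambda}) and match it term-by-term against the composite $(j\wedge j)\circ\Delta^M\circ\overline F_1$, using the key simplifying feature that $F_1$ retracts $P$ onto $M$ and that $F|_M$ is the identity. First I would fix $(v,\gamma,s,t)$ with $s=1$ and split into cases according to whether $\gamma(0)\in M$ or $\gamma(0)\in W\setminus M$. In the second case, $(v,\gamma)$ is sent to the basepoint by $\overline F_1$ (by (\ref{eq: overline F1})), so the right-hand side is the basepoint; on the left, since $F_1\circ\gamma(t)\in M$, condition (\ref{def: trace data hcob}.\ref{it: tr eps small 3}) forces the incidence condition $\lVert v-\phi_1\circ F_1\circ\gamma(t)\rVert\le\eps$ to fail (here one uses that $v$ lies in the fibre of $D\nu_P$ over $\gamma(0)\in W$, while $\phi_1\circ F_1\circ\gamma(t)$ is near $M$, and the triangle inequality), so $\Lambda|_{\{s=1\}}$ also gives the basepoint. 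This disposes of the ``otherwise'' case.

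Next, assume $\gamma(0)\in M$. Then $\overline F_1(v,\gamma)=(v,F_1\circ\gamma)$, and $\Delta^M$ is applied to this pair using the restricted embedding data $Q|_M$ from (\ref{eq: Q restr Q}): in particular the vector field, $\eps$, $\lambda$, $\zeta$, $\theta$ and the map $B$ all agree with those of $P$ restricted to $M$. So $\Delta^M\circ\overline F_1(v,\gamma,t)$ has incidence condition $\lVert v-\phi_1\circ(F_1\circ\gamma)(t)\rVert\le\eps$, which is exactly the $s=1$ incidence condition in (\ref{eq: Lambda}), namely $\lVert v-\phi_1\circ F_1\circ\gamma(t)\rVert\le\eps$. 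When this holds, I would read off the three components of (\ref{eq: Lambda}) at $s=1$: the first is $\lambda(v-\phi_1\circ F_1\circ\gamma(t))$, matching the first coordinate of $\Delta^M\circ\overline F_1$. For the second and third components, the point is that because $\gamma(0)\in M$ and $F|_M=\mathrm{id}$, the paths $\gamma(0)\overset{F|_{[0,s]}}\rightsquigarrow F_s\circ\gamma(0)$ and $F_s\circ\gamma(1)\overset{\overline F|_{[0,s]}}\rightsquigarrow\gamma(1)$ appearing in (\ref{eq: Lambda}) are \emph{constant} at $s=1$ (the retraction is stationary on $M$), so that second component of (\ref{eq: Lambda}) simplifies to
$$B\!\left(\gamma(0)\overset{F_1\circ\gamma|_{[0,t]}}\rightsquigarrow F_1\circ\gamma(t)\overset{\phi}\rightsquigarrow\phi_1\circ F_1\circ\gamma(t)\overset{\theta}\rightsquigarrow\gamma(0)\right),$$
and the third simplifies analogously; these are precisely the second and third entries of $\Delta^M_{unst}$ (as in (\ref{eq: Delta unst})) evaluated on the loop $F_1\circ\gamma$. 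Finally, applying $j\wedge j$ just includes these loops (and the suspension coordinate) from $\cL M/M$ into $\cL P/P$, which is compatible with the fact that the loops in (\ref{eq: Lambda}) are a priori valued in $P$ but in fact land in $M$. Hence the two maps of spaces agree on the nose.

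The main obstacle I expect is \emph{not} the matching of paths — which is essentially bookkeeping — but checking that the two descriptions agree at the level of \emph{Thom spaces and basepoint conventions}: one must verify that the identification $\frac{\cL M^{D\nu_M}}{\partial\cL M^{D\nu_M}}\to\frac{\cL P^{D\nu_P}}{\partial\cL P^{D\nu_P}}$ implicit in composing with $\overline F_1$ and then the unstable $\Delta^M$ really does make the normal-bundle coordinate $v$ transform correctly (here $\nu_M=\nu_e|_M$ is literally the restriction of $\nu_P=\nu_e|_P$, so $v$ is untouched, but this needs to be stated), and that the $s=1$ restriction of $\Lambda$, which is a priori only defined as a map out of a product, descends to $\frac{\cL P^{D\nu}}{\partial\cL P^{D\nu}}\wedge S^1$ compatibly — this was already arranged by Lemma~\ref{lem: Lambda well def}, so one mainly needs to invoke it. A secondary subtlety is that $\overline F_1$ is only defined after choosing the restricted embedding data, so strictly the equality $\Lambda|_{\{s=1\}}=(j\wedge j)\circ\Delta^M\circ\overline F_1$ is an equality of maps built from the \emph{same} tuple $(Q,F)$, which is exactly the standing assumption of this section; I would remark on this explicitly to avoid any ambiguity about choices.
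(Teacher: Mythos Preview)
Your proposal is correct and follows essentially the same approach as the paper: split into the cases $\gamma(0)\in W$ and $\gamma(0)\in M$, use a trace-data $\eps$-separation condition to rule out the incidence condition in the first case, and in the second case use that $F|_M$ is the identity (so the $F|_{[0,1]}$ and $\overline F|_{[0,1]}$ paths at $\gamma(0)=\gamma(1)$ are constant) together with the choice (\ref{eq: Q restr Q}) to match the formula on the nose with $\Delta^M_{unst}$ applied to $(v,F_1\circ\gamma)$. The paper's proof is two sentences; yours simply fills in the details. One small remark: for the $\gamma(0)\in W$ case the paper's proof of this lemma cites (\ref{def: trace data hcob}.\ref{it: tr eps small 2 }) rather than (\ref{def: trace data hcob}.\ref{it: tr eps small 3}); the former is the condition that directly separates $\phi_1$ of the retracted boundary from $\rho(D\nu|_W)$, and is the one that makes your triangle-inequality sketch go through.
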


\begin{proof}
    Note that $F_1 \circ \gamma(t) \in M$. Hence, if $\gamma(0) \in W$, by (\ref{def: trace data hcob}.\ref{it: tr eps small 2 }), the incidence condition can not hold. If $\gamma(0)\in M$ then $\overline F_1(v, \gamma) = (v, \gamma)$, and by our choice \cref{eq: Q restr Q}, the equality holds on the nose. 
\end{proof}
    
\begin{proof}[Proof of Theorem \ref{thm: cop Xi}]
    Passing to suspension spectra (and desuspending $L$ times), Theorem \ref{thm: cop Xi} follows from Lemmas \ref{lem: Lambda s=0}, \ref{lem: Lambda s=1} and \ref{lem: Lambda t=0,1}, by using the homotopy $\Lambda$.
\end{proof}

\subsection{Characterizing $\Xi_{{{r}}}$ and $\Xi_{{{l}}}$ }\label{sec: tr is product}
In this section we characterize the operations $\Xi_{{{r}}}$ and $\Xi_{{{l}}}$ in terms of known invariants and operations. In particular, we will show in \cref{thm: T Xi} that $\Xi_{{{r}}}$ and $\Xi_{{{l}}}$ are homotopic to certain operations, denoted $\mu_{{{l}}}\left((\cdot \times [P]), [T_{diag}]\right)$ and $\mu_{{{r}}}\left([\overline T_{diag}], [P] \times \cdot\right)$, which are determined by $[T]$, the fundamental class of $P$, and the Chas-Sullivan product.

We start by introducing slight variations of $[T]$ and $Tr$ and recalling the definition of the fundamental class:

\begin{definition}\label{def: Tdiag}
    Let \[[T_{diag}], [\overline T_{diag}] \in \Omega^{fr}_1(\cL (W \times W), (W \times W)) \] be the images of $[T]$ under the antidiagonal maps sending $\gamma$ to $(\gamma, \overline \gamma)$ and $(\overline \gamma, \gamma)$ respectively.
\end{definition}

Similarly:
\begin{definition}\label{def: Tr diag}
    Let  \[Tr_{diag}, \overline {Tr}_{diag}: \Sigma^\infty S^1 \to \Sigma^\infty \frac{\cL (W \times W)}{W\times W}\]  be given by the map $Tr$, (see \cref{def: Tr}), composed with the antidiagonals $\frac{\cL W}W \to \frac{\cL (W \times W)}{W\times W}$ sending $\gamma$ to $(\gamma, \overline \gamma)$ and $(\overline \gamma, \gamma)$ respectively.
\end{definition}
The definition of the fundamental class of $P$ represented by constant loops in $\frac{\cL P^{-TP}}{\partial \cL P^{-TP}}$ is then: 
\begin{definition}
   Let $e: P \to \R^L$ be a codimension $0$ embedding. Then

   $$[P]: \bS \to \frac{\cL P^{-TP}}{\partial \cL P^{-TP}}$$ is given by the composition: \begin{equation} 
            \mathbb{S} \to \frac{P^{-TP}}{\partial P^{-TP}} \to \frac{\cL P^{-TP}}{\partial \cL P^{-TP}}
        \end{equation}
        where the first arrow is the fundamental class (as in Appendix \ref{sec: PT}), and the last arrow is given by inclusion of constant loops. 
\end{definition}

We will now use $[P]$, $Tr_{diag}$, and the version of the Chas Sullivan products $\tilde \mu^{P \times P}_{{{r}}}$ and $\tilde \mu^{P \times P}_{{{l}}}$ on $P \times P$,  considered in \cref{eq: split prod}, to define:

\begin{definition}\label{def: compact not}
   Let  $\mu_{{{l}}}\left((\cdot \times [P]), [T_{diag}]\right)$ denote the composition: 

   \begin{multline*}
        \frac{\cL P^{-TP}}{\partial \cL P^{-TP}} \wedge S^1 
        \xrightarrow{\simeq}
        \frac{\cL P^{-TP}}{\partial \cL P^{-TP}} \wedge \bS \wedge \Sigma^\infty S^1 
        \xrightarrow{Id \wedge [P] \wedge Tr_{diag}}
        \frac{\cL P^{-TP}}{\partial \cL P^{-TP}} \wedge \frac{\cL P^{-TP}}{\partial \cL P^{-TP}} \wedge \Sigma^\infty \frac{\cL (P \times P)}{P \times P} 
        \\[8pt]
        \xrightarrow{\simeq} 
        \frac{\cL (P \times P)^{-T(P \times P)}}{\partial \cL (P \times P)^{-T(P \times P)}} \wedge \Sigma^\infty\frac{\cL (P \times P)}{P \times P} 
        \xrightarrow{\tilde \mu^{P\times P}_{{{l}}}}
        \Sigma^\infty \frac{\cL (P \times P)}{P \times P}
        \xrightarrow{} \Sigma^\infty \frac{\cL P}P \wedge \frac{\cL P}P.
    \end{multline*}  Where the third map is the natural identification and the final map is the natural quotient map. 
    
    Similarly, let $\mu_{{{r}}}\left([\overline T_{diag}], [P] \times \cdot\right)$ denote the composition: 

 \begin{multline*}
        \frac{\cL P^{-TP}}{\partial \cL P^{-TP}} \wedge S^1 
        \xrightarrow{\simeq} \bS \wedge \frac{\cL P^{-TP}}{\partial \cL P^{-TP}} \wedge \Sigma^\infty S^1
        \xrightarrow{\operatorname{swap}} 
        \Sigma^\infty S^1 \wedge \bS \wedge \frac{\cL P^{-TP}}{\partial \cL P^{-TP}} 
        \xrightarrow{\overline{Tr}_{diag} \wedge [P] \wedge Id}
        \\[8pt]
        \Sigma^\infty \frac{\cL (P \times P)}{P\times P} \wedge \frac{\cL (P \times P)^{-T(P \times P)}}{\partial \cL (P \times P)^{-T(P \times P)}} 
        \xrightarrow{\tilde \mu_{{{r}}}^{P\times P}} 
        \Sigma^\infty \frac{\cL (P \times P)}{P \times P} 
        \xrightarrow{}
        \Sigma^\infty \frac{\cL P}P \wedge \frac{\cL P}P.
    \end{multline*}
\end{definition}

\begin{remark}
    As suggested in the notation, the compositions appearing in Definition \ref{def: compact not} are the appropriate spectral-level analogues of taking the cross product with the fundamental class $[P]$ and then taking the Chas-Sullivan product in $P \times P$ with the classes $Tr_{diag}$ and $\overline{Tr}_{diag}$ in $\pi_1^{st}$, and indeed this is exactly what these maps do on any generalised homology theory.
\end{remark}

The main theorem of the section is: 
\begin{theoremaaa}\label{thm: T Xi}
    Let $M \subseteq P$ be a codimension 0 submanifold with corners, such that the complement $W := P \setminus M^\circ$ is an $h$-cobordism. Assume that there exists a codimension $0$ embedding $e: P \to \R^L$.\par 
    
    Then there are homotopies$$\Xi_{{{l}}}\simeq  \mu_{{{l}}}\left((\cdot \times [P]), [T_{diag}]\right)$$ 
and 
   $$\Xi_{{{r}}}\simeq \mu_{{{r}}}\left([\overline T_{diag}], [P] \times \cdot\right) .$$

\end{theoremaaa}

\begin{remark}
    The assumption that $P$ embeds as a codimension $0$ submaniold  of $\R^L$ is not necessary, but is sufficient to prove Theorem \ref{main theorem}, and simplifies some of the arguments.
\end{remark}
Combining Theorems \ref{thm: cop Xi} and \ref{thm: T Xi}, we have:
\begin{corollary}\label{cor: 9+10}
    There is a homotopy:
    \begin{equation}
        \Delta^P-(j \wedge j) \circ \Delta^M \circ \overline F_1 \simeq \mu_l(\cdot \times [P], [T_{diag}]) - \mu_r([\overline T_{diag}],[P] \times \cdot)
    \end{equation}
\end{corollary}

The proof of Theorem \ref{thm: T Xi} constitutes the rest of this subsection. We show the statement for the left product; the right case is identical. We first make convenient choices of trace data.

\subsubsection{Convenient data}

\begin{lemma}\label{lem: cop def choices}
    We can choose trace data $(Q,F) \in TD^L(M \xhookrightarrow{j} P)$, as well as collars $\cC_P$ and $\cC_M$ as in Section \ref{sec: T Tr}, so that the following conditions hold:

    \begin{enumerate}[(i).]
            \item \label{eq: cCP small} If $x \in \cC_P$, there is a (necessarily unique) $s^+=s^+(x) \in [0,1]$ such that $F_{[0,s^+]}(x) \subseteq \cC_P$ is a straight line in the collar direction, and $F|_{(s^+, 1]}(x) \subseteq P \setminus \cC_P$.

            \item \label{eq: cCM small}
            Whenever $x \in \cC_M$, the path $F(x)$ lies in $\cC_M$ and is a straight line in the collar direction.

            \item \label{eq: F sminus small}
            For all $x \in \cC_M$, the path $F(x)$ has length $\leq \frac \zeta 4$.
            
            \item \label{eq: F splus small} For all $x \in \cC_P$, $F|_{[0,s^+]}(x)$ has length $\leq \frac \zeta 4$
             \item  \label{eq: V 0 away col}
            $V=0$ \textrm{ on } $P \setminus (M \cup \cC_M \cup \cC_P^{in})$

       \item \label{eq: def eps small P}
            $d(P\setminus \cC_P, \cC_P^{in}) > \eps$
       \end{enumerate}
\end{lemma}
\begin{proof}
    We first choose $e$ and $\rho^{ext}$ any embeddings as in Definition \ref{def: cop dat}, and then $\zeta > 0$ sufficiently small. Next, choose disjoint collar neighbourhoods of the boundaries $\cC_M$ and $\cC_P$, which are small enough that the straight lines in each collar neighbourhood all have length $\leq \zeta/4$; this ensures (\ref{eq: F splus small}) and (\ref{eq: F sminus small}) hold.\par 
    Choose a vector field $V$ on $P$ which points into $M$ along $\partial M$ and into $P$ on $\partial P$, and which satisfies (\ref{eq: V 0 away col}), and scale $V$ down to be sufficiently small.\par 
    Specifying a smooth strong deformation retraction $F: P \times [0,1] \to P$ is the same as a smoothly-varying family of paths $\{F_{t}(x)\}_{t \in [0,1]}$ for $x \in P$. We first choose any smooth strong deformation retraction $F$, then modify $F$ by preconcatenating (and reparametrising appropriately) the paths $\{F_{t}(x)\}_{t \in [0,1]}$ with a straight line in the collar direction for all $x \in \cC_P$ and postcomposing similarly for all $x \in \cC_P$; this ensures that (\ref{eq: cCP small}) and (\ref{eq: cCM small}) hold.\par 
    We now choose $\eps > 0$ sufficiently small that (\ref{eq: def eps small P}) holds.
\end{proof}

Given $F$ satisfying the conditions in \cref{lem: cop def choices}, let $T=T(F)$ be the framed manifold defined as in \cref{sec: framed bordism invariant} and $f: T \to \cL P$ the natural map sending $(x,t)$ to the loop $F|_{[0,t]}$ from $x$ to itself. Let $[T] \in \Omega^{fr}_1(\cL P/P)$ be the associated framed bordism class.

\begin{lemma}\label{lem: no bound T}
    We can choose $(Q, F) \in TD^L(M \xhookrightarrow{j} P)$ such that the conditions in Lemma \ref{lem: cop def choices} hold, and additionally $T$ has no boundary.
\end{lemma}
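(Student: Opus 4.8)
\textbf{Proof proposal for Lemma \ref{lem: no bound T}.}

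The plan is to show that, with a slightly more careful choice of strong deformation retraction $F$ near the collar $\cC_P$ of $\partial P$, the set of fixed points parametrising $T$ never touches $\{0\}\times P$, which by Lemma \ref{lem: gen pert F} is the only place $\partial T$ can lie. Recall that $\partial T \subseteq \{0\}\times W$ arises because the vector field $V_p = \tfrac{d}{ds}\big|_{s=0}F_s(p)$ may have zeros away from $M$; the boundary of $T$ is (after the generic perturbation) the zero set $S$ of $V$ on $W\setminus M$. So it suffices to arrange that $V$ has \emph{no} zeros on $W\setminus M^\circ$, while still being a legitimate piece of trace data satisfying the conditions of Lemma \ref{lem: cop def choices}.

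First I would note that the constraints on $V$ coming from Definition \ref{def: trace data hcob}.\ref{it: tr v.f.} and from Lemma \ref{lem: cop def choices}.\ref{eq: V 0 away col} pull in opposite directions: the former forces $V$ to point inwards at $N$ and outwards at $M$, while the latter forces $V\equiv 0$ on $P\setminus(M\cup\cC_M\cup\cC_P^{in})$. Since $W$ is an $h$-cobordism, $W\setminus M^\circ$ is \emph{not} a product in general, so one cannot hope to find a globally non-vanishing $V$ restricted to all of $W$. The resolution is that $T$ is built from the \emph{full} homotopy $F$, not just its derivative $V$ at time $0$: the boundary points of $T$ correspond only to zeros of $V$, and near such a zero $F$ begins by a geodesic, so whether $(x,0)\in T$ is governed entirely by $V(x)$. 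Thus I would choose $F$ so that on a neighbourhood of $\cC_P$ the initial derivative $V$ is a nowhere-vanishing inward-pointing field (this is possible since $\cC_P\cong \partial P\times[0,1]$ is a product, and the collar direction gives such a field), and so that on the region $P\setminus(M\cup\cC_M\cup\cC_P^{in})$ — where $V$ is forced to vanish — the homotopy $F$ immediately \emph{moves} every point: concretely, reparametrise $F$ so that for $x$ in this middle region, $F_t(x)$ is constant for $t\in[0,\delta]$ and only then starts moving. With this reparametrisation the condition $t\neq 0$ in the definition of $T^\circ$ kicks in before any motion occurs, so no fixed point with $t=0$ and $x$ in the middle region survives into $T^\circ$; more precisely, the closure $T=\overline{T^\circ}$ picks up boundary points on $\{0\}\times W$ only over the locus where $V\neq 0$ and $F$ moves immediately, and there we have arranged $V\neq 0$ forces $(x,0)\notin\tilde T$. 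Hence $T$ has empty boundary.

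The cleaner way to phrase the last step, which I would actually write up: perturb $F$ (relative to the regions where it is constrained, exactly as in Lemma \ref{lem: gen pert F}) so that the vector field $V$ is \emph{transverse to zero} on $W\setminus M^\circ$ and moreover so that $0\notin\operatorname{Im}(V|_{W\setminus M^\circ})$. The second clause is achievable because, after the transversality perturbation, the zero set $S$ of $V$ is a compact $0$-manifold (finitely many points) disjoint from $\partial W = M\sqcup N$, and one can cancel these zeros in pairs — or simply push them off — using that $V$ already points inwards along $N$ and outwards along $M$, so the Poincaré–Hopf count of $V$ on the compact manifold-with-boundary $W$ (with this boundary behaviour) is $\chi(W)=\chi(M)$, and one redistributes accordingly; in fact it is cleanest to build $V$ from the start as the gradient-like vector field of a Morse function $g:W\to[0,1]$ with $g^{-1}(0)=M$, $g^{-1}(1)=N$ and \emph{no} interior critical points, which exists precisely because $W$ is an $h$-cobordism of dimension $\geq 6$ (this is one of the standard consequences of the $s$-cobordism theorem / Morse theory on $h$-cobordisms). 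Then $V=\nabla g$ has no zeros on $W$ at all, which gives $\partial T=\varnothing$ directly, and one checks the remaining conditions of Definition \ref{def: trace data hcob} and Lemma \ref{lem: cop def choices} hold for this $V$ after the usual small rescaling.

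The main obstacle is reconciling this nowhere-vanishing requirement with condition Lemma \ref{lem: cop def choices}.\ref{eq: V 0 away col}, which literally demands $V=0$ on a large open region. I expect the correct statement to be that one weakens \ref{eq: V 0 away col} to "$V$ is \emph{small}" rather than "$V$ is \emph{zero}" on that region (the proof of Lemma \ref{lem: cop def choices} only used smallness, via the length bound $\leq\zeta/4$), or equivalently that one reorganises the choices so that the middle region on which $V$ must be controlled is where $\nabla g$ is already small and transverse; either way the only real content is the existence of a Morse function on the $h$-cobordism $W$ with no interior critical points, which is standard, together with a routine check that the derived trace data still satisfies all the bookkeeping conditions. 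I do not anticipate difficulty in the transversality/perturbation arguments themselves, which are identical in spirit to Lemmas \ref{lem: gen pert F} and \ref{lem: T ind}.
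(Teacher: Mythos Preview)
Your proposal contains a fatal error and a confusion that together obscure an argument you nearly had right.

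\textbf{The fatal error.} You write that a Morse function $g:W\to[0,1]$ with no interior critical points ``exists precisely because $W$ is an $h$-cobordism of dimension $\geq 6$.'' This is false: such a Morse function exists if and only if $W$ is diffeomorphic to $\partial M\times[0,1]$, which by the $s$-cobordism theorem happens precisely when the Whitehead torsion of $W$ vanishes. The entire point of the paper is to treat $h$-cobordisms with \emph{nontrivial} torsion (this is where the invariant $[T]$ comes from), so your ``cleanest'' argument assumes away exactly the case of interest.

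\textbf{The confusion.} You conflate two different vector fields. The $V$ appearing in the trace data and in condition (\ref{lem: cop def choices}.\ref{eq: V 0 away col}) is the auxiliary inward-pointing field on $P$ used to define the flow $\phi$; it has nothing to do with $F$. The vector field governing $\partial T$ is $V'(p)=\tfrac{d}{ds}\big|_{s=0}F_s(p)$, the initial velocity of the deformation retraction. There is no constraint forcing $V'$ to vanish on any open region, so the ``main obstacle'' you identify at the end simply does not exist.

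\textbf{What the paper actually does.} Once the confusion is removed, your Poincar\'e--Hopf idea is essentially correct and is exactly the paper's argument: zeros of $V'$ on $W\setminus\partial M$ biject with boundary points of $T$, and their signed count is the relative Euler characteristic $\chi(W,\partial M)$, which vanishes because $W$ deformation retracts onto $\partial M$. Hence the zeros can be cancelled in pairs by modifying $F$ near time $0$, and this is compatible with the choices made in Lemma~\ref{lem: cop def choices}. That one-line argument is all that is needed.
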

\begin{proof}
    Consider the vector field $V'$ on $W$, where $V'(p) = \frac{d}{ds}|_{s=0} F_s(p)$. Zeroes of this vector field in $W \setminus \partial M$ biject with points in $\partial T$. Since the relative Euler characteristic $\chi(W, \partial M)$ vanishes, we can choose $F$ so that this vector field has no zeros; furthermore this is compatible with the proofs of Lemmas \ref{lem: cop def choices} and \ref{lem: gen pert F} (by making $V'$ agree with the appropriate collar vector field near $\partial M$).
\end{proof}
We assume we have chosen $(Q, F)$ so that the conclusion of Lemma \ref{lem: no bound T} also holds. We consider the following composition, which is the composition appearing in \cref{def: compact not} on $(3L)^{th}$ spaces (see Appendix \ref{sec: smash}): 
\begin{multline}\label{eq: unst xi comp}
    \frac{\cL P}{\partial \cL P} \wedge \Sigma^L S^0 \wedge \Sigma^L S^1 
    \xrightarrow{Id \wedge[P]_{unst} \wedge (Tr_{diag})_{unst}}
    \frac{\cL P}{\partial \cL P} \wedge \frac{\cL P}{\partial \cL P} \wedge \Sigma^L_+ \cL (P \times P) 
    \\
    \xrightarrow{\mu^{P \times P}_{r, unst}}
    \Sigma^{3L}_+ \cL P \times \cL P
    \to 
    \Sigma^{3L} \frac{\cL P}P \wedge \frac{\cL P}P
\end{multline}
where $[P]_{unst}$ and $(Tr_{diag})_{unst}$ are maps of spaces representing the maps of spectra $[P]$ and $Tr_{diag}$ respectively, as in Appendix \ref{sec: smash}.\par 
To prove Theorem \ref{thm: T Xi}, it suffices to show that (\ref{eq: unst xi comp}) is homotopic to the map sending $(\gamma, u, v, t)$ (so $\gamma \in \cL P,u,v \in [-1,1]^L$ and $t \in S^1$) to 
\begin{equation}\label{eq: ot prod}
    (u, v, \Xi_{{{l}},unst}(\gamma,t))
\end{equation}
\begin{remark}
    Though the first map in (\ref{eq: unst xi comp}) may depend on the choice of vector field in the proof of Lemma \ref{lem: no bound T} (which isn't necessarily unique up to homotopy), the total composition does not.
\end{remark}

\subsubsection{Simplifying $\Xi_{{{l}}}$}

\begin{lemma}\label{lem: V 0}
    Let $(\gamma,s) \in \frac{\cL P}{\partial \cL P} \wedge S^1$. If $\gamma(0)$ lies in $M$, $\cC_M$ or $\cC_P^{in}$, then $\Xi_{{{l}},unst}(\gamma,s)$ is given by the basepoint.\par 
    In particular, if $\Xi_{{{l}},unst}(\gamma,s)$ isn't the basepoint, then by (\ref{lem: cop def choices}.\ref{eq: V 0 away col}) and (\ref{lem: cop def choices}.\ref{eq: cCP small}), $V$ vanishes at $F_s \circ \gamma(0)$.
\end{lemma}
\begin{proof}
    If $\gamma(0) \in M$, the final term in (\ref{eq: tr XiR}) is constant.\par 
    If $\gamma(0) \in \cC_M$, then by (\ref{lem: cop def choices}.\ref{eq: cCM small}) and (\ref{lem: cop def choices}.\ref{eq: F sminus small}), the final term of (\ref{eq: tr XiR}) is again constant.\par 
    Now suppose $\gamma(0) \in \cC_P^{in}$. If $s \leq s^+(\gamma(0))$, then by (\ref{lem: cop def choices}.\ref{eq: F splus small}), the final term of (\ref{eq: tr XiR}) is constant. If instead $s \geq s^+(\gamma(0))$, by (\ref{lem: cop def choices}.\ref{eq: def eps small P}), the incidence condition for (\ref{eq: tr XiR}) can't hold.
\end{proof}
\begin{lemma}\label{lem: compare IC for Xi and T}
    For $\lambda > 0$ large enough, for any $(\gamma,s) \in \frac{\cL P}{\partial \cL P} \wedge S^1$, if $\Xi_{{{l}},unst}(\gamma, s)$ is not equal to the basepoint, then $(\gamma(0), s) \in \sigma_\chi(D\nu_i)$.\par 
\end{lemma}
\begin{proof}
    Same as Lemma \ref{lem: change inc cond}.
\end{proof}
We now assume we have made choices such that $\lambda$ satisfies the hypothesis of Lemma \ref{lem: compare IC for Xi and T}. By Lemmas \ref{lem: V 0} and \ref{lem: compare IC for Xi and T}, we can write an alternative formula for $\Xi_{{{l}},unst}$ with respect to these choices of data:
\begin{corollary}\label{cor: XiR IC}
    For $(\gamma, s) \in 
    \frac{\cL P}{\partial \cL P} \wedge S^1$, we have that $\Xi_{{{l}},unst}( \gamma, s)$ is equal to
    \begin{equation}\label{eq: XiR IC}
        \begin{cases}
            \begin{pmatrix}
                \lambda\left(\gamma(0)-F_s \circ \gamma(0)\right), \\
                B\left(
                    \gamma(0) \overset{\gamma}{\rightsquigarrow} 
                    \gamma(0) \overset{F|_{[0,s]}}{\rightsquigarrow}
                    F_s \circ \gamma(0) \overset{\theta}{\rightsquigarrow}
                    \gamma(0)
                \right),\\
                B\left(
                    \gamma(0) \overset{\theta}{\rightsquigarrow}
                    F_s \circ \gamma(0)  \overset{\overline F|_{[0,s]}}{\rightsquigarrow}
                    \gamma(0)
                \right)
            \end{pmatrix} &
            \textrm{ if } (\gamma(0), s) \in \sigma_\chi(D\nu_i)\\
            * & 
            \textrm{otherwise.}
        \end{cases}
    \end{equation}
\end{corollary}
Note that (\ref{eq: XiR IC}) is the equation (\ref{eq: tr XiR}), with the incidence condition replaced by that of (\ref{eq: simp tr}), and with all instances of $\phi$ removed.

\subsubsection{Proof} 
\begin{proof}[Proof of Theorem \ref{thm: T Xi}]
    Using Lemma \ref{lem: T vs Tr}, Lemma \ref{lem: B}, Lemma \ref{lem: V 0} to remove instances of $\phi$ and then plugging in the definitions, we see that (\ref{eq: unst xi comp}) is homotopic to the map which sends $(\gamma, u, x, s)$ (so $\gamma \in \cL P$, $u, x \in [-1, 1]^L$ and $s \in [0,1]$) to:
    \begin{equation}\label{eq: simplified product}
        \begin{cases}
            \begin{pmatrix}
                \lambda(\gamma(0) - x),\\
                \lambda( u-x),\\
                \lambda(x-F_s(x)),\\
                B\left(\gamma(0) \overset \gamma \rightsquigarrow 
                \gamma(0) \overset \theta \rightsquigarrow
                x \overset {F|_{[0,s]}} \rightsquigarrow 
                F_1(x) \overset \theta \rightsquigarrow 
                x \overset \theta \rightsquigarrow
                \gamma(0)\right),\\
                B\left( u \overset \theta \rightsquigarrow 
                x \overset \theta \rightsquigarrow 
                F_s(x) \overset {\overline F|_{[0,s]}} \rightsquigarrow 
                x \overset \theta \rightsquigarrow u\right)
            \end{pmatrix}
            & 
            \begin{matrix}
                \textrm{ if } u \in P,\, x \in P,\, \lVert x-F_s(x)\rVert \leq \eps \\
                \textrm{ and } \lVert (\gamma(0), u) - (x,x)\rVert \leq \eps
            \end{matrix}
            \\
            * & \textrm{ otherwise.}
        \end{cases}
    \end{equation}
    Note that the first two conditions of the incidence condition of (\ref{eq: simplified product}) are implied by the final two, implying they are redundant and we may therefore drop them.\par 
    We argue that this map is homotopic to (\ref{eq: ot prod}). The final terms are homotopic via a homotopy similar to the one between the final terms described in the proof of Lemma \ref{lem: T vs Tr}.\par 
    Then the second entry may be replaced with $\lambda u$, by a homotopy which replaces $(u-x)$ with $(u-\tau x)$ at time $\tau \in [0,1]$, both in the second entry and in the incidence condition.\par    
    The the third entry can be replaced by $\lambda(\gamma(0)- F_s\circ \gamma(0))$, by a homotopy which at time $\tau$ replaces $(x-F_s(x))$ with $z_\tau(x,y)-F_s(z_\tau(x,y))$ where $\{z_\tau(x,y)\}_\tau$ is a straight-line path between $x$ and $y$, both in the third entry and in the incidence condition.\par 
    Then the first entry can be replaced with $-\lambda x$ via a similar argument to the second entry. The resulting map then differs from (\ref{eq: ot prod}) only by applying the linear transformation $\begin{pmatrix} 0& Id_L \\ -Id_L & 0\end{pmatrix}$ to the first two entries; this matrix has positive determinant so is homotopic to the identity in $O(2L)$.
\end{proof}

\section{Proof of \cref{main theorem}}\label{proof of main thm}
    In this section we prove Theorem \ref{main theorem} using the results of the previous sections. We first reduce to the case where the homotopy equivalence is a codimension $0$ embedding of manifolds with corners, and then appeal to results of Section \ref{sec: cop defect}.\par 
    Let $f: N \to Z$ be a homotopy equivalence of compact manifolds as in Theorem \ref{main theorem}. Embed $Z$ into $\bR^L$ for some large $L$. Let $P$ be the unit disc bundle of the normal bundle, which we embed as a submanifold of $\bR^L$ extending the embedding of $Z$. Composing $f$ with the inclusion of the zero section $Z \hookrightarrow P$ gives a map $N \to P$. This is not an embedding, but we can choose a generic perturbation to an embedding $N \hookrightarrow P \subset \R^L$.  Let $M$ be the unit disc bundle of this embedding $N \hookrightarrow P$, which we can assume embeds as a submanifold of $P$ extending the embedding of $N$. Let $j: M \hookrightarrow P$ be the inclusion. Note $j$ is a codimension $0$ embedding. Then there is a homotopy commutative diagram:
    \begin{equation}\label{diag: emb comm}
        \begin{tikzcd}
            N \arrow[r, "f"] \arrow[d, "\iota^N"] &
            Z \arrow[d, "\iota^Z"]\\
            M \arrow[r, "j"]&
            P
        \end{tikzcd}
    \end{equation}
    where the vertical arrows, $\iota^N$ and $\iota^Z$, are the inclusions of the zero sections, and in particular are simple homotopy equivalences.\par 
    Let $\nu_N$ and $\nu_Z$ be the normal bundles of the embeddings $N, Z \hookrightarrow \bR^L$ respectively, so $M \cong \operatorname{Tot}(D\nu_N)$ and $P \cong \operatorname{Tot}(D\nu_Z)$.

    \begin{lemma}
        For $L$ sufficiently large, the complement $W := P \setminus M^\circ$ is an $h$-cobordism.
    \end{lemma}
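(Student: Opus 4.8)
The plan is to verify directly that $W$ is an $h$-cobordism: that $W$ is a cobordism from $\partial M$ to $\partial P$, and that both inclusions $\partial M \hookrightarrow W$ and $\partial P \hookrightarrow W$ are homotopy equivalences. This is the standard argument relating homotopy equivalences to $h$-cobordisms (compare \cite{Mazur}). First, since $\dim N, \dim Z < \dim P = L$, after a generic perturbation we may assume $N$ lies in the interior $P^\circ$ and that $M$, the disc bundle $D\nu_N$, is realised as a closed tubular neighbourhood of $N$ inside $P^\circ$; then $W := P \setminus M^\circ$ is a compact manifold with $\partial W = \partial M \sqcup \partial P$, i.e.\ a cobordism. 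Moreover the homotopy-commutative square (\ref{diag: emb comm}) has vertical arrows $\iota^N, \iota^Z$ which are homotopy equivalences, so $j \colon M \hookrightarrow P$ is a homotopy equivalence; in particular $H_*(P, M ; \bZ[\pi_1 P]) = 0$.

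Next I would show $\partial M \hookrightarrow W$ is a homotopy equivalence. Writing $P = M \cup_{\partial M} W$ (after slightly enlarging $M$ and using its collar) and excising, one identifies $H_*(P, M; \bZ[\pi_1 P])$ with $H_*(W, \partial M; \bZ[\pi_1 W])$; here for $L$ large the inclusion $W \hookrightarrow P$ is $(L - \dim N - 1)$-connected by general position against the core $N$, so in particular $\pi_1 W \xrightarrow{\cong} \pi_1 P$ and all the local systems in sight coincide. Hence $H_*(W, \partial M; \bZ[\pi_1 W]) = 0$. Since for $L$ large one also has $\pi_1 \partial M \cong \pi_1 N$, and chasing $\partial M \hookrightarrow W \hookrightarrow P$ against $\partial M \hookrightarrow M \hookrightarrow P$ shows $\pi_1 \partial M \xrightarrow{\cong} \pi_1 W$, the relative Hurewicz theorem gives $\pi_*(W, \partial M) = 0$, so $\partial M \hookrightarrow W$ is a homotopy equivalence. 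For $\partial P \hookrightarrow W$ I would then use Poincar\'e--Lefschetz duality for the compact $L$-manifold $W$: with $\bZ[\pi_1 W]$-coefficients (twisting by the orientation character $w_1(W) \in H^1(W;\bZ/2)$ if $W$ is non-orientable, which does not affect vanishing of homology), $H^{L - *}(W, \partial P; \bZ[\pi_1 W]) \cong H_*(W, \partial M; \bZ[\pi_1 W]) = 0$, whence also $H_*(W, \partial P; \bZ[\pi_1 W]) = 0$; combined with $\pi_1 \partial P \cong \pi_1 W$ (again valid for $L$ large), a second application of relative Hurewicz shows $\partial P \hookrightarrow W$ is a homotopy equivalence as well. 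Therefore $W$ is an $h$-cobordism.

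The one point that requires care — and the main place the hypothesis that $L$ is sufficiently large is used — is the bookkeeping of fundamental groups and local coefficient systems: one must choose $L$ large enough that $W \hookrightarrow P$ and the sphere bundles $\partial M \to N$, $\partial P \to Z$ are $2$-connected on the relevant relative homotopy groups, so that $\pi_1 N$, $\pi_1 Z$, $\pi_1 M$, $\pi_1 P$, $\pi_1 W$, $\pi_1 \partial M$, $\pi_1 \partial P$ are all canonically identified and the excision and duality statements above become isomorphisms of $\bZ[\pi_1 W]$-modules. Granting this, each individual step is standard; and if $Z$ (hence $N$) is disconnected one simply argues componentwise.
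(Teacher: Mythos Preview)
Your argument is correct and follows essentially the same route as the paper: excision to identify $H_*(W,\partial M;\bZ[\pi_1])$ with $H_*(P,M;\bZ[\pi_1])=0$, then duality (the paper calls it Alexander duality, you call it Poincar\'e--Lefschetz; same thing here) for $H_*(W,\partial P;\bZ[\pi_1])$, with general position supplying the $\pi_1$ isomorphisms for $L$ large. One small imprecision: when $N$ or $Z$ has boundary, $\partial M$ and $\partial P$ are not sphere bundles but rather $S\nu \cup_{S\nu|_\partial} D\nu|_\partial$; the paper handles this via Seifert--van Kampen, and your $\pi_1$ claim still goes through by the same reasoning.
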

    \begin{proof}
        The argument is similar to \cite[Remark 1.1.4]{WJR}.\par 
        We first argue that the inclusions $\partial M, \partial P \hookrightarrow W$ induce isomorphisms on $\pi_1$. We use the following decomposition:
       
        \begin{equation}
            \partial P \cong \operatorname{Tot}(S\nu_Z)\cup_{\operatorname{Tot}(S\nu_Z|_{\partial Z})} \operatorname{Tot}(D\nu_Z|_{\partial Z})
        \end{equation}
        Since the fibres of the sphere bundle $S\nu_Z$ are high-dimensional spheres, by the long exact sequence of a fibration we see that the projections $\operatorname{Tot}(S\nu_Z) \to Z$ and $\operatorname{Tot}(S\nu_Z|_{\partial Z}) \to \partial Z$ induce isomorphisms on $\pi_1$. Therefore by Seifert-van Kampen, we find that
        \begin{equation}
            \pi_1(\partial P) \cong \pi_1(Z) *_{\pi_1 \partial Z} \pi_1 \partial Z \cong \pi_1 Z
        \end{equation}
        It follows that the inclusion $\partial P \hookrightarrow P$ induces an isomorphism on $\pi_1$. Exactly the same argument shows that the inclusion $\partial M \hookrightarrow M \simeq P$ does too.

        Since the handle dimension of $M$ is at most the dimension of $N$ and thus bounded above independently of $L$, for $L$ sufficiently large any loop in $P$ can be generically perturbed away from the skeleton of some handle decomposition of (a smoothing of) $M$, 
        and therefore can be homotoped to live in $W$. Similarly given any loops in $W$ which are homotopic in $P$, the homotopy can be generically perturbed away from the same skeleton, and therefore can be homotoped to live entirely in $W$. It follows that $\partial M, \partial P \hookrightarrow W$ induce isomorphisms on $\pi_1$.\par 
        Now by excision and using the above isomorphisms on $\pi_1$, the relative homology group with universal local coefficients $H_*(W, \partial M; \bZ[\pi_1]) \cong H_*(P, M; \bZ[\pi_1]) = 0$ vanishes. Using Alexander duality, we see also that $H_*(W, \partial P; \bZ[\pi_1])$ also vanishes. It follows that $W$ is an $h$-cobordism.
    \end{proof}
    \begin{remark}
        Note that the $h$-cobordism $W$ here may have corners, but this does not affect any of the following arguments or constructions.
    \end{remark}

    The inclusion $j:M  \hookrightarrow P$ now satisfies the conditions of Section \ref{sec: cop defect}. Choose a strong deformation retraction $F: W \times [0,1] \to W$ and extend it by the identity to $F: P \times [0,1] \to P$; let $\overline F_1$ be as in (\ref{eq: overline F1}).\par 
    We next define a map \begin{equation}\label{eq: h.e on -TN}
         f_!:\frac{\cL N^{-TN}}{\partial \cL N^{-TN}} \to \frac{\cL Z^{-TZ}}{\partial \cL Z^{-TZ}}, 
         \end{equation}and give an alternative characterisation of it in the case that $N$ and $Z$ have no boundary.\par 
    Since $\overline F_1$ and $\alpha^Z$ are homotopy equivalences, we may choose a map $f_!$ such that the following diagram commutes up to homotopy, and this choice is well-defined up to homotopy:
    \begin{equation}
        \begin{tikzcd}
            \frac{\cL N^{-TN}}{\partial \cL N^{-TN}} \arrow[r, "f_!"] \arrow[d, "\alpha^N"] &
            \frac{\cL Z^{-TZ}}{\partial \cL Z^{-TZ}} \arrow[d, "\alpha^Z"] \\
            \frac{\cL M^{-TM}}{\partial \cL M^{-TM}} &
            \frac{\cL P^{-TP}}{\partial \cL P^{-TP}} \arrow[l, "\overline F_1"]
        \end{tikzcd}
    \end{equation}
    \begin{proposition}\label{prop: ati}
        Suppose that $N$ and $Z$ are both closed manifolds. Then $f_!$ is homotopic to the following composition:
        \begin{equation*}
            \cL N^{-TN} \xrightarrow{\simeq} \cL N^{-f^*TZ} \xrightarrow{f} \cL Z^{-TZ}
        \end{equation*}
        where the first map is given by Atiyah's equivalence \cite{Atiyah} between $-TN$ and $-f^*TZ$, as stable spherical fibrations.\par 
        In particular, if $N$ and $Z$ are oriented and $f$ is orientation-preserving, then the following diagram commutes:
        \begin{equation*}
            \begin{tikzcd}
                H_{*+n}(\cL N) \arrow[r, "(\cL f)_*"] \arrow[d, "\mathrm{Thom}"] &
                H_{*+n}(\cL Z) \arrow[d, "\mathrm{Thom}"] \\
                H_*(\cL N^{-TN}) \arrow[r, "(f_!)_*"] &
                H_*(\cL Z^{-TZ})
            \end{tikzcd}
        \end{equation*}
    \end{proposition}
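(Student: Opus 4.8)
## Proof Proposal for Proposition \ref{prop: ati}

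The plan is to unwind the definitions of $f_!$, $\alpha^N$, $\alpha^Z$ and $\overline F_1$ in the closed case, and to identify $f_!$ with Atiyah's equivalence by constructing an explicit homotopy of stable spherical fibrations. First I would note that when $N$ and $Z$ are closed, the disc bundles $M = D\nu_N$ and $P = D\nu_Z$ are honest compact manifolds with boundary, and the $h$-cobordism $W = P \setminus M^\circ$ retracts onto $\partial M$. The key point is that all four spectra in the defining square of $f_!$ are, by \cref{lem: stab dom}, canonically equivalent to the Thom spectra $\cL N^{-TN}$, $\cL Z^{-TZ}$ built directly from the embeddings $N, Z \hookrightarrow \bR^L$ — this is the content of the compatibility remark (\ref{eq: stab fund class}) following Lemma \ref{lem: stab dom}, together with the fact that the zero-section inclusions $\iota^N, \iota^Z$ are the sources of the equivalences $\alpha^N, \alpha^Z$. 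So the composite $\overline F_1 \circ \alpha^Z \colon \cL Z^{-TZ} \to \cL M^{-TM}$ is, up to homotopy, the map induced on Thom spectra by the homotopy equivalence $f \colon N \to Z$ (via $\iota^N$ and a homotopy inverse to $\iota^Z$), twisted by the identification of stable normal data.

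Next I would make the Atiyah-duality statement precise. Atiyah's theorem (in the form of \cite{Atiyah}) gives, for a homotopy equivalence $f \colon N \to Z$ of closed manifolds, a canonical homotopy equivalence of stable spherical fibrations $\nu_N \simeq f^* \nu_Z$ over $N$ (equivalently $-TN \simeq -f^*TZ$), compatible with the Spivak normal fibration; concretely it comes from the fact that both $N^{-TN}$ and $N^{-f^*TZ}$ are Spanier–Whitehead dual to $\Sigma^\infty_+ N$ and the duality is natural in $f$. The map labelled ``$\cL N^{-TN} \xrightarrow{\simeq} \cL N^{-f^*TZ}$'' is the loop-space version of this, obtained by pulling the spherical fibration equivalence back along $\operatorname{ev}_0 \colon \cL N \to N$; then $\cL N^{-f^*TZ} \xrightarrow{f} \cL Z^{-TZ}$ is the evident structure map covering $\cL f$. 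I would then argue that the two constructions agree: both $f_!$ and the Atiyah composite are maps $\cL N^{-TN} \to \cL Z^{-TZ}$ which cover $\cL f \colon \cL N \to \cL Z$ and which on normal Thom data realize the same (up to homotopy) spherical fibration equivalence $\nu_N \simeq f^*\nu_Z$ — the latter because the $h$-cobordism $W$ is precisely the geometric witness to Atiyah duality: $W$ retracts to $\partial M = S\nu_N$ and its Whitehead torsion / the map $\overline F_1$ encodes exactly the identification of $\nu_N$ with $f^*\nu_Z$ as spherical fibrations. Uniqueness up to homotopy of such a cover (using that $ED^L$ is highly connected, so choices don't matter in the stable range) then gives the homotopy $f_! \simeq (\text{Atiyah composite})$.

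For the final commuting square, once $f_!$ is identified with the Atiyah composite, the statement is standard: when $N, Z$ are oriented and $f$ is orientation-preserving, the spherical fibration equivalence $-TN \simeq -f^*TZ$ is compatible with Thom classes, so on homology the Thom isomorphisms for $\cL N^{-TN}$ and $\cL Z^{-TZ}$ intertwine $(f_!)_*$ with $(\cL f)_* = (\cL f)_*$ up to the sign governed by the orientation, which is trivial by the orientation-preserving hypothesis. Concretely, $(f_!)_*(\mathrm{Thom}(x)) = \mathrm{Thom}((\cL f)_* x)$ because $f^*\tau_{TZ} = \tau_{TN}$ under the Atiyah identification, and $\mathrm{Thom}$ is cap product with the Thom class followed by the fibration equivalence. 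I expect the main obstacle to be pinning down the precise naturality statement for Atiyah duality at the level of free loop spaces — i.e. checking that pulling back the spherical fibration equivalence along $\operatorname{ev}_0$ and the geometric description via $\overline F_1$ on $W$ genuinely produce homotopic maps — rather than the homological corollary, which is formal once the spectrum-level identification is in hand. A clean way to handle this is to observe that both maps are determined, up to contractible choice, by a single piece of data (a compatible pair of Spanier–Whitehead coevaluations for $\Sigma^\infty_+ N$ and $\Sigma^\infty_+ Z$), so no explicit chain-level comparison is needed.
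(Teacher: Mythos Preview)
Your high-level strategy is sensible, and you correctly identify the crux: one must show that the spherical fibration equivalence $\nu_N \simeq f^*\nu_Z$ encoded by $\overline F_1$ agrees with Atiyah's. However, your proposed resolution of this crux is where the argument becomes a sketch rather than a proof. The claim that ``both maps are determined, up to contractible choice, by a single piece of data (a compatible pair of Spanier--Whitehead coevaluations)'' is not obviously true as stated: the space of self-equivalences of a stable spherical fibration over $N$ is not contractible in general (it contains at least the units of $\pi_0^{st}$ and higher automorphisms), so uniqueness does not follow from abstract duality alone. You would still need to check that both constructions induce the \emph{same} coevaluation, which is exactly the content you are trying to avoid. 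Your earlier sentence, that ``$W$ retracts to $\partial M = S\nu_N$ and \ldots $\overline F_1$ encodes exactly the identification of $\nu_N$ with $f^*\nu_Z$,'' is the right intuition but is not an argument.

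The paper takes a different and more concrete route. Rather than appealing to uniqueness, it writes down an explicit model for the Atiyah map $\operatorname{Ati}: f^*D\nu_Z \to D\nu_N$ built directly out of $F_1$: for $v \in (D\nu_Z)_{f(x)}$, one takes $F_1(v) \in D\nu_N$ and parallel-transports it back to the fibre over $x$ along a specific path (coming from the homotopy $j \circ \iota^N \simeq \iota^Z \circ f$ and fibre-line paths). Having pinned down $\operatorname{Ati}$ in these terms, the paper then exhibits an explicit homotopy $\{H_t\}$ between $\alpha^N \circ \operatorname{Ati}$ and $\overline F_1 \circ \alpha^Z \circ f$ on the level of spaces, by interpolating the parallel-transport paths. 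This is more laborious but sidesteps the uniqueness issue entirely: once $\operatorname{Ati}$ is \emph{defined} using $F_1$, the comparison becomes a matter of unwinding formulas. If you want to salvage your approach, you would need to either prove the relevant uniqueness statement (which likely requires showing both maps respect the full duality pairing, not just the coevaluation) or, as the paper does, bite the bullet and write down the homotopy.
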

    \begin{proof}
        We first recap (a version of) the construction of the equivalence of stable spherical fibrations $-TN \simeq -f^* TZ$ from \cite{Atiyah}. We construct this as a map $\mathrm{Ati}: f^*D\nu_Z \to D\nu_N$ of fibre bundles over $N$, sending boundaries to boundaries, that is a fibrewise homotopy equivalence of pairs. We make use of the fact that using the vector bundle structure, between any two points in the same fibre of the disc bundle of a vector bundle, there is a canonical path given by taking the convex hull of these two points; we call this a \emph{fibre line path} and write these paths $\operatorname{Fib^\pi}$ for a vector bundle $\pi: E \to B$; in general it should be unambiguous what the endpoints are.\par
        Let $j, \iota^N, \iota^Z$ be as in   \cref{diag: emb comm}. Let $h'$ be a homotopy from $h'_0 = j \circ \iota^N$ to $h'_1 = \iota^Z \circ f: N \to P$, and let $h = F_1 \circ h'$, a homotopy between $\iota^N, F_1\circ \iota^Z \circ f: N \to M$. \par 
        Let $x \in N$, and choose a vector $v \in (f^*D\nu_Z)_x = (D\nu_Z)_{f(x)}$. Let $u = F_1(v) \in P \cong D\nu_N$. $u$ does not necessarily live in the fibre over $x$; it instead lives in the fibre over $\pi^N\circ F_1(v)$. We parallel transport along a natural path between these two points. \par 
        Consider the path in $N$:
        \begin{equation}\label{eq: delta v x}
            \delta^{v,x}: \pi^N \circ F_1(v) \overset{\pi^N \circ F_1 \circ \operatorname{Fib^{\pi^Z}}}{\rightsquigarrow}
            \pi^N \circ F_1 \circ \iota^Z \circ f(x) \overset{\overline{\pi^N \circ F_1 \circ h'(x)}}{\rightsquigarrow}
            \pi^N \circ F_1 \circ j \circ \iota^N(x) = x
        \end{equation}
     where the first path in the concatenation is $\pi^N\circ F_1$ composed with a fibre line path of the disc bundle $M \to N$. We define $\operatorname{Ati(v)}$ to be the image of $F_1(v)$ under the parallel transport map along the path $\delta^{v,x}$; this lives in the fibre over $x$ by construction, and assuming we parallel transport along a metric-compatible connection, if $|v|=1$ then $|\operatorname{Ati}(v)|=1$, so this induces a well-defined map of spherical fibrations.\par 
        It suffices to show that the following diagram commutes up to homotopy, which we do by writing down an explicit homotopy:
        \begin{equation*}
            \begin{tikzcd}
                \cL N^{f^* D\nu_Z} \arrow[r, "\operatorname{Ati}"] \arrow[d, "f"] &
                \cL N^{D\nu_N} \arrow[dr, "\alpha^N"] & \\
                \cL Z^{D\nu_Z} \arrow[r, "\alpha^Z"] &
                \frac{\cL Z}{\partial \cL Z} \arrow[r, "\overline F_1"] &
                \frac{\cL M}{\partial \cL M}
            \end{tikzcd}
        \end{equation*}
        We define a homotopy $\{H_t\}_{t \in [0,1]}: \cL N^{f^*D\nu_Z} \to \frac{\cL M}{\partial \cL M}$ as follows. Choose $(\gamma, v) \in \cL N^{f^* D\nu_Z}$ and $t \in [0,1]$.\par 
        We first define $u_t^{v,\gamma}\in P$ to be the image of $v$ along the parallel transport map along the path in $Z$:
        \begin{equation*}
            f \circ \gamma(0) \overset{\overline{\pi^Z \circ h'|_{[t,1]} \circ \gamma(0)}}{\rightsquigarrow} 
            \pi^Z \circ h'_t \circ \gamma(0)
        \end{equation*}
        Note $u_1^{v,\gamma} = v$. We also define a path $\delta^{v,\gamma}_t$ in $N$:
        \begin{equation*}
            \pi^N \circ F_1(v) \overset{\pi^N \circ F_1 \circ \operatorname{Fib^{\pi^Z}}}{\rightsquigarrow}
            \pi^N \circ F_1 \circ \iota^Z \circ f \circ \gamma(0) 
            \overset{\overline{\pi^N \circ F_1 \circ  h'|_{[t,1]} \circ \gamma(0)}}{\rightsquigarrow} 
            \pi^N \circ F_1 \circ h'_t\circ \gamma(0) \overset{\pi^N \circ F_1 \circ \operatorname{Fib}^{\pi^Z}}{\rightsquigarrow}
            \pi^N \circ F_1 (t \cdot u_t^{v,\gamma})
        \end{equation*}
        where $t \cdot u_t$ denotes $u_t$ rescaled by $t$. Let $w_t^{v,\gamma} \in M$ be the image of $F_1(v)$ under the parallel transport along the path $\delta^{v,\gamma}_t$; note that $w_1^{v, \gamma} = F_1(v)$ since $\delta_1^{v,\gamma}$ consists of a path concatenated with its inverse. By inspection of (\ref{eq: delta v x}) we see that $\delta^{v,\gamma}_0 = \delta^{v, \gamma(0)}$; from this we also see that $w_0^{v, \gamma} = \operatorname{Ati}_{\gamma(0)}(v)$.\par 
        We define $H_t(v,\gamma)$ to be the following loop:
        \begin{equation*}
            w_t^{v,\gamma} \overset{\operatorname{Fib}^{\pi^N}}{\rightsquigarrow}
            F_1(t \cdot u_t^{v,\gamma}) \overset{F_1 \circ \operatorname{Fib}^{\pi^Z}}{\rightsquigarrow} 
            h_t \circ \gamma(0) \overset{h_t \circ \gamma}{\rightsquigarrow} 
            h_t \circ \gamma(0) \rightsquigarrow 
            F_1(t \cdot u_t^{v,\gamma}) \rightsquigarrow w_t^{v,\gamma} 
        \end{equation*}
        where the last two paths are the reverses of the first two paths. \par 
        Then since $w_1^{\gamma, v} = F_1(v)$ and $h_1 = F_1 \circ \iota^Z \circ f$, we see that $H_1(v, \gamma) = (v,\overline{F_1 \circ \alpha^Z \circ f \circ \gamma})$.\par 
        Similarly, since $\delta^{v,x}_0 = \delta^{v,x}$, $0 \cdot u_0^{\gamma,v} = \pi^Z u_0^{\gamma,v}$ and $h_0 = \iota^N$, we see that $H_0 = \alpha^N \circ \operatorname{Ati}$.
        
    \end{proof}
\begin{proof}[Proof of \cref{main theorem}]
    Now consider the following diagram.
    \begin{equation}\label{diag: cube}
        \begin{tikzcd}
             \frac{\cL N^{-TN}}{\partial \cL N^{-TN}} \wedge S^1 \arrow[rr, "\Delta^N"] \arrow[dd, "f_! \wedge Id_{S^1}"] \arrow[dr, "\alpha^N \wedge Id_{S^1}"] & &
            \Sigma^\infty \frac{\cL N}N \wedge \frac{\cL N}N \arrow[dd, near start, "f \wedge f"] \arrow[dr, "\iota^N \wedge \iota^N"] & \\
            &  \frac{\cL M^{-TM}}{\partial \cL M^{-TM}} \wedge S^1 \arrow[rr, near start,  "\Delta^M"] &&
            \Sigma^\infty \frac{\cL M}M \wedge \frac{\cL M}M \arrow[dd, "j \wedge j"] \\
            \frac{\cL Z^{-TZ}}{\partial \cL Z^{-TZ}} \wedge S^1 \arrow[dr, swap, "\alpha^Z \wedge Id_{S^1}"] \arrow[ rr, near start, "\Delta^Z"] &&
            \Sigma^\infty \frac{\cL Z}Z\wedge \frac{\cL Z}Z \arrow[dr, "\iota^Z \wedge \iota^Z"] & \\
            & \frac{\cL P^{-TP}}{\partial \cL P^{-TP}} \wedge S^1 \arrow[rr, "\Delta^P"] \arrow[uu, near start, "\overline F_1 \wedge Id_{S^1}"] &&
            \Sigma^\infty \frac{\cL P}P \wedge \frac{\cL P}P
        \end{tikzcd}
    \end{equation}
    where $\alpha^N, \alpha^Z$ are the homotopy equivalences from Lemma \ref{lem: stab dom}. The back cube is the square (\ref{diag: fail of hom inv }) whose failure to homotopy commute we wish to determine.\par 
    The top and bottom squares in (\ref{diag: cube}) homotopy commute by Theorem \ref{thm: stability}. The left square homotopy commutes by construction. The right square homotopy commutes by homotopy commutativity of (\ref{diag: emb comm}).

    Now we invoke the invariant from Section \ref{sec: cop defect}:
    \begin{definition}\label{def: T hom eq}
        Let $[T] \in \Omega^{fr}_1(\cL P, P)$ be the framed bordism fixed-point invariant associated to the inclusion $j: M \hookrightarrow P$, as in Section \ref{sec: cop defect}. We also write $[T]: \Sigma^\infty S^1 \to \Sigma^\infty \frac{\cL Z}Z$ for the corresponding stable homotopy class under the Pontrjagin-Thom isomorphism.\par 
        As in Section \ref{sec: tr is product}, we let $[T_{diag}]$ and $[\overline T_{diag}]$ be given by $[T]$ composed with the two antidiagonal maps.
    \end{definition}
    A proof similar to Definition \ref{thm: stability} shows that the class $[T]  \in \Omega^{fr}_1(\cL Z, Z)$ only depends on the homotopy equivalence $f: N \to Z$, and none of the auxiliary choices.\par 
    The front square of (\ref{diag: cube}) does not necessarily commute, but its failure to commute is determined by Corollary \ref{cor: 9+10}:
    
    \begin{equation}\label{eq: hom in main proof}
        \Delta^P- (j \wedge j) \circ\Delta^M \circ \overline F_1 \simeq \mu_{{{l}}}((\cdot \times [P]), [T_{diag}]) - \mu_{{{r}}}([\overline T_{diag}], [P] \times \cdot)
    \end{equation}
    where the maps on the right are as in Section \ref{sec: tr is product}.

    We now use a form of Theorem \ref{thm: prod stab} for the setting at hand:
    \begin{lemma}\label{lem: stab incorporated}
        The following diagram commutes up to homotopy:
        \begin{equation}
            \begin{tikzcd}
                \frac{\cL Z^{-TZ}}{\partial \cL Z^{-TZ}} \wedge S^1 
                \arrow[d, "\alpha^Z \wedge Id_{S^1}"]
                \arrow[rrr, "{\mu_{{{l}}}^{Z \times Z}(\cdot \times [Z], [T_{diag}])}"] &&&
                \Sigma^\infty \frac{\cL Z}Z \wedge \frac{\cL Z}Z \\
                \frac{\cL P^{-TP}}{\partial \cL P^{-TP}} \wedge S^1 
                \arrow[rrr, "{\mu^{P \times P}_{{{l}}}(\cdot \times [P], [T_{diag}])}"] 
                &&&
                \Sigma^\infty \frac{\cL P}P \wedge \frac{\cL P}P 
                \arrow[u, "\pi^Z \wedge \pi^Z"]
            \end{tikzcd}
        \end{equation}
        where the horizontal maps are defined as in Theorem \ref{thm: T Xi}.\par  
        A similar diagram commutes with the top and bottom horizontal arrows replaced by $\mu_{{{r}}}^{Z \times Z}([\overline T_{diag}], [Z] \times \cdot)$ and $\mu_{{{r}}}^{P \times P}([\overline T_{diag}], [Z] \times \cdot)$ respectively.
    \end{lemma}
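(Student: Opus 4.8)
The plan is to observe that $\mu_r^{Z\times Z}(\cdot\times[Z],[T_{diag}])$ is assembled, via the composition (\ref{eq: xi prod desc}), out of a few ``building blocks'' — the tilde-product $\tilde\mu_r^{Z\times Z}$, the insertion of the fundamental class $[Z]$, the insertion of the trace class $[T_{diag}]$, and some canonical structural maps — each of which intertwines the stability equivalences of Section \ref{sec: stab} for $Z$ and for $P$; pasting these compatibilities along (\ref{eq: xi prod desc}) gives the lemma, and the $\mu_l$ statement follows identically from (\ref{eq: xi prod desc LEFT}). Throughout, I would use that $P\times P\cong D(\nu_Z\times\nu_Z)$ is the total space of the unit disc bundle of the normal bundle of the product embedding $Z\times Z\hookrightarrow\bR^{2L}$, so that Theorems \ref{thm: stability} and \ref{thm: prod stab} apply verbatim to the pair $(Z\times Z,\,P\times P)$. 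Choosing the embedding data for $P\times P$ to be a product of that for $P$, formula (\ref{eq: alpha 2}) shows that the stability equivalence $\alpha^{Z\times Z}$ of Lemma \ref{lem: stab dom} agrees up to homotopy with $\alpha^Z\wedge\alpha^Z$ under the canonical identification $\frac{\cL(Z\times Z)^{-T(Z\times Z)}}{\partial\cL(Z\times Z)^{-T(Z\times Z)}}\simeq\frac{\cL Z^{-TZ}}{\partial\cL Z^{-TZ}}\wedge\frac{\cL Z^{-TZ}}{\partial\cL Z^{-TZ}}$, since for a product manifold the retraction $\theta$ and the loop $\gamma_v$ are formed coordinatewise; likewise $\pi^{Z\times Z}$ and the zero-section $\iota^{Z\times Z}$ split as products.

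I would then record the compatibility of each block. For the product itself, Theorem \ref{thm: prod stab} (and its $\mu_l$-analogue) applied to $Z\times Z\hookrightarrow P\times P$ gives $\tilde\mu_r^{Z\times Z}\simeq\pi^{Z\times Z}\circ\tilde\mu_r^{P\times P}\circ(\alpha^{Z\times Z}\wedge\cL\iota^{Z\times Z})$, using that the splitting (\ref{eq: split}) — and hence the passage from $\mu_r$ to $\tilde\mu_r$ — is natural for the maps induced by $\iota^{Z\times Z}$ and $\pi^{Z\times Z}$, as both preserve constant loops. For the fundamental class, diagram (\ref{eq: stab fund class}) gives $\alpha^Z\circ[Z]\simeq[P]$ as maps $\bS\to\frac{\cL P^{-TP}}{\partial\cL P^{-TP}}$, so (together with the product decompositions above) $\alpha^{Z\times Z}$ carries the input $(\cdot)\wedge[Z]$ to $\alpha^Z(\cdot)\wedge[P]$. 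For the trace class, by construction in Definition \ref{def: T hom eq} the class $[T_{diag}]$ for $Z$ is the image of $[T_{diag}]$ for $P$ under the map $\frac{\cL(P\times P)}{P\times P}\to\frac{\cL(Z\times Z)}{Z\times Z}$ induced by $\pi^{Z\times Z}$, and since $\iota^{Z\times Z}\circ\pi^{Z\times Z}\simeq Id$ on $P\times P$ we get $\cL\iota^{Z\times Z}\circ[T_{diag}]_Z\simeq[T_{diag}]_P$. Finally the remaining structural maps of (\ref{eq: xi prod desc}) — the equivalence folding two $(-T)$-twisted Thom spectra into one, and the projection $\Sigma^\infty_+\cL(P\times P)\to\Sigma^\infty\frac{\cL P}P\wedge\frac{\cL P}P$ — are natural in the manifold and hence commute (on the nose, or up to canonical homotopy) with the $\alpha$'s and $\pi$'s.

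Substituting these compatibilities into (\ref{eq: xi prod desc}) rewrites $\mu_r^{Z\times Z}(\cdot\times[Z],[T_{diag}])$ as $(\pi^Z\wedge\pi^Z)\circ\mu_r^{P\times P}(\cdot\times[P],[T_{diag}])\circ(\alpha^Z\wedge Id_{S^1})$, which is the asserted homotopy-commutative square; the $\mu_l$ diagram is identical using (\ref{eq: xi prod desc LEFT}) and the $\mu_l$-analogue of Theorem \ref{thm: prod stab}. I expect the main obstacle to be bookkeeping rather than geometry: one must track that in Theorem \ref{thm: prod stab} the $(-T)$-twisted input slot receives $\alpha$, the $\Sigma^\infty_+$-input slot receives $\cL\iota$, and the output receives $\cL\pi$, and that the two separate uses of stability for $Z$ (for the variable input and for the fundamental-class insertion $\cdot\times[Z]$) glue into the single use for $Z\times Z$ via $\alpha^{Z\times Z}\simeq\alpha^Z\wedge\alpha^Z$. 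A secondary point is to check that the compatibility homotopies can be concatenated coherently along (\ref{eq: xi prod desc}); this should be routine, since at each stage one only glues homotopies of maps of spectra along a linear composite, with no commuting-square obstructions.
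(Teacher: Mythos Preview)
Your proposal is correct and follows the same approach as the paper's proof, which simply reads ``Follows from homotopy commutativity of (\ref{eq: stab fund class}) and Theorem \ref{thm: prod stab}.'' You have unpacked exactly how these two ingredients combine along the composition (\ref{eq: xi prod desc}), including the point that one must apply Theorem \ref{thm: prod stab} to the pair $(Z\times Z,\,P\times P)$ and split $\alpha^{Z\times Z}$ as $\alpha^Z\wedge\alpha^Z$; the only minor wrinkle (which the paper also elides) is that $P\times P=D\nu_Z\times D\nu_Z$ is a ``square'' rather than the round disc bundle $D(\nu_Z\times\nu_Z)$, but these are identified by an obvious radial diffeomorphism compatible with all the data.
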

    \begin{proof}
        Follows from homotopy commutativity of (\ref{eq: stab fund class}) and Theorem \ref{thm: prod stab}, together applied to $Z \times Z$ along with its inclusion into $\bR^{2L}$ which has unit disc bundle $P \times P$.
    \end{proof}
    \cref{main theorem} then follows from the homotopy commutativity of four of the squares in (\ref{diag: cube}), along with (\ref{eq: hom in main proof}) and Lemma \ref{lem: stab incorporated}.
\end{proof}

\section{Proof of Corollary \ref{main cor}}
    Let $f: N \to Z$ be an orientation-preserving homotopy equivalence of closed oriented manifolds.
    \begin{proposition}\label{prop: hom comp inc}
        Let $M$ be a closed oriented manifold. Let $\tau \in \pi^{st}_1(\frac{\cL (M \times M)}{M \times M})$.\par 
        Then the following diagram commutes up to a factor of $(-1)^{np}$:
        \begin{equation}\label{eq: ffff}
            \begin{tikzcd} 
                H_{p+1-n}\left(\cL M^{-TM} \wedge S^1\right)
                \arrow[d, "\operatorname{Thom} \wedge Id_{S^1}"]
                \arrow[rrr, "{\left(\mu_{{{l}}}\left(\cdot \times [M], \tau\right)\right)_*}"] 
                &&&
                H_{p+1-n}\left(\Sigma^\infty \frac{\cL M}M \wedge \frac{\cL M}M\right) 
                \arrow[dd, "="]\\
                \tilde H_{p+1}\left(\cL M_+ \wedge S^1\right) 
                &&&\\
                H_p(\cL M)
                \arrow[u, "\cdot \times{[0,1]}"]
                \arrow[rrr, "{\mu^{CS}(\cdot \times [M], h_*\tau)}"]
                &&&
                \tilde H_{p+1-n}\left(\frac{\cL M}M \wedge \frac{\cL M}M\right)
            \end{tikzcd}
        \end{equation}

        Similarly, the following diagram commutes up to a factor of $(-1)^{p}$:
        \begin{equation}\label{eq: kkkk}
            \begin{tikzcd}
                H_{p+1-n}\left(\cL M^{-TM} \wedge S^1\right)
                \arrow[d, "\operatorname{Thom} \wedge Id_{S^1}"]
                \arrow[rrr, "{\left(\mu_{{{r}}}({\tau}, [M] \times \cdot)\right)_*}"] 
                &&&
                H_{p+1-n}\left(\Sigma^\infty \frac{\cL M}M \wedge \frac{\cL M}M\right) 
                \arrow[dd, "="]\\
                \tilde H_{p+1}\left(\cL M_+ \wedge S^1\right) 
                &&&\\
                H_p(\cL M)
                \arrow[u, "\cdot \times{[0,1]}"]
                \arrow[rrr, "{\mu^{CS}(h_*\tau, [M] \times \cdot)}"]
                &&&
                \tilde H_{p+1-n}\left(\frac{\cL M}M \wedge \frac{\cL M}M\right)
            \end{tikzcd}
        \end{equation}
        
    \end{proposition}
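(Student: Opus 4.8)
The plan is to deduce Proposition~\ref{prop: hom comp inc} from the spectrum-level identifications already established in Section~\ref{sec: hom com pro}, namely Proposition~\ref{prop: hom comp prod} and its Corollary~\ref{cor: prod CS spec}, together with the definitions of $\mu_r(\cdot \times [M], \tau)$ and $\mu_l(\tau, [M] \times \cdot)$ as the compositions (\ref{eq: xi prod desc}) and (\ref{eq: xi prod desc LEFT}) (read on $(M \times M)$). The point is that these compositions are, by construction, built out of: (a) smashing with the fundamental class $[M]$; (b) smashing with the class $\tau$; (c) the spectral Chas--Sullivan product $\tilde\mu_r^{M \times M}$ (resp.\ $\tilde\mu_l^{M\times M}$); and (d) the splitting/projection $\Sigma^\infty_+ \cL(M\times M) \to \Sigma^\infty \cL(M\times M)/(M\times M) \simeq \Sigma^\infty \frac{\cL M}M \wedge \frac{\cL M}M$. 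So on homology, applying the Thom isomorphism and unwinding, the result of going around (\ref{eq: xi prod desc}) is exactly the homology-level Chas--Sullivan product $\mu^{CS}(\cdot \times [M], h_*\tau)$, where the two inputs are the homology class $x \times [M]$ (coming from the cross product with the fundamental class, which under Thom corresponds to $\cdot \times [0,1]$ precomposed appropriately — see the left column of the diagram) and $h_* \tau$, the Hurewicz image of the stable homotopy class $\tau$.

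First I would fix an embedding $e: M \hookrightarrow \bR^L$ and choice of {embedding data}, and reduce (as in the proofs of Propositions~\ref{prob: comp cop geo sp} and~\ref{prop: hom comp prod}) to an unstable statement about maps of spaces on the $(3L)^{th}$ spaces, using the identifications of Definitions~\ref{def: hom sp}, \ref{def: thom iso}, \ref{def: smash}. The key input is that on homology, $(\mu_{r,unst})_* \circ (\Theta' \times \cdot)$ realizes $\mu^{geo}(\cdot,\cdot)$ up to the sign $(-1)^n$ of Proposition~\ref{prop: hom comp prod}; I would then feed in the specific inputs: the first slot is $x \times [M] \in H_{p+n}(\cL M \times \cL M) \cong H_{p+n}(\cL(M\times M))$ (after Thom for $M\times M$ this is where $\operatorname{Thom}\wedge Id_{S^1}$ and $\cdot \times [0,1]$ in the left column come from, together with the fundamental class $[M]$), and the second slot is $h_*\tau \in H_1(\cL(M\times M))$. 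Tracking the Koszul signs from \cite[(A.1),(A.3)]{hingston2017product} through the cross product $x \mapsto x \times [M]$ (with $[M]$ in degree $n$, $x$ in degree $p$) and through the Thom isomorphism of the $(-TM)$-twist, I expect the accumulated sign to be $(-1)^{np}$ for the right product and $(-1)^p$ for the left, matching the claimed discrepancies; the difference between the two cases is precisely the order in which the degree-$n$ fundamental class and the degree-$p$ class $x$ appear relative to the degree-$1$ class $\tau$ in the product, which is why $\mu_l$ picks up $(-1)^p$ rather than $(-1)^{np}$ (the $[M]$ factor sits on the far side of $\tau$ so its $n$ does not interact with $p$).

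The main obstacle will be bookkeeping the signs: one must be careful that (i) the definition of $h_*$ fixed in Section~\ref{sec: framed bordism invariant} (how a stable framing induces an orientation) is compatible with the orientation conventions used to define $\mu^{geo}$ in (\ref{eq: def mu geo}), so that $h_*\tau$ really is the correct geometric input; (ii) the Thom isomorphism $\operatorname{Thom}\wedge Id_{S^1}$ on $\cL M^{-TM}\wedge S^1$ and the one implicit in passing to $\cL(M\times M)^{-T(M\times M)}$ are the same up to the stated sign — this is where a factor of $(-1)^n$ can enter, as in the $(-1)^n$ appearing in Propositions~\ref{prob: comp cop geo sp} and~\ref{prop: hom comp prod}; and (iii) the projection to $\frac{\cL M}M \wedge \frac{\cL M}M$ (the splitting (\ref{eq: split})) does not reintroduce a sign. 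I would handle (i) and (iii) by direct inspection and (ii) by re-running the orientation comparison of vector bundles exactly as in diagram (\ref{eq: wrfewrf}). Once these compatibilities are checked, the commutativity of (\ref{eq: ffff}) and (\ref{eq: kkkk}) up to the claimed signs follows formally by composing the already-established commuting squares, and the proposition is proved.
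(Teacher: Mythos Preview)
Your approach is essentially the paper's: both decompose the composition \eqref{eq: xi prod desc} (resp.\ \eqref{eq: xi prod desc LEFT}) into the steps (a)--(d) you list, invoke Corollary~\ref{cor: prod CS spec} for $M\times M$, and track Koszul signs through the cross product with $[M]$ and the Thom isomorphism.

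One point to sharpen: you attribute the final sign partly to the $(-1)^n$ of Proposition~\ref{prop: hom comp prod}, but since that proposition is applied to $M\times M$ (dimension $2n$), its sign is $(-1)^{2n}=1$ and contributes nothing. The paper identifies the signs precisely: for the right case, the entire $(-1)^{np}$ arises from commuting $x\in H_p$ past the Thom class of the \emph{second} copy of $-TM$ when inserting $[M]$; for the left case, one gets $(-1)^n$ from commuting $[M]\in H_n$ past the second Thom class, and a separate $(-1)^{p+n}$ from the swap sending $\cdot\times[0,1]$ to $[0,1]\times\cdot$, totalling $(-1)^p$. Your heuristic about ``which side of $\tau$ the class $[M]$ sits on'' is morally right but not quite the mechanism; making this precise is what the paper's four intermediate diagrams do.
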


    \begin{proof}
        Consider the following diagram:
        \begin{equation}
            \begin{tikzcd}\label{eq: gggg}
                H_{p+1-n}(\cL M^{-TM}\wedge S^1) 
                \arrow[d, "\operatorname{Thom}"]
                \arrow[r, "\simeq"]
                &
                H_{p+1-n}(\cL M^{-TM} \wedge \bS \wedge \Sigma^\infty S^1)
                \arrow[d, "\operatorname{Thom}"]
                \arrow[r, "Id \wedge {[M]} \wedge Id"]
                &
                H_{p+1}\left(\left(\cL M^{-TM} \right)^{\wedge 2}\wedge \cL M^{-TM} \wedge S^1\right) 
                \arrow[d, "\operatorname{Thom}"]
                \\
                \tilde H_{p+1}(\cL M_+ \wedge S^1) 
                \arrow[r, "\simeq"]
                &
                H_{p+1}(\cL M_+ \wedge \bS \wedge \Sigma^\infty S^1) 
                &
                H_{p+1+n}\left(\Sigma^\infty \cL M_+^{\wedge 2} \wedge \Sigma^\infty S^1\right)
                \\
                H_p(\cL M)
                \arrow[u, "{\cdot \times [0,1]}"]
                \arrow[r, "="]
                &
                H_p(\cL M)
                \arrow[u, "{\cdot \times [0,1]}"]
                \arrow[r, "{\cdot \times [M]}"]
                \arrow[ur, "{\cdot \times [M] \times [0,1]}"]
                &
                H_{p+n}(\cL M \times \cL M)
                \arrow[u, "{\cdot \times [0,1]}"]
            \end{tikzcd}
        \end{equation}
        All of (\ref{eq: gggg}) commutes except the top right trapezium, which commutes up to a factor of $(-1)^{pn}$, coming from commuting $x \in H_p(\cL M)$ past the Thom class of the second copy of $-TM$. Also consider:
        
        \begin{equation}\label{eq: hhhh}
            \begin{tikzcd}
                H_{p+1}\left(\left(\cL M^{-TM}\right)^{\wedge 2} \wedge S^1\right)
                \arrow[d, "\operatorname{Thom}"]
                \arrow[r, "Id \wedge Id \wedge \tau"]
                &
                H_{p+1-n}\left(\left(\cL M^{-TM}\right)^{\wedge 2} \wedge \frac{\cL (M \times M)}{M \times M}\right) 
                \arrow[d, "\operatorname{Thom}"]
                \arrow[r, "\mu^{M \times M}_{{{l}}}"]
                &
                H_{p+1-n}\left(\Sigma^\infty \frac{\cL (M \times M)}{M \times M}\right) 
                \arrow[d, "="] 
                \\
                H_{p+1-n}(\Sigma^\infty \cL M_+^{\wedge 2} \wedge \Sigma^\infty S^1) 
                \arrow[r, "Id \wedge Id \wedge \tau"]
                &
                H_{p+1-n}\left(\Sigma^\infty\cL M_+^{\wedge 2} \wedge \frac{\cL (M \times M)}{M \times M}\right)
                \arrow[r, "\mu^{CS}_{M \times M}"]
                &
                \tilde H_{p+1-n}\left(\frac{\cL (M \times M)}{M \times M}\right)
                \\
                H_{p+n}(\cL M \times \cL M) 
                \arrow[u, "{\cdot \times [0,1]}"]
                \arrow[r, "\cdot \times h_*\tau"]
                &
                \tilde H_{p+1+n}\left(\cL M_+^{\wedge 2} \wedge \frac{ \cL (M \times M)}{M \times M}\right)
                \arrow[u, "="]
                \arrow[r, "\mu^{CS}_{M \times M}"]
                &
                \tilde H_{p+1-n}\left(\frac{\cL (M \times M)}{M \times M}\right)
                \arrow[u, "="]
            \end{tikzcd}
        \end{equation}
        (\ref{eq: hhhh}) commutes; for the top right square this uses Corollary \ref{cor: prod CS spec} applied to $M \times M$ (which is even-dimensional).\par 
        Then the concatenation of (\ref{eq: gggg}) and (\ref{eq: hhhh}), followed by the natural collapse map 
        \begin{equation} \label{eq: coll}
            \tilde H_*\left(\frac{\cL (M \times M)}{M \times M}\right) \to \tilde H_*\left(\left(\frac{\cL M}M\right)^{\wedge 2}\right)
        \end{equation}
        has outer square given by (\ref{eq: ffff}), so (\ref{eq: ffff}) commutes up to a factor of $(-1)^{np}$.\par 
        Consider the following diagram, analagous to (\ref{eq: gggg}):
        \begin{equation}\label{eq: iiii}
            \begin{tikzcd}
                H_{p+1-n}\left(\cL M^{-TM}\right)
                \arrow[d, "\operatorname{Thom}"]
                \arrow[r, "\simeq"] 
                &
                H_{p+1-n}\left(\bS \wedge \cL M^{-TM} \wedge S^1\right)
                \arrow[d, "\operatorname{Thom}"]
                \arrow[r, "{[M] \wedge Id \wedge Id}"]
                &
                H_{p+1-n}\left(\left(\cL M^{-TM}\right)^{\wedge 2} \wedge S^1 \right) 
                \arrow[d, "\operatorname{Thom}"]
                \\
                \tilde H_{p+1}\left(\cL M_+ \wedge S^1\right) 
                \arrow[r, "\simeq"]
                &
                H_{p+1}\left(\bS \wedge \cL M_+ \wedge \Sigma^\infty S^1\right)
                &
                H_{p+1+n}\left(\Sigma^\infty \cL M_+^{\wedge 2}\wedge \Sigma^\infty S^1\right)
                \\
                H_p(\cL M) 
                \arrow[u, "{\cdot \times [0,1]}"]
                \arrow[r, "="]
                &
                H_p(\cL M)
                \arrow[u, "{\cdot \times [0,1]}"]
                \arrow[ur, "{[M] \times \cdot \times [0,1]}"]
                \arrow[r, "{[M] \times \cdot}"]
                &
                H_{p+n}(\cL M \times \cL M)
                \arrow[u, "{\cdot \times [0,1]}"]
            \end{tikzcd}
        \end{equation}
        All of (\ref{eq: iiii}) commutes except the top right trapezium, which commutes up to a factor of $(-1)^n$, coming from commuting $[M] \in H_n(\cL M)$ past the Thom class of the second copy of $-TM$. Also consider:
        \begin{equation}\label{eq: jjjj}
            \begin{tikzcd}
                H_{p+1-n}\left(\left(\cL M^{-TM}\right)^{\wedge 2} \wedge S^1\right)
                \arrow[d, "\operatorname{Thom}"]
                \arrow[r, "\operatorname{Swap}"]
                &
                H_{p+1}\left(\Sigma^\infty S^1 \wedge \left(\cL M^{-TM}\right)^{\wedge 2}\right)
                \arrow[d, "\operatorname{Thom}"]
                \arrow[r, "\tau \wedge Id"]
                &
                H_{p+1}\left(\Sigma^\infty \frac{\cL (M \times M)}{M \times M} \wedge \left(\cL M^{-TM}\right)^{\wedge 2}\right)
                \arrow[d, "\operatorname{Thom}"]
                \\
                H_{p+1+n}\left(\Sigma^\infty \cL M_+^{\wedge 2}\wedge \Sigma^\infty S^1\right)
                \arrow[r, "\operatorname{Swap}"]
                &
                \tilde H_{p+1}\left(\Sigma^\infty S^1 \wedge \cL M_+^{\wedge 2}\right)
                \arrow[r, "\tau \wedge Id"]
                &
                \tilde H_{p+1}\left(\frac{\cL (M \times M)}{M \times M} \wedge \cL M_+^{\wedge 2}\right)
                \\
                H_{p+n}\left(\cL M \times \cL M\right) 
                \arrow[u, "{\cdot \times [0,1]}"]
                \arrow[rr, "h_*\tau \times \cdot"]
                \arrow[ur, "{[0,1] \times \cdot}"]
                &
                &
                \tilde H_{p+1+n}\left(\frac{\cL (M \times M)}{M \times M} \wedge \cL M_+^{\wedge 2}\right)
                \arrow[u, "="]
            \end{tikzcd}
        \end{equation}
        All of (\ref{eq: jjjj}) commutes except the bottom left triangle, which commutes up to a sign of $(-1)^{p +n}$. \par 
        Then the diagram obtained by concatenating (\ref{eq: iiii}), (\ref{eq: jjjj}), composing with maps $\mu_{{{r}}}^{M \times M}$ and $\mu_{M \times M}^{CS}$ similarly to (\ref{eq: hhhh}) and then composing with the natural collapse map (\ref{eq: coll}), has outer square given by (\ref{eq: kkkk}), so (\ref{eq: kkkk}) commutes up to a factor of $(-1)^p$.
    \end{proof}
    
    \begin{proof}[Proof of Corollary \ref{main cor}]
        Combining Proposition \ref{prop: ati}, Corollary \ref{cor: GH spec cop}, Proposition \ref{prop: hom comp inc} and plugging these into Theorem \ref{main theorem}, we find that for $x \in H_p(\cL N)$:
        \begin{multline}
            (-1)^n \Delta^{GH}\circ f_*(x) - (-1)^n (f\times f)_*\circ \Delta^{GH}(x) 
            \\
            = 
            (-1)^{np}\mu^{CS}(x \times [M], h_*[T_{diag}]) - (-1)^{p} \mu^{CS}(h_*[\overline T_{diag}], [M] \times \cdot)
        \end{multline}
        Multiplying through by $(-1)^n$ then gives the result.
    \end{proof}
    
\appendix 
\section{Conventions for stable homotopy theory}\label{sec: sp}
    We work with spectra throughout this paper. We work with the sign conventions of \cite{Adams:book}, mirrored: for example, we apply $\Sigma$ on the \emph{left} when considering the structure maps of spectrum, whereas \emph{loc. cit.} applies $\cdot \wedge S^1$ on the \emph{right}. In this section, we recap the properties and definitions that we need: all results here are standard, but it will be convenient to have a self-contained treatment of all the sign and order conventions we require.
    \subsection{Spectra} 
        \begin{remark}
            When the spaces in the spectra are not of finite type, the definition given below does not necessarily include all morphisms of spectra considered in \cite{Adams:book}.
            However all morphisms that we need in this paper are of this form, so the definition given below is sufficient for our purposes.
        \end{remark}
       
        \begin{definition}
            A \emph{spectrum} $X$ consists of a sequence of based spaces $\{X_n\}_{n\gg 0}$ for $n$ sufficiently large, along with \emph{structure maps} $\sigma_n^X: \Sigma X_n \to X_{n+1}$. \par 
            A \emph{map of spectra} $f: X \to Y$ consists of based maps $f_n: X_n \to Y_n$ for sufficiently large $n$, compatible with the structure maps.\par
            A \emph{homotopy} between two maps $X \to Y$ consists of homotopies between the corresponding maps $X_n \to Y_n$ for sufficiently large $n$, compatible with the structure maps up to homotopy. \par 
            We consider two spectra or maps of spectra the same if they agree for sufficiently large $n$.\par 
            For $k \in \bZ$, the functor $\Sigma^k$ from spectra to itself sends a spectrum $X=\{X_n, \sigma_n^X\}_{n \gg 0}$ to $\{X_{n+k}, \sigma_{n+k}^X\}_{n \gg 0}$, and acts similarly on maps of spectra.
        \end{definition}
        The homotopy category of spectra is enriched in abelian groups, and as such, given a map of spectra $f: X \to Y$ and $n \in \bZ$, there is a map of spectra $n \cdot f: X \to Y$ well-defined up to homotopy. Similarly on the level of spaces, for $i \geq 1$ and spaces $S$ and $T$, the set of homotopy classes of maps $\Sigma^i S \to T$ is an abelian group.\begin{definition}
            A \emph{suspension spectrum} is one in which all structure maps are homotopy equivalences, and are cofibrations.
        \end{definition}
        \begin{example}
            The \emph{sphere spectrum} $\bS$ has $i^{th}$ space $\Sigma^i S^0 \cong [-1,1]^i/\partial [-1,1]^i$.
        \end{example}
        In this paper, we always work in the homotopy category of spectra. For $n \leq n'$, we sometimes write $\sigma^X_{nn'}$ as shorthand for $\sigma^X_{n'-1} \circ \ldots \circ \Sigma^{n'-n}\sigma^X_{n}: \Sigma^{n'-n}X_n \to X_{n'}$. All spectra that we consider are suspension spectra.\par 
        The advantage of working with suspension spectra is that we have the following lemmas:
        \begin{lemma}\label{lem: spectra hom}
            Let $f, g: X \to Y$ be maps betweem two suspension spectra, and $n\gg0$ large enough that $f_n$ and $g_n$ are defined. Then $f$ and $g$ are homotopic if and only if for sufficiently large $n$, $f_n$ and $g_n$ are homotopic as maps of spaces.
        \end{lemma}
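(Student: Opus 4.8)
\textbf{Proof proposal for Lemma \ref{lem: spectra hom}.}

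The plan is to exploit the defining feature of a suspension spectrum: all structure maps $\sigma_k^X : \Sigma X_k \to X_{k+1}$ are homotopy equivalences, and likewise for $Y$. The ``only if'' direction is immediate from the definition of a homotopy of maps of spectra (it is required to restrict to a homotopy of the level-$n$ maps for all sufficiently large $n$), so the content is the ``if'' direction: given a homotopy $h : f_n \simeq g_n : X_n \to Y_n$ for one large value of $n$, produce a homotopy of spectra $f \simeq g$.

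First I would propagate $h$ upwards. Suspending $h$ gives a homotopy $\Sigma h : \Sigma f_n \simeq \Sigma g_n$, and composing with the structure map $\sigma_n^Y$ and using the compatibility square $\sigma_n^Y \circ \Sigma f_n = f_{n+1} \circ \sigma_n^X$ (valid since $f$ is a map of spectra) yields a homotopy between $f_{n+1}\circ \sigma_n^X$ and $g_{n+1}\circ \sigma_n^X$ as maps $\Sigma X_n \to Y_{n+1}$. Since $\sigma_n^X : \Sigma X_n \to X_{n+1}$ is a homotopy equivalence, we may choose a homotopy inverse $t : X_{n+1} \to \Sigma X_n$ with $\sigma_n^X \circ t \simeq \mathrm{Id}_{X_{n+1}}$; precomposing the previous homotopy with $t$ and splicing in the homotopies $f_{n+1}\circ\sigma_n^X\circ t \simeq f_{n+1}$ and $g_{n+1}\circ\sigma_n^X\circ t\simeq g_{n+1}$ produces a homotopy $h_{n+1} : f_{n+1}\simeq g_{n+1}$. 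Iterating, we obtain homotopies $h_m : f_m \simeq g_m$ for every $m \geq n$.

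Second I would address compatibility of the $\{h_m\}$ with the structure maps. By the definition of homotopy of maps of spectra stated in the excerpt, one only needs the level-wise homotopies to be compatible with the structure maps \emph{up to homotopy}, i.e.\ the square relating $\Sigma h_m$ and $h_{m+1}$ via $\sigma_m^X, \sigma_m^Y$ should commute up to homotopy. By construction $h_{m+1}$ was built out of $\Sigma h_m$ composed with structure maps and their inverses, so this compatibility holds essentially by construction; one records the relevant homotopy of squares explicitly. This gives the desired homotopy of spectra $f \simeq g$, and since we are working in the homotopy category (and regard spectra as the same if they agree for $n \gg 0$), the choice of starting index $n$ is immaterial.

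The only mildly delicate point — and hence the main thing to be careful about rather than a genuine obstacle — is bookkeeping the homotopy inverses $t$ of the structure maps and the splicing of homotopies, making sure the ``compatible up to homotopy'' clause is genuinely verified rather than asserted; but because the spectra are suspension spectra this is entirely formal and involves no new ideas. I would therefore present the argument concisely, doing the $n \to n+1$ step in detail and noting that iteration and the structure-map compatibility follow in the same way.
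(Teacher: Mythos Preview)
The paper does not actually give a proof of this lemma: it is stated without proof as a standard fact about suspension spectra, with the adjacent Lemma~\ref{lem: spectra unst} receiving the only (and very terse) argument. Your proposal is correct and is precisely the argument one would write to fill in the details; it is in the same spirit as the paper's proof of Lemma~\ref{lem: spectra unst}, which also simply invokes that the structure maps are homotopy equivalences and asserts compatibility ``by construction''.
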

        \begin{lemma}\label{lem: spectra unst}
            Let $X$ and $Y$ be suspension spectra, and $n\gg 0$ large enough that $X_n$ and $Y_n$ are defined. Then for any map $g: X_n \to Y_n$ there is a (unique up to homotopy) map of spectra $f: X \to Y$ whose associated map $f_n: X_n \to Y_n$ is $g$.
        \end{lemma}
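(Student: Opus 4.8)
The plan is to build $f$ by upward induction on the spectrum level, exploiting the fact that the structure maps of a suspension spectrum are homotopy equivalences, and then to read off uniqueness directly from Lemma \ref{lem: spectra hom}.

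For existence, I would define $f_m \colon X_m \to Y_m$ for all $m \ge n$ — which is enough, since maps of spectra are only considered up to agreement in high degrees. Start with $f_n := g$. Inductively, given $f_m$, use that $\sigma^X_m \colon \Sigma X_m \to X_{m+1}$ is a homotopy equivalence and pick a homotopy inverse $t_m \colon X_{m+1} \to \Sigma X_m$; then set
\[
f_{m+1} \ :=\ \sigma^Y_m \circ (\Sigma f_m) \circ t_m \ \colon\ X_{m+1} \longrightarrow Y_{m+1}.
\]
Since $t_m \circ \sigma^X_m \simeq \mathrm{id}_{\Sigma X_m}$, one gets $f_{m+1} \circ \sigma^X_m \simeq \sigma^Y_m \circ (\Sigma f_m)$, so the collection $\{f_m\}_{m \ge n}$ is compatible with the structure maps and assembles into a map of spectra $f \colon X \to Y$ with $f_n = g$.

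For uniqueness, if $f, f' \colon X \to Y$ are maps of spectra with $f_n = g = f'_n$, then in particular $f_n$ and $f'_n$ are homotopic as based maps, so Lemma \ref{lem: spectra hom} gives $f \simeq f'$. The one point that needs care is the tension between the homotopy-level compatibility produced by the construction above and the strict compatibility demanded in the definition of a map of spectra; since we work throughout in the homotopy category this is harmless, but if one insists on a strictly compatible representative the cleanest fix is to first replace $X$ and $Y$ by the shifted suspension spectra $\Sigma^{-n}\Sigma^\infty X_n$ and $\Sigma^{-n}\Sigma^\infty Y_n$ — which map to $X$ and $Y$ by levelwise homotopy equivalences given by the iterated structure maps $\sigma^X_{nm}$, $\sigma^Y_{nm}$ — and observe that $\Sigma^\infty g$ furnishes an honest strict map between these. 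This rectification is really the only subtlety; the actual content is just that invertibility of the structure maps makes the tower rigid enough to propagate $g$ upward, and, via Lemma \ref{lem: spectra hom}, flexible enough that the extension is unique.
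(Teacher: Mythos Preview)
Your proof is correct and takes essentially the same approach as the paper: both define $f_i$ for $i \ge n$ by conjugating $\Sigma^{i-n}g$ with the (homotopy-invertible) iterated structure maps $\sigma^X_{ni}$ and $\sigma^Y_{ni}$; you just phrase this inductively rather than all at once. Your treatment is in fact more careful than the paper's, which does not spell out the uniqueness argument via Lemma~\ref{lem: spectra hom} and simply accepts homotopy-compatibility with the structure maps (consistent with the paper's standing convention of working in the homotopy category).
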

        \begin{proof}
            Since all $\sigma^X$ and $\sigma^Y$ are homotopy equivalences, for each $i \geq n$, we choose maps $f_i$ inductively in $i$ such that the following diagram commutes:
            \begin{equation*}
                \begin{tikzcd}
                    \Sigma X_i
                    \arrow[r, "f_i"]
                    \arrow[d, "\sigma^X_i"]
                    &
                    \Sigma Y_i
                    \arrow[d, "\sigma^Y_i"]
                    \\
                    X_{i+1} 
                    \arrow[r, "f_{i+1}"]
                    &
                    Y_{i+1}
                \end{tikzcd}
            \end{equation*}
            These exist by, for example, choosing a deformation retraction $X_{i+1} \to \Sigma X_i$ of $\sigma^X_i$.
        \end{proof}
        
        \begin{definition}
            Let $X$ be a spectrum and $S$ a space. The spectrum $X \wedge S$ has $i^{th}$ space $(X \wedge S)_i := X_i \wedge S$ and structure maps $\sigma^{X \wedge S}_i := \sigma^X_i \wedge Id_S$.
        \end{definition}

    \subsection{Homology}
        \begin{definition}\label{def: hom sp}
            Let $X$ be a suspension spectrum. We define its homology to be 
            \begin{equation}
                H_*(X) := \tilde H_{*+i}(X_i)
            \end{equation}
            for some $i \gg 0$. We identify these groups for different choices of $i$ as follows: for $i \leq i'$, we use the isomorphism 
            \begin{equation}\label{eq: hom sp stab}
                \tilde H_{*+i}(X_i) \xrightarrow{[-1,1]^{i'-i} \times \cdot} 
                \tilde H_{*+i'}(\Sigma^{i'-i}X_i) \xrightarrow{\tilde H_*(\sigma^X)}
                \tilde H_{*+i'}(X_{i'})
            \end{equation}
            These isomorphisms are compatible with each other in the sense that composing (\ref{eq: hom sp stab}) for $i \leq i'$ and $i' \leq i''$ gives (\ref{eq: hom sp stab}) for $i \leq i''$.
        \end{definition}
    \subsection{Thom spectra}\label{sec: Thom sp}
        Let $E \to B$ be a vector bundle of rank $r$. We assume that either $B$ is a finite CW complex or that $E = f^*E'$ where $E' \to B'$ is a vector bundle over a finite CW complex and $f: B \to B'$.\par 
        If $E$ is equipped with a metric, we write $DE$ for its unit disc bundle, $SE$ for its unit sphere bundle and $B^{DE}$ for the Thom space $DE / SE$. This is canonically homeomorphic to the quotient space $E/(E \setminus DE^\circ)$; we use these two models for the Thom space interchangeably.
        \begin{definition}
            The \emph{Thom spectrum} $B^{-E}$ of $-E$ is the suspension spectrum defined as follows. \par 
            Choose an embedding $e: E \hookrightarrow \bR^L$ of vector bundles, for some $L\gg 0$. If $B$ is not finite CW, we assume this embedding is obtained by choosing an embedding $E' \hookrightarrow \bR^L$ and pulling back.\par 
            Let $\nu_e$ be the orthogonal complement of $E$ in $\bR^L$. Then for $i \geq L$ the $i^{th}$ space of $B^{-E}$ is defined to be
            \begin{equation}
                (B^{-E})_i := B^{D(\bR^{i-L} \oplus \nu_e)} = \frac{\operatorname{Tot(D(\bR^{i-L}\oplus \nu_e}) \to B)}{\operatorname{Tot(S(\bR^{i-L}\oplus \nu_e} )\to B)}
            \end{equation}
            The structure maps 
            \begin{equation}\label{eq: hom str maps}
                \Sigma B^{D(\bR^{i-L} \oplus \nu_e)} \to B^{D(\bR^{1+i-L} \oplus \nu_e)}
            \end{equation}
            send the $[-1,1]$-coordinate from $\Sigma$ to the first coordinate in $\bR^{1+i-L}$: more precisely, $(t, (u, v, b))$ is sent to $((t, u), v, b)$, where $t \in [-1,1]$, $b \in B$, $u \in \bR^{i-L}$ and $v \in (D\nu_e)_b$.
        \end{definition}
        This definition depended on a choice of embedding $e$. For different choices of $e$, there is a natural identification between the resulting spectra. 
    \subsection{Thom isomorphism}
        We work in the same setting as Section \ref{sec: Thom sp}. Assume also that $E$ is oriented, with corresponding Thom class $\tau_E \in \tilde H^r(B^E)$. 
        \begin{definition}\label{def: thom iso}
            The \emph{Thom isomorphism} is the isomorphism
            \begin{equation}
                \operatorname{Thom}: H_{*-r}(B^{-E}) \to H_*(B)
            \end{equation}
            given by the composition:
            \begin{equation}
                H_{*-r}(B^{-E}) = \tilde H_{*-r+i}\left(B^{D(\bR^{i-L} \oplus \nu_e)}\right) \xrightarrow{\tau_{\bR^{i-L\oplus \nu_e}} \cap -} H_*(B)
            \end{equation}
            where $\tau_{\bR^{i-L} \oplus \nu_e}$ is a Thom class for the vector bundle $\bR^{i-L} \oplus \nu_e$, which we orient so that the canonical isomorphiam
            \begin{equation}
                \bR^{i-L} \oplus \nu_e \oplus E \cong \bR^{i-L} \oplus \bR^L = \bR^i
            \end{equation}
            is orientation-preserving.
        \end{definition}
        This map is independent of choices, in the sense that it is compatible with the maps (\ref{eq: hom str maps}) for different choices of $i$.

    \subsection{Smash product}\label{sec: smash}
        We recap the construction of the smash product of spectra from \cite[Section III.4]{Adams:book}. 
        \begin{definition}\label{def: smash}
            Let $X$, $Y$ be suspension spectra. Choose sequences of nonnegative integers $\vec u= (u_i)_i$ and $\vec v=(v_i)_i$ (which we only require to be defined for sufficiently large $i\gg 0$) such that
            \begin{itemize}
                \item $\vec u$ and $\vec v$ are both monotonically increasing and unbounded.
                \item $u_i+v_i = i$ for all $i$.
            \end{itemize}
            We define the smash product $X \wedge Y$ as follows. The $i^{th}$ space is
            \begin{equation}
                (X\wedge Y)_i = X_{u_i} \wedge Y_{v_i}
            \end{equation}
            and the structure maps are as follows.\par 
            If $u_{i+1} = u_i + 1$ (so $v_{i+1} = v_i$), $\sigma^{X \wedge Y}_i$ is the composition
            \begin{equation}
                \Sigma(X \wedge Y)_i = \Sigma X_{u_i} \wedge Y_{v_i} \xrightarrow{\sigma^X \wedge Id} X_{u_{i+1}} \wedge Y_{v_{i+1}} = (X \wedge Y)_{i+1}
            \end{equation}
            If $v_{i+1} = v_i+1$ (so $u_{i+1} = u_i$), $\sigma^{X \wedge Y}_i$ is the composition
            \begin{equation}
                \Sigma (X \wedge Y)_i = \Sigma X_{u_i} \wedge Y_{v_i} \xrightarrow{\operatorname{swap}} X_{u_i} \wedge \Sigma Y_{v_i} \xrightarrow{(-1)^{u_i} \cdot Id \wedge \sigma^Y} X_{u_{i+1}} \wedge Y_{v_{i+1}} = (X \wedge Y)_{i+1}
            \end{equation}
        \end{definition}
        
        \begin{remark}
            The definition of smash product above depends on the choice of sequences $\vec u$ and $\vec v$; however the resulting spectra for different choices are canonically identified up to homotopy equivalence, see \cite[Theorem III.4.2]{Adams:book}.
        \end{remark}
        
        \begin{remark}
            Let $X,Y,Z$ be suspension spectra. Let $f: X_i \wedge Y_j \to Z_{i+j}$ be a map of spaces. We may choose sequences $\vec u, \vec v$ as in Definition \ref{def: smash} with $u_{i+j} = i$ and $v_{i+j} = j$ and apply Lemma \ref{lem: spectra unst} to obtain a well-defined map of spectra $X \wedge Y \to Z$.
        \end{remark}
        \begin{lemma}
            Let $X$ be a spectrum. Then there is a homotopy equivalence of spectra 
            \begin{equation}
                f: X \wedge \bS \to X
            \end{equation}
        \end{lemma}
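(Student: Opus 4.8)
The plan is to exhibit the equivalence explicitly at the level of spaces and then promote it using the formalism of Appendix~\ref{sec: sp}. Fix sequences $\vec u = (u_i)_i$ and $\vec v = (v_i)_i$ as in Definition~\ref{def: smash}, so that $(X \wedge \bS)_i = X_{u_i} \wedge \bS_{v_i} = X_{u_i} \wedge \Sigma^{v_i} S^0$, which is canonically homeomorphic to $\Sigma^{v_i} X_{u_i}$. Define $f_i$ to be this homeomorphism followed by the iterated structure map $\sigma^X_{u_i, i} \colon \Sigma^{v_i} X_{u_i} \to X_i$. Since $X$ is a suspension spectrum, each $\sigma^X_j$ is a homotopy equivalence, hence so is the composite $\sigma^X_{u_i, i}$, and therefore $f_i$. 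First I would check that $\{f_i\}$ is compatible with the structure maps up to homotopy, distinguishing the two cases in Definition~\ref{def: smash}: in the case $u_{i+1} = u_i + 1$ this follows immediately from the associativity of iterated structure maps, while in the case $v_{i+1} = v_i + 1$ one must check that the sign $(-1)^{u_i}$ occurring in $\sigma^{X \wedge \bS}_i$ is absorbed — after applying $f_{i+1}$ it becomes a coordinate permutation on $X_{i+1}$ realised by an element of $O(i+1)$, hence homotopic to the identity. Granting this, $\{f_i\}$ is a levelwise homotopy equivalence of suspension spectra, hence a homotopy equivalence of spectra.

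Alternatively — and this is the route I would actually take, to avoid the sign bookkeeping above — one can argue purely through Lemma~\ref{lem: spectra unst}. Choose an index $i \gg 0$ together with sequences $\vec u, \vec v$ for which $v_i = 0$ (so $u_i = i$); such sequences exist since we only need $v_i = 0$ at a single level. Then $(X \wedge \bS)_i = X_i \wedge S^0 = X_i$, and Lemma~\ref{lem: spectra unst} applied to $Id \colon (X \wedge \bS)_i \to X_i$ produces a map of spectra $f \colon X \wedge \bS \to X$ with $f_i = Id$. Symmetrically, applying Lemma~\ref{lem: spectra unst} to $Id \colon X_i \to (X \wedge \bS)_i$ produces $g \colon X \to X \wedge \bS$ with $g_i = Id$. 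The composites $f \circ g$ and $g \circ f$ then restrict to the identity at level $i$, so by the uniqueness clause of Lemma~\ref{lem: spectra unst} they are homotopic to $Id_X$ and $Id_{X \wedge \bS}$ respectively; hence $f$ is a homotopy equivalence.

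The only genuine obstacle is the sign $(-1)^{u_i}$ in the structure maps of the smash product, and the second argument sidesteps it entirely since there one never needs to track it. (For completeness one could observe that the two constructions of $f$ agree up to homotopy, again by the uniqueness part of Lemma~\ref{lem: spectra unst}, but this is not needed.)
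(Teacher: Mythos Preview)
Your second argument is correct and is a genuinely different route from the paper's. The paper does essentially what you sketch in your \emph{first} approach, but with one crucial difference: it inserts the sign $(-1)^{u_i v_i}$ into the definition of $f_i$, setting
\[
f_i:\; X_{u_i} \wedge \Sigma^{v_i} S^0 \xrightarrow{\ \operatorname{swap}\ } \Sigma^{v_i} X_{u_i} \xrightarrow{\ (-1)^{u_i v_i}\cdot\, \sigma^X_{u_i,i}\ } X_i,
\]
and then simply asserts that this is a map of spectra (levelwise a homotopy equivalence, hence an equivalence). That sign is precisely what your first approach is missing, and your hand-wave that the leftover $(-1)^{u_i}$ ``becomes a coordinate permutation \ldots\ realised by an element of $O(i+1)$, hence homotopic to the identity'' is where it breaks: when $u_i$ is odd the element in question lies in $O(i+1)\setminus SO(i+1)$, so it is \emph{not} homotopic to the identity. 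Without the correction $(-1)^{u_i v_i}$, the collection $\{f_i\}$ fails to commute with the structure maps even up to homotopy.

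Your second route, picking sequences with $v_{i_0}=0$ at the starting index and invoking Lemma~\ref{lem: spectra unst} on $Id\colon X_{i_0}\to X_{i_0}$, neatly sidesteps all of this; the sign bookkeeping is absorbed into the proof of that lemma. What you lose is an explicit formula for $f_i$ at every level, which the paper's version supplies and which is occasionally useful when one actually computes with the smash product. For the bare statement of the lemma either argument is fine, and yours is arguably cleaner.
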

        \begin{proof}
            Let $(u_i)_i, (v_i)_i$ be sequences as in Definition \ref{def: smash}. We define $f$ on $i^{th}$ spaces to be the composition
            \begin{equation}
                (X \wedge \bS)_i = X_{u_i} \wedge \Sigma^{v_i} S^0 \xrightarrow{\operatorname{swap}} \Sigma^{v_i} X_{u_i} \wedge S^0 \cong \Sigma^{v_i} X_{u_i} \xrightarrow{(-1)^{u_i v_i} \cdot \sigma^X} X_i
            \end{equation}
            This is a map of spectra.
        \end{proof}
    \subsection{Pontrjagin-Thom theory}\label{sec: PT}
        In this section, we record a concrete model for the Pontrjagin-Thom construction.
        \begin{definition}
            A \emph{stable framing} on a manifold $X$ consists of an equivalence class of isomorphisms of vector bundles over $X$ $\psi: \bR^{i-k} \oplus TX \to \bR^i$. The equivalence relation is generated by the following relations:
            \begin{itemize}
                \item $\psi, \psi': \bR^{i-k} \oplus TX \to \bR^i$ are equivalent if they are homotopic (through isomorphisms of vector bundles).
                \item $\psi$ is equivalent to $Id_\bR \oplus \psi: \bR^{1+i-k} \oplus TX \to \bR^{1+i}$.
            \end{itemize}
        \end{definition}
        Let $A\subseteq B$ be a CW subcomplex of a CW complex, and $X^k$ a compact manifold, possibly with boundary, equipped with a stable framing. Let $f: X \to B$ be a map sending $\partial X$ to $A$.
        \begin{definition}
            \emph{Pontrjagin-Thom data} of \emph{rank $L$} for the data above consists of a tuple $(i, \sigma, \psi)$:
            \begin{enumerate}
                \item $i: X \hookrightarrow (-1, 1)^L$ is an embedding. Write $\nu_i$ for the normal bundle of this embedding.
                \item $\sigma: D\nu_i \hookrightarrow [-1, 1]^L$ is a tubular neighbourhood of the embedding $i$.
                \item $\psi: \nu_i \to \bR^{L-k}$ is an isomorphism of vector bundles such that the following composition is a representative for the stable framing on $X$:
                \begin{equation}
                    \bR^{L-k} \oplus TX \xrightarrow{\psi^{-1} \oplus Id} 
                    \nu_i \oplus TX \xrightarrow{=} \bR^L
                \end{equation}
                and such that 
                \begin{equation}\label{eq: PT fram cond}
                    |\psi(v)| \geq |v|
                \end{equation}
                for all $v \in \nu_i$.
            \end{enumerate}
        \end{definition}
        Given Pontrjagin-Thom data as above, we construct a map of spectra $\Sigma^k \bS \to \Sigma^\infty \frac BA$ as follows.\par 
        This map is defined on $(L-k)^{th}$ spaces to be the composition, which we call $[X]_{unst}$:
        \begin{equation}
            \Sigma^L S^0 \xrightarrow{\operatorname{Collapse}}
            \frac{X^{D\nu_i}}{\partial X^{D\nu_i}}
            \xrightarrow{\psi} \Sigma^{L-k} \frac{X}{\partial X} \xrightarrow{ \Sigma^{L-k}f} 
            \Sigma^{L-k} \frac BA
        \end{equation}
        Here the first map $\operatorname{Collapse}$ sends $p \in [-1, 1]^L$ to $\sigma^{-1}(p)$ if $p \in \operatorname{Im}(p)$ and to the basepoint otherwise, and the second map $\psi$ sends $(v, x)$ (where $x \in X$ and $v \in (D\nu_i)_x)$ to $(\psi(v), x)$.\par 
        Standard arguments (e.g. \cite[Section IV]{Rudyak}) show that Pontrjagin-Thom data always exists, and that the induced map of spectra is independent of the choice of Pontrjagin-Thom data up to homotopy.\par 
        \begin{definition}\label{def: fund class}
            Let $M$ be a compact manifold, possibly with boundary of corners. Its \emph{stable homotopy fundamental class} is the map $[M]: \bS \to \frac{M^{-TM}}{\partial M^{-TM}}$ constructed as follows.\par 
            Let $i: M \hookrightarrow (-1, 1)^L$ be an embedding, and $\sigma$
            A map of spaces $[M]_{unst}$ is defined to be the map $\Sigma^L S^0 \to \frac{M^{D\nu_i}}{\partial M^{D\nu_i}}$ sending $x \in [-1,1]^L$ to $\sigma^{-1}(x)$ if $x \in \operatorname{Im}(\sigma)$, and $*$ otherwise. The map of spectra $[M]$ is then induced by Lemma \ref{lem: spectra unst}. 
        \end{definition}
        This map of spectra is independent of choices up to homotopy.

\bibliography{Refs.bib}{} \bibliographystyle{abbrv}
\Addresses
    
\end{document}